\title{Gluing pseudoholomorphic curves along branched covered
 cylinders II}
 \author{Michael Hutchings and Clifford Henry Taubes}
\numberwithin{equation}{section}
\newcommand{\mc}[1]{{\mathcal #1}}
\newtheorem{theorem}{Theorem}[section]
\newtheorem{proposition}[theorem]{Proposition}
\newtheorem{corollary}[theorem]{Corollary}
\newtheorem{lemma}[theorem]{Lemma}
\newtheorem{lemma-definition}[theorem]{Lemma-Definition}
\theoremstyle{definition}
\newtheorem{definition}[theorem]{Definition}
\newtheorem{remark}[theorem]{Remark}
\newcommand{\floor}[1]{\left\lfloor #1 \right\rfloor}
\newcommand{\ceil}[1]{\left\lceil #1 \right\rceil}
\newcommand{\eqdef}{\;{:=}\;}
\newcommand{\fedqe}{\;{=:}\;}
\renewcommand{\frak}{\mathfrak}
\newcommand{\C}{{\mathbb C}}
\newcommand{\R}{{\mathbb R}}
\newcommand{\Z}{{\mathbb Z}}
\newcommand{\op}{\operatorname}
\newcommand{\dbar}{\overline{\partial}}
\newcommand{\zbar}{\overline{z}}
\newcommand{\wbar}{\overline{w}}
\newcommand{\ubar}{\overline{u}}
\newcommand{\Hom}{\op{Hom}}
\newcommand{\Ker}{\op{Ker}}
\newcommand{\Coker}{\op{Coker}}
\newcommand{\tensor}{\otimes}
\newcommand{\vu}{\nu}
\newcommand{\Nbar}{\overline{N}}
\newcommand{\vbar}{\overline{v}}
\newcommand{\morrey}{{1/2}}
\newcommand{\morreyovertwo}{{1/4}}
\newcommand{\morreyovertwominusone}{{-3/4}}
\newcommand{\bpm}{\begin{pmatrix}}
\newcommand{\epm}{\end{pmatrix}}
\begin{document}

\maketitle

\begin{abstract}
This paper and its prequel (``Part I'') prove a generalization of the
usual gluing theorem for two index $1$ pseudoholomorphic curves $U_+$
and $U_-$ in the symplectization of a contact 3-manifold.  We assume
that for each embedded Reeb orbit $\gamma$, the total multiplicity of
the negative ends of $U_+$ at covers of $\gamma$ agrees with the total
multiplicity of the positive ends of $U_-$ at covers of $\gamma$.
However, unlike in the usual gluing story, here the individual
multiplicities are allowed to differ.  In this situation, one can
often glue $U_+$ and $U_-$ to an index $2$ curve by inserting genus
zero branched covers of $\R$-invariant cylinders between them.  This
paper shows that the signed count of such gluings equals a signed
count of zeroes of a certain section of an obstruction bundle over the
moduli space of branched covers of the cylinder.  Part I obtained a
combinatorial formula for the latter count and, assuming the result of
the present paper, deduced that the differential $\partial$ in
embedded contact homology satisfies $\partial^2=0$.  The present paper
completes all of the analysis that was needed in Part I.  The gluing
technique explained here is in principle applicable to more gluing
problems. We also prove some lemmas concerning the generic behavior of
pseudoholomorphic curves in symplectizations, which may be of
independent interest.
\end{abstract}

\section{Introduction}
\label{sec:intro}

This paper is a sequel to \cite{obg1}, which we refer to here as
``Part I''; references to Part I are given in the form I.$*$.  We
assume some familiarity with \S{I.1}, \S{I.2}, and \S{I.3}, although
we will attempt to review the essentials of what is needed here.  We
will not use any of \S{I.4}, \S{I.5}, or \S{I.7}, and we will only
rarely use
\S{I.6}.

\subsection{Geometric setup}

This paper studies pseudoholomorphic curves in the symplectization of
a contact 3-manifold.  The setup for this is as follows:  Let $Y$ be a closed
oriented 3-manifold, and let $\lambda$ be a contact $1$-form on $Y$.
Let ${\mathbf R}$ denote the Reeb vector field associated to
$\lambda$, and assume that all Reeb orbits are nondegenerate.  Let $J$
be an admissible almost complex structure on $\R\times Y$.
``Admissible'' here means that $J$ is $\R$-invariant; $J$ sends the
$\R$ direction, denoted by $\partial_s$, to ${\mathbf R}$; and $J$
sends the contact plane field $\xi=\Ker(\lambda)$ to itself, rotating
positively with respect to $d\lambda$.  If
$\alpha=(\alpha_1,\ldots,\alpha_k)$ and
$\beta=(\beta_1,\ldots,\beta_l)$ are ordered lists of Reeb orbits,
possibly repeated or multiply covered, then $\mc{M}^J(\alpha,\beta)$
denotes the moduli space of $J$-holomorphic curves $u:C\to\R\times Y$
with ordered and asymptotically marked positive ends at
$\alpha_1,\ldots,\alpha_k$, ordered and asymptotically marked negative
ends at $\beta_1,\ldots,\beta_l$, and no other ends.  The precise
definitions of the above notions are reviewed in \S{I.1.1}.  We assume that
the domain $C$ is a punctured compact Riemann surface, which may be
disconnected, and whose components may have any genus.  We also assume
that $J$ is generic so that all non-multiply-covered $J$-holomorphic
curves are unobstructed \cite{dragnev}.

We want to glue together two curves $U_+\in\mc{M}^J(\alpha_+,\beta_+)$
and $U_-\in\mc{M}^J(\beta_-,\alpha_-)$ that consitute a ``gluing
pair'' in the sense of Definition~{I.1.9}.  This means the following:
First, $U_+$ and $U_-$ have Fredholm index $1$, cf.\ \S{I.1.1}.
Second, $U_+$ and $U_-$ are immersed, and not multiply covered, except
that they may contain unbranched covers of $\R$-invariant cylinders.
Third, for each embedded Reeb orbit $\gamma$, the total covering
multiplicity of Reeb orbits covering $\gamma$ in the list $\beta_+$ is
the same as the corresponding total for $\beta_-$.  For example, $U_+$
could have two negative ends at $\gamma$, while $U_-$ could have a
single positive end at the double cover of $\gamma$, which we denote
by $\gamma^2$.  Note that in the usual gluing story, one would assume
that the lists $\beta_+$ and $\beta_-$ are identical; the weakening of
this condition above is the novelty of the present paper.  Finally,
when $U_+$ or $U_-$ contain covers of $\R$-invariant cylinders over
elliptic Reeb orbits, there is a fourth condition in
Definition~{I.1.9}, called ``partition minimality'', concerning the
multiplicities of these covers.  We will not review the meaning of
this condition now, because it will not come to the fore in this paper
until
\S\ref{sec:SDR}.

Given a gluing pair $(U_+,U_-)$ as above, we want to compute a signed
count of ends of the index 2 part of the moduli space
$\mc{M}^J(\alpha_+,\alpha_-)/\R$ that are ``close to breaking'' into
$U_+$ and $U_-$ together with some index zero branched covers of
$\R$-invariant cylinders between them.  This count is made precise in
Definition~{I.1.12}, reviewed here in Definition~\ref{def:count}, and
denoted by $\#G(U_+,U_-)\in\Z$.  To define this count one needs to
assume that $J$ is generic, so that all moduli spaces of
non-multiply-covered $J$-holomorphic curves are smooth and have the
expected dimension.  To determine the signs one also needs to fix
``coherent orientations'' of the relevant moduli spaces; our conventions for
doing so are specified in \S\ref{sec:orienting}.

Note that the definition of $\#G(U_+,U_-)$ does not count boundary
points of the compactification of the moduli space
$\mc{M}^J(\alpha_+,\alpha_-)/\R$ as in \cite{behwz,cm} (for all we know
there could be infinitely many such boundary points), but rather
counts boundary points of a truncation of the moduli space.
Consequently, this definition is insensitive to the number of levels
of branched covers that may appear in the limit of a sequence of
curves in $\mc{M}^J(\alpha_+,\alpha_-)/\R$.

\subsection{Statement of the main result}
\label{sec:SMR}

Before continuing, we make two additional assumptions on our gluing
pair $(U_+,U_-)$.  The first is:
\begin{equation}
\label{eqn:i}
\begin{array}{l}
\text{All negative ends of $U_+$, and all positive ends of $U_-$, are at}\\
\text{covers of a single embedded elliptic Reeb orbit $\alpha$.}
\end{array}
\end{equation}
Here the statement that $\alpha$ is ``elliptic'' means that the
linearized return map of the Reeb flow around $\alpha$ has eigenvalues
on the unit circle, and thus is conjugate to a rotation by angle
$2\pi\theta$ for some $\theta\in\R$.  Our standing assumption that all
Reeb orbits are nondegenerate implies that $\theta$ is irrational.

To state the second assumption on $(U_+,U_-)$, let
$a_1,\ldots,a_{N_+}$ denote the multiplicities of the negative ends of
$U_+$ (this means that $U_+$ has negative ends at the covers
$\alpha^{a_1},\ldots,\alpha^{a_{N_+}}$), and likewise let
$a_{-1},\ldots,a_{-N_-}$ denote the multiplicities of the positive
ends of $U_-$.  The second assumption is:
\begin{equation}
\label{eqn:ii}
\sum_{i=1}^{N_+}\ceil{a_i\theta} - \sum_{i=-1}^{-N_-}\floor{a_i\theta}
= 1.
\end{equation}

To see the significance of this assumption, let
\begin{equation}
\label{eqn:specifyM}
\mc{M} \eqdef
\mc{M}(a_1,\ldots,a_{N_+} \mid a_{-1},\ldots,a_{-N_-})
\end{equation}
denote the moduli space of connected genus zero branched covers of
$\R\times S^1$ from Definition I.2.1.  Recall that a branched cover in
$\mc{M}$ has positive ends indexed by $1,\ldots,N_+$, and negative
ends indexed by $-1,\ldots,-N_-$, such that the end indexed by $i$ has
multiplicity $a_i$; and all ends are asymptotically marked.  We use a
parametrization of $\alpha$ to identify elements of $\mc{M}$ with
branched covers of the $J$-holomorphic cylinder $\R\times\alpha$ in
$\R\times Y$.  As explained in \S{I.1.2}, a branched cover of
$\R\times\alpha$ with positive ends of multiplicities
$a_1,\ldots,a_{N_+}$ and negative ends of multiplicities
$a_{-1},\ldots,a_{-N_-}$ has Fredholm index zero if and only if it
consists of $\kappa_\theta$ genus zero components, where
$\kappa_\theta$ denotes the left hand side of \eqref{eqn:ii}.  Hence
the assumption \eqref{eqn:ii} implies that index zero branched covers
of $\R\times\alpha$ with ends as above correspond to elements of
$\mc{M}$.

We can now explain the idea of the gluing construction.  Fix
$R>>r>>0$.  Let $\mc{M}_R$ denote the set of branched covers in
$\mc{M}$ such that all ramification points have $|s|\le R$, where $s$
denotes the $\R$ coordinate on $\R\times S^1$.  Given a branched cover
$\pi:\Sigma\to\R\times S^1$ in $\mc{M}_R$, we can form a ``preglued''
curve by using appropriate cutoff functions to patch the negative ends
of the $s\mapsto s+R+r$ translate of $U_+$ to the positive ends of
$\Sigma$, and the positive ends of the $s\mapsto s-R-r$ translate of
$U_-$ to the negative ends of $\Sigma$.  Now try to perturb the
preglued curve to a $J$-holomorphic curve, where near the ramification
points of the branched cover we only perturb in directions normal to
$\R\times\alpha$.  For a given branched cover $\Sigma$, we can obtain
a (unique) $J$-holomorphic curve this way if and only if
$\frak{s}(\Sigma)=0$, where $\frak{s}$ is a certain section of the
obstruction bundle $\mc{O}\to\mc{M}_R$.  Here the fiber of $\mc{O}$
over a branched cover $\Sigma$ consists of the (dual of the) cokernel of an
associated linear deformation operator $D_\Sigma$; see \S{I.2.3} for
details.  Note that the rank of the obstruction bundle equals the
dimension of $\mc{M}$.  In this way the count of gluings
$\#G(U_+,U_-)$ that we are after is related to a count of zeroes of
the obstruction section $\frak{s}$.

The section $\frak{s}$ is difficult to understand directly, because it
arises in a somewhat indirect way out of the analysis in
\S\ref{sec:gluing}.  Fortunately there is a more tractable
section, the ``linearized section'' $\frak{s}_0$, which has the same
count of zeroes.  The linearized section $\frak{s}_0$ is defined
explicitly in \S\ref{sec:DLS} in terms of the collection of
``asymptotic eigenfunctions'' $\gamma$ associated to the negative ends
of $U_+$ and the positive ends of $U_-$.  (For the definition of the
asymptotic eigenfunction associated to an end of a $J$-holomorphic
curve, see \S\ref{sec:AF}.)  As explained in
\S{I.3.2} (using results from \S\ref{sec:SDR}), the signed count of
zeroes of $\frak{s}_0$ over $\mc{M}_R$, denoted by
$\#\frak{s}_0^{-1}(0)\in\Z$, is well-defined as long as $\gamma$ is
``admissible'' in the sense of Definition~{I.3.2}.  We will prove in
\S\ref{sec:generic} that this admissibility condition holds if $J$ is
generic.  We also showed in
\S{I.3.2} that the count $\#\frak{s}_0^{-1}(0)$ for admissible
$\gamma$ does not depend on $\gamma$, but only on the multiplicities
of the $\R$-invariant and non-$\R$-invariant negative ends of $U_+$
and positive ends of $U_-$ at covers of $\alpha$.  Let
$\#\frak{s}_0^{-1}(0)$ denote this count for admissible $\gamma$. The
main result of this paper can then be stated as follows.  (This
appears in Part I as Theorem~{I.3.6}.)

\begin{theorem}
\label{thm:main}
Fix coherent orientations, let $J$ be a generic admissible almost
complex structure on $\R\times Y$, and let $(U_+,U_-)$ be a gluing
pair satisfying assumptions \eqref{eqn:i} and \eqref{eqn:ii} above.
Then
\[
\#G(U_+,U_-) = \epsilon(U_+)\epsilon(U_-)\#\frak{s}_0^{-1}(0).
\]
\end{theorem}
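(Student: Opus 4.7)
The plan is to realize $\#G(U_+,U_-)$ as a signed zero count of the obstruction section $\mathfrak{s}$ on $\mathcal{M}_R$, then deform $\mathfrak{s}$ to $\mathfrak{s}_0$ through admissible sections without changing the signed count. Concretely, first I would set up the pregluing: for each pair $(\Sigma,T)$ with $\Sigma\in\mathcal{M}_R$ and gluing parameter $T\sim R+r$, patch the translated curves $U_+(s-T)$ and $U_-(s+T)$ to $\Sigma$ via cutoff functions supported in a neck region around $s=\pm R$, producing a preglued map $u_\Sigma$ which is $J$-holomorphic away from the neck and has $\bar\partial u_\Sigma$ exponentially small there. Then I would look for a genuine $J$-holomorphic perturbation $u_\Sigma+\psi$, requiring $\psi$ to be a section of the normal bundle to $U_+\cup(\R\times\alpha)\cup U_-$, subject to the restriction that over a neighborhood of the ramification locus one only perturbs in directions normal to $\R\times\alpha$ (this is the geometric content of working modulo $\ker D_\Sigma$ along the cylinder part).

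Next I would do the alternating Newton iteration. The linearized operator $D$ on the preglued curve is almost surjective, with an approximate cokernel of dimension $\op{rank}(\mathcal{O})$ coming from $D_\Sigma$; the non-cylindrical parts are unobstructed by hypothesis and their contribution can be killed by choosing $\psi$ in the orthogonal complement of the approximate kernels provided by $U_\pm$. This requires carefully constructed weighted Sobolev norms adapted to the branched cover (with exponential weights near the punctures matching the eigenvalues of the asymptotic operators). A contraction mapping in these norms then produces, for each $\Sigma$, a unique small $\psi(\Sigma)$ solving $\bar\partial(u_\Sigma+\psi)\equiv 0$ modulo the approximate cokernel. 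The residual obstruction defines the section $\mathfrak{s}(\Sigma)$, and transversality at a zero produces an honest glued curve in $\mathcal{M}^J(\alpha_+,\alpha_-)/\R$. After showing the gluing map $\mathfrak{s}^{-1}(0)\to\mathcal{M}^J(\alpha_+,\alpha_-)/\R$ is a local homeomorphism onto its image, a compactness/uniqueness argument shows that every "near-broken" curve contributing to $\#G(U_+,U_-)$ arises this way exactly once, so that $\#G(U_+,U_-)=\#\mathfrak{s}^{-1}(0)$ up to the signs $\epsilon(U_\pm)$ supplied by the coherent orientations of the Fredholm determinant lines.

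To finish, I would compare $\mathfrak{s}$ with the linearized section $\mathfrak{s}_0$. The key observation is that $\mathfrak{s}(\Sigma)$, evaluated against any element of the cokernel, is to leading order a pairing between the cokernel representative and the jump in the normal data across the neck, which is exactly what $\mathfrak{s}_0$ computes from the asymptotic eigenfunctions $\gamma$ of the ends of $U_\pm$. The correction $\mathfrak{s}-\mathfrak{s}_0$ can be estimated by the size of the nonlinear tails and by the higher-order terms in the asymptotic expansion of the ends; both decay as $e^{-cR}$ for a uniform $c>0$, while $\mathfrak{s}_0$ itself scales polynomially in the neck variable on relevant strata of $\mathcal{M}_R$. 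Hence for $R$ large the straight-line homotopy $t\mathfrak{s}+(1-t)\mathfrak{s}_0$ has no zeros on $\partial\mathcal{M}_R$, and the signed counts agree.

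The main obstacle is the analytic gluing itself, specifically the construction of a right inverse to the linearized operator on the preglued domain that is uniformly bounded as $R\to\infty$ and remains valid near branch points of $\Sigma$, where the normal bundle to $\R\times\alpha$ ramifies and the eigenfunction decomposition used on the ends is not intrinsic. This forces a delicate choice of function spaces that are hybrid: standard weighted Sobolev norms on the $U_\pm$ parts, and norms tailored to the multivalued normal data on the branched cover part, glued together via partition-of-unity arguments whose commutators with $D$ must be controlled. Partition minimality in Definition~I.1.9 is precisely what ensures that the relevant approximate kernels and cokernels are orthogonal in a neighborhood-independent way, and thus is indispensable for the uniform bounds. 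The sign comparison also requires care: one must show that the orientation on $\mathfrak{s}^{-1}(0)$ coming from the coherent orientation convention on $\mathcal{M}^J(\alpha_+,\alpha_-)$ matches the product of the orientations on $U_\pm$ and the intrinsic orientation of $\mathcal{O}$ over $\mathcal{M}$, which is most naturally verified by tracking isomorphisms of determinant lines through the pregluing construction.
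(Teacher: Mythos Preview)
Your overall architecture matches the paper: pregluing, contraction-mapping to produce $\psi$ and the obstruction section $\mathfrak{s}$, bijectivity of the gluing map onto curves close to breaking, a determinant-line sign comparison yielding the factor $\epsilon(U_+)\epsilon(U_-)$, and finally a straight-line homotopy from $\mathfrak{s}$ to $\mathfrak{s}_0$. Two points in your sketch, however, are not right and would block the argument as written.

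First, the boundary control for the homotopy $t\mathfrak{s}+(1-t)\mathfrak{s}_0$ is not the simple ``leading term versus exponentially small correction'' picture you describe. The linearized section $\mathfrak{s}_0$ does \emph{not} scale polynomially; by \eqref{eqn:defsL} it is itself a sum of terms $e^{-\nu_i}\langle\gamma_i,\beta_i\rangle$, each exponentially small, and near $\partial\mc{M}_R$ the nonlinear remainder $\mathfrak{s}-\mathfrak{s}_0$ is a priori of the same order $e^{-\nu}$. What the paper actually does (Proposition~\ref{prop:deform}) is, for a hypothetical zero near the boundary, construct a \emph{specific} cokernel element $\sigma$ using the tree combinatorics of the limiting branched cover and Lemma~I.2.18, so that $\mathfrak{s}_0(\sigma)$ is dominated by a single term $e^{-\nu}\|\gamma_{i_1}\|^2$; the nondegeneracy condition $\gamma_{i_1}\neq 0$ makes this term nonzero, and the nonoverlapping condition prevents cancellation when two ends share an eigenvalue (Step~4 of \S\ref{sec:PDR}, via Lemma~\ref{lem:keylemma}). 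The admissibility hypothesis is thus the crux of the boundary argument, not an exponential-versus-polynomial comparison.

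Second, your account of partition minimality is off. It plays no role in the uniform Fredholm bounds or in any orthogonality of approximate kernels. Its actual function (see the proof that $\mc{V}_R/\R$ is compact in \S\ref{sec:SDR}) is to force all ramification points of $\Sigma$ to have $\pi^*s\in[s_-,s_+]$, so that the slice over which one counts zeros is compact. Without it, ramification points could escape to infinity along the $\R$-invariant ends and the relative Euler class $\#(\mathfrak{s}^{-1}(0)\cap\mc{V}_R/\R)$ would not be defined. The uniform bound on $D_\Sigma^{-1}$ (Lemma~\ref{lem:SB}) is instead a separate compactness argument on $\mc{M}/\R$ that does not use partition minimality.

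A smaller point: the paper does not use weighted Sobolev spaces on the branched cover but Morrey-type spaces $\mc{H}_0,\mc{H}_1$ (\S\ref{sec:BSS}), chosen so that solutions are H\"older continuous across ramification points where the nonlinearity $F_\Sigma$ is discontinuous; this is what makes the Lipschitz regularity of $\psi_\Sigma$ (Proposition~\ref{prop:TRE}(c)) go through.
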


Here $\epsilon(U_+),\epsilon(U_-)\in\{\pm1\}$ denote the signs associated
to $U_+$ and $U_-$ by the system of coherent orientations; see \S{I.1.1} and
\S\ref{sec:orienting}.

As explained in \S{I.1.8}, there is a straightforward generalization
of this story in which the assumptions \eqref{eqn:i} and
\eqref{eqn:ii} are dropped.  This requires gluing in disconnected
branched covers.  We will omit the details of this generalization, as
 it does not involve any new analysis and differs only in the amount
 of notation.

Recall from Part I that the significance of Theorem~\ref{thm:main} is
as follows.  In Proposition~{I.5.1}, we obtained a combinatorial
formula for the count $\#\frak{s}_0^{-1}(0)$.  Combining this with
Theorem~\ref{thm:main} (and its generalization for disconnected
branched covers) proves the main result of this pair of papers, namely
Theorem~{I.1.13}, which gives a combinatorial formula for
$\#G(U_+,U_-)$.  An important application of Theorem~{I.1.13} is given in
\S{I.7}, which deduces that the differential $\partial$ in embedded
contact homology (see \cite{t3}) satisfies $\partial^2=0$.

Essentially the same argument shows that $\partial^2=0$ in the
periodic Floer homology of mapping tori.  In fact, our gluing theorem
generalizes easily to stable Hamiltonian structures, as defined in
\cite{cm,siefring}, of which contact structures and mapping tori are
special cases.  The starting point for the analysis in the present
paper is a nice local coordinate system around a Reeb orbit, and that
exists just as well in this more general setting.

\subsection{Guide to the paper}

This paper divides roughly into three parts.

The first part, consisting of \S\ref{sec:decay}--\S\ref{sec:immersed},
does not yet address the gluing problem, but rather proves some
general results on $J$-holomorphic curves in $\R\times Y$ which we
will use in the gluing story and which might be of independent
interest.  In \S\ref{sec:decay} we describe the asymptotic behavior of
ends of $J$-holomorphic curves and define their asymptotic
eigenfunctions.  Although asymptotic results of this sort have
appeared previously in \cite{hwz1,mora,siefring}, we will find it
useful to review the asymptotics in a particular way in order to
prepare for the subsequent analysis.  In \S\ref{sec:generic} we prove
that for generic $J$, if $u$ is an index $1$, connected,
non-multiply-covered $J$-holomorphic curve, then the asymptotic
eigenfunctions describing the ends of $u$ are all nonzero; and
moreover, whenever two ends of $u$ have the same ``asymptotic
eigenvalue'', the corresponding asymptotic eigenfunctions are
geometrically distinct.  This is exactly what is needed to show that
the collection of asymptotic eigenfunctions $\gamma$ determined by a
gluing pair $(U_+,U_-)$ as in \S\ref{sec:SMR} is admissible.  In
\S\ref{sec:immersed} we prove that if $J$ is generic, then all
non-multiply-covered $J$-holomorphic curves of index $\le 2$ are
immersed.  Although this is probably not really necessary, it will
simplify the analysis in the rest of the paper by allowing us to
consider only immersed curves (except of course for the branched
covers of cylinders that we are gluing in).

The second part of the paper, consisting of
\S\ref{sec:gluing}--\S\ref{sec:deform}, explains the details
of gluing.  In \S\ref{sec:gluing} we show how to glue $U_+$ and $U_-$
along a branched cover of the cylinder, wherever the obstruction
section $\frak{s}$ vanishes.  Note that the section $\frak{s}$ in
\S\ref{sec:gluing} is not defined over $\mc{M}_R$ as in
\S\ref{sec:SMR}, but rather over a slightly different domain, because we
are not yet modding out by the $\R$ action on moduli spaces of
$J$-holomorphic curves.  In \S\ref{sec:properties} we prove various
technical properties of the obstruction section $\frak{s}$, including
its continuity.  In \S\ref{sec:map} we show that the gluing
construction describes all curves that are ``close to breaking'' into
$U_+$ and $U_-$ along a branched cover of the cylinder, in the precise
sense of Definition~{I.1.10}, which is reviewed in
Definition~\ref{def:CTB}.  To count ends of the index $2$ part of the moduli
space $\mc{M}^J(\alpha_+,\alpha_-)/\R$, we now want to count zeroes of
$\frak{s}$ over a relevant slice of the quotient of its domain by the
$\R$ action.  This slice is identified with $\mc{M}_R$, minus a fringe
region where $\frak{s}$ has no zeroes.  In \S\ref{sec:deform} we
prove that if the collection of asymptotic eigenfunctions $\gamma$
determined by $(U_+,U_-)$ is admissible, then whenever $R>>r>>0$, the
signed count of zeroes of $\frak{s}$ on $\mc{M}_R$ is the same as that
of the linearized section $\frak{s}_0$:
\begin{equation}
\label{eqn:nsns0}
\#\frak{s}^{-1}(0) = \#\frak{s}_0^{-1}(0).
\end{equation}
For the precise statement see Corollary~\ref{cor:ecd} and
Remark~\ref{rem:cf}.  In the proof of \eqref{eqn:nsns0}, the
admissibility condition on $\gamma$ is needed to ensure that no zeroes
of the section cross the boundary of $\mc{M}_R$ as we deform
$\frak{s}$ to $\frak{s}_0$.

As described previously, the count $\#G(U_+,U_-)$ of relevant ends of
the index $2$ part of the moduli space
$\mc{M}^J(\alpha_+,\alpha_-)/\R$ can be identified with a count of
zeroes of $\frak{s}$ on $\mc{M}_R$.  However equation
\eqref{eqn:nsns0} does not yet prove Theorem~\ref{thm:main}, because
the signs with which the zeroes of $\frak{s}$ are counted in
\eqref{eqn:nsns0} are determined by canonical orientations of the
obstruction bundle and of the moduli space of branched covers, and
might not agree with the signs (coming from the coherent orientations)
with which the ends of $\mc{M}^J(\alpha_+,\alpha_-)/\R$ are counted.
This brings us to the third and last part of the paper, which is a
detailed discussion of signs, occupying
\S\ref{sec:coherent} and \S\ref{sec:signs}.  To go from equation
\eqref{eqn:nsns0} to Theorem~\ref{thm:main}, in \S\ref{sec:signs}
we will prove Theorem~\ref{thm:count}, asserting that if $R>>r>>0$
then
\begin{equation}
\label{eqn:Gees}
\#G(U_+,U_-) = \epsilon(U_+)\epsilon(U_-)\#\frak{s}^{-1}(0).
\end{equation}
To prove \eqref{eqn:Gees}, it turns out that for generic $R$, each
side of the equation is a signed count of points in the same finite
set, so we just need to compare the signs.  To set up this comparison,
we need to rework the theory of coherent orientations from scratch,
which is what we do in
\S\ref{sec:coherent}.

That completes the outline of the paper.  Before plunging into the
details, let us briefly indicate the basic idea of the gluing
analysis.  (This is adapted from a technique pioneered by Donaldson in
the context of four-dimensional gauge theory; see
\cite[Ch.\ 7]{donkron}.)  Suppose we want to glue together some curves
$u_1,\ldots,u_n$ in some configuration.  For simplicity suppose that
each $u_i$ is immersed with domain $C_i$.  To start, we can use
appropriate cutoff functions to form a preglued curve $C_0$.  Now if
$\psi_i$ is a section of the normal bundle to $C_i$ for each $i$, then
we can deform $C_0$ in the direction
$\beta_1\psi_1+\cdots+\beta_n\psi_n$, where $\beta_i$ is a cutoff
function supported over the part of $C_0$ coming from $C_i$.  The
deformed pregluing will be pseudoholomorphic if and only if an
equation of the form
\[
\beta_1\Theta_1(\psi_1,\ldots,\psi_n) + \cdots +
\beta_n\Theta_n(\psi_1,\ldots,\psi_n) = 0
\]
holds.  Here $\Theta_i$ is defined on all of $C_i$, and has the form
\[
\Theta_i = D_i\psi_i + \cdots
\]
where $D_i$ is the linear deformation operator associated to $C_i$, and
the remaining terms are mostly nonlinear and involve the $\psi_j$'s
for those $j$ such that $C_i$ is adjacent to $C_j$ in the gluing
configuration.  If one sets this up properly, then the contraction
mapping theorem in a suitable Banach space finds a unique $n$-tuple
$(\psi_1,\ldots,\psi_n)$ such that $\psi_i \perp \Ker(D_i)$ and
\[
\Theta_i(\psi_1,\ldots,\psi_n) \in \Coker(D_i)
\]
for each $i$.  If the $n$-tuple $(u_1,\ldots,u_n)$ varies over some
moduli space, then these elements of $\Coker(D_i)$ define an
obstruction section over this moduli space, and we will obtain a
pseudoholomorphic curve wherever this obstruction section vanishes.
Further analysis shows that this construction identifies the zero set
of the obstruction section with the set of all gluings in an
appropriate sense.  Finally, the main contribution to $\Theta_i$,
other than $D_i\psi_i$, arises from the failure of the original
preglued curve to be pseudoholomorphic, which is essentially
determined by the asymptotic behavior of the $u_i$'s; and this is what
we use to define the ``linearized section''.  We expect that this
technique can be applied to additional gluing problems.

\section{Asymptotics of $J$-holomorphic curves}
\label{sec:decay}

Let $J$ be an admissible almost complex structure on $\R\times Y$, and
let $u:C\to\R\times Y$ be a $J$-holomorphic curve.  In this section we
prove an asymptotic formula, stated in Proposition~\ref{prop:AF}
below, for the behavior of $u$ on each end of $C$.  Similar asymptotic
formulas have been previously established in
\cite{hwz1,mora,siefring}.  However we will find it useful to go over
the asymptotics in a particular way in order to prepare for the
subsequent analysis.

\subsection{Neighborhoods of $\R$-invariant cylinders}
\label{sec:NRIC}

We begin by writing down equations for $J$-holomorphic curves near a
Reeb orbit.  These equations will be used throughout the paper.

Let ${\mathbf R}$ denote the Reeb vector field on $Y$.  Let $\alpha$
be an embedded Reeb orbit; by rescaling the $s$ and $t$
coordinates on $\R\times Y$, we may assume that $\alpha$ has period
$2\pi$. Fix a parametrization $\alpha:S^1\to Y$ with
$\alpha'(t)={\mathbf R}$.  Recall that the admissible almost complex
structure $J$ sends the $\R$ direction $\partial_s$ to the Reeb vector
field ${\mathbf R}$, so $J(\partial_s) = \alpha'(t)$ on
$\R\times\alpha$.

We begin the analysis by choosing coordinates on a tubular
neighborhood of $\R\times\alpha$ in $\R\times Y$, via an
``exponential map'' $e$ with certain nice properties.

\begin{lemma}
\label{lem:EM}
For each embedded Reeb orbit $\alpha:S^1\to Y$, there exists a disc
$D\subset\C$ containing the origin and an embedding
\[
e:\R\times S^1\times D \longrightarrow \R\times Y
\]
with the following properties:
\begin{itemize}
\item
$e(s,t,0)=(s,\alpha(t))$.
\item
The derivative of $e$  at $(s,t,0)$
sends $T_0D=\C$ to the contact plane $\xi_{\alpha(t)}$.
\item
$e$ commutes with translations of the $\R$ coordinate.
\item
The restriction of $e$ to each disc $\{s\}\times\{t\}\times D$ is
$J$-holomorphic.
\end{itemize}
\end{lemma}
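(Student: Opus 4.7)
The plan is to define $e(s,t,z) \eqdef \phi_{s,t}(z)$, where $\phi_{s,t}$ is, for each $(s,t)$, a $J$-holomorphic disc through $(s,\alpha(t))$ tangent to the contact plane $\xi_{\alpha(t)}$ with a prescribed framing. All four required properties then follow from existence, uniqueness, and smooth dependence of such discs, together with an argument that the resulting map is an embedding.

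First I would trivialize the contact plane along $\alpha$: since $\alpha^*\xi$ is a complex line bundle over $S^1$, it admits a nowhere-vanishing smooth section $v(t) \in \xi_{\alpha(t)}$, and by $\R$-invariance the same section trivializes $\xi$ along $\R\times\alpha$. Next I would invoke the standard existence theorem for $J$-holomorphic discs with prescribed $1$-jet (an implicit function theorem argument for the $\dbar_J$ operator in a suitable Banach setting, with the ansatz obtained from a Riemannian exponential of $zv(t)$): there exists a disc $D\subset\C$ around the origin, uniform in $(s,t)$ by compactness of $S^1$ and $\R$-invariance, and for each $(s,t)$ a unique smooth $J$-holomorphic embedding $\phi_{s,t}:(D,0) \to (\R\times Y, (s,\alpha(t)))$ with $d\phi_{s,t}|_0(\partial_x) = v(t)$ and hence $d\phi_{s,t}|_0(\partial_y) = Jv(t)$. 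Smoothness in $(s,t)$ follows from the parametric version of the implicit function theorem.

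Setting $e(s,t,z) \eqdef \phi_{s,t}(z)$, properties 1 and 2 hold by the prescribed $1$-jet, and property 4 holds by $J$-holomorphicity of each $\phi_{s,t}$. Property 3 follows from uniqueness: letting $T_a$ denote translation in the $\R$ factor by $a$, the composition $T_a\circ \phi_{s,t}$ is $J$-holomorphic (since $J$ is $\R$-invariant), passes through $(s+a,\alpha(t))$, and has the same framing $v(t)$ at the origin (since $dT_a$ acts as the identity on $\xi$); uniqueness therefore forces $T_a\circ\phi_{s,t} = \phi_{s+a,t}$, which is exactly property 3.

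The main obstacle is verifying that $e$ is an embedding, rather than merely a collection of discs with compatible $1$-jets along $\R\times\alpha$. By property 2, the differential $de$ at $(s,t,0)$ sends the coordinate frame $\partial_s,\partial_t,\partial_x,\partial_y$ to $\partial_s,\mathbf{R},v(t),Jv(t)$, a basis of $T_{(s,\alpha(t))}(\R\times Y)$; so $e$ is immersive on $\R\times S^1\times\{0\}$ and restricts there to the embedding $(s,t)\mapsto(s,\alpha(t))$. By the inverse function theorem applied along this set, $e$ is a local embedding in a neighborhood of $\R\times S^1\times\{0\}$, and $\R$-invariance together with the compactness of $S^1$ allows one to shrink $D$ so that $e$ becomes a global embedding of $\R\times S^1\times D$ onto an open tubular neighborhood of $\R\times\alpha$.
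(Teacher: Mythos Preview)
Your proposal is correct and is the standard construction; the paper itself gives no argument beyond citing \cite[Lem.~5.4]{swgr}, which carries out essentially this same implicit-function-theorem construction of $J$-holomorphic discs transverse to a given curve.

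One point worth tightening: the uniqueness you invoke for property~3 is not absolute uniqueness of $J$-holomorphic discs with a prescribed $1$-jet---that already fails for the standard complex structure on $\C^2$, where $z\mapsto(z,0)$ and $z\mapsto(z,z^2)$ share their $1$-jet at the origin---but rather uniqueness of the small solution in the implicit-function-theorem neighborhood of your Riemannian-exponential ansatz. For the argument $T_a\circ\phi_{s,t}=\phi_{s+a,t}$ to go through, you therefore need $T_a$ to carry the ansatz at $(s,t)$ to the ansatz at $(s+a,t)$, which amounts to choosing the auxiliary Riemannian metric on $\R\times Y$ to be $\R$-invariant (e.g.\ a product metric). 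With that in place your argument is fine. A slightly cleaner alternative is to construct the discs $\phi_{0,t}$ only along $\{0\}\times S^1$ and then \emph{define} $e(s,t,z)\eqdef T_s(\phi_{0,t}(z))$; this builds in property~3 from the start and avoids the uniqueness discussion altogether.
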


\begin{proof}
This can be proved similarly to \cite[Lem.\ 5.4]{swgr}.
\end{proof}

We denote the coordinates on $\R\times S^1\times D$ by
$(s,t,w)$, and write $z\eqdef s+it$ and  $w\fedqe x+iy$.

Recall that a ``positive end of $u$ at $\alpha$'' is an end of $u$
whose constant $s$ slices converge as $s\to +\infty$ to $\alpha$.  By
positivity of intersections with the $J$-holomorphic discs
$e(\{s\}\times\{t\}\times D)$, such an end pulls back via $e$ to the
graph of a smooth map
\[
\eta: [s_0,\infty)\times S^1 \to D
\]
with $\lim_{s\to\infty}\eta(s,t)=0$.

We now write down an equation for the end described by $\eta$ to be
 $J$-holomorphic.  The conditions in Lemma~\ref{lem:EM} imply that in
 the image of $e$,
\begin{equation}
\label{eqn:t10ry}
T^{1,0}(\R\times Y) = \op{span}(dz - a\,d\zbar,dw + b\,d\zbar),
\end{equation}
where $a$ and $b$ are smooth functions of $t$ and $w$ which vanish
where $w=0$.  It follows from \eqref{eqn:t10ry} that the graph of
$\eta$ is $J$-holomorphic if and only if
\begin{equation}
\label{eqn:hol1}
\frac{\partial\eta}{\partial\zbar} +
a\frac{\partial\eta}{\partial z} + b = 0.
\end{equation}
To see this, note that the tangent space to the graph of $\eta$ is the
kernel of the $\C$-valued $1$-form $dw-d\eta$.  The latter can be
written as a linear combination of the forms on the right side of
\eqref{eqn:t10ry}, plus $d\zbar$ times the left side of
\eqref{eqn:hol1}.  Thus \eqref{eqn:hol1} holds if and only if the
tangent space to the graph of $\eta$ is $J$-invariant.

Equation \eqref{eqn:hol1} can be rewritten as
\begin{equation}
\label{eqn:hol2}
\frac{\partial\eta}{\partial\zbar} + \nu \eta + \mu
\overline{\eta} + {r}_0
+ {r}_1\frac{\partial\eta}{\partial z} = 0,
\end{equation}
where $\nu$ and $\mu$ denote the functions of $t$ given by the derivatives of
$b$ at $w=0$ with respect to $w$ and $\overline{w}$ respectively,
while ${r}_0$ and ${r}_1$ are smooth functions of $t$ and $w$.  Since
$a$ and $b$ both vanish where $w=0$, the nonlinear terms in
\eqref{eqn:hol2} are bounded by
\begin{equation}
\label{eqn:rbounds}
|{r}_0(t,w)| \le c|w|^2, \quad\quad |{r}_1(t,w)| \le c|w|,
\end{equation}
where $c$ denotes a constant which does not depend on $t$ or $w$.

The inequalities \eqref{eqn:rbounds} lead to the following elliptic
estimate for solutions to the equation \eqref{eqn:hol2}, which will be
used frequently below.  Given $z\in \R\times S^1$, let $B(z,1)$ denote
the ball of radius $1$ centered at $z$.  In the lemma that follows,
$\nabla$ is used to denote the $\C$-valued $1$-form of first
derivatives along $\R\times S^1$.  Meanwhile, $\nabla^k$ denotes the
associated $\C$-valued tensor of $k^{th}$ derivatives.  Elsewhere in
this paper, $\nabla$ will denote the covariant derivative on the
indicated section of whatever vector bundle is under consideration,
and $\nabla^k$ for $k\ge 1$ the associated tensor of $k^{th}$ order
covariant derivatives.

\begin{lemma}
\label{lem:ER}
Given functions $r_0$ and $r_1$ satisfying \eqref{eqn:rbounds}, there
exists a positive constant $\varepsilon$, and for each nonnegative
integer $k$ a constant $c_k$, such that the following holds.  Let
$\eta$ be a solution to \eqref{eqn:hol2} on $[R_--1,R_++1]\times S^1$
with $|\eta|\le \varepsilon$.  (We allow $R_\pm=\pm\infty$.)  Then for
each $z\in [R_-,R_+]\times S^1$, we have
\begin{equation}
\label{eqn:ER}
\left|\nabla^k\eta(z)\right|^2
  \le c_k^2\int_{B(z,1)}|\eta|^2.
\end{equation}
\end{lemma}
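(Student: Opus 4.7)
The plan is to view equation \eqref{eqn:hol2} as a small perturbation of the standard Cauchy-Riemann equation and to apply classical interior elliptic regularity. By \eqref{eqn:rbounds}, the hypothesis $|\eta|\le\varepsilon$ gives $|r_1(t,\eta)|\le c\varepsilon$ and $|r_0(t,\eta)|\le c\varepsilon|\eta|$. I will choose $\varepsilon$ small enough that the linear first-order operator $L\eta \eqdef \dbar\eta + r_1(t,\eta)\partial_z\eta$ is a uniformly elliptic perturbation of $\dbar$, with $\|r_1\|_\infty \le 1/2$, say. The operator $L$ then enjoys the standard interior $L^p$ regularity estimates of Calder\'on-Zygmund type.

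The key step is the case $k=0$. Rewriting \eqref{eqn:hol2} as $L\eta = -\nu\eta - \mu\overline{\eta} - r_0$, the right-hand side is pointwise bounded by $C|\eta|$. Pairing the equation with $\overline{\eta}$ and integrating by parts, together with the smallness of $r_1$, shows that $|\eta|^2$ is a weak subsolution of an elliptic differential inequality of the form $\Delta|\eta|^2 \ge -C|\eta|^2$ on $B(z,1)$. A standard mean value inequality for elliptic subsolutions (Moser iteration) then yields
\[
\sup_{B(z,3/4)} |\eta|^2 \le c_0^2 \int_{B(z,1)}|\eta|^2,
\]
which handles the case $k=0$.

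For $k \ge 1$, I proceed by induction. Differentiating \eqref{eqn:hol2} with respect to $s$ and $t$ shows that each component of $\nabla\eta$ satisfies a linear first-order elliptic equation of the same form as $L$, with inhomogeneous terms polynomial in $\eta, \nabla\eta, \ldots, \nabla^{k-1}\eta$ and smooth in $(t,\eta)$. Applying interior $W^{k+1,p}$ regularity for $L$ on a nested sequence of balls, combined with the inductive hypothesis controlling the lower-order derivatives and Sobolev embedding in dimension two (taking $p>2$), converts the $L^p$ bound on $\nabla^k\eta$ into the desired pointwise bound in terms of $\|\eta\|_{L^2(B(z,1))}$.

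The main obstacle is the nonlinear first-order term $r_1\partial_z\eta$: it prevents a direct appeal to estimates for the unperturbed $\dbar$-operator and forces the smallness hypothesis on $\varepsilon$. Once $\varepsilon$ is fixed so that $L$ is a small uniform perturbation of $\dbar$, the rest of the argument is routine elliptic bootstrap, with the constants $c_k$ depending only on $k$ and on fixed $C^k$-norms of $\nu,\mu,r_0,r_1$.
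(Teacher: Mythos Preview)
Your approach is correct and essentially the same as the paper's: choose $\varepsilon$ small so that $|r_1|<1/2$, making the principal part a uniformly elliptic perturbation of $\dbar$, and then apply standard interior elliptic bootstrapping. The paper's own proof is even terser than yours---after noting ellipticity it simply cites \cite[Thm.~5.5.3]{morrey} for the bootstrap, without spelling out the Moser-iteration and inductive differentiation steps you sketch.
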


\begin{proof}
Choose $\varepsilon$ small enough that if $|\eta|<\varepsilon$ then
$|r_1|<1/2$, so that the derivative term in \eqref{eqn:hol2} is
elliptic.  The lemma then follows by a standard bootstrapping argument
e.g.\ using \cite[Thm.\ 5.5.3]{morrey}.
\end{proof}

It proves useful to further rewrite equation \eqref{eqn:hol2} in terms
of the asymptotic operator $L\eqdef L_\alpha$ associated to the Reeb
orbit $\alpha$.  Our convention is to define
\[
L:C^\infty(S^1,\alpha^*\xi)\longrightarrow
C^\infty(S^1,\alpha^*\xi)
\]
by
\[
L \eqdef J \nabla^{\mathbf R}_t,
\]
where $\nabla^{\mathbf R}$ denotes the symplectic connection on
$\alpha^*\xi$ defined by the linearized Reeb flow.  (In the
literature, the operator $L$ is often defined with the opposite sign.)
Recall from \S{I.2.2} that since the connection $\nabla^{\mathbf R}$
is symplectic, the operator $L$ is self-adjoint; and since the Reeb
orbit $\alpha$ is assumed nondegenerate, the spectrum of $L$ does not
contain $0$.

If we use the coordinate $w$ to trivialize the bundle $\alpha^*\xi$,
then it follows from the admissibility condition $J\partial_s={\mathbf
R}$ and equation \eqref{eqn:t10ry} that
\begin{equation}
\label{eqn:L}
L\eta = i\partial_t\eta + 2(\nu \eta + \mu \overline{\eta}).
\end{equation}
Hence equation \eqref{eqn:hol2} can be rewritten as
\begin{equation}
\label{eqn:hol3}
\partial_s\eta + L\eta + \frak{r} = 0,
\end{equation}
where $\frak{r}$ is shorthand for $2(r_0 + r_1\partial\eta/\partial
z)$.

The above discussion generalizes to describe a positive end of $u$ at
the $m$-fold cover $\alpha^m$ of $\alpha$, where $m$ is a positive
integer.  Let $\widetilde{S}^1\eqdef
\R/2\pi m\Z$ denote the $m$-fold cover of $S^1$.
A positive end of $u$ at $\alpha^m$ is then described by a function
$\eta:[s_0,\infty)\times \widetilde{S^1}\to D$ satisfying the
modification of equation \eqref{eqn:hol2}, in which the functions
$\nu$, $\mu$, $r_0$, and $r_1$ on $S^1\times D$ are replaced by their
pullbacks to $\widetilde{S^1}\times D$.  The latter equation can also
be written as
\begin{equation}
\label{eqn:Lm}
\partial_s\eta + L_m\eta + \frak{r} = 0,
\end{equation}
where $L_m$ denotes the asymptotic operator associated to $\alpha^m$,
acting on sections of the bundle $(\alpha^m)^*\xi$ over
$\widetilde{S^1}$.  Note that a solution $\eta$ to \eqref{eqn:Lm} also
satisfies the elliptic estimate \eqref{eqn:ER}.

\subsection{Initial decay estimates}

We now derive a decay estimate for solutions to the equation
\eqref{eqn:hol3}.  Let $E_+$ and $E_-$ respectively denote the
smallest positive and largest negative eigenvalues of the asymptotic
operator $L$.  Also, let $\Pi_+$ and $\Pi_-$ respectively denote the
$L^2(S^1;\R^2)$ projections to the direct sums of the positive and
negative eigenspaces of $L$.

\begin{lemma}
\label{lem:IDE}
There exist a positive constant $\varepsilon_0$ and constants $c_k$
for each nonnegative integer $k$ with the following property.  Let
$\eta$ be a solution to equation \eqref{eqn:hol3} defined on
$[R_--1,R_++1]\times S^1$; we allow $R_\pm = \pm\infty$.  Assume that
$|\eta| \le \varepsilon$ where $\varepsilon<\varepsilon_0$.  Then for
$s\in[R_-+2,R_+-2]$, we have
\begin{equation}
\label{eqn:decay6}
\left|\nabla^k\eta\right| \le c_k\varepsilon\left[e^{-E_+(s-R_-)} +
 e^{-E_-(s-R_+)}\right].
\end{equation}
\end{lemma}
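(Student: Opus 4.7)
The plan is to first establish exponential decay of $\eta$ in the $L^2(S^1)$ norm along the slices $\{s\}\times S^1$, and then upgrade to the pointwise $C^k$ bound \eqref{eqn:decay6} via the elliptic estimate of Lemma~\ref{lem:ER} applied on balls of radius $1$.

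For the $L^2$ decay, I decompose $\eta=\eta_++\eta_-$ using the spectral projections $\Pi_\pm$ onto the positive and negative eigenspaces of the self-adjoint operator $L$. Since $L$ commutes with $\Pi_\pm$, equation \eqref{eqn:hol3} splits as $\partial_s\eta_\pm+L\eta_\pm=-\Pi_\pm\mathfrak{r}$. Setting $u_\pm(s)\eqdef\|\eta_\pm(s)\|_{L^2(S^1)}$ and pairing these equations with $\eta_\pm$, the spectral-gap bounds $\langle\eta_+,L\eta_+\rangle\ge E_+u_+^2$ and $\langle\eta_-,L\eta_-\rangle\le E_-u_-^2$ produce the differential inequalities
\[
u_+'(s)\le -E_+\,u_+(s)+\|\mathfrak{r}(s)\|_{L^2(S^1)},\qquad u_-'(s)\ge |E_-|\,u_-(s)-\|\mathfrak{r}(s)\|_{L^2(S^1)}.
\]
By \eqref{eqn:rbounds} and the hypothesis $|\eta|\le\varepsilon$, we have the pointwise bound $|\mathfrak{r}|\le c\varepsilon(|\eta|+|\nabla\eta|)$. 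Applying Lemma~\ref{lem:ER} with $k=1$ to control $\|\nabla\eta(s)\|_{L^2(S^1)}$ by the $L^2$ norm of $\eta$ on $[s-1,s+1]\times S^1$ yields $\|\mathfrak{r}(s)\|_{L^2(S^1)}\le c\varepsilon\sup_{s'\in[s-1,s+1]}(u_+(s')+u_-(s'))$. The boundary values $u_\pm(R_\mp)\le c\varepsilon$ follow directly from $|\eta|\le\varepsilon$, and for $\varepsilon$ sufficiently small a Gronwall-type comparison applied to this coupled system produces a preliminary exponential decay of both $u_+$ and $u_-$ at rates $\lambda_\pm\eqdef E_\pm-O(\varepsilon)$ slightly weaker than the sharp rates.

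To sharpen the rates from $\lambda_\pm$ to $E_\pm$, I bootstrap using the Duhamel formula
\[
\eta_+(s)=e^{-L(s-R_-)}\eta_+(R_-)-\int_{R_-}^s e^{-L(s-s')}\Pi_+\mathfrak{r}(s')\,ds',
\]
together with the analogous formula for $\eta_-$ obtained by integrating backward from $R_+$. On the positive eigenspace, $\|e^{-L(s-s')}\|\le e^{-E_+(s-s')}$ for $s'\le s$, so the homogeneous term already decays at the sharp rate $e^{-E_+(s-R_-)}$. Because $\mathfrak{r}$ is pointwise quadratic in $(\eta,\nabla\eta)$, the preliminary decay improves to $\|\mathfrak{r}(s')\|_{L^2(S^1)}\le C\varepsilon^2\left(e^{-2\lambda_+(s'-R_-)}+e^{-2\lambda_-(R_+-s')}\right)$; provided $\varepsilon_0$ is chosen small enough that $2\lambda_\pm>\max(E_+,|E_-|)$, a direct computation shows that the corresponding Duhamel integral is bounded by $C\varepsilon\left(e^{-E_+(s-R_-)}+e^{-|E_-|(R_+-s)}\right)=C\varepsilon\left(e^{-E_+(s-R_-)}+e^{-E_-(s-R_+)}\right)$, which is the desired $L^2$ decay. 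The pointwise $C^k$ estimate \eqref{eqn:decay6} at any $z=(s,t)$ with $s\in[R_-+2,R_+-2]$ then follows by applying Lemma~\ref{lem:ER} on the ball $B(z,1)$, using that the right-hand side of \eqref{eqn:decay6} varies by at most a bounded factor on such balls.

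The main obstacle will be the bootstrap step. The crude Gronwall estimate necessarily loses $O(\varepsilon)$ in the exponential rates owing to the cross-coupling between $\eta_+$ and $\eta_-$ through $\mathfrak{r}$, and recovering the sharp rates $E_\pm$ depends on the quadratic nonlinearity producing a forcing term whose decay is fast enough for the Duhamel integral to converge at the sharp rate; this is what dictates the smallness requirement on $\varepsilon_0$. One also needs to handle mixed cross-terms of the form $e^{-\lambda_+(s-R_-)}e^{-\lambda_-(R_+-s)}$ arising from products $|\eta_+||\eta_-|$ in $|\mathfrak{r}|$, but since each factor is bounded by $1$ their product is bounded by a constant multiple of $e^{-\lambda_+(s-R_-)}+e^{-\lambda_-(R_+-s)}$ and slots into the sharp-rate estimate without difficulty.
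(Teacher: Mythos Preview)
Your approach is essentially that of the paper: establish preliminary exponential decay of $\|\eta(s,\cdot)\|_{L^2(S^1)}$ via differential inequalities for the positive and negative spectral projections (the paper uses the $|L|^{1/2}$-weighted quantities $f_\pm=\tfrac12\||L|^{1/2}\Pi_\pm\eta\|^2$ and the combination $f_+-\varepsilon_*f_-$ to decouple, rather than a direct Gronwall on your $u_\pm$), then bootstrap using the quadratic nature of $\mathfrak{r}$ to reach the sharp rates $E_\pm$, and finally invoke Lemma~\ref{lem:ER} for the pointwise $C^k$ bound.

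There is, however, a concrete slip in your bootstrap step. The condition you state, $2\lambda_\pm>\max(E_+,|E_-|)$, cannot be arranged merely by taking $\varepsilon_0$ small: since $\lambda_+\to E_+$ and $\lambda_-\to|E_-|$ as $\varepsilon\to0$, your condition would force $2E_+>|E_-|$ and $2|E_-|>E_+$, which are constraints on $L$ rather than on $\varepsilon$. What the Duhamel integrals actually need is only $2\lambda_+>E_+$ and $2\lambda_->|E_-|$ (each squared diagonal term has to beat only its \emph{own} sharp rate), and these do hold for small $\varepsilon$. Your handling of the cross term $e^{-\lambda_+(s'-R_-)-\lambda_-(R_+-s')}$ is also too crude: bounding the product by either single factor reproduces only the preliminary rate $\lambda_\pm$, not the sharp one. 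One must check directly that $e^{-\lambda_+a-\lambda_-b}\le e^{-E_+a}+e^{-|E_-|b}$ (with $a=s-R_-$, $b=R_+-s$) by comparing $E_+a$ with $|E_-|b$ and using that $E_+-\lambda_+$ and $|E_-|-\lambda_-$ are $O(\varepsilon)$. The paper sidesteps both issues by rerunning the differential-inequality argument with the constant $\varepsilon$ replaced by the decaying function $\widehat\varepsilon(s)=c_0\varepsilon\bigl[e^{-\nu_+(s-R_-)}+e^{-\nu_-(s-R_+)}\bigr]$, which makes the rate loss integrable and yields \eqref{eqn:decay6} directly.
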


\begin{proof}
Below, $c$ denotes a constant which is independent of $\varepsilon$,
$\eta$, and $R_\pm$, but which may change from line to line.

To start, each solution $\eta$ to the nonlinear equation
\eqref{eqn:hol3} satisfies an associated linear equation, depending on
$\eta$, of the form
\begin{equation}
\label{eqn:hol4}
\frac{\partial\eta}{\partial s} + L\eta + \frak{r}_0\cdot\eta +
\frak{r}_1\cdot\nabla\eta=0.
\end{equation}
Here $\frak{r}_0$ and $\frak{r}_1$ are smooth, $\R$-linear bundle maps
with norm bounded by $c\cdot\varepsilon$.  Using \eqref{eqn:L} to
express the derivative $\partial/\partial t$ in terms of $L$, we can
rewrite equation \eqref{eqn:hol4} as
\begin{equation}
\label{eqn:hol5}
\frac{\partial\eta}{\partial s} + L\eta + \frak{b}_0\cdot\eta +
\frak{b}_1\cdot L\eta = 0.
\end{equation}
Here $\frak{b}_0$ and $\frak{b}_1$ are also $\R$-linear bundle maps
with norm bounded by some constant $c$ times $\varepsilon$.

To analyze \eqref{eqn:hol5}, for $s\in[R_-,R_+]$ define $f_\pm(s)$ to
be one half of the square of the $L^2$ norm of $|L|^{1/2}\Pi_\pm\eta$
on the circle $\{s\}\times S^1$.  Assume that $\varepsilon$ is much
smaller than $|E_\pm|$.  Applying the projection $\Pi_+$ to
\eqref{eqn:hol5}, and taking the $L^2(S^1;\R^2)$ inner product with
$L\Pi_+\eta$ at $s$, gives a differential inequality
\[
\frac{\partial}{\partial s}f_+ + \langle L\Pi_+\eta, L\Pi_+\eta\rangle
\le c\varepsilon\big(\langle L\Pi_+\eta,L\Pi_+\eta\rangle +
\langle L\Pi_-\eta,L\Pi_-\eta\rangle\big).
\]
Likewise, applying $\Pi_-$ to \eqref{eqn:hol5} and taking the inner product
with $L\Pi_-\eta$ gives
\[
- \frac{\partial}{\partial s}f_- + \langle L\Pi_-\eta,L\Pi_-\eta\rangle
\le c\varepsilon\big(\langle L\Pi_+\eta,L\Pi_+\eta\rangle +
\langle L\Pi_-\eta,L\Pi_-\eta\rangle\big).
\]
Adding $\varepsilon_*\eqdef c\varepsilon/(1-c\varepsilon)$ times the
second inequality to the first gives
\[
\frac{d}{ds}\left(f_+-\varepsilon_*f_-\right) +
(1-\varepsilon_*)\langle L\Pi_+\eta,L\Pi_+\eta\rangle \le 0.
\]
This last inequality imples that
\[
\frac{d}{ds}\left(f_+-\varepsilon_*f_-\right) +
2(1-\varepsilon_*)E_+f_+ \le 0,
\]
and thus
\begin{equation}
\label{eqn:decay1}
\frac{d}{ds}\left(f_+-\varepsilon_*f_-\right) +
2(1-\varepsilon_*)E_+(f_+-\varepsilon_*f_-) \le 0.
\end{equation}

Now suppose that $R_-$ and $R_+$ are finite.  Integrating
\eqref{eqn:decay1} gives
\[
(f_+-\varepsilon_*f_-)(s) \le
e^{-2\vu_+(s-R_-)}(f_+-\varepsilon_*f_-)(R_-)
\]
for $s\ge R_-$, where $\vu_+\eqdef (1-\varepsilon_*)E_+$.  A similar
sequence of manipulations finds
\[
(f_--\varepsilon_*f_+)(s) \le
e^{-2\nu_-(s-R_+)}(f_- - \varepsilon_*f_+)(R_+)
\]
for $s\le R_+$, where $\vu_-\eqdef (1-\varepsilon_*)E_-$.  The
preceding two equations can be combined to obtain
\begin{equation}
\label{eqn:decay3}
\begin{split}
f_+(s) & \le (1-\varepsilon_*^2)^{-1}\left[f_+(R_-)e^{-2\nu_+(s-R_-)}
+ \varepsilon_* f_-(R_+)e^{-2\nu_-(s-R_+)}\right],\\
f_-(s) & \le (1-\varepsilon_*^2)^{-1}\left[f_-(R_+)e^{-2\nu_-(s-R_+)}
+ \varepsilon_* f_+(R_-)e^{-2\nu_+(s-R_-)}\right]
\end{split}
\end{equation}
for $s\in[R_-,R_+]$.  Since $|\eta|\le\varepsilon$, Lemma~\ref{lem:ER}
implies that if $\varepsilon$ is chosen sufficiently small, then
$|f_+(R_-)|,|f_-(R_+)|<c\varepsilon$.  Hence adding the equations
\eqref{eqn:decay3} shows that if $s\in[R_-,R_+]$, then on
$\{s\}\times S^1$,
\[
\||L|^{1/2}\eta\|_2 \le c\varepsilon\left[e^{-\vu_+(s-R_-)} +
 e^{-\nu_-(s-R_+)}\right].
\]
It follows that the $L^2$ norm of $\eta$ over a ball of radius $1$ in
$[R_-,R_+]\times S^1$ has a bound of the same form.
Lemma~\ref{lem:ER} then gives a pointwise bound
\begin{equation}
\label{eqn:decay4}
|\nabla^k\eta| \le c_k\varepsilon\left[e^{-\vu_+(s-R_-)} +
 e^{-\nu_-(s-R_+)}\right]
\end{equation}
for $s\in[R_-+1,R_+-1]$.

We now refine the estimate \eqref{eqn:decay4} by feeding it back into
the previous calculation. To do so, recall from \eqref{eqn:rbounds} that
the terms $\frak{r}_0$ and $\frak{r}_1$ that appear in
\eqref{eqn:hol4} are bounded by $c|\eta|$. Using the bound
\eqref{eqn:decay4} on the latter, we can repeat the calculations that
led to
\eqref{eqn:decay3}, replacing the constant $\varepsilon$ by the
function
\[
\widehat{\varepsilon}(s)
\eqdef c_0\varepsilon\left[e^{-\vu_+(s-R_-)} +
e^{-\nu_-(s-R_+)}\right].  
\]
If $\varepsilon$ is small, this procedure allows \eqref{eqn:decay3} to
be replaced by
\begin{equation}
\label{eqn:decay5}
\begin{split}
f_+(s) & \le (1+c\varepsilon)\left[f_+(R_-)e^{-2E_+(s-R_-)}
+ c\varepsilon f_-(R_+)e^{-2E_-(s-R_+)}\right],\\
f_-(s) & \le (1+c\varepsilon)\left[f_-(R_+)e^{-2E_-(s-R_+)}
+ c\varepsilon f_+(R_-)e^{-2E_+(s-R_-)}\right]
\end{split}
\end{equation}
for $s\in[R_-+1,R_+-1]$.  The argument that gave \eqref{eqn:decay4}
now gives the desired estimate \eqref{eqn:decay6} for
$s\in[R_-+2,R_+-2]$.  Taking the limit shows that \eqref{eqn:decay6}
also holds when $R_-=-\infty$ or $R_+=+\infty$.
\end{proof}

\subsection{Asymptotic formula}
\label{sec:AF}

Fix an embedded Reeb orbit $\alpha$ and a positive integer $m$.  We
now prove an asymptotic formula for the behavior of a positive end of
a $J$-holomorphic curve $u$ at $\alpha^m$.  To state the result,
recall that $\widetilde{S^1}\eqdef \R/2\pi m\Z$, and let
$\pi:\widetilde{S^1}\to S^1$ denote the projection.  Also, let $E_m^+$
and $E_m^-$ respectively denote the smallest positive and largest
negative eigenvalues of $L_m$.

\begin{proposition}
\label{prop:AF}
There exist constants $c, \kappa > 0$ such that
the following holds.  Let $\mc{E}$ be a positive end of a
$J$-holomorphic curve $u$ at $\alpha^m$.  Then there is a real
number $s_0$ and a function
$\eta:[s_0,\infty)\times\widetilde{S^1}\to\R^2$ such that:
\begin{description}
\item{(a)}
The end $\mc{E}$ of $u$ is
described by the map
\[
\begin{split}
[s_0,\infty)\times\widetilde{S^1} & \longrightarrow \R\times Y,\\
(s,t) & \longmapsto
e(s,\pi(t),\eta(s,t)).
\end{split}
\]
\item{(b)}
There is a (possibly zero) eigenfunction $\gamma$
of $L_m$ with eigenvalue $E_m^+$ such that
\begin{equation}
\label{eqn:PAF}
\left|\eta(s,t)
 - e^{-E_m^+ s}\gamma(t)\right|
 \le c e^{-(E_m^+ +\kappa)(s-s_0)}.
\end{equation}
\end{description}
\end{proposition}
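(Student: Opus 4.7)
The plan is to extract the asymptotic formula from the nonlinear evolution equation \eqref{eqn:Lm} by combining the initial exponential decay from Lemma \ref{lem:IDE} with the spectral decomposition of the self-adjoint asymptotic operator $L_m$.

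First I would use positivity of intersections with the $J$-holomorphic discs $e(\{s\}\times\{t\}\times D)$ from Lemma \ref{lem:EM} to realize the end $\mc{E}$ as the image under $e$ of the graph of a smooth function $\eta\colon [s_0,\infty)\times\widetilde{S^1}\to D$, with $\eta\to 0$ at infinity, satisfying the pulled-back equation \eqref{eqn:Lm}. After enlarging $s_0$ so that $|\eta|\le \varepsilon_0$ on the entire end, Lemma \ref{lem:IDE} applied with $R_+=+\infty$ (and $R_-=s_0-1$) yields the pointwise bound $|\nabla^k\eta(s,t)|\le c_k\varepsilon\, e^{-E_m^+(s-s_0)}$ for $s\ge s_0+2$ and all $k\ge 0$. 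Combined with \eqref{eqn:rbounds}, this forces the nonlinear term to satisfy $|\mathfrak{r}(s,t)|\le c\,\varepsilon^2 e^{-2E_m^+(s-s_0)}$, which is critically integrable against $e^{E_m^+ s}$ at infinity.

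Next I would split $\eta$ using the $L^2$-orthogonal decomposition induced by the spectrum of $L_m$, writing $\eta = \Pi^{(0)}\eta + \Pi^{(>0)}\eta + \Pi^{(-)}\eta$, where the projectors are onto the $E_m^+$-eigenspace, onto the sum of higher positive eigenspaces, and onto the negative eigenspaces, respectively. Applying each projector to \eqref{eqn:Lm} gives three linear first-order ODEs in $s$ with forcing $-\Pi^{(\bullet)}\mathfrak{r}$, which by the previous paragraph I may now treat as a given function of size $O(e^{-2E_m^+(s-s_0)})$. Setting
\[
\gamma \;{:=}\; e^{E_m^+ s_0}\,\Pi^{(0)}\eta(s_0) - \int_{s_0}^{\infty} e^{E_m^+ s'}\,\Pi^{(0)}\mathfrak{r}(s')\,ds',
\]
the integral converges, and Duhamel's formula produces $\Pi^{(0)}\eta(s) = e^{-E_m^+ s}\gamma + O(e^{-2E_m^+(s-s_0)})$, with $\gamma$ in the $E_m^+$-eigenspace (possibly zero). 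For $\Pi^{(-)}\eta$, integrating the corresponding ODE from $+\infty$ downward — permissible because $e^{-L_m(s-s')}$ is exponentially decaying on negative eigenspaces when $s<s'$ and $\eta\to 0$ — gives $|\Pi^{(-)}\eta(s)|=O(e^{-2E_m^+(s-s_0)})$. For $\Pi^{(>0)}\eta$, using the semigroup bound $\|e^{-L_m(s-s')}\Pi^{(>0)}\|\le e^{-\lambda_2(s-s')}$ for $s\ge s'$, where $\lambda_2$ is the next positive eigenvalue of $L_m$ after $E_m^+$, integration from $s_0$ yields $|\Pi^{(>0)}\eta(s)|=O\bigl(e^{-\min(\lambda_2,\,2E_m^+)(s-s_0)}\bigr)$, with at worst a harmless factor of $(s-s_0)$ in the borderline case $\lambda_2=2E_m^+$. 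Choosing any $0<\kappa<\min(\lambda_2 - E_m^+,\,E_m^+)$ and summing the three pieces then yields \eqref{eqn:PAF}; the pointwise bound (as opposed to an $L^2$ bound over the circle) is recovered using Lemma \ref{lem:ER}.

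The main obstacle I anticipate is the circular nature of the argument: the variation of parameters analysis treats $\mathfrak{r}$ as given, but $\mathfrak{r}$ depends on $\eta$ and its first derivatives. The resolution is to use Lemma \ref{lem:IDE} as an a priori input that pins down the size of $\mathfrak{r}$ independently of the subsequent spectral analysis, thereby decoupling the problem. A secondary technical point is keeping track of exponents near the resonance $\lambda_2=2E_m^+$, which is handled by choosing $\kappa$ strictly smaller than both $\lambda_2-E_m^+$ and $E_m^+$ so that any polynomial correction is absorbed into the exponential margin.
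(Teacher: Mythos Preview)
Your proposal is correct and follows essentially the same approach as the paper: use Lemma~\ref{lem:IDE} for the a priori decay $|\eta|\le c\varepsilon e^{-E_m^+(s-s_0)}$, which makes $\frak{r}$ quadratically small; then spectrally decompose $\eta$ into the $E_m^+$-eigenspace, the higher positive eigenspaces, and the negative eigenspaces, and integrate the resulting forced first-order ODEs to define $\gamma$ by the integral formula and bound the remainder with $\kappa=\min\{2E_m^+,\lambda_2\}-E_m^+$ (the paper's Lemma~\ref{lem:AF}). The only minor discrepancy is your final invocation of Lemma~\ref{lem:ER} for the pointwise bound: that lemma applies to solutions of the full equation \eqref{eqn:hol3}, not to individual spectral projections, so the paper instead bounds $\|L\Pi^{(\bullet)}\eta\|_{L^2(S^1)}$ by the same ODE argument and uses the one-dimensional Sobolev embedding on the circle---a trivial adjustment.
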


An analogous result holds for a negative end of $u$ at $\alpha^m$,
with an analogous proof.  Such an end is described by a function
$\eta:(-\infty,s_0]\to\R^2$; and there is a (possibly zero)
eigenfunction $\gamma$ of $L_m$ with eigenvalue $E_m^-$ such that
\begin{equation}
\label{eqn:NAF}
\left|\eta(s,t)
 - e^{-E_m^- s}\gamma(t)\right|
 \le c e^{-(E_m^- -\kappa)(s+s_0)}.
\end{equation}

\begin{definition}
\label{def:AE}
If $\mc{E}$ is a positive or negative end of $u$, then the {\em
asymptotic eigenfunction\/} of the end $\mc{E}$ is the eigenfunction
$\gamma$ of $L_m$ with eigenvalue $E_m^+$ or $E_m^-$ appearing in
\eqref{eqn:PAF} or \eqref{eqn:NAF} respectively.  Note that the
estimates \eqref{eqn:PAF} and \eqref{eqn:NAF} imply that $\gamma$ is unique.
\end{definition}

We now prove Proposition~\ref{prop:AF}.  We already know part (a) from
\S\ref{sec:NRIC}.  To prove part (b), by translating the $s$
coordinate we can arrange that $|\eta|\le \varepsilon$ on
$[-2,\infty)\times
\widetilde{S^1}$.
Moreover, the analysis does not depend in any essential way on $m$,
and so we may assume that $m=1$. The following lemma then implies part
(b), with $s_0=0$.

\begin{lemma}
\label{lem:AF}
There exist constants $c,\kappa,\varepsilon_0>0$ such that the
following holds.
Let $\eta$ be a solution to
\eqref{eqn:hol3} on $[-2,\infty)\times {S}^1$ with
$|\eta|\le\varepsilon$ where $\varepsilon<\varepsilon_0$.  Then there
is a (possibly zero) eigenfunction $\gamma$ of $L$ with eigenvalue
$E_+$ such that for $s\ge 0$,
\begin{equation}
\label{eqn:AF}
\left|\eta(s,t)
 - e^{-E_+ s}\gamma(t)\right|
 \le c\varepsilon e^{-(E_+ +\kappa)s}.
\end{equation}
\end{lemma}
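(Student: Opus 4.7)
The plan is to perform a spectral decomposition of $\eta$ with respect to the self-adjoint operator $L$. By Lemma~\ref{lem:IDE} applied with $R_-=-2$ and $R_+=+\infty$, we already have $|\nabla^k\eta(s,\cdot)|\le c_k\varepsilon e^{-E_+s}$ for $s\ge 0$; combined with the bounds \eqref{eqn:rbounds}, this gives $|\frak{r}(s,\cdot)|\le c\varepsilon^2 e^{-2E_+s}$. Let $\Pi_0$ denote the $L^2$-orthogonal projection onto the (finite-dimensional) $E_+$-eigenspace of $L$, and decompose $\eta=\eta_0+\eta^\perp$ with $\eta_0\eqdef\Pi_0\eta$. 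The idea is to extract $\gamma$ from the long-time behavior of $e^{E_+s}\eta_0$, and to show separately that $\eta^\perp$ decays at a rate strictly greater than $E_+$.

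Projecting equation \eqref{eqn:hol3} onto the $E_+$-eigenspace yields $\partial_s(e^{E_+s}\eta_0)=-e^{E_+s}\Pi_0\frak{r}$, whose right-hand side is bounded in $L^2(S^1)$ by $c\varepsilon^2 e^{-E_+s}$ and is therefore integrable on $[0,\infty)$. Hence
\[
\gamma\eqdef\lim_{s\to\infty}e^{E_+s}\eta_0(s,\cdot)
\]
exists in the $E_+$-eigenspace, and integrating from $s$ to $\infty$ gives $\|\eta_0(s,\cdot)-e^{-E_+s}\gamma\|_{L^2(S^1)}\le c\varepsilon^2 e^{-2E_+s}$.

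For $\eta^\perp$, note that on the orthogonal complement of the $E_+$-eigenspace the spectrum of $L$ has a gap: the smallest positive eigenvalue is some $\lambda_2>E_+$, while the negative eigenvalues are still bounded above by $E_-<0$. Split $\eta^\perp$ further into its positive- and negative-eigenvalue parts. On the positive part, rerun the energy argument of Lemma~\ref{lem:IDE} with $\lambda_2$ in place of $E_+$, treating $\Pi^\perp\frak{r}$ as an inhomogeneous forcing of size $O(\varepsilon^2 e^{-2E_+s})$; this yields decay of the $L^2$ norm at a rate arbitrarily close to $\lambda_2$. On the negative part, the unique bounded solution mode by mode is the convergent integral from $s$ to $\infty$ of the forcing, giving, after summing (the series converges because the eigenvalues of $L$ grow linearly on $S^1$), decay at rate $2E_+$. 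Combining these two estimates gives $\|\eta^\perp(s,\cdot)\|_{L^2}\le c\varepsilon e^{-(E_++\kappa)s}$ for some $\kappa\in(0,E_+)$ determined by $\lambda_2-E_+$. An elliptic bootstrap analogous to Lemma~\ref{lem:ER} applied to $\eta-e^{-E_+s}\gamma$ then upgrades the resulting $L^2$ estimate to the pointwise bound \eqref{eqn:AF}.

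The principal technical point is the energy argument for the positive-eigenvalue part of $\eta^\perp$: one must redo the manipulations \eqref{eqn:decay1}--\eqref{eqn:decay3} on the complement of the $E_+$-eigenspace and track the Young-inequality constants carefully so as to preserve a decay exponent strictly greater than $E_+$. Once this is in hand, the extraction of $\gamma$ and the bootstrap to the pointwise bound are essentially formal.
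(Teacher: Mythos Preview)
Your proposal is correct and follows essentially the same route as the paper: decompose $\eta$ spectrally into its $E_+$-component, its negative-eigenvalue part $\eta_-$, and its higher-positive-eigenvalue part $\eta_{1+}$; extract $\gamma$ from the first via exactly the integral you write; and show the other two decay at rate at least $E_++\kappa$ with $\kappa=\min\{E_{1+},2E_+\}-E_+$.

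One simplification worth noting: for what you flag as the ``principal technical point'' (the $\eta_{1+}$ piece), the paper does \emph{not} rerun the coupled $f_+/f_-$ machinery of Lemma~\ref{lem:IDE}; it simply integrates the single differential inequality $\tfrac{d}{ds}\|\eta_{1+}\|_{2}+E_{1+}\|\eta_{1+}\|_{2}\le\|\Pi_{1+}\frak{r}\|_{2}$ from $0$ to $s$, which immediately gives decay at rate $\min(E_{1+},2E_+)$ (so your phrase ``arbitrarily close to $\lambda_2$'' should be $\min(\lambda_2,2E_+)$). Likewise $\eta_-$ is handled by a single inequality on $\|\eta_-\|_2$ rather than mode by mode. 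For the pointwise bounds the paper repeats these inequalities for $\|L\eta_-\|_2$ and $\|L\eta_{1+}\|_2$ and invokes one-dimensional Sobolev on each circle, rather than the 2D elliptic bootstrap you suggest; either works.
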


\begin{proof}
Suppose first that the eigenspace of $L$ with eigenvalue $E_+$ is
one-dimensional.  Let $\gamma_+$ be a corresponding normalized
eigenfunction.  Let $\Pi_{1+}$ denote the projection to the sum of the
eigenspaces with eigenvalue greater than $E_+$.  We can then decompose
$\eta$ as
\begin{equation}
\label{eqn:eta3}
\eta = \eta_- + a_+(s)\gamma_+ + \eta_{1+},
\end{equation}
where $\eta_-\eqdef\Pi_-\eta$ and $\eta_{1+}\eqdef \Pi_{1+}\eta$.  We
now individually analyze the terms on the right hand side of \eqref{eqn:eta3}.

Taking the $L^2(S^1;\R^2)$ inner product of equation \eqref{eqn:hol3}
with $\gamma_+$ gives the following differential equation for $a_+(s)$:
\[
\frac{d}{ds}a_+ + E_+ a_+ + \langle\gamma_+,\frak{r}\rangle = 0.
\]
This equation can be integrated to give
\begin{equation}
\label{eqn:asym1}
a_+(s) = \widehat{a}e^{-E_+s} + \int_s^\infty
e^{-E_+(s-\tau)}\langle\gamma_+,\frak{r}\rangle|_\tau d\tau,
\end{equation}
where $\widehat{a}$ is the constant
\begin{equation}
\label{eqn:ahat}
\widehat{a} \eqdef a_+(0) - \int_0^\infty
e^{E_+\tau}\langle\gamma_+,\frak{r}\rangle|_\tau d\tau.
\end{equation}
In \eqref{eqn:asym1} and \eqref{eqn:ahat}, the
integral of $e^{E_+\tau}\langle\gamma_+,\frak{r}\rangle|_\tau$ on the
half-line $[0,\infty)$ is absolutely convergent since $\frak{r}$ is
quadratic in $\eta$.  Indeed, by Lemma~\ref{lem:IDE},
\begin{equation}
\label{eqn:decay7}
\int_s^\infty e^{E_+\tau}|\langle\gamma_+,\frak{r}\rangle|_\tau|d\tau
\le c\varepsilon^2 e^{-E_+s}.
\end{equation}
Here and below, $c$ denotes a constant which does not depend on
$\eta$, but which may change from one appearance to the next.
Combining \eqref{eqn:decay7} with \eqref{eqn:asym1} gives
\begin{equation}
\label{eqn:asym2}
\left|a_+(s) - \widehat{a}e^{-E_+s}\right| \le ce^{-2E_+s}.
\end{equation}

We now bound the size of $\eta_-$.  Let $f_-(s)$ denote the $L^2$ norm
of $\eta_-$ on the circle $\{s\}\times S^1$.  Taking the
$L^2(S^1;\R^2)$ inner product of equation \eqref{eqn:hol3} with
$\eta_-$ shows that
\begin{equation}
\label{eqn:decay8}
\frac{d}{ds}f_- - |E_-|f_- \ge -\|\Pi_-\frak{r}\|_2,
\end{equation}
where $\|\cdot\|_2$ denotes the $L^2$ norm on the circle $\{s\}\times
S^1$.  Integrating this last equation and estimating as 
in \eqref{eqn:decay7} gives
\begin{equation}
\label{eqn:decay9}
f_-(s) \le e^{|E_-|s}\int_s^\infty
e^{-|E_-|\tau}\|\Pi_-\frak{r}\|_2|_\tau d\tau
\le
c \varepsilon^2 e^{-2E_+s}.
\end{equation}
By a standard Sobolev inequality, to obtain a pointwise bound on
$\eta_-$, it is enough to bound the $L^2_1$ norm of $\eta_-$, for
which purpose it suffices to bound the $L^2$ norm over the circle of
$L\eta_-$.  We use $g_-(s)$ to denote the latter function of $s$.  To
obtain a bound on $g_-(s)$, apply $L$ to equation \eqref{eqn:hol3} and
take the $L^2$ inner product with $L\eta_-$ to obtain
\[
\frac{d}{ds}g_- - |E_-|g_- \ge -\|\Pi_-L\frak{r}\|_2.
\]
It follows as in \eqref{eqn:decay9} that $g_-(s)\le
c\varepsilon^2 e^{-2E_+s}$.  Consequently, we also have
\[
|\eta_-(s)| \le
c\varepsilon^2 e^{-2E_+s}.
\]

To bound $|\eta_{1+}|$, first introduce $f_{1+}(s)$ to denote the
$L^2$ norm of $\eta_{1+}$ on $\{s\}\times S^1$.  Steps that are
analogous to those leading to \eqref{eqn:decay8} find that $f_{1+}(s)$
obeys the inequality
\begin{equation}
\label{eqn:decay10}
\frac{d}{ds}f_{1+} +E_{1+}f_{1+} \le \|\Pi_{1+}\frak{r}\|_2,
\end{equation}
where $E_{1+}$ denotes the second smallest positive eigenvalue of $L$.
This last equation integrates to give
\[
f_{1+}(s) \le e^{-E_{1+}s}\left[f_+(0) + \int_0^s
e^{E_{1+}\tau}\|\Pi_{1+}\frak{r}\|_2|_\tau d\tau\right].
\]
Bounding the integral here using the estimate
$\|\Pi_{1+}\frak{r}\|_2\le c\varepsilon^2 e^{-2E_+s}$, we obtain
\begin{equation}
\label{eqn:decay11}
f_{1+}(s) \le c \varepsilon\left[(1+\varepsilon)e^{-E_{1+}s} +
\varepsilon e^{-2E_+s}\right].
\end{equation}
Meanwhile, the $L^2$ norm on $\{s\}\times S^1$ of $L\eta_{1+}$ obeys a
differential inequality which is identical to \eqref{eqn:decay10} but
for the replacement of $\frak{r}$ with $L\frak{r}$.  Hence this $L^2$
norm is bounded by a constant multiple of
the right hand side of \eqref{eqn:decay11}.  It follows that the same
holds for $|\eta_{1+}|$.

Putting together the above analysis of the terms in \eqref{eqn:eta3},
we conclude that if $\kappa\eqdef\min\{2E_+,E_{1+}\}-E_+$, then
\[
|\eta - \widehat{a} e^{-E_+s}\gamma_+| \le
 c\varepsilon e^{-(E_++\kappa)s}.
\]
This proves \eqref{eqn:AF} when the $E_+$ eigenspace of $L$ has
dimension $1$.

In the general case where the $E_+$ eigenspace is possibly degenerate,
let $\Pi_1$ denote the projection onto the $E_+$ eigenspace.  Then the
above argument proves
\eqref{eqn:AF} with
\begin{equation}
\label{eqn:gammahat}
\gamma = 
\Pi_1\eta|_{s=0} - \int_0^\infty e^{E_+\tau}\Pi_1\frak{r}|_\tau d\tau.
\qedhere
\end{equation}
\end{proof}

The integral formula \eqref{eqn:ahat} above for the asymptotic
eigenfunction will play an important role in \S\ref{sec:generic}.

\section{Generic behavior of asymptotic eigenfunctions}
\label{sec:generic}

We have seen in Proposition~\ref{prop:AF} that the asymptotic behavior
of an end of a $J$-holomorphic curve at a Reeb orbit $\alpha$ is
controlled by an ``asymptotic eigenfunction'' $\gamma$ of the
asymptotic operator $L_\alpha$.  In this section we show that if the
admissible almost complex structure $J$ on $\R\times Y$ is generic,
and if $u$ is an index $1$, connected, non-multiply-covered
$J$-holomorphic curve, then the asymptotic eigenfunctions associated
to the ends of $u$ are all nonzero; and moreover, whenever two ends of
$u$ at covers of the same Reeb orbit have the same ``asymptotic
eigenvalue'', the corresponding asympotic eigenfunctions are
geometrically distinct.  Below, the statement that ``generic''
admissible almost complex structures have a given property means that
the space of admissible almost complex structures (with the $C^\infty$
Frechet space topology) contains a Baire set whose elements have the
desired property.

\subsection{Nondegenerate ends for generic $J$}
\label{sec:G1}

\begin{definition}
An end of a $J$-holomorphic curve at a Reeb orbit $\alpha$ is {\em
degenerate\/} if the corresponding asymptotic eigenfunction $\gamma$
of the asymptotic operator $L_{\alpha}$ is zero.
\end{definition}

\begin{proposition}
\label{prop:G1}
If the admissible almost complex structure $J$ on $\R\times Y$ is
generic, then no index $1$, connected, non-multiply-covered
$J$-holomorphic curve has a degenerate end.
\end{proposition}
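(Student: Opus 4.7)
The plan is a standard Sard-Smale argument on a universal moduli space. For each ``combinatorial type'' $\tau$ (genus, plus for each end an embedded Reeb orbit and a covering multiplicity) and each end $e$ of $\tau$, let $L_e$ denote the corresponding asymptotic operator, let $E_e$ be the eigenvalue ($E_m^+$ or $E_m^-$) appearing in Proposition~\ref{prop:AF}, and let $V_e\subset C^\infty(\widetilde{S^1},(\alpha^m)^*\xi)$ be the associated eigenspace. Writing $\mc{J}$ for the space of admissible almost complex structures, set
\[
\widetilde{\mc{M}}_\tau \eqdef \left\{(u,J) : J\in\mc{J},\ u\text{ a connected, non-multiply-covered }J\text{-holomorphic curve of type }\tau\right\};
\]
by the standard transversality for such curves \cite{dragnev}, this is a Banach manifold whose projection to $\mc{J}$ is Fredholm of index equal to the Fredholm index of any $u\in\mc{M}^J_\tau$, namely $1$. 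Proposition~\ref{prop:AF} together with the formula \eqref{eqn:gammahat} produces a smooth asymptotic-eigenfunction map $\gamma_e\colon \widetilde{\mc{M}}_\tau \to V_e$.

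Granting that $\gamma_e$ is transverse to $0$, the Sard-Smale theorem yields a comeager set $\mc{J}_{\tau,e}\subset\mc{J}$ such that for $J\in\mc{J}_{\tau,e}$ the slice $\gamma_e^{-1}(0)\cap\mc{M}^J_\tau$ is a smooth manifold of dimension $1-\dim V_e$. If $\dim V_e\ge 2$ (which occurs, for instance, when $L_e$ is complex-linear in some trivialization, as at an elliptic orbit), this dimension is negative and the slice is empty. If $\dim V_e=1$, the slice has dimension $0$; but it is invariant under the $\R$-translation action on $\mc{M}^J_\tau$ (since translation by $s_0$ multiplies $\gamma_e$ by $e^{-E_es_0}$, preserving its vanishing), and that action is free, because any connected non-multiply-covered $\R$-invariant $J$-holomorphic curve must be some cylinder $\R\times\gamma$, which has Fredholm index $0$ rather than $1$. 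A $0$-dimensional $\R$-invariant subset on which $\R$ acts freely is empty. Either way the slice is empty, and intersecting the comeager sets $\mc{J}_{\tau,e}$ over the countable collection of pairs $(\tau,e)$ yields the desired comeager set of generic $J\in\mc{J}$.

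The main analytical obstacle is the transversality claim: given $(u,J)\in\gamma_e^{-1}(0)$ and $\xi_*\in V_e$, one must produce $(\dot u,\dot J)\in T_{(u,J)}\widetilde{\mc{M}}_\tau$ with $d\gamma_e(\dot u,\dot J)=\xi_*$. Linearizing \eqref{eqn:gammahat} expresses $d\gamma_e(\dot u,\dot J)$ as the boundary-value projection $\Pi_{V_e}(\dot\eta|_{s=0})$ plus an integral of the linearized variation of $\frak{r}$ along the end $e$, so it suffices to exhibit enough $(\dot u,\dot J)$ for which $\Pi_{V_e}(\dot\eta|_{s=0})$ ranges over $V_e$. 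Following the Dragnev strategy, one perturbs $J$ in a small ball about an injective point of $u$; the resulting element $F_{J,u}\dot J$ can, after solving $D_u\dot u=-F_{J,u}\dot J$, be used to prescribe the asymptotic leading coefficient of $\dot\eta$ at $e$, via unique continuation for $D_u$ together with denseness of injective points (here the hypotheses that $u$ is non-multiply-covered and connected are essential). This is where the real work lies, though the techniques are standard; the rest of the argument is bookkeeping.
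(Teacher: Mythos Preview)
Your Sard--Smale framework is sound and the dimension count with the free $\R$-action is handled correctly, but the transversality step is a genuine gap, not merely missing detail. The phrase ``unique continuation for $D_u$ together with denseness of injective points'' is the Dragnev device for showing that the universal linearized operator $(\dot u,\dot J)\mapsto D_u\dot u+F_{J,u}\dot J$ surjects onto $L^2$; that is not what is required here. You need $(\dot u,\dot J)\mapsto d\gamma_e(\dot u,\dot J)$ to surject onto the finite-dimensional eigenspace $V_e$, i.e.\ you must be able to prescribe the \emph{leading asymptotic coefficient} of $\dot u=-D_u^{-1}(F_{J,u}\dot J)$ on the end $e$. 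Perturbing $\dot J$ near an arbitrary injective point produces a compactly supported inhomogeneity, but gives no direct control over the asymptotics of $D_u^{-1}$ applied to it on a distant end, and unique continuation does not supply this control. The paper confronts precisely this point: the proof of Lemma~\ref{lem:Zsub} takes the perturbation supported far out on the end itself, in a slab $r\le s\le r+1$, and carries out an explicit five-step estimate to show $\nabla_j\widehat{a}_{\mc E}\neq 0$. The cleaner alternative the paper itself mentions (in the remark immediately following the proposition) is to recast ``$\gamma_e=0$'' via an exponential weight on end $e$ just above $|E_e|$, which drops the Fredholm index by $\dim V_e$; \emph{then} the standard Dragnev argument applies directly to the weighted universal operator and yields the transversality you want. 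Your sketch invokes neither mechanism, so as written it is not a proof.

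Two smaller technical points also need attention. First, the asymptotic operator $L_e$, and hence the eigenspace $V_e$, depends on $J$ along the Reeb orbit, so over all of $\mc J$ your target is a bundle rather than a fixed vector space (and eigenvalues can collide); the paper sidesteps this by working over the neighborhood $\mc U$ of almost complex structures that agree with a fixed $J$ near the relevant Reeb orbits. Second, smoothness of $\gamma_e$ is not immediate from the integral formula~\eqref{eqn:gammahat}, since the nonlinear remainder $\frak r$ depends on $\eta$ itself; the paper proves smoothness separately in Lemma~\ref{lem:asmooth} using the coordinate chart of Lemma~\ref{lem:CC}.
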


\begin{remark}
A nice way to prove this, from \cite{wendl}, is to generalize the fact
that moduli spaces of non-multiply-covered $J$-holomorphic curves are
smooth and have dimension equal to the index of the deformation
operator for generic $J$, to consider $J$-holomorphic curves with
asymptotic exponential weight constraints on the ends.  If one changes
the weight associated to an end so that it crosses an eigenvalue of
the corresponding asymptotic operator, then the index of the
deformation operator changes.  We will use a different approach here,
in order to set up the proof of Proposition~\ref{prop:G2} below.
\end{remark}

\begin{proof}[Proof of Proposition~\ref{prop:G1}.]
  Let $\mc{J}$ denote the Frechet space of admissible almost complex
  structures.  For each positive integer $n$, let $\mc{C}_n$ denote
  the space of pairs $(J,C)$ such that $J\in\mc{J}$ and $C$ is a
  $J$-holomorphic, connected, non-multiply covered index $1$ curve
  with the following four properties: First, $C$ has at most $n$ ends,
  each of which is at a (possibly multiply covered) Reeb orbit of
  symplectic action at most $n$.  Second, $C$ is ``not close to
  breaking'' in the sense that if $C'\subset C$ is a connected subset
  with area greater than or equal to $n$ and distance less than or
  equal to $1/n$ from an $\R$-invariant cylinder, then $C'$ lies in an
  end of $C$.  Third, $C$ is ``not close to a multiple cover'' in the
  sense that if $C'$ is another $J$-holomorphic curve such that every
  point in $C'$ has distance less than $1/n$ from a point in $C$ and
  vice-versa, then the energy of $C'$ is at least $2/3$ that of $C$.
  Fourth, $C$ is ``not close to a nodal curve'' in the sense that $C$
  does not contain a simple closed curve of length less than $1/n$
  that separates $C$ into two noncompact components.  Note that if $C$
  is any index $1$, connected, non-multiply covered $J$-holomorphic
  curve, then $(J,C)\in\mc{C}_n$ for $n$ sufficiently large.  Now let
  $\frak{p}_n:\mc{C}_n \to \mc{J}$ denote the projection.  Then
  standard Gromov compactness arguments prove that
  $\frak{p}_n^{-1}(J)/\R$ is compact for each $J\in\mc{J}$.

Next, let $\mc{J}_n\subset\mc{J}$ denote the set of $J\in\mc{J}$ such
that no curve in $\frak{p}_n^{-1}(V)$ is obstructed.  By the
aforementioned compactness, if $J\in\mc{J}_n$ then
$\frak{p}_n^{-1}(J)/\R$ is finite.  A straightforward limit argument
then proves that $\mc{J}\setminus\mc{J}_n$ is closed, so
$\mc{J}_n$ is open in $\mc{J}$.  By well-known arguments, cf.\
\cite[Ch. 3]{mcds}, $\mc{J}_n$ is
also dense in $\mc{J}$.

Now let $\widehat{\mc{J}}_n\subset\mc{J}_n$ denote the set of $J\in\mc{J}_n$
such that no curve in $\frak{p}_n^{-1}(J)$ has a degenerate end.
Since the asymptotic eigenfunctions depend continuously on $C$, it
follows that $\widehat{\mc{J}}_n$ is also open in $\mc{J}$.  Finally, we will
prove:

\begin{lemma}
\label{lem:dense}
$\widehat{\mc{J}}_n$ is dense in $\mc{J}_n$.
\end{lemma}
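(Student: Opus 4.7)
The plan is to fix $J_0 \in \mc{J}_n$ and perturb it slightly to eliminate degenerate ends, via a standard transversality argument on the universal moduli space. Since $\mc{J}_n$ is open and $\frak{p}_n^{-1}(J_0)/\R$ is finite (being compact and consisting of unobstructed curves), the implicit function theorem produces a neighborhood $\mc{V} \subset \mc{J}_n$ of $J_0$ and finitely many Banach manifold charts $\mc{U}_1, \ldots, \mc{U}_k$ of the universal moduli space, each projecting to $\mc{V}$ as a local diffeomorphism, and together covering the image in $\frak{p}_n^{-1}(\mc{V})/\R$ of every curve near $\frak{p}_n^{-1}(J_0)/\R$.

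For each $\mc{U}_i$ and each end $e$ of the associated curves, converging to some cover $\alpha^{m}$ of an embedded Reeb orbit, define the asymptotic evaluation map $\op{ev}_{i,e}\colon \mc{U}_i \to V_{i,e}$, where $V_{i,e}$ is the eigenspace of $L_m$ with eigenvalue $E_m^+$ (if $e$ is positive) or $E_m^-$ (if $e$ is negative). Proposition~\ref{prop:AF} and the integral formula \eqref{eqn:gammahat} imply that $\op{ev}_{i,e}$ is smooth.

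The key step is to show that $\op{ev}_{i,e}$ is a submersion. Given $(J,C) \in \mc{U}_i$ and $\dot\gamma \in V_{i,e}$, one must produce an admissible variation $\dot J$ inducing change $\dot\gamma$ in the asymptotic eigenfunction. We choose $\dot J$ supported in a thin shell $\{s \in [\tau,\tau+1]\} \times Y$ intersected with the tubular neighborhood of $\alpha$ from Lemma~\ref{lem:EM}, with $\tau$ far enough along the end $e$ that the end is graphical over $\R \times \alpha^m$ there with $|\eta|$ arbitrarily small. Such a $\dot J$ perturbs only $\frak{r}$ in equation \eqref{eqn:hol3} in the shell, without affecting the asymptotic operator $L_m$. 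Differentiating \eqref{eqn:gammahat} in the $\dot J$ direction, the leading-order change is
\[
\dot\gamma = -\int_\tau^{\tau+1} e^{E_m^+ s}\, \Pi_1\bigl(\delta_{\dot J}\frak{r}\bigr)\big|_s\, ds,
\]
where $\Pi_1$ is the projection to $V_{i,e}$; the neglected terms are $O(|\eta(\tau,\cdot)|)$ times this and the corresponding variation of the curve, hence negligible for $\tau$ large. By freely choosing the fiber-profile of $\dot J$ in the directions normal to $\R \times \alpha^m$, the integrand $\Pi_1(\delta_{\dot J}\frak{r})$ ranges over all of $V_{i,e}$, so the differential of $\op{ev}_{i,e}$ is surjective. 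Once the submersion property is established, $\op{ev}_{i,e}^{-1}(0)$ is a closed Banach submanifold of $\mc{U}_i$ of codimension $\dim V_{i,e} \ge 1$, its image in $\mc{V}$ under the local diffeomorphism is closed and nowhere dense, and the complement in $\mc{V}$ of the finite union over all pairs $(i,e)$ is a dense open subset of $\mc{V}$ contained in $\widehat{\mc{J}}_n$.

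The main obstacle is verifying the submersion property: a priori, $\dot J$ perturbs both the ambient equation and, through the implicit function theorem, the curve $C$ itself, and these effects could conspire. The remedy is to localize $\dot J$ far along the end where the curve is $C^0$-close to $\R \times \alpha^m$, so that the formula \eqref{eqn:gammahat} captures the dominant change in $\gamma$ and the induced deformation of $C$ on compact sets is of higher order. The remaining technical content is bookkeeping to verify that $\Pi_1(\delta_{\dot J}\frak{r})$ genuinely exhausts $V_{i,e}$, which follows from the explicit form of $\frak{r}$ and the freedom in the admissible $\dot J$.
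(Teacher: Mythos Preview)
Your proposal has a genuine gap that stems from the admissibility constraint on $J$. Recall that an admissible almost complex structure on $\R\times Y$ is required to be $\R$-invariant; hence any tangent vector $\dot J\in T\mc{J}$ is determined by a $(0,1)$ bundle map $\xi\to\xi$ over $Y$ alone and is necessarily $\R$-invariant. Your perturbation ``supported in a thin shell $\{s\in[\tau,\tau+1]\}\times Y$'' is therefore not an admissible variation. If instead you try to localize $\dot J$ in the $Y$-direction near the Reeb orbit $\alpha$ (and let it be $\R$-invariant), then $\dot J$ generally changes the linearization of the Reeb flow along $\alpha$ and hence the asymptotic operator $L_m$ itself, so the target eigenspace $V_{i,e}$ is no longer fixed and your formula for $\dot\gamma$ via \eqref{eqn:gammahat} breaks down.

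The paper resolves this in a different way. It restricts to variations $\dot J$ that \emph{vanish} in a fixed neighborhood of $\R\times\alpha$ for every relevant Reeb orbit $\alpha$, so $L_m$ is literally unchanged. The effect on the asymptotic eigenfunction then comes entirely through the induced deformation of the curve, $d\psi_J(\dot J)=D_C^{-1}(\dot J_C)+w_{\dot J}$; the restriction $\dot J_C$ to $C$ \emph{can} be localized in $s$ along a single end, because the curve is not $\R$-invariant, but this requires the additional care of making $\dot J_C$ vanish near the locus $\Lambda\cup\mc{T}$ where distinct $\R$-translates of $C$ intersect or $C$ is tangent to $\xi$ or $\op{span}(\partial_s,\mathbf{R})$. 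The paper then computes $\nabla_{\dot J}\widehat{a}_{\mc{E}}$ via the integral formula \eqref{eqn:da+} and shows, by a delicate comparison of the dominant term $e^{E_+s_0}v_+(s_0)$ against the error integral, that it is nonzero for suitable parameters. Your displayed formula for $\dot\gamma$ also implicitly differentiates \eqref{eqn:gammahat} holding $\eta$ fixed, which ignores precisely this indirect effect through the curve; in the paper's setup that indirect effect is the \emph{entire} contribution.
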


Granted this lemma, we conclude that $\widehat{\mc{J}}_\infty \eqdef
\bigcap_n\widehat{\mc{J}}_n$ is a Baire subset of $\mc{J}$.  This proves
Proposition~\ref{prop:G1}, because by definition, every
$J\in\widehat{\mc{J}}_\infty$ obeys the condition stated in
Proposition~\ref{prop:G1}.
\end{proof}

\begin{proof}[Proof of Lemma~\ref{lem:dense}.]
Given $l>>0$, let $\mc{J}_n^l$ and $\widehat{\mc{J}}_n^l$ denote the
analogues of $\mc{J}_n$ and $\widehat{\mc{J}}_n$ defined using $C^l$
almost complex structures.  Define $\frak{p}_n: \mc{C}_n^l\to\mc{J}^l$
as above, using class $C^{l}$ pseudoholomorphic curves.  It is
enough to show that $\widehat{\mc{J}}_n^l$ is dense in $\mc{J}_n^l$
for all $l>>0$, cf.\ \cite[\S3]{mcds}.

Fix $l>>0$ and $J\in\mc{J}_n^l$.  Let ${\mathbf O}$ denote the set of
embedded Reeb orbits $\alpha$ for which a curve
$C\in\frak{p}_n^{-1}(J)$ has an end at a cover of $\alpha$.  Note that
the set ${\mathbf O}$ is finite, because $\frak{p}_n^{-1}(J)/\R$ is finite.
Fix a small $\delta>0$, and let $\mc{U}\subset\mc{J}_n^l$ be a small,
contractible neighborhood of $J$ in the space of almost complex
structures $J'\in\mc{J}_n^l$ that agree with $J$ within distance
$\delta$ of $\R\times\alpha$ for each $\alpha\in{\mathbf O}$.

Now fix $C\in\frak{p}_n^{-1}(J)$, and fix a positive end $\mc{E}$ of
$C$ at a Reeb orbit $\alpha$.  In what follows, we shall assume that
$\alpha$ is embedded; the argument for the general case differs only
in the notation.  Let $\mc{B}_{\mc{E}}$ denote the $E_+$ eigenspace of
$L$. Let $\mc{C}$ denote the universal moduli space consisting of
pairs $(J',C')$ such that $J'\in\mc{U}$ and $C'$ is a $J'$-holomorphic
curve that is a deformation of $C$.  There is an obvious projection
$\pi:\mc{C}\to\mc{U}$.  Since $\mc{U}\subset\mc{J}_n^l$, it follows that
$\pi^{-1}(J')$ is $1$-dimensional for each $J'$, and consists of the
$\R$-translates of a single unobstructed curve.  For each pair
$(J',C')\in\mc{C}$, the end $\mc{E}$ of $C$ determines an end of $C'$
at $\alpha$, which we also denote by $\mc{E}$.  Because $J=J'$ along
$\R\times\alpha$, the asymptotic operator $L\eqdef L_{\alpha}$ is the
same for $J$ and $J'$, so the end $\mc{E}\subset C'$ determines an
asymptotic eigenfunction $\gamma(C')\in \mc{B}_{\mc{E}}$.

Fix a smooth section $\psi:\mc{U}\to\mc{C}$ with $\psi(J)=(C,J)$.  We
then have a function $\gamma\circ\psi: \mc{U}\to \mc{B}_{\mc{E}}$; let
$\mc{Z}\subset\mc{U}$ denote the zero locus of $\gamma\circ\psi$.  The
zero locus $\mc{Z}$ does not depend on the choice of $\psi$, because
for any $(J',C')\in\mc{U}$, translating $C'$ upward by $R$ multiplies
$\gamma(C')$ by $e^{E_+R}$.

Below, we will prove:

\begin{lemma}
\label{lem:Zsub}
If $\delta>0$ above is sufficiently small, then $\mc{Z}$ is a
submanifold of $\mc{U}$, with $\op{codim}(\mc{Z})=\dim(\mc{B}_{\mc{E}})$.
\end{lemma}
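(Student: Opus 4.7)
The plan is to apply the implicit function theorem. Since $\mc{B}_{\mc{E}}$ is finite-dimensional, it suffices to show that the differential $d(\gamma\circ\psi)_{J'} : T_{J'}\mc{U} \to \mc{B}_{\mc{E}}$ is surjective at every $J' \in \mc{Z}$. First I would identify the differential. A tangent vector $K \in T_{J'}\mc{U}$ determines, via $d\psi$, a normal variation $v = v(K) \in \Gamma(N_{C'})$ satisfying the linearized equation $D_{C'}v = -K\cdot du\cdot j$ with the normalization pinned down by $\psi$. Near $\mc{E}$, in the coordinates of \S\ref{sec:NRIC}, $v$ pulls back to a section $\delta\eta$ satisfying the linearization of \eqref{eqn:hol3}; the coefficients of this linearized equation agree with those for $C$ on the $\delta$-neighborhood of $\R\times\alpha$, since $J'=J$ there. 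Linearizing the integral formula \eqref{eqn:gammahat} with respect to $\delta\eta$ shows that $d(\gamma\circ\psi)(K)$ equals the $E_m^+$ asymptotic eigenfunction component $\Pi_{\mc{E}}(v)\in\mc{B}_{\mc{E}}$ of $v$ on $\mc{E}$.

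I would prove surjectivity of $K\mapsto\Pi_{\mc{E}}(v(K))$ by a duality and unique continuation argument. Suppose for contradiction that a nonzero functional $\ell\in\mc{B}_{\mc{E}}^*$ annihilates the image. Using Lockhart--McOwen Fredholm theory for $D_{C'}$ on weighted Sobolev spaces with a weight on $\mc{E}$ allowing exponential growth at rate $E_m^+$ and exponential decay on all other ends, one produces a nonzero dual section $w_\ell$ with $D_{C'}^* w_\ell = 0$ on $C'$, decaying exponentially on every end other than $\mc{E}$, and with leading asymptotic behavior $e^{E_m^+ s}\phi_\ell$ at $\mc{E}$, where $\phi_\ell\in\mc{B}_{\mc{E}}$ is $L^2$-dual to $\ell$. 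Integration by parts on $C'$, together with the matched decay and growth rates that make the boundary pairing at $\mc{E}$ contribute exactly $\ell(\Pi_{\mc{E}}(v))$ while all other ends contribute $0$, yields
\[
\ell\bigl(\Pi_{\mc{E}}(v(K))\bigr) = -\int_{C'}\langle w_\ell,K\cdot du\cdot j\rangle
\]
for every $K\in T_{J'}\mc{U}$. By hypothesis the right-hand side vanishes for all $K$ supported outside the $\delta$-neighborhoods of the orbits $\R\times\alpha$ for $\alpha\in\mathbf{O}$. Because $C'$ is not multiply covered and not $\R$-invariant, the complement of these neighborhoods meets $C'$ in an open subset on which $du\ne 0$, and the standard transversality argument (cf.\ \cite[Ch.\ 3]{mcds}) shows that on this subset, varying $K$ exhausts all $(0,1)$-form type perturbations of the $\dbar$ equation; hence $w_\ell$ vanishes on an open subset of $C'$. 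Unique continuation for the elliptic operator $D_{C'}^*$ then forces $w_\ell\equiv 0$, contradicting its prescribed nontrivial growth at $\mc{E}$. The implicit function theorem then produces the lemma. The smallness of $\delta$ is used only to guarantee that the complement of the $\delta$-neighborhoods meets $C'$ in a nonempty open set, which holds for all sufficiently small $\delta$ because $C'$ is a fixed curve that is not a union of $\R$-invariant cylinders.

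The main obstacle is the careful construction of $w_\ell$ via weighted Sobolev theory and the verification of the integration-by-parts identity in a framework where $v$ decays at rate $E_m^+$ and $w_\ell$ grows at the matching rate, so that the pairing at $\mc{E}$ produces exactly the $E_m^+$ asymptotic component. This is closely related to the Wendl-style transversality with asymptotic exponential weights alluded to in the remark following Proposition~\ref{prop:G1}, but needs to be done by hand here to connect with the explicit formula \eqref{eqn:gammahat} and to prepare the ground for the finer argument needed in Proposition~\ref{prop:G2}.
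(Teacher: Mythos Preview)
Your approach is essentially correct and is precisely the weighted--Sobolev/duality route alluded to in the Remark following Proposition~\ref{prop:G1}. The paper, however, proceeds quite differently: it first proves directly that $\widehat{a}_{\mc{E}}$ is smooth via the integral formula \eqref{eqn:ahat} (Lemma~\ref{lem:asmooth} and Lemma~\ref{lem:CC}), and then, assuming $\widehat{a}_{\mc{E}}(J)=0$, \emph{explicitly constructs} a tangent vector $j\in T\mc{U}|_J$ with $\nabla_j\widehat{a}_{\mc{E}}\neq 0$. The construction places $j_C$ on the end $\mc{E}$ itself, supported where $r\le s\le r+1$, equal to a cutoff times the eigenfunction $\gamma_+$; one then computes $\nabla_j\widehat{a}_{\mc{E}}$ by the explicit formula \eqref{eqn:da+} and shows by direct estimates (Steps~4--5) that for $r$ large the ``main term'' $e^{E_+s_0}v_+(s_0)$ dominates. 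What this buys is that the very same perturbations $j_k$, one per end, are reused in Lemma~\ref{lem:G2} to prove Proposition~\ref{prop:G2}: there one must show the \emph{matrix} $(\widehat a_{ik})$ of directional derivatives is nonsingular, and the explicit estimates give diagonal dominance. Your abstract surjectivity argument, while cleaner for a single end, does not directly yield that quantitative control across ends.

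Two points in your sketch deserve more care. First, the existence of $w_\ell$ with prescribed leading asymptotic $e^{E_m^+s}\phi_\ell$ is not a general fact about $D_{C'}^*$; it holds here precisely \emph{because} $J'\in\mc{Z}$. At such $J'$ the $\R$--translation kernel element of $D_{C'}$ has vanishing $E_m^+$ asymptotic at $\mc{E}$ and hence lies in the fast--decay weighted space; the resulting index count then gives $\dim\Ker(D_{C'}^*)_{\text{growing}}=\dim\mc{B}_{\mc{E}}$, and injectivity of the leading--asymptotic map (since $\Coker(D_{C'})_{L^2}=0$) makes it bijective. You should make this dependence explicit. Second, admissible $J$ are $\R$--invariant, so $K$ is too; the assertion that ``varying $K$ exhausts all $(0,1)$--form perturbations'' on an open set requires working away from the locus $\Lambda\cup\mc{T}$ where the projection to $Y$ fails to be a local embedding transverse to $\partial_s$ and $\mathbf{R}$ (cf.\ Step~2 of the paper's proof). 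The unique continuation then proceeds as you say.
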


An analogous statement also holds if $\mc{E}$ is a negative end of
$C$, with an analogous proof.

Granted Lemma~\ref{lem:Zsub} and its negative end version, we now
complete the proof of Lemma~\ref{lem:dense}.  Since there are finitely
many curves in $\frak{p}_n^{-1}(J)/\R$ and each has finitely many
ends, if $\delta>0$ is sufficiently small then we can apply
Lemma~\ref{lem:Zsub} and its negative end version a finite number of
times to obtain a finite set of codimension $1$ or $2$ submanifolds in
$\mc{U}$ whose complement consists of almost complex structures in
$\widehat{\mc{J}}_n^l$.  As $J$ is in $\mc{U}$, this proves that there
are points in $\widehat{\mc{J}}_n^l$ that lie in any given neighborhood of $J$.
Thus, $\widehat{\mc{J}}_n^l$ is dense in $\mc{J}_n^l$.
\end{proof}

We now prepare for the proof of Lemma~\ref{lem:Zsub}.  It proves
convenient to fix a normalized eigenfunction $\gamma_+\in
\mc{B}_{\mc{E}}$, and write $\gamma\circ\psi =
\widehat{a}_{\mc{E}}\gamma_+$, where $\widehat{a}_{\mc{E}}$ is a
function on $\mc{U}$ with values in $\R$ or $\C$, depending on whether
$\mc{B}_{\mc{E}}$ is $1$- or $2$-dimensional.

\begin{lemma}
\label{lem:asmooth}
The function $\widehat{a}_{\mc{E}}$ on $\mc{U}$ is smooth.
\end{lemma}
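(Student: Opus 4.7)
The plan is to use the integral formula \eqref{eqn:gammahat} (reducing to \eqref{eqn:ahat} when $\mc{B}_{\mc{E}}$ is one-dimensional) and thereby reduce smoothness of $\widehat{a}_{\mc{E}}$ to smoothness of the end profile of $\psi(J')$ as a function of $J'$. By the construction of $\mc{U}$, every $J'\in\mc{U}$ agrees with $J$ in a $\delta$-neighborhood of $\R\times\alpha$; consequently the tubular neighborhood coordinates of Lemma~\ref{lem:EM} and the equation \eqref{eqn:hol3} satisfied by the end profile have the same operator $L=L_\alpha$ and the same nonlinearity $\frak{r}$ for every $J'\in\mc{U}$. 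After translating in $s$ so that $|\eta_{J'}|\le\varepsilon$ on $[-2,\infty)\times\widetilde{S^1}$, formula \eqref{eqn:gammahat} reads
\[
\widehat{a}_{\mc{E}}(J')\gamma_+ \;=\; \Pi_1\eta_{J'}\big|_{s=0} \;-\; \int_0^\infty e^{E_+\tau}\,\Pi_1\frak{r}\bigl(\eta_{J'}\bigr)\big|_\tau\,d\tau,
\]
so it is enough to show that the right-hand side varies smoothly with $J'$.

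The main step is to show that $J'\mapsto\eta_{J'}$ is smooth as a map into a Banach space of sections on $[0,\infty)\times\widetilde{S^1}$ in which $\frak{r}$ acts as a smooth pointwise-quadratic map and multiplication by $e^{E_+\tau}$ produces integrable functions. This is the output of the standard weighted Sobolev deformation theory: since $\mc{U}\subset\mc{J}_n^l$ and the projection $\pi:\mc{C}\to\mc{U}$ is a smooth Banach submersion (with $\psi$ a smooth section by hypothesis), and since by Lemma~\ref{lem:IDE} and Lemma~\ref{lem:AF} all nearby solutions of \eqref{eqn:hol3} decay like $e^{-E_+ s}$ with higher-order correction of order $e^{-(E_++\kappa)s}$, the implicit function theorem argument of \cite[Ch.\ 3]{mcds}, set up in a weighted Sobolev space on $[0,\infty)\times\widetilde{S^1}$ with weight $e^{\tau s}$ for some fixed $\tau\in(E_+,\min(2E_+,E_{1+}))$, makes $\eta_{J'}$ depend smoothly on $J'$ in that space. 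Equivalently, one parametrizes $\eta_{J'}=\chi(s)e^{-E_+ s}\gamma_0(J')+\eta^{\op{res}}_{J'}$, with $\chi$ a fixed cutoff, $\gamma_0$ a smooth $\mc{B}_{\mc{E}}$-valued function on $\mc{U}$, and the residual term varying smoothly in the weighted space.

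Given this, the first summand $\Pi_1\eta_{J'}|_{s=0}$ is evaluation at $s=0$ composed with a bounded linear projection, hence smooth in $J'$. For the second summand, $\frak{r}(\eta_{J'})$ is fiberwise quadratic in $\eta_{J'}$ and $\nabla\eta_{J'}$ by \eqref{eqn:rbounds}, so it is a smooth map on the chosen weighted space, and the weight ensures that $e^{E_+\tau}\,\Pi_1\frak{r}(\eta_{J'})|_\tau$ is absolutely integrable in $\tau$ with a bound depending only on the weighted norm of $\eta_{J'}$; the integral is therefore a smooth functional of $\eta_{J'}$, and hence of $J'$. The only real obstacle is the functional-analytic bookkeeping: the weight $\tau$ must be large enough that \eqref{eqn:gammahat} produces a smooth map, yet small enough that the linearized Cauchy--Riemann operator at $\eta_{J'}$ remains Fredholm so that the implicit function theorem applies. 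The interval $(E_+,\min(2E_+,E_{1+}))$, which is precisely the spectral gap exploited in the proof of Lemma~\ref{lem:AF}, accommodates both constraints, completing the proof.
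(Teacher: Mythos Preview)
Your overall strategy matches the paper's: exploit the integral formula \eqref{eqn:ahat} and work in weighted Sobolev spaces on the half-cylinder, using that every $J'\in\mc{U}$ produces the same equation \eqref{eqn:hol3} there. The gap is in the crucial step --- showing that $J'\mapsto\eta_{J'}$ is smooth into a weighted space strong enough that the integral $\int_0^\infty e^{E_+\tau}\Pi_1\frak{r}(\eta_{J'})\,d\tau$ becomes a smooth functional.

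Your first formulation asserts this via ``the implicit function theorem argument of \cite[Ch.~3]{mcds}, set up in a weighted Sobolev space on $[0,\infty)\times\widetilde{S^1}$''. But that reference builds the universal moduli space $\mc{C}$ over the whole curve in ordinary (at most mildly weighted) Sobolev spaces; smoothness of the given section $\psi:\mc{U}\to\mc{C}$ in that topology does not automatically upgrade to smoothness of the restriction $\eta_{J'}$ in a stronger weighted topology on one end. Nor does \cite{mcds} supply a half-cylinder version with the boundary conditions you would need. Your ``equivalent'' split $\eta_{J'}=\chi(s)e^{-E_+s}\gamma_0(J')+\eta^{\mathrm{res}}_{J'}$ is the standard remedy, but asserting that $\gamma_0(J')$ is smooth is essentially what you are trying to prove, and you do not carry out the implicit-function-theorem setup that would actually produce this decomposition with the claimed smoothness.

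The paper closes this gap by proving an auxiliary result, Lemma~\ref{lem:CC}: the space $\mc{M}$ of small solutions to \eqref{eqn:hol3} on $S=[0,\infty)\times S^1$ is a smooth manifold, smoothly included in the borderline weighted space $L^2_{2+}$ (weight $e^{E_+s}$), and is parametrized by a chart $\frak{t}$ whose inverse is the boundary-value map $\eta\mapsto\Pi_+\eta|_{s=0}$. This last point is the decisive trick: the map $\mc{U}'\to\frak{t}(B)\subset\mc{M}$ then factors through $J''\mapsto\Pi_+\eta_{J''}|_{s=s_0}$, whose smoothness is immediate from smoothness of $\psi$ and of restriction to a fixed slice --- no weighted deformation theory for the universal moduli space is required. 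The integral term is handled because it is the pullback of a smooth function on $L^2_{2+}$ along the smooth inclusion $\mc{M}\hookrightarrow L^2_{2+}$ furnished by Lemma~\ref{lem:CC}.
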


\begin{proof}
To simplify notation, assume that $\dim(\mc{B}_{\mc{E}})=1$, so that
$\widehat{a}_{\mc{E}}:\mc{U}\to\ \R$.  Introduce $S\eqdef
[0,\infty)\times S^1 \subset \R\times Y$, viewed as part of the
cylinder $\R\times\alpha$.  We also need weighted versions of the
Sobolev spaces $L^2_{k=0,1,2}(S;\R^2)$.  The norm for the weighted
version of $L^2_k$ assigns to a smooth, compactly supported function
$\eta$ on $S$ the square root of
\[
\int_S e^{E_+s} \sum_{0\le j\le k} \left|\nabla^j\eta\right|^2 \,
ds\,dt.
\]
We denote the corresponding weighted Sobolev space by $L^2_{k+}$.  Now
consider the equation \eqref{eqn:hol3} determined by the almost
complex structure $J$ near $\alpha$.

\begin{lemma}
\label{lem:CC}
The space $\mc{M}$ of $L^2_2$ solutions to \eqref{eqn:hol3} on
$S$ with small $L^2_2$ norm is a smooth
manifold.  Moreover, each element in $\mc{M}$ is in $L^2_{2+}$, and
this inclusion defines a smooth map from $\mc{M}$ into $L^2_{2+}$.  Finally,
 there exists a ball ${B}\subset \Pi_+
L^2_{3/2}(S^1;\R^2)$ about the origin, and a smooth embedding
$\frak{t}:{B}\to\mc{M}$ as a coordinate chart about $\eta=0$, such
that $\Pi_+\frak{t}(\lambda)|_{s=0}=\lambda$ for all
$\lambda\in{B}$.
\end{lemma}

Granted this lemma, we now complete the proof of
Lemma~\ref{lem:asmooth}.  Equation \eqref{eqn:ahat} associates to each
$\eta\in\mc{M}$ the number $\widehat{a}$, and so defines a function on
$\mc{M}$.  This function is smooth on $\frak{t}({B})$.  Indeed, the
term $a_+(0)$ that appears in \eqref{eqn:ahat} is a bounded linear
function on ${B}$.  Meanwhile, the integral term in \eqref{eqn:ahat}
is the pullback to $\mc{M}$ of a smooth function on $L^2_{2+}$, and so
Lemma~\ref{lem:CC} guarantees that it too defines a smooth function on
$\frak{t}({B})$.  With $\widehat{a}$ understood, let $J'\in\mc{U}$.
Since any complex structure on $\mc{U}$ agrees with $J$ within
distance $\delta$ of $\alpha$, it follows that there exists $s_0>0$,
and a neighborhood $\mc{U}'$ of $J'$ in $\mc{U}$, with the following
property: If $J''\in\mc{U}'$, then the end $\mc{E}$ of the $s\to
s-s_0$ translate of $\psi(J'')$ restricts to $S$ as an element of the
space $\frak{t}({B})$.  Denote this element of $\frak{t}({B})$ by
$\psi(J'')_0$.  Equation \eqref{eqn:asym2} implies that
\begin{equation}
\label{eqn:ahat+}
\widehat{a}_{\mc{E}}(J'') = e^{E_+ s_0} \cdot \widehat{a}(\psi(J'')_0).
\end{equation}
Thus the function $\widehat{a}_{\mc{E}}$ on $\mc{U}'$ is a constant times the
pullback of $\widehat{a}$ via a smooth map from $\mc{U}'$ to
$\frak{t}({B})$.
\end{proof}

\begin{proof}[Proof of Lemma~\ref{lem:CC}.]
Let $B_0$ be a small radius ball around the origin in
$L^2_{2+}(S;\R^2)$.  Define a smooth map
\[
F: B_0 \longrightarrow \Pi_+ L^2_{3/2}(S^1;\R^2) \times L^2_{1+}(S;\R^2)
\]
by the rule
\[
\eta \longmapsto \left(\Pi_+\eta|_{s=0}, \partial_s\eta + L\eta
+ \frak{r}\right),
\]
where $\frak{r}\eqdef 2(r_0+r_1\partial\eta/\partial z)$ as before.
Note that the map $F$ is well defined if $B_0$ has sufficiently small
radius, so that elements of $B_0$ have sufficiently small pointwise
norm.

We claim that the differential of $F$ at the origin in $B_0$ is an
isomorphism
\[
dF|_{0}:L^2_{2+}(S;\R^2)\stackrel{\simeq}{\longrightarrow} \Pi_+
L^2_{3/2}(S^1;\R^2) \times L^2_{1+}(S;\R^2).
\]
This claim implies the lemma by the implicit
function theorem.

To prove the claim, note that
\[
dF|_{0}(\eta) = \left(\Pi_+\eta|_{s=0}, \partial_s\eta +
L\eta\right).
\]
To show that $dF|_{0}$ is an isomorphism, we first need to show that
given $f\in \Pi_+ L^2_{3/2}(S^1;\R^2)$ and $g\in L^2_{1+}(S;\R^2)$,
there exists a unique $\eta\in L^2_{2+}(S;\R^2)$ with
$\Pi_+\eta|_{s=0}=f$ and $\partial_s\eta + L\eta = g$.  To prove the
latter statement, note that any solution to these equations can be
written as $\eta =
\sum_{\gamma}a_\gamma(s)\gamma$, where the sum is over an orthonormal
basis of eigenfunctions for $L$.  Similarly write $g=\sum_\gamma
g_\gamma(s)\gamma$ and $f=\sum_\gamma f_\gamma \gamma$.  Then the
$a_\gamma$'s must be given as follows:  If $\gamma$ is an
eigenfunction with eigenvalue $E$, then
\begin{equation}
\label{eqn:agamma}
a_\gamma(s)= 
\left\{
\begin{array}{cl}
f_\gamma e^{-Es} +
\int_0^s e^{-E(s-\tau)}g_\gamma(\tau)d\tau, & E>0, \\
- \int_s^\infty e^{|E|(s-\tau)}g_\gamma(\tau)d\tau, & E<0.
\end{array}
\right.
\end{equation}

We must now verify that the function $\eta$ defined by
\eqref{eqn:agamma} satisfies
\begin{equation}
\label{eqn:needc}
\|\eta\|_{L^2_{2+}} \le c\left(\|f\|_{L^2_{3/2}} +
\|g\|_{L^2_{1+}}\right)
\end{equation}
for some $\eta$-independent constant $c$.
For this purpose, observe that the norm on
$L^2_{k+}(S;\R^2)$ is equivalent to the norm defined by
\[
\|\eta\|_{L^2_{k+}}^2 \eqdef \int_S e^{E_+s} \sum_{i+j=
k}\left|\partial_s^iL^j\eta\right|^2
\,ds\,dt.
\]
Also, the norm on $L^2_{3/2}(S^1;\R^2)$ can be defined by
\[
\|f\|_{L^2_{3/2}}^2 \eqdef \int_{S^1} \langle
f,|L|^3f\rangle \, dt.
\]
Granted this, one need only establish
\eqref{eqn:needc} when
$g=g_\gamma(s)\gamma$ and $f=f_\gamma \gamma$, where $\gamma$ is an
eigenfunction of $L$, and $c$ does not depend on $\gamma$.  This is
straightforward using the preceding equations in the case where $E\neq
E_+$.  In the case $E=E_+$, it is also necessary to use the fact that
\[
\int_0^\infty|\partial_s a|^2 e^{E_+s}\, ds \ge
\frac{E_+^2}{4}\int_0^\infty |a|^2 e^{E_+s}\, ds
\]
for any given function $a$ of $s$ that has limit zero as $s\to
\infty$.  This last inequality is proved by writing $E_+ e^{E_+s} =
\frac{d}{ds} e^{E_+s}$ in the right hand integral above, and
integrating by parts.
\end{proof}

\begin{proof}[Proof of Lemma~\ref{lem:Zsub}.]  By
Lemma~\ref{lem:asmooth} and the implicit function theorem, it is
enough to assume that $\widehat{a}_{\mc{E}}(J)=0$, and construct $j\in
T\mc{U}|_J$ with $\nabla_j\widehat{a}_{\mc{E}}\neq 0$.  We will
proceed in five steps.

Before starting, by translating $s$ we can assume that the end
$\mc{E}$ of $C$ is described by a map $\eta$ defined on
$[-2,\infty)\times S^1$ such that for each $s_0\ge 0$, the $s\to
s-s_0$ translate of $\eta$ restricts to $S$ as an element of
$\frak{t}({B})$.  Also, we will assume that $C$ is immersed; the
general case can be handled by introducing more notation, or by
appealing to the results of \S\ref{sec:immersed} below.

\medskip
{\em Step 1.\/} The differential of the section $\psi:\mc{U}\to\mc{C}$
at $J$ defines a linear map $d\psi_J$, from the tangent space
$T\mc{U}|_J$, to the space of sections of the normal bundle $N_C\to
C$.  In this step we derive a useful formula for $d\psi_J$.

If $C'$ is any immersed surface in $\R\times Y$, then $C'$ is
$J$-holomorphic if and only if $\dbar_J(C')=0$, where $\dbar_J(C') :
TC'\to N_{C'}$ is the bundle map defined by
\[
\dbar_J(C') \eqdef \Pi_{N_{C'}}\circ J.
\]
The linearization of $\dbar_J$ at our $J$-holomorphic curve $C$
defines a real linear operator, mapping sections of the normal bundle
$N_C$ to sections of the complex line bundle
\begin{equation}
\label{eqn:TCN}
\Hom^{0,1}(TC,N_C)
= T^{0,1}C\tensor_\C N_C.
\end{equation}
Define $D_C$ to be $-i/2$ times this operator.  On the $s\ge 0$ part
of the end $\mc{E}$ of $C$, the operator $D_C$ has the following form:
If we identify the normal bundle $N_{C}$ with $\C$ via the coordinate
$w$, and if we trivialize $T^{0,1}C$ using $d\overline{z}$, then
$D_{C}$ sends a function $v\in C^\infty(S,\C)$ to
\begin{equation}
\label{eqn:DCv}
D_{C}v = \frac{\partial v}{\partial\zbar} + \nu v + \mu\overline{v} +
\frak{r}_{0*}\cdot v +
\frak{r}_{1*}\frac{\partial v}{\partial z}.
\end{equation}
Here $\frak{r}_{0*}$ is an $\R$-linear bundle map, and $\frak{r}_{1*}$ is
a complex-valued function on $S$, satisfying
\[
|\frak{r}_{0*}| + |\frak{r}_{1*}| \le c|\eta|.
\]

Since $C$ has index $1$ and is a smooth point in its moduli space, the
operator
\begin{equation}
\label{eqn:DCO}
D_{C}:L^2_1(C,N_{C}) \longrightarrow L^2(C,T^{0,1}C\tensor_\C N_{C})
\end{equation}
is surjective and has a $1$-dimensional kernel.  Let $D_C^{-1}$
denote the unique right inverse of \eqref{eqn:DCO} that maps to the orthogonal
complement of $\Ker(D_C)$.

The differential $d\psi_J$ can now be described in terms of the
operator $D_C^{-1}$ as follows.  Let $j$ be a tangent vector at $J$ to
$\mc{U}$.  Let $j_C\in\Hom^{0,1}(TC,N_C)$ denote the $(0,1)$ bundle
map given by $i/2$ times the composition
\begin{equation}
\label{eqn:jC}
TC \stackrel{j}{\longrightarrow} T(\R\times Y)|_C
\stackrel{\Pi_{N_C}}{\longrightarrow} N_C.
\end{equation}
Let $\{J_\tau\}$ be a smooth family of almost complex structures
parametrized by a neighborhood of $0$ in $\R$ with $J_0=J$ and
$\frac{d}{d\tau}|_{\tau=0}J_\tau=j$.  Write
$\psi(J_\tau)=(J_\tau,C_\tau)$.  Then differentiating the equation
$\dbar_{J_\tau}(C_\tau)=0$ at $\tau=0$ shows that
\[
(-2i) j_C + (2i) D_C(d\psi_J(j)) = 0.
\]
Thus, the section $d\psi_J(j)$ of $C$'s normal bundle is given by
\begin{equation}
\label{eqn:dpsi}
d\psi_J(j) = D_C^{-1}\left(j_C\right) + w_j,
\end{equation}
where $w_j\in\Ker(D_C)$ depends linearly on $j$.

{\em Step 2.\/} We now choose a tangent vector $j\in T\mc{J}|_J$.  Our $j$
will vanish identically on some neighborhood of $\R\times\alpha'$ for
all Reeb orbits $\alpha'$ in ${\mathbf O}$, so that $j\in T\mc{U}|_J$ if
$\delta$ is chosen sufficiently small.  (See the beginning of the
proof of Lemma~\ref{lem:dense}.) To finish the proof of
Lemma~\ref{lem:Zsub}, we will later show that
$\nabla_j\widehat{a}_{\mc{E}}\neq 0$.

To prepare for the choice of $j$, we need to consider the locus where
the projection of $C$ to $Y$ is not an embedding.  First, let
$\Lambda\subset C$ denote the set of points where $C$ intersects the
$s\mapsto s-s_0$ translate of $C$ for some $s_0\neq 0$ (including
$s_0=\pm\infty$, i.e.\ points where the projection of $C$ to $Y$
intersects one of the Reeb orbits at the ends of $C$).  For any given
$s_0$, the set of such intersections is discrete (in fact finite, see
\cite{siefring}).  Thus $\Lambda$ is a closed codimension $1$
subvariety of $C$.  Next, let $\mc{T}\subset C$ denote the set of
points where $C$ is tangent to $\xi$ or to
$\op{span}(\partial_s,{\mathbf R})$.  The set $\mc{T}$ is also
discrete (in fact finite).

Next, recall that an admissible almost complex structure $J$ is
required to be $\R$-invariant and to send $\partial_s\mapsto {\mathbf
R}$ and $\xi\to\xi$.  Thus a tangent vector $j\in T\mc{J}|_J$ is
equivalent to a $(0,1)$ bundle map $\xi\to\xi$ over $Y$.  As a
consequence, any $(0,1)$ bundle map $f:TC\to N_C$ can be realized as
$j_C$ for some $j\in T\mc{J}|_J$, provided that $f=0$ in a
neighborhood of $\Lambda\cup\mc{T}$.

We now specify such a bundle map $f$.  First of all, $f$ will be
supported in the $s> 1$ part of the end $\mc{E}$ of $C$.  On this end,
under the identifications in \eqref{eqn:DCv}, a $(0,1)$ bundle map
$f:TC\to N_C$ is equivalent to a complex function $g$ on
$[-2,\infty)\times S^1$. To specify $g$, fix $r>1$ large and $\rho>0$
small.  Recall that $\gamma_+$ denotes the chosen normalized
eigenfunction of $L$ with eigenvalue $E_+$.  Let
$\chi:[-2,\infty)\times S^1\to[0,1]$ be a function which vanishes
where $s\notin[r,r+1]$ or the distance to $\Lambda\cup\mc{T}$ is less
than $\rho/2$, and which is $1$ where $s\in[r+\rho,r+1-\rho]$ and the
distance to $\Lambda\cup\mc{T}$ is greater than $\rho$.  Now define
\[
g(s,t) \eqdef \chi(s,t)\gamma_+(t).
\]
This completes the specification of $f$.  Finally, choose $j$ such that
$f=j_C$.

{\em Step 3.\/}
We now calculate $\nabla_j\widehat{a}_{\mc{E}}$. By equation
\eqref{eqn:dpsi},
\begin{equation}
\label{eqn:UAZ}
\nabla_j\widehat{a}_{\mc{E}} = 
(d\widehat{a}_{\mc{E}})_J D_C^{-1}\left(j_C \right).
\end{equation}
To see why the $w_j$ term in \eqref{eqn:dpsi} is irrelevant, observe
that the latter is in $\Ker(D_C)$, and hence it is a multiple of the
tangent vector at $C$ to the $1$-parameter family of pseudoholomorphic
curves given by by translating $C$ along the $\R$ factor of $\R\times
Y$.  Since we are assuming that $\widehat{a}_{\mc{E}}(C)=0$, these
translations have no effect on $\widehat{a}_{\mc{E}}$.

To evaluate the right hand side of \eqref{eqn:UAZ}, note that the
restriction of $D_C^{-1}(j_C)$ to the end $\mc{E}$ appears as a
function $v:[-2,\infty)\times S^1\to\C$.  Now fix $s_0\ge r+1$, and
let $\eta_0$ and $v_0$ denote the restrictions to $S$ of the $s\to
s-s_0$ translates of $\eta$ and $v$ respectively.  Since $j$ is
supported where $r\le s \le r+1$, it follows using equation
\eqref{eqn:dpsi} that $v_0\in T\mc{M}|_{\eta_0}$.  So by
\eqref{eqn:UAZ} and \eqref{eqn:ahat+},
\[
\nabla_j\widehat{a}_{\mc{E}} = e^{E_+s_0} (d\widehat{a})|_{\eta_0} (v_0).
\]
We conclude from this and equation \eqref{eqn:ahat} that for any
$s_0\ge r+1$,
\begin{gather}
\nonumber
\nabla_j\widehat{a}_{\mc{E}} = e^{E_+ s_0} v_+(s_0)
\quad\quad\quad\quad\quad\quad\quad\quad\quad\quad\quad\quad
\quad\quad\quad\quad\quad
\\
\label{eqn:da+}
  \quad\quad
- \int_{s_0}^\infty
e^{E_+{s}} \left\langle\gamma_+,
\frak{r}_0\cdot v + \frak{r}_1\cdot\nabla v +
d\frak{r}_0(v) \cdot \eta +
d\frak{r}_1(v) \cdot\nabla \eta
\right\rangle|_{s} d{s}.
\end{gather}
Here $v_+(s_0)$ denotes the $L^2$ inner
product on $S^1$ between $\gamma_+$ and $v(s_0,\cdot)$, while $\frak{r}_0$
and $\frak{r_1}$ are the $w$-dependent bundle maps from
\eqref{eqn:hol4}.

To prove that the differential \eqref{eqn:da+} is nonzero, we will
show that for suitable $r$ and $s_0$, the $v_+$ term in
\eqref{eqn:da+} is much larger than the integral.

{\em Step 4.\/} 
We now establish an upper bound on the integral in \eqref{eqn:da+}.

By definition, $v$ obeys the equation
\begin{equation}
\label{eqn:vg}
\frac{1}{2}\left(\frac{\partial v}{\partial s} + Lv\right) +
\frak{r}_{0*}\cdot v + \frak{r}_{1*}\cdot\frac{\partial v}{\partial z}
= g.
\end{equation}
Since the operator $D_C$ in \eqref{eqn:DCO} is bounded and Fredholm,
there is a constant $c$ such that
$\|\lambda\|_2^2+\|\nabla\lambda\|_2^2 \le c^2\|D_C\lambda\|_2^2$ for
all $\lambda$ that are in the domain of $D_C$ and orthogonal to the
kernel of $D_C$.  Applying this to $\lambda\eqdef D_C^{-1}(j_C)$, we
deduce that
\begin{equation}
\label{eqn:vbound}
\|v\|_2^2 + \|\nabla v\|_2^2 \le c^2\|g\|_2^2.
\end{equation}

Now observe that \eqref{eqn:vg} is a homogeneous equation for $v$
where $s\ge r+1$.  Elliptic estimates as in Lemma~\ref{lem:ER} then
give pointwise bounds for the derivatives of $v$ on the $s=r+2$ circle
in terms of the $L^2$ norm of $v$, which by \eqref{eqn:vbound} is
bounded by $c\|g\|_2$.  In particular, the $L^2$ norm of $|L|^{1/2}v$
over the $s=r+2$ circle is bounded by an $r$ and $\rho$ independent
constant times the $L^2$ norm of $g$.  The analysis leading to
\eqref{eqn:decay6} then gives
\begin{equation}
\label{eqn:PVB}
|\nabla^k v| \le c_k e^{-E_+(s-r)}\|g\|_2
\end{equation}
where $s\ge r+3$.

Recall from Lemma~\ref{lem:IDE} that $|\eta|$ and $|\nabla\eta|$
are bounded by $c\varepsilon e^{-E_+s}$ for $s\ge 0$.  Combining this
with \eqref{eqn:PVB}, we conclude that if $s_0\ge r+3$, then the
integral in \eqref{eqn:da+} is bounded by
\begin{equation}
\label{eqn:IB}
\left|\int_{s_0}^\infty(\cdots)d{s}\right| \le c\varepsilon
e^{-E_+(s_0-r)}
\|g\|_2.
\end{equation}

{\em Step 5.\/}  We now establish a lower bound on the term
$e^{E_+s_0}v_+(s_0)$ in \eqref{eqn:da+}.

Suppose that $r$ is large.  The analysis leading to \eqref{eqn:decay6}
on the cylinder where $0\le s \le r$ can be employed to prove that
when $s=r/2$,
\begin{equation}
\label{eqn:vkb}
|\nabla^kv|\le c_k e^{-E_+r/2}\|g\|_2.
\end{equation}
Therefore
\begin{equation}
\label{eqn:vrb}
\left|v_+(r/2)\right| \le c e^{-E_+r/2} \|g\|_2.
\end{equation}

Keeping this in mind, define
$g_+(s)\eqdef\left\langle\gamma_+,g\right\rangle\big|_s$.
Take the $L^2(S^1;\R^2)$ inner product of
\eqref{eqn:vg} with $\gamma_+$ to obtain the differential equation
\[
\frac{1}{2}\left(\frac{d v_+}{d s} + E_+v_+\right) +
\left\langle\gamma_+,\frak{r}_{0*}\cdot v +
\frak{r}_{1*}\cdot\partial v\right\rangle\big|_s
= g_+.
\]
Integrating this from $s=r/2$ to $s=s_0$ gives
\begin{gather*}
e^{E_+s_0}v_+(s_0) = e^{E_+r/2} v_+(r/2)
\quad\quad\quad\quad\quad\quad\quad\quad\quad\quad\quad\\
\quad\quad\quad\quad\quad\quad\quad
+ 2\int_{r/2}^{s_0} e^{E_+s}
\left[
g_+ - \left\langle\gamma_+,\frak{r}_{0*}\cdot v +
\frak{r}_{1*}\cdot\partial v\right\rangle\big|_s
\right]ds.
\end{gather*}
Using \eqref{eqn:vrb}, the bounds on $\eta$ and its derivatives by
$c\varepsilon e^{-E_+s}$, and the bounds on $v$ and its
derivatives in \eqref{eqn:PVB}, we deduce that
\begin{equation}
\label{eqn:v+b}
e^{E_+s_0}v_+(s_0) \ge 2\int_{r/2}^{s_0}e^{E_+s}g_+(s)ds -
c(s_0-r/2+1)\|g\|_2.
\end{equation}
By the definition of $g$, we have
\begin{equation}
\label{eqn:BDG}
\begin{split}
\lim_{\rho\to 0}\int_{r/2}^{s_0}e^{E_+s}g_+(s)ds & =
\int_{r}^{r+1}
e^{E_+s}ds,\\
\lim_{\rho\to 0}\|g\|_2 & = 2\pi. 
\end{split}
\end{equation}
Combing
\eqref{eqn:da+}, \eqref{eqn:IB},
\eqref{eqn:v+b}, and \eqref{eqn:BDG}, we conclude that for any $r$, we
can choose $\rho$ sufficiently small that for any $s_0$,
\begin{equation}
\label{eqn:da+b}
\nabla_j \widehat{a}_{\mc{E}}
 \ge c_1 e^{E_+r} -
c_2(s_0-r/2+1) - c_3 e^{-E_+(s_0-r)}.
\end{equation}
If $r$ is sufficiently large and $s_0=2r$, then the first term on the
right hand side of \eqref{eqn:da+b} is much larger than the other two
terms, so $\nabla_j \widehat{a}_{\mc{E}} \neq 0$.  This completes the
proof of Lemma~\ref{lem:Zsub}, and with it Proposition~\ref{prop:G1}.
\end{proof}

\subsection{Nonoverlapping pairs of ends for generic $J$}
\label{sec:G2}

We now prove a genericity result for the asymptotic eigenfunctions
associated to pairs of ends of a $J$-holomorphic curve.  To state the
result, recall that if $\alpha$ is an embedded Reeb orbit, if $m_1$
and $m_2$ are positive integers, and if the smallest positive
eigenvalues of $L_{\alpha^{m_1}}$ and $L_{\alpha^{m_2}}$ agree, then
the corresponding eigenspaces are pulled back from the smallest
positive eigenspace of $L_{\alpha^m}$, where $m$ denotes the greatest
common divisor of $m_1$ and $m_2$.  This is explained in
\S{I.3.1} for elliptic Reeb orbits, and is even easier for
hyperbolic Reeb orbits.  Note also that $\Z/m$ acts on the eigenspace
of $L_{\alpha^{m}}$ with smallest positive eigenvalue, via pullback
from its action on $\alpha^m$ by deck transformations for the covering
map to $\alpha$.

\begin{definition}
Let $C$ be a $J$-holomorphic curve with all ends nondegenerate.  A
pair of positive ends $\mc{E}_1, \mc{E}_2$ of $C$ is {\em
overlapping\/} if the following conditions hold:
\begin{itemize}
\item
There is a single embedded Reeb orbit $\alpha$, and positive integers
$m_1$ and $m_2$, such that $\mc{E}_i$ is a positive end at
$\alpha^{m_i}$.
\item
The smallest positive eigenvalues of $L_{\alpha^{m_1}}$ and
$L_{\alpha^{m_2}}$ agree.
\item
Let $m$ denote the greatest common divisor of $m_1$ and $m_2$, and let
$\gamma_i$ denote the eigenfunction of $L_{\alpha^{m}}$ whose pullback to
$\alpha^{m_i}$ is the asymptotic eigenfunction associated to
the end $\mc{E}_i$.  Then there exists $g\in\Z/m$ such that
$g\cdot\gamma_1=\gamma_2$.
\end{itemize}
\end{definition}

An overlapping pair of negative ends is defined analogously.

\begin{proposition}
\label{prop:G2}
If the admissible almost complex structure $J$ on $\R\times Y$ is
generic, then no index $1$, connected, non-multiply-covered
$J$-holomorphic curve has an overlapping pair of ends.
\end{proposition}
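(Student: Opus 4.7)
The plan is to mirror the proof of Proposition~\ref{prop:G1}, replacing the single section $\gamma\circ\psi$ by a finite collection of sections designed to cut out the overlap condition. Keep the notation $\mc{J}$, $\mc{C}_n$, $\frak{p}_n$, $\mc{J}_n$, $\widehat{\mc{J}}_n$ from \S\ref{sec:G1}, and define $\widetilde{\mc{J}}_n\subset\widehat{\mc{J}}_n$ to be the subset of $J$ such that no curve in $\frak{p}_n^{-1}(J)$ has an overlapping pair of ends. The continuity of the asymptotic eigenfunctions in $J$ (which follows from Lemma~\ref{lem:asmooth}) together with standard Gromov compactness shows that $\widetilde{\mc{J}}_n$ is open in $\widehat{\mc{J}}_n$, so it suffices to show that it is also dense, from which $\widetilde{\mc{J}}_\infty\eqdef\bigcap_n\widetilde{\mc{J}}_n$ is a Baire set with the desired property.

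For density, fix $J\in\widehat{\mc{J}}_n$, and let $\mc{U}$ be the contractible neighborhood of $J$ constructed in the proof of Lemma~\ref{lem:dense}. There are only finitely many curves in $\frak{p}_n^{-1}(J)/\R$, hence finitely many pairs of ends $(\mc{E}_1,\mc{E}_2)$ at covers $\alpha^{m_1}, \alpha^{m_2}$ of a common embedded Reeb orbit $\alpha$ sharing the smallest positive asymptotic eigenvalue. For each such pair and each $g\in\Z/m$ with $m\eqdef\gcd(m_1,m_2)$, let $\mc{B}$ denote the $E_+$-eigenspace of $L_{\alpha^m}$ and define a smooth map
\[
\sigma_g:\mc{U}\too\mc{B},\qquad \sigma_g(J')\eqdef\gamma_2(J')-g\cdot\gamma_1(J'),
\]
where $\gamma_i(J')\in\mc{B}$ is the eigenfunction on $\alpha^m$ pulling back to the asymptotic eigenfunction of the $\mc{E}_i$-end of the curve $\psi(J')$. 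Smoothness of $\sigma_g$ follows from Lemma~\ref{lem:asmooth}. If each $\sigma_g^{-1}(0)$ is a submanifold of codimension $\dim\mc{B}$, then its complement, taken over finitely many $(\mc{E}_1,\mc{E}_2,g)$, contains almost complex structures in $\widetilde{\mc{J}}_n$ arbitrarily close to $J$.

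By the implicit function theorem, it is enough to show that whenever $\sigma_g(J)=0$, the differential $d\sigma_g|_J$ is surjective. I plan to prove the stronger claim that the linear map
\[
T: T\mc{U}|_J \too \mc{B}\oplus\mc{B},\qquad j\longmapsto (\nabla_j\gamma_1,\nabla_j\gamma_2)
\]
is surjective; this gives surjectivity of $d\sigma_g|_J$ on composing with $(b_1,b_2)\mapsto b_2-g\cdot b_1$. Since the image of $T$ is a subspace of the finite-dimensional space $\mc{B}\oplus\mc{B}$, and since Lemma~\ref{lem:Zsub} applied separately to each end shows the projections of $\op{image}(T)$ to each factor are surjective, it suffices to exhibit elements of the form $(b_1,0)$ and $(0,b_2)$ in the closure of $\op{image}(T)$. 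By symmetry, consider $(b_1,0)$.

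The key point is to refine the perturbation $j$ from Step 2 of the proof of Lemma~\ref{lem:Zsub} so that $j_C$ has support in $\mc{E}_1$ disjoint from $\mc{E}_2$'s image in $\R\times Y$. Since $C$ is embedded away from the codimension-1 subvariety $\Lambda$, any point $p\in\mc{E}_1\setminus(\Lambda\cup\mc{T})$ admits a neighborhood in $\R\times Y$ disjoint from $\mc{E}_2$'s image, so a cutoff function supported in this neighborhood produces the desired $j$. For such a $j$ at parameter level $r$ on $\mc{E}_1$, equation \eqref{eqn:da+b} provides the lower bound $|\nabla_j\gamma_1|\gtrsim c_1 e^{E_+ r}\|g\|_2$, while $\|v\|_{L^2_1(C)}\le c\|g\|_2$ together with the vanishing of $j_C$ on $\mc{E}_2$ forces $v|_{\mc{E}_2}$ to be a decaying solution of the homogeneous linearized equation with $\nabla_j\gamma_2$ bounded only by a curve-dependent constant times $\|g\|_2$. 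Scaling $g$ by $e^{-E_+ r}$ (to fix $\nabla_j\gamma_1=b_1$) then drives $\nabla_j\gamma_2\to 0$ as $r\to\infty$, placing $(b_1,0)$ in the closure of $\op{image}(T)$.

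The main obstacle I anticipate is making precise the bound on the ``cross-influence'' $\nabla_j\gamma_2$: one must convert the global $L^2_1$ bound on $v$ on $C$ into a genuine bound on the $E_+$-coefficient of $v$ at the far end $\mc{E}_2$, using elliptic estimates analogous to Lemma~\ref{lem:ER} and the integral formula \eqref{eqn:ahat} for the asymptotic eigenfunction. Once this is established the same Baire-set argument as in \S\ref{sec:G1} completes the proof.
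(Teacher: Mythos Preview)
Your approach is essentially the same as the paper's: perturb $J$ along one end and show that the induced change in the asymptotic eigenfunction there dominates the induced change at the other end. The paper organizes this by considering \emph{all} ends at once and proving that the map $p:\mc{U}\to\mathbb{P}$ (where $\mathbb{P}$ is the product of eigenspaces modulo the $\R$-translation action \eqref{eqn:EER}) is a submersion; the diagonal-dominance estimate there, $|\widehat a_{kk}|\ge ce^{E_kr}$ versus $|\widehat a_{ik}|\le ce^{-E_kr/2}\|g_k\|_2$, is exactly your ``large on-end / small cross-end'' comparison. Working one pair at a time as you do is fine and slightly lighter notationally.

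There is, however, a genuine gap in your argument as written. You invoke \eqref{eqn:da+b} to bound $|\nabla_j\gamma_1|$ from below, but that estimate was derived in the proof of Lemma~\ref{lem:Zsub} under the hypothesis $\widehat a_{\mc{E}}(J)=0$; that hypothesis is what allowed the $w_j\in\Ker(D_C)$ term in \eqref{eqn:dpsi} to be discarded (see \eqref{eqn:UAZ}). Here $J\in\widehat{\mc J}_n$, so $\gamma_1,\gamma_2\neq 0$, and the $w_j$ term contributes an uncontrolled multiple of $(E_+\gamma_1,E_+\gamma_2)$ to your $T(j)$. This can spoil the claimed surjectivity of $T$ onto $\mc{B}\oplus\mc{B}$. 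The paper avoids this by passing to the quotient $\mathbb{P}$, where the $w_j$ contribution (being tangent to the $\R$-orbit) vanishes; equivalently it works directly with the quantities $\widehat a_{ik}=(d\widehat a_{\mc{E}_i})\,D_C^{-1}(j_{kC})$, which omit $w_j$. In your setup the same fix is available: at a point with $\sigma_g(J)=0$ one has $\gamma_2=g\cdot\gamma_1$, so the $w_j$ contribution to $d\sigma_g$ is $c\,E_+(\gamma_2-g\cdot\gamma_1)=0$. Thus you should argue surjectivity of $d\sigma_g$ directly from the $\widehat a_{ik}$ estimates rather than via the stronger (and unproven) surjectivity of $T$. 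Also note that your discussion of choosing the support of $j$ disjoint from $\mc{E}_2$ is unnecessary: the construction in Lemma~\ref{lem:Zsub} already arranges $j_C=f$ globally with $f$ supported on $\mc{E}_1$; the cross-term arises not from $j_C|_{\mc{E}_2}$ but from $D_C^{-1}(j_C)|_{\mc{E}_2}$, and that is precisely what the bound you sketch (and the paper's \eqref{eqn:aikbound}) controls.
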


\begin{proof}
Fix $l>>0$, and reintroduce the notation $\frak{p}_n$ and
$\widehat{\mc{J}}_n^l$ from the beginning of the proof of
Proposition~\ref{prop:G1}.  Let
$\widetilde{\mc{J}}_n^l\subset\widehat{\mc{J}}_n^l$ denote the set of
$J\in\widehat{\mc{J}}_n^l$ such that no curve $C\in\frak{p}_n^{-1}(J)$
has an overlapping pair of ends.  Then as in the proof of
Proposition~\ref{prop:G1}, it is enough to show that
$\widetilde{\mc{J}}_n$ is dense in $\widehat{\mc{J}}_n$.

To prove that $\widetilde{\mc{J}}_n$ is dense in $\widehat{\mc{J}}_n$,
fix $J\in\widehat{\mc{J}}_n$, and let ${\mathbf O}$, $\delta$, and
$\mc{U}$ be defined as in the beginning of the proof of
Lemma~\ref{lem:dense}.

Next, fix $C\in \frak{p}_n^{-1}(J)$, with ordered positive ends at
(possibly multiply covered or repeated) Reeb orbits
$\alpha_1,\ldots,\alpha_{N_+}$, and ordered negative ends at Reeb
orbits $\alpha_{-1},\ldots,\alpha_{-N_-}$.  For $i=1,\ldots,N_+$ let
$E_i$ denote the smallest positive eigenvalue of the asymptotic
operator $L_{\alpha_i}$; and for $i=-1,\ldots,-N_-$ let $E_i$ denote
the largest negative eigenvalue of $L_{\alpha_i}$.  Let $\mc{B}_i$
denote the $E_i$ eigenspace of $L_{\alpha_i}$; then the ends of $C$
determine asymptotic eigenfunctions $\gamma_i\in
\mc{B}_i\setminus\{0\}$.

Applying the translation $s\mapsto s+R$ to $C$ acts on $\gamma_i$ as
\begin{equation}
\label{eqn:AOE}
\gamma_i\longmapsto
e^{E_iR}\gamma_i.
\end{equation}
To keep track of this, let $I$ denote the index set
$\{1,\ldots,N_+\}\cup\{-1,\ldots,-N_-\}$, and let
$\mc{B}\eqdef\bigoplus_{i\in I}\mc{B}_i$.  Define ${\mathbb P}$ to be the
set of tuples $(\gamma_i)_{i\in I}\in\mc{B}$ with all
components $\gamma_i$ nonzero, modulo the equivalence
relation
\begin{equation}
\label{eqn:EER}
(\gamma_i)_{i\in I}
\sim
(e^{E_iR}\gamma_i)_{i\in I}
\end{equation}
for all $R\in\R$.  Note that ${\mathbb P}$ is a smooth manifold.  The
asymptotic eigenfunctions of the ends of $C$ define an element
$p(C)\in{\mathbb P}$ which is invariant under translation of $C$.
Furthermore, $C$ has all pairs of ends nonoverlapping if and only if
$p(C)\in{\mathbb P}\setminus Z$, where $Z$ is a finite union of
codimension $1$ and $2$ submanifolds of ${\mathbb P}$.

As in the proof of Lemma~\ref{lem:dense}, let $\pi:\mc{C}\to\mc{U}$
denote the universal moduli space, and let $\psi:\mc{U}\to\mc{C}$ be a
smooth section with $\psi(J)=C$.  The asymptotic eigenfunctions
determine a smooth map $p:\mc{U}\to{\mathbb P}$ which does not depend
on $\psi$.

We will momentarily prove:

\begin{lemma}
\label{lem:G2}
If $\delta$ is sufficiently small, then the map $p:\mc{U}\to{\mathbb
P}$ is a submersion.
\end{lemma}

It follows from this lemma and the implicit function theorem that
$p^{-1}(Z)$ is a finite union of codimension $1$ and $2$ submanifolds
of $\mc{U}$.  This completes the proof of Proposition~\ref{prop:G2}
(in the same way that Lemma~\ref{lem:Zsub} completes the proof of
Proposition~\ref{prop:G1}).
\end{proof}

\begin{proof}[Proof of Lemma~\ref{lem:G2}.]
We will assume that $C$ is immersed, and also that each eigenspace
$\mc{B}_i$ is one dimensional.  The argument for the general case
differs only in the amount of notation.

Let $j_k\in T\mc{U}|_J$ denote the tangent vector that is constructed
in the proof of Lemma~\ref{lem:Zsub} for the $k^{th}$ end.  Here we
assume that the constant $r$ used is very large and the constant
$\rho$ used is very small, and that these constants are the same for
each $k$.  (The construction given was for positive ends; there is a
negative end construction that is completely analogous.)  To prove the
lemma, we will show that the set of tangent vectors
$\{dp_J(j_k)\}_{k\in I}$ spans $T{\mathbb P}|_{p(C)}$.

On the $i^{th}$ end we introduce, as in the proof of
Proposition~\ref{prop:G1}, a normalized eigenfunction for $\mc{B}_i$,
in order to view the assignment of $\gamma_i$ to the elements in
$\mc{U}$ as defining a function $\widehat{a}_{\mc{E}_i}:\mc{U}\to\R$.
For each pair $(i,k)\in I\times I$, we now have the following analogue
of equation \eqref{eqn:UAZ}:
\begin{equation}
\label{eqn:UAZ2}
\nabla_{j_k}\widehat{a}_{\mc{E}_i} =
(d\widehat{a}_{\mc{E}_i})|_J \left(D_C^{-1}(j_{kC}) + w_{j_k}\right).
\end{equation}
Here $D_C^{-1}(j_{kC})$ and $w_{j_k}$ are the $j_k$ versions of the
expression on the right hand side of \eqref{eqn:dpsi}.

We now introduce
\[
\widehat{a}_{ik} \eqdef 
(d\widehat{a}_{\mc{E}_i})|_J \left(D_C^{-1}(j_{kC})\right),
\]
and we claim that $(dp)|_J(j_k)$ depends only on $\widehat{a}_{ik}$.
Indeed, this follows from the fact that each $w_{j_k}$ is tangent to
the orbit through $C$ of the $\R$-action that translates curves along
the $\R$ factor in $\R\times Y$.  Since $p$ is invariant under this
action, the $w_{j_k}$ term in \eqref{eqn:UAZ2} contributes nothing to
$dp|_J(j_k)$.  Therefore $dp|_J(j_k)$ is the projection to $T{\mathbb
P}|_{p(C)}$ of the vector $(\widehat{a}_{ik})_{i\in I}\in\mc{B}$.

So, to prove that the set of tangent vectors $\{dp_J(j_k)\}_{k\in I}$
spans $T{\mathbb P}|_{p(C)}$, it is enough to show that the matrix
$(\widehat{a}_{ik})_{i,k\in I}$ has nonzero determinant.  For this
purpose it is enough to show that the diagonal entries are much larger
than the other entries.  Let $i$ and $k$ be distinct ends, and to
simplify notation assume that they are positive.  By equation
\eqref{eqn:da+b}, if $r$ is sufficiently large and $\rho$ is
sufficiently small, then
\begin{equation}
\label{eqn:akkbound}
\widehat{a}_{kk}
 \ge ce^{E_k r}.
\end{equation}
Next let
$g_k$ denote the function $g:[-2,\infty)\times S^1\to\C$ used to construct
$j_k$.  We will show that
\begin{equation}
\label{eqn:aikbound}
|\widehat{a}_{ik}| \le c e^{-E_kr/2}\|g_k\|_2.
\end{equation}
By \eqref{eqn:BDG}, this is much smaller than \eqref{eqn:akkbound} when
$r$ is large and $\rho$ is small.

To prove \eqref{eqn:aikbound}, let $v_k\eqdef D_C^{-1}(j_k)_C$.  Let
$\beta:C\to[0,1]$ be a smooth function which equals $1$ off of the $s
\ge r/2$ part of the $k^{th}$ end, and which on the $k^{th}$ end is a
function of $s$ such that $|\beta'|\le 2$, and $\beta=0$ for $s\ge
r/2+1$.  Define $v_k'\eqdef \beta v_k$.  By definition, $D_Cv_k'$ is
nonzero only on the part of the $k^{th}$ end where $r/2<s<r/2+1$.
Using the bound \eqref{eqn:vkb}, it follows that $\|D_Cv_k'\|_2$ is
bounded by the right side of \eqref{eqn:aikbound}.  In particular, the
$L^2$ norm of $v_k$, over the complement of the $s\ge r/2$ part of the
$k^{th}$ end, is bounded by the right side of
\eqref{eqn:aikbound}.

Using standard elliptic estimates, we deduce pointwise bounds on
$|v_k|$ and $|\nabla v_k|$ on the $s\ge -1$ part of the $i^{th}$ end.
It follows as in \eqref{eqn:decay6} that on the $s\ge 0$ part of the
$i^{th}$ end,
\[
|v_k|, |\nabla v_k| \le c e^{-E_kr/2}\|g_k\|_2 e^{-E_is}.
\]
A virtual repeat of the arguments that lead to \eqref{eqn:IB} now
proves \eqref{eqn:aikbound}.
\end{proof}

\section{Genericity of immersion singularities}
\label{sec:immersed}

This section is devoted to proving:

\begin{theorem}
\label{thm:immersed}
If the admissible almost complex structure $J$ on $\R\times Y$ is
generic, then all non-multiply-covered $J$-holomorphic curves of index
$\le 2$ are immersed.
\end{theorem}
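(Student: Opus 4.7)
The plan is to carry out a universal moduli space argument in the spirit of the proof of Proposition~\ref{prop:G1}. The crucial observation is that the free $\R$-translation action on moduli spaces of $J$-holomorphic curves lowers the virtual dimension of the non-immersed locus by one; this is just enough to push that locus into negative virtual dimension for curves of index $\le 2$.

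First I would define a universal moduli space
\[
\mc{P}_n \eqdef \{(J,C,z): J\in\mc{J},\ C\in\frak{p}_n^{-1}(J),\ \op{index}(C)\le 2,\ z\in C,\ du(z)=0\},
\]
where $\frak{p}_n^{-1}(J)$ is the truncated family of non-multiply-covered $J$-holomorphic curves from the proof of Proposition~\ref{prop:G1}, chosen so that Gromov compactness forces each fiber to be finite for generic $J$. Working with $C^l$ almost complex structures for $l\gg 0$, I would exhibit $\mc{P}_n$ as a smooth Banach submanifold of the ambient universal moduli space of triples $(J,C,z)$. The main analytic step is to prove that the $1$-jet constraint $du(z)=0$ is transversely cut out, i.e., that tangent vectors $j\in T\mc{J}|_J$ produce, via the formula \eqref{eqn:dpsi}, deformations of $C$ whose $1$-jets at $z$ cover all four real directions in $\Hom_{\C}(T_zC,T_{u(z)}(\R\times Y))$. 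Following Step~2 of the proof of Lemma~\ref{lem:Zsub}, I would choose $j\in T\mc{J}|_J$ supported in a small neighborhood of $u(z)$ disjoint from the self-intersection locus $\Lambda$ and the $\xi$-tangency set $\mc{T}$, and vary the associated $(0,1)$ bundle map $j_C$ of \eqref{eqn:jC} over a four-real-dimensional family.

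Granting universal transversality, the dimension count is direct: the space of pairs $(C,z)$ with $C$ of Fredholm index $n$ has real dimension $n+2$; the free $\R$-action (which fixes the marked point $z$ on the domain) cuts this to $n+1$; and $du(z)=0$ is real codimension $4$, since $du(z)$ is complex linear and lies in the complex $2$-dimensional space $\Hom_{\C}(T_zC,T_{u(z)}(\R\times Y))$. Hence the projection $\mc{P}_n/\R\to\mc{J}$ is Fredholm of index $n-3$, and by Sard-Smale, for $J$ in a Baire subset of $\mc{J}$ every fiber has dimension $n-3\le -1$ when $n\le 2$, hence is empty. A Baire intersection over $n$ then yields a Baire set of admissible $J$ on which no non-multiply-covered $J$-holomorphic curve of index $\le 2$ has a critical point.

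The hardest step is the universal transversality argument, as it requires producing enough $J$-perturbations supported near $u(z)$ that respect the admissibility constraints ($\R$-invariance, $J\partial_s={\mathbf R}$, $J\xi\subseteq\xi$), so as to perturb $du(z)$ in all four real directions. This closely parallels the analysis in Lemma~\ref{lem:Zsub}, where a similar four-real-dimensional space of $J$-perturbations was constructed in a neighborhood of an end; the technical work is in controlling the right-inverse $D_C^{-1}$ of the linearized operator near $u(z)$ and using elliptic estimates to certify that the induced change in the $1$-jet at $z$ is nonzero.
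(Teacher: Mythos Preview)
Your high-level strategy---form a universal moduli space of pointed curves with $du(z)=0$, show the constraint is codimension $4$, quotient by the free $\R$-action, and apply Sard--Smale---is exactly the paper's approach, and your dimension count is correct. The gap is in the transversality step: the formula \eqref{eqn:dpsi}, the bundle map $j_C$ of \eqref{eqn:jC}, and the operator $D_C$ all live on the \emph{normal line bundle} of an immersed curve, whereas the curves in $\mc{P}_n$ are precisely those that are \emph{not} immersed at $z$. At such a point there is no normal line bundle, and the analogy with Lemma~\ref{lem:Zsub} breaks down---that lemma's perturbation is supported far out on an end where the curve is immersed and the normal-bundle formalism is valid. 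The paper therefore sets up, in Steps~1--2, a deformation theory for non-immersed maps: tangent vectors to the universal moduli space are triples $(\dot J,\dot j,\dot\varphi)$ with $\dot\varphi$ a section of the rank-two bundle $\varphi^*T(\R\times Y)$ and $\dot j$ a variation of the domain complex structure, governed by the operator $\widetilde{D}_\varphi$ of \eqref{eqn:grfo}. Note also that the $\dot J$-term in the linearized equation \eqref{eqn:satisfying} is $\frac{1}{2}\dot J\circ d\varphi\circ j$, which vanishes at the critical point since $d\varphi=0$ there; so a perturbation of $J$ concentrated at $u(z)$ does not directly move the $1$-jet.

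Consequently the paper's transversality argument is organized quite differently from yours. Rather than choosing $\dot J$ and computing the induced $1$-jet via a right inverse, the paper works backward: it first solves the homogeneous linearized equation on a small disc with \emph{prescribed} $1$-jet at the critical point (Lemma~\ref{lem:eta}), cuts off, and absorbs the cutoff error into a pair $(\dot J,\dot j)$ supported on an annulus $D_\rho$ (Step~7). Two further nontrivial modifications, with no counterpart in your sketch, are then required: Step~8 adjusts $\dot j$ to lie in the chosen slice $T_jB$, compensating via a domain reparametrization $\zeta$ and a shift $\dot u=-\zeta(0)$ of the marked point; and Step~9 cuts off $\dot J$ near the codimension-$1$ locus $\Lambda$ of $\R$-translate intersections so that it extends to an $\R$-invariant admissible perturbation, with the resulting small error controlled by \eqref{eqn:te2}. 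The necessity of both the $\dot j$-variable and the $\dot u$-variable is invisible if one tries to force the problem into the immersed, normal-bundle framework.
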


\begin{proof} The proof has nine steps.

{\em Step 1.\/}
We begin by setting up the deformation theory for $J$-holomorphic
curves in $\R\times Y$ that are not necessarily immersed.

Fix an admissible almost complex structure $J$ on $\R\times Y$.  Let
$\varphi:(C,j)\to\R\times Y$ be a $J$-holomorphic curve, with ends
asymptotic to Reeb orbits as usual. Let $\mc{O}$ denote the set of
Reeb orbits at which $C$ has ends.  Recall that the domain $C$ is a
punctured compact Riemann surface, and $j$ denotes the complex
structure on $C$; thus the equation for $\varphi$ to be
$J$-holomorphic can be written as
\begin{equation}
\label{eqn:Jhol}
\frac{1}{2}\left(d\varphi + J \circ d\varphi \circ j\right) = 0.
\end{equation}
The derivative of this equation with respect to deformations of
$\varphi$ defines a real linear Fredholm operator
\begin{equation}
\label{eqn:ddfo}
D_\varphi : L^2_{1+}(C;\varphi^*TX) \longrightarrow
L^2_{0+}(C;T^{0,1}C\tensor_\C \varphi^* TX),
\end{equation}
where $X\eqdef \R\times Y$.  Here the almost complex structure $J$ is
used to regard $\varphi^*TX$ as a rank $2$ complex vector bundle over
$C$.  Also $L^2_{k+}$ denotes the weighted $L^2_k$ space, using a
weight which on the ends of $C$ is equal to $e^{\varepsilon |s|}$,
where $\varepsilon>0$ is small with respect to the Reeb orbits in the
finite set $\mc{O}$.

Observe that the diagram
\begin{equation}
\label{eqn:dbarcd}
\begin{CD}
L^2_{1+}(C;TC) @>{\dbar}>> L^2_{0+}(C;T^{0,1}C\tensor_\C TC) \\
@VV{d\varphi}V @VV{1\tensor d\varphi}V \\
L^2_{1+}(C;\varphi^*TX) @>{D_\varphi}>> L^2_{0+}(C;T^{0,1}C\tensor_\C
\varphi^*TX).
\end{CD}
\end{equation}
commutes.  In particular, this is why weighted Sobolev spaces are
needed to make the operator \eqref{eqn:ddfo} Fredholm.  Indeed, the
weights are present so as to deal with the fact that the operator
$\dbar$ on a cylinder is not Fredholm as a map from $L^2_1$ to $L^2$,
because the operator $i\partial_t$ has zero modes on the circle.  Note
that the operator $\dbar$ in \eqref{eqn:dbarcd} has zero kernel,
except when $C$ is a plane in which case the kernel has dimension $1$
over $\C$.

Let $B$ be the image of a smooth embedding of a ball into the space of
complex structures on $C$ with the following two properties: First,
$j\in B$, and each $j'\in B$ agrees with $j$ outside of a compact
subset of $C$.  Second, $T_jB$ projects isomorphically to the cokernel
of the operator $\dbar$ in \eqref{eqn:dbarcd}.  This condition makes
sense because a tangent vector $\dot{j}\in T_jB$ defines a smooth,
compactly supported bundle endomorphism of $TC$ satisfying
$j\dot{j}+\dot{j}j=0$.

Differentation of equation \eqref{eqn:Jhol} with respect to
deformations of $\varphi$ and $j$ gives rise to a Fredholm operator
\begin{equation}
\label{eqn:grfo}
\widetilde{D}_{\varphi}: T_jB \oplus L^2_{1+}(C;\varphi^*TX)
\longrightarrow L^2_{0+} (C;T^{0,1}C\tensor_\C \varphi^*TX)
\end{equation}
defined by
\[
\widetilde{D}_{\varphi}(\dot{j},\dot{\varphi}) \eqdef
D_\varphi\dot{\varphi} + \frac{1}{2} J \circ d\varphi \circ \dot{j}.
\]
When $C$ is a plane, we implicitly use \eqref{eqn:dbarcd} to regard
$\widetilde{D}_{\varphi}$ instead as a Fredholm operator on $T_jB
\oplus L^2_{1+}(C;\varphi^*TX) / d\varphi(\Ker(\dbar))$.  The curve
$\varphi$ is said to be ``unobstructed'' if the operator
$\widetilde{D}_{\varphi}$ is surjective.
Standard arguments show that if $J$ is generic, then all
non-multiply-covered $J$-holomorphic curves are unobstructed in this
sense.

{\em Step 2.\/} We now explain why if $\varphi$ is unobstructed in the
above sense, then the kernel of $\widetilde{D}_{\varphi}$ is naturally
identified with the tangent space at $\varphi$ to the moduli space of
$J$-holomorphic curves where $J$ is fixed.

To define the correspondence, let
$(\dot{j},\dot{\varphi})\in\Ker(\widetilde{D}_{\varphi})$.  Since
$\widetilde{D}_{\varphi}$ is surjective, the implicit function theorem
can be used in the usual way to find a smooth family of
$J$-holomorphic maps $\varphi_t:(C,j_t)\to\R\times Y$ parametrized by
$t$ in a neighborhood of $0$, with $(j_0,\varphi_0)=(j,\varphi)$,
satisfying the following two properties: First,
$(dj_t/dt)|_{t=0}=\dot{j}$.  Second, $\varphi_t=\exp_\varphi(v_t)$
where $\exp:TX\to X$ is the exponential map determined by some
$\R$-invariant metric on $X$, while $v_t$ is a smooth $L^2_{1+}$
section of $\varphi^*TX$, and $(dv_t/dt)|_{t=0}=\dot{\varphi}$.  Since
$\dot{\varphi}$ is in $L^2_{1+}$, the considerations in
\S\ref{sec:decay} show that there is a constant $c>0$ depending only
on the Reeb orbits in the finite set $\mc{O}$, such that
$|\dot{\varphi}|\le e^{-c|s|}$ for large $|s|$.  Consequently each
curve $\varphi_t:(C,j_t)\to\R\times Y$ is still in the moduli space
$\mc{M}^J$.  There is then a well-defined map
$\Phi:\Ker(\widetilde{D}_{\varphi})\to T_{\varphi}\mc{M}^J$ sending
$(\dot{j},\dot{\varphi})$ to the derivative of the family of
holomorphic curves $\varphi_t:(C,j_t)\to\R\times Y$ with respect to
$t$ at $t=0$.

To show that $\Phi$ is surjective, consider a smooth family of
holomorphic curves $\varphi_t:(C,j_t)\to\R\times Y$ parametrized by
$t$ in a neighborhood of $0$ with $(j_0,\varphi_0)=(j,\varphi)$.  By
reparametrization of the holomorphic curves $\varphi_t$, we can
arrange that each $j_t$ agrees with $j$ outside of a compact subset of
$C$.  This is because the pairs $(C,j_t)$ are punctured compact
Riemann surfaces, and the complex structures on the corresponding
closed surfaces are locally diffeomorphic near the punctures.  Now
define $\dot{j} \eqdef (dj_t/dt)|_{t=0}$ and $\dot{\varphi}\eqdef
(d\varphi_t/dt)|_{t=0}$.  These are smooth sections over $C$ of
$T^{0,1}C\tensor_{\C}TC$ and $T^{0,1}C\tensor_{\C}\varphi^*TX$
respectively.  Differentiation of the equation $J\circ d\varphi_t =
d\varphi_t\circ j_t$ with respect to $t$ at $t=0$ shows that
\begin{equation}
\label{eqn:ts1}
D_{\varphi}\dot{\varphi} + \frac{1}{2}J\circ d\varphi \circ \dot{j}=0.
\end{equation}
By construction $\dot{j}$ is compactly supported. To describe the
asymptotic behavior of $\dot{\varphi}$, note that on the ends of $C$,
the map $\varphi$ is an immersion by \cite{siefring}, so we have a
spitting $\varphi^*TX=TC\oplus N$ where $N$ denotes the normal bundle
to $C$, and the splitting is defined using an $\R$-invariant metric on
$X$ which is preserved by $J$.  By a further reparametrization of the
holomorphic curves $\varphi_t$, we can arrange that the $TC$ component
of $\dot{\varphi}$ is $0$ on the ends of $C$.  Meanwhile, the analysis
from \S\ref{sec:decay} shows that the $N$ component of $\dot{\varphi}$
is bounded from above by $e^{-c|s|}$ on the ends.  So $\dot{j}$ and
$\dot{\varphi}$ are in $L^2_{1+}$, provided that the constant
$\varepsilon>0$ used to define the spaces $L^2_{k+}$ is chosen smaller
than $c$.  Since the projection of $T_jB$ to the cokernel of the
operator $\dbar$ in \eqref{eqn:dbarcd} is surjective, there is a pair
$(b,v)\in T_jB \oplus L^2_{1+}(C;TC)$ such that
\begin{equation}
\label{eqn:ts2}
\dbar v + b = \dot{j}.
\end{equation}
Since the diagram \eqref{eqn:dbarcd} commutes, it follows from
\eqref{eqn:ts1} and \eqref{eqn:ts2} that
\[
\left(b, \dot{\varphi} + \frac{1}{2}j\dbar v\right) \in
\Ker(\widetilde{D}_{\varphi}).
\]
And $\Phi$ sends the above to the tangent vector
$(d(j_t,\varphi_t)/dt)|_{t=0}$.

To show that $\Phi$ is injective, let
$(\dot{j},\dot{\varphi})\in\Ker(\widetilde{D}_\varphi)$, and suppose
that the corresponding tangent vector to the moduli space is zero.
This means that there is a smooth one-parameter family of holomorphic
curves $\varphi_t:(C,j_t)\to\R\times Y$ parametrized by $t$ in a
neighborhood of $0$ with $(j_0,\varphi_0)=(j,\varphi)$ and
$(dj_t/dt)|_{t=0}=\dot{j}$ and $(d\varphi_t/dt)|_{t=0}=\dot{\varphi}$,
such that the holomorphic curves $\varphi_t$ are all equivalent to
each other: that is, there is a one-parameter family of
diffeomorphisms $\psi_t:C\to C$ with $\varphi_t=\varphi\circ\psi_t$
and $d\psi_t\circ j_t = j\circ\psi_t$.  Define $\eta \eqdef
(d\psi_t/dt)|_{t=0}$; this is a smooth, $L^2_{1+}$ section of $TC$
satisfying $\dot{\varphi}=d\varphi(\eta)$.  Since the diagram
\eqref{eqn:dbarcd} commutes, and since $J\circ d\varphi = d\varphi
\circ j$, the equation $\widetilde{D}_\varphi(\dot{j},\dot{\varphi})$
becomes
\[
d\varphi\circ\left(\dbar\eta + \frac{1}{2} j \circ \dot{j}\right) = 0.
\]
Therefore
\[
\dbar(2j\eta) = \dot{j}
\]
wherever $d\varphi\neq 0$; and by continuity this equation holds on
all of $C$.  Since the projection of $TB$ to the cokernel of $\dbar$
is injective, it follows that $\dot{j}=0$ and $\dbar\eta=0$.  Thus
$(\dot{j},\dot{\varphi})$ is equivalent to $0$.

{\em Step 3.\/} We now set up the proof of Theorem~\ref{thm:immersed}.

Fix an admissible $J$, and assume that $J$ is generic so that all
non-multiply-covered $J$-holomorphic curves are unobstructed.  Let
$\varphi:(C,j)\to\R\times Y$ be a non-multiply-covered $J$-holomorphic
curve of index $\le 2$.  Assume that $\varphi$ is not an immersion; in
particular this implies that it is not $\R$-invariant.  Define $B$ as
in Step 1.  Fix $\delta>0$ small and $l>>2$, and let $\mc{U}$ be a
small neighborhood of $J$ in the space of $C^l$ admissible almost
complex structures on $\R\times Y$ that agree with $J$ where the
distance is less than $\delta$ to any of the Reeb orbits corresponding
to the ends of $C$.  There is a smooth, universal family
$\mc{C}\to\mc{U}$ whose fiber over $J'\in\mc{U}$ consists of pairs
$(j',\varphi')$, where $j'\in B$ and $\varphi':(C,j')\to \R\times Y$
is a $J'$-holomorphic map near $\varphi$, which is the composition of
the exponential map with an $L^2_{2+}$ section of $\varphi^*TX$.  When
$C$ is a plane, we require the latter section to be $L^2$ orthogonal
to $d\varphi(\Ker(\dbar))$.  We can choose the neighborhood $\mc{U}$
to be small enough so that the fiber of $\mc{C}$ over each
$J'\in\mc{U}$ consists of unobstructed curves, and in particular is a
manifold of dimension $\le 2$.  Note that $\R$ acts freely on
$\mc{C}$, by composing $\varphi'$ with translations of $\R\times Y$;
and the projection to $\mc{U}$ is invariant under this action.

Let $u_0\in C$ be a point where $\varphi$ is not an immersion; since
$\varphi$ is $J$-holomorphic, this means that the differential
$d\varphi_{u_0}=0$.  Let $D\subset C$ be a small disc containing
$u_0$, such that $\varphi$ is an embedding on the closure of $D$ minus
the origin.  Let $\mc{Z}\subset\mc{C}\times D$ denote the locus of
points $(J',j',\varphi',u)\in\mc{C}\times D$ such that
$d\varphi'_{u}=0$.  Note that the $\R$ action on $\mc{C}$, crossed
with the identity on $D$, sends $\mc{Z}$ to itself.

To prove Theorem~\ref{thm:immersed}, we will show that if
$\delta$ is sufficiently small, then $\mc{Z}$ is a codimension 4
submanifold of $\mc{C}\times D$.  Granted this, then $\mc{Z}/\R$ is a
codimension 4 submanifold of $(\mc{C}/\R)\times D$.  Since the fibers
of the projection $(\mc{C}/\R)\times D\to\mc{U}$ have dimension at
most 3, the usual Sard-Smale argument can be used to deduce that there
is an open dense subset of $\mc{U}$ whose inverse image in
$\mc{C}\times D$ is disjoint from $\mc{Z}$.  As in the beginning of
the proof of Proposition~\ref{prop:G1}, it then follows that there is
a Baire set of admissible almost complex structures satisfying the
condition in Theorem~\ref{thm:immersed}.

\medskip

{\em Step 4.\/} To prepare for the proof that $\mc{Z}$ is a
codimension 4 submanifold of $\mc{C}\times D$, we now choose
convenient local coordinates $(s,t,x,y)$ on a neighborhood of
$\varphi(u_0)$ in $\R\times Y$.  These coordinates will be defined for
$|t|, |x|, |y|$ small, and will have the following properties:
\begin{description}
\item{(1)}
 $\varphi(u_0)$ corresponds to $(0,0,0,0)$.
\item{(2)}
$\partial_s$ is the derivative of the $\R$ action on $\R\times Y$, and
  $J\partial_s=\partial_t$.
\item{(3)}
$\partial_x$ and $\partial_y$ are tangent to the contact plane field
  at $(x,y)=(0,0)$.
\item{(4)}
$J\partial_x=\partial_y$
  at $t=0$.
\end{description}

To find such coordinates, first note that there exists a
$J$-holomorphic embedding $\imath:\Delta\subset\R\times Y$, where
$\Delta$ is a neighborhood of the origin in $\C$, such that
$\imath(0)=\varphi(u_0)$, and $d\imath_0$ maps $T_0\C$ to the contact
plane field at $\varphi(u_0)$.  Write the holomorphic coordinate on
$\Delta$ as $w=x+iy$.  The desired coordinates are now described by a
map
\[
\Phi:\R\times (-\delta_0,\delta_0) \times \Delta \longrightarrow \R\times
Y,
\]
where $\Phi(s,t,w)$ is obtained by starting with the point
$\imath(w)\in\R\times Y$, translating the $\R$ coordinate by $s$, and
then flowing along the Reeb vector field ${\mathbf R}$ for time $t$.
If $\delta_0>0$ is small, then $\Phi$ is an embedding.

These coordinates satisfy property (1), and the first part of property
(2), by construction.  The second part of property (2) holds because
the admissible almost complex structure $J$ is required to satisfy
$J\partial_s={\mathbf R}$.  Property (3) holds for $t=0$ by
construction, and for general $t$ because the Lie derivatives with
respect to ${\mathbf R}$ of $\partial_x$, $\partial_y$, and the
contact form all vanish.  Property (4) holds for $s=0$ by
construction, and for general $s$ because $J$ is $\R$-invariant.

\medskip

{\em Step 5.\/} We now write down equations for $\mc{Z}$ to be a
codimension 4 submanifold of $\mc{C}\times D$.

To start, in the coordinate chart from Step 4, if we write $z\eqdef
s+it$, then a basis for the $J$ version of $T^{1,0}(\R\times Y)$ is
given by
\[
e_0 = dz +
\gamma\,d\wbar,
\quad\quad 
e_1 = dw + \sigma\, d\wbar,
\]
where $\gamma$ and $\sigma$ vanish at $t=0$.  Likewise, if
$J'\in\mc{U}$, then because $J'\partial_s=\partial_t$, a basis for the
$J'$ version of $T^{1,0}(\R\times Y)$ is given by
\begin{equation}
\label{eqn:gpsp}
e_0' = dz + \gamma'\,d\wbar,
\quad\quad
e_1' = dw + \sigma'\,d\wbar
\end{equation}
where $\gamma'$ and $\sigma'$ no longer necessarily vanish at $t=0$.

Next, let us choose the set $B$ of complex structures on $C$ from Step
1 so that all $j'\in B$ agree with $j$ on the disc $D\subset C$.
Also, choose $D$ sufficiently small so that $\varphi(D)$ is contained
in the coordinate chart from Step 4.  Fix a holomorphic local
coordinate $u$ on $D$, with $u=0$ corresponding to $u_0$.  Now let
$(J',j',\varphi')\in\mc{C}$.  Then the equations for $\varphi'$ to be
$J'$-holomorphic on $D$ can be written as follows.  On $D$, in our
local coordinates, write $\varphi'(u)=(z'(u),w'(u))$.  Then $\varphi'$
is $J'$-holomorphic on $D$ if and only if it pulls back $e_0'$ and
$e_1'$ to multiples of $du$, which means that
\begin{equation}
\label{eqn:varphi'}
\dbar z' + 
\gamma' \dbar\overline{w'} = 0, \quad
  \quad
\dbar w' + \sigma'\dbar\overline{w'}=0.
\end{equation}
Here $\gamma'$ and $\sigma'$ are shorthand for $(\varphi')^*\gamma'$
and $(\varphi')^*\sigma'$ respectively.

It follows using \eqref{eqn:varphi'} that $\mc{Z}$ is the zero locus
of the smooth function $f:\mc{C}\times D \to \C^2$ defined by
\begin{equation}
\label{eqn:f}
f(J',j',\varphi',u) \eqdef \left(
\partial_u z' + 
\gamma'{\partial_u\overline{w'}},\,
{\partial_u w'} + \sigma'{\partial_u
  \overline{w'}}
\right)\big|_u.
\end{equation}
Thus to prove that $\mc{Z}$ is a codimension 4 submanifold of
$\mc{C}\times D$ near our given point $(J,j,\varphi,0)\in\mc{Z}$, by
the implicit function theorem it is enough to show that the
differential
\begin{equation}
\label{eqn:df}
df_{(J,j,\varphi,0)}:T_{(J,j,\varphi,0)}(\mc{C}\times D)
\longrightarrow \C^2
\end{equation}
is surjective.

\medskip

{\em Step 6.\/} We now give an explicit description of the tangent
space $T_{(J,j,\varphi)}\mc{C}$ and the differential \eqref{eqn:df},
in preparation for showing that \eqref{eqn:df} is surjective.

In the notation from Step 1, a tangent vector in
$T_{(J,j,\varphi)}\mc{C}$ is equivalent to a triple
$(\dot{J},\dot{j},\dot{\varphi})$, where:
\begin{description}
\item
{(i)} $\dot{J}$ is a $C^l$ bundle endomorphism of $TX$ which satisfies
$J\dot{J}+\dot{J}J=0$, annihilates $\partial_s$ and ${\mathbf R}$, and
maps the contact plane field $\xi$ to itself.
\item{(ii)} $\dot{j}$ is a smooth, compactly supported bundle
  endomorphism of $TC$ satisfying $j\dot{j}+\dot{j}j=0$.
\item{(iii)} $\dot{\varphi}$ is an $L^2_{1+}$ section of $\varphi^*TX$
over $C$ satisfying
\begin{equation}
\label{eqn:satisfying}
D_\varphi \dot{\varphi} + \frac{1}{2}\left(
\dot{J}\circ d\varphi \circ j + J \circ d\varphi \circ
\dot{j}\right)=0.
\end{equation}
When $C$ is a plane, we declare two such sections $\dot{\varphi}$ to
be equivalent if their difference is in $d\varphi(\Ker(\dbar))$.
\item{(iv)}
$\dot{j}\in T_jB$.
\item{(v)} $\dot{J}$ is invariant under $\R$ translation and vanishes
within distance $\delta$ of the Reeb orbits in $\mc{O}$.
\end{description}
Of course condition (ii) follows from condition (iv); but our
construction later will obtain condition (ii) before obtaining
condition (iv).

In the local coordinates from Step 4, these data appear as follows.
By condition (i) above, $\dot{J}$ determines a pair of functions
$(\dot{\gamma},\dot{\sigma})$, which describe the respective changes
in $\gamma$ and $\sigma$.  Likewise, $\dot{\varphi}$ determines a pair
of functions $(\dot{z},\dot{w})$ on $D$.  With this notation, equation
\eqref{eqn:satisfying} on $D$ is equivalent to the equations obtained
by differentiating \eqref{eqn:varphi'}, which have the form
\begin{equation}
\label{eqn:tfs}
\begin{split}
\dbar\dot{z} + 
\gamma\dbar\overline{\dot{w}} +
e_\gamma(\dot{z},\dot{w})
&= - \dot{\gamma}\dbar\wbar,\\
\dbar\dot{w} + 
\sigma\dbar\overline{\dot{w}} +
e_\sigma(\dot{z},\dot{w})
&= - \dot{\sigma}\dbar\wbar.
\end{split}
\end{equation}
Here $e_\gamma$ and $e_\sigma$ are linear functions of $\dot{z}$ and
$\dot{w}$, arising from the derivatives of $\gamma$ and $\sigma$; and
$(z,w)$ denotes the pair of functions determined by $\varphi$.

In the above notation, it follows from \eqref{eqn:f}, with the help of
\eqref{eqn:varphi'}, that the differential \eqref{eqn:df} is given as
follows: If $\dot{u}\in T_0 D$, then
\begin{equation}
\label{eqn:dfe}
df_{(J,j,\varphi,0)}(\dot{J},\dot{j},\dot{\varphi},\dot{u}) =
\left(\partial_u\dot{z},\,
\partial_u\dot{w}\right)\big|_{u=0}
+
(\partial_u\partial_u z,\partial_u\partial_u w)\big|_{u=0} du(\dot{u}).
\end{equation}
To prove that the differential \eqref{eqn:df} is surjective, we will
show that for any $\eta=(\eta_z,\eta_w)\in\C^2$, there exists
$(\dot{J},\dot{j},\dot{\varphi})\in T_{(J,j,\varphi)}\mc{C}$ and
$\dot{u}\in T_0D$ such that
\begin{equation}
\label{eqn:wanteta}
\left(\partial_u\dot{z},\, \partial_u\dot{w}\right)\big|_{u=0} +
(\partial_u\partial_u z,\partial_u\partial_u w)\big|_{u=0} du(\dot{u})
= (\eta_z,\eta_w).
\end{equation}

\medskip

{\em Step 7.\/} We now begin the construction of
$(\dot{J},\dot{j},\dot{\varphi})\in T_{(J,j,\varphi)}\mc{C}$ and
$\dot{u}\in T_0D$ satisfying \eqref{eqn:wanteta}.

\begin{lemma}
\label{lem:eta}
There exist constants $\rho_0>0$ and $c$ such that if $D$ has radius
$\rho < \rho_0$, then given $\eta=(\eta_z,\eta_w)\in\C^2$, there exist
smooth functions $\dot{z},\dot{w}:D\to \C$ such that:
\begin{itemize}
\item
The equations \eqref{eqn:tfs} hold with $\dot{\gamma}=\dot{\sigma}=0$.
\item
Equation \eqref{eqn:wanteta} holds with $\dot{u}=0$.
\end{itemize}
\end{lemma}

\begin{proof}
We will first solve \eqref{eqn:tfs} with
$\dot{\gamma}=\dot{\sigma}=0$, and then explain how to solve
\eqref{eqn:wanteta} with $\dot{u}=0$ as well.

Write $\zeta\eqdef(\dot{z},\dot{w})$ and $\eta\eqdef(\eta_z,\eta_w)$.
The disc of radius $\rho$ can be identified with a disc of radius $1$
so that on the disc of radius $1$, the equations \eqref{eqn:tfs} with
$\dot{\gamma}=\dot{\sigma}=0$ have the form
\begin{equation}
\label{eqn:rescaled}
\dbar_u\zeta + \Theta \zeta = 0,
\end{equation}
where $\Theta$ is $\R$-linear and satisfies $|\Theta|,|d\Theta|<c\rho$.  It is
enough to solve this equation on the disc of radius $1/2$, for which
purpose we can assume that $\Theta$ vanishes outside of the disc of
radius $3/4$ and has derivative bounded by $c\rho$.  Now write
$\zeta=\zeta_0+\Delta$, where $\zeta_0\eqdef \eta u$; then equation
\eqref{eqn:rescaled} becomes
\begin{equation}
\label{eqn:Delta}
\dbar_u\Delta + \Theta\Delta + \Theta\zeta_0 = 0.
\end{equation}
If $\rho>0$ is sufficiently small, then the contraction mapping
theorem finds a unique continuous function $\Delta$ on $D$ satisfying
\begin{equation}
\label{eqn:ucd}
\Delta(u) = \frac{1}{\pi} \int_{|v|\le 1}\frac{1}{u-v}\left(\Theta\Delta
+ \Theta\zeta_0\right) |_{v} \, d^2v,
\end{equation}
and this $\Delta$ will satisfy \eqref{eqn:Delta}.  Moreover, it
follows from \eqref{eqn:ucd} that
\[
|\Delta(u)| \le c\rho(\sup|\Delta| + |\eta|),
\]
where $|\eta| \eqdef \sqrt{|\eta_z|^2 + |\eta_w|^2}$.  Thus
\begin{equation}
\label{eqn:DeltaBound}
|\Delta(u)| \le c\rho|\eta|(1-c\rho)^{-1}.
\end{equation}
Differentiating \eqref{eqn:ucd}, and using \eqref{eqn:DeltaBound} and
our assumptions on $\Theta$, gives a similar bound on
$|\partial_u\Delta|$.  As a result, if $\rho>0$ is
sufficiently small then
\[
\bigg|\partial_u\zeta|_{u=0} - \eta\bigg| \le \frac{1}{2}|\eta|.
\]
Since the set
\[
\left\{\partial_u\zeta|_{u=0} \mid \mbox{$\zeta$ solves
  \eqref{eqn:rescaled}}\right\}
\]
is a real linear subspace of $\C^2$, it follows that this set is all
of $\C^2$.  This proves the lemma.
\end{proof}

Now fix a smooth function $\beta:\C\to[0,1]$ with $\beta(u)=0$ for
$|u|\ge 1$ and $\beta(u)=1$ for $|u|\le 1/2$.  Given $\rho>0$, define
$\beta_\rho:\C\to[0,1]$ by $\beta_\rho(u)\eqdef \beta(\rho^{-1}u)$.

Next, given $\eta=(\eta_z,\eta_w)\in\C^2$, let $\rho>0$ be small, and
let $(\dot{z}_\eta,\dot{w}_\eta)$ denote the pair of functions
provided by Lemma~\ref{lem:eta}.  Take
\[
\dot{z} \eqdef \beta_\rho\dot{z}_\eta, \quad\quad \dot{w} \eqdef
\beta_\rho\dot{w}_\eta,
\]
and extend these to a section $\dot{\varphi}$ of $\varphi^*TX$ over
$C$ by declaring $\dot{\varphi}$ to be zero on the complement of $D$.
Then over the disc of radius $\rho/2$, equation \eqref{eqn:satisfying}
holds with $\dot{J}=\dot{j}=0$.

Let $D_\rho$ denote the portion of $D$ where the radius is between
$\rho/2$ and $\rho$.  If $\rho$ is sufficiently small, then the
restriction of $\varphi$ to $D_\rho$ does not hit the finite set of
points in $C$ where $\varphi(C)$ is tangent to $\xi$ or to the span of
vectors $\partial_s$ and ${\mathbf R}$. It then follows as in the
proof of Lemma~\ref{lem:Zsub}, Step 2, that there exist $\dot{J}$ and
$\dot{j}$ such that:
\begin{itemize}
\item
The quadruple
$(\dot{J},\dot{j},\dot{\varphi},0)$ satisfies conditions (i)--(iii)
above, as well as equation \eqref{eqn:wanteta}.
\item
$\dot{j}$
and the restriction of $\dot{J}$ to $C$ are supported in $D_\rho$.
\end{itemize}
It proves convenient later to also choose $\rho$ sufficiently small so
that:
\begin{itemize}
\item
The restriction of $\varphi$ to $D_\rho$ does not hit the finite set
of points in $C$ where $\varphi(C)$ intersects the Reeb orbits in
$\mc{O}$.
\end{itemize}

\medskip

{\em Step 8.\/} We now modify the quadruple
$(\dot{J},\dot{j},\dot{\varphi},0)$ so as to also satisfy condition
(iv), namely that $\dot{j}\in T_jB$, while still satisfying conditions
(i)--(iii) and equation \eqref{eqn:wanteta}.

The idea here is that one could regard $f$ as a function defined on a
larger space, where the complex structure on $C$ is not required to be
in $B$, such that $f$ is invariant under an appropriate equivalence
relation; then by moving along an appropriate slice in this larger
space we can obtain $\dot{j}\in T_jB$ without changing $df$.  This
works concretely as follows.  By definition, $T_jB$ projects
isomorphically onto the cokernel of the operator
\[
\dbar: L^2_{1+}(C;TC) \longrightarrow L^2_{0+}(C;T^{0,1}C\tensor_\C TC).
\]
Hence there exists a tangent vector $\dot{j}_0\in T_jB$, and a section
$\zeta\in L^2_{1+}(C;TC)$, such that
\begin{equation}
\label{eqn:jdz}
2j\circ\dbar\zeta + \dot{j}_0 = \dot{j}
\end{equation}
as bundle automorphisms of $TC$.  Let $\dot{\varphi}_0 \eqdef
d\varphi\circ\zeta$.  To achieve condition (iv), replace the quadruple
$(\dot{J},\dot{j},\dot{\varphi},0)$ with
\begin{equation}
\label{eqn:newquad}
\left(\dot{J},\dot{j}_0,
\dot{\varphi}+ \dot{\varphi}_0,-\zeta(0)\right).
\end{equation}
We need to check that the new quadruple \eqref{eqn:newquad} still satisfies
equations \eqref{eqn:satisfying} and \eqref{eqn:wanteta}.

To verify equation \eqref{eqn:satisfying}, use the commutativity of
the diagram \eqref{eqn:dbarcd}, the fact that $\varphi$ is
$J$-holomorphic, and equation \eqref{eqn:jdz}, to find that
\[
\begin{split}
D_\varphi(\dot{\varphi}_0) &=  d\varphi \circ \dbar\zeta \\
&= - J \circ d\varphi \circ j\dbar\zeta \\
&= J \circ d\varphi \circ
\frac{1}{2}(\dot{j}_0 - \dot{j}).
\end{split}
\]
It follows from this that the new quadruple still satisfies equation
\eqref{eqn:satisfying}.

To prove that the new quadruple \eqref{eqn:newquad} still satisfies
equation \eqref{eqn:wanteta}, write $\dot{\varphi}_0 =
(\dot{z}_0,\dot{w}_0)$ on $D$.  We need to show that
\begin{equation}
\label{eqn:stilleta}
(\partial_u\dot{z}_0, \partial_u\dot{w}_0)\big|_{u=0} =
(\partial_u\partial_uz, \partial_u\partial_u w)\big|_{u=0} du(\zeta(0)).
\end{equation}
It follows from the definition of $\dot{\varphi}_0$ and the
equations \eqref{eqn:varphi'} that
\[
(\dot{z}_0,\dot{w}_0) = (\partial_u z,\partial_u w)du(\zeta) -
(\gamma\partial_{\ubar}\wbar, \sigma\partial_{\ubar}\wbar) d\ubar(\zeta).
\]
Since $\partial_uz$, $\partial_uw$, $\gamma$, and $\sigma$ all vanish
at $u=0$, equation \eqref{eqn:stilleta} follows.

\medskip

{\em Step 9.\/}  The previous steps constructed a quadruple
$(\dot{J},\dot{j},\dot{\varphi},\dot{u})$ satisfying conditions
(i)--(iv) and equation \eqref{eqn:wanteta};  we now modify this
quadruple so as to also satisfy condition (v).

Recall from Step 7 that the restriction of $\dot{J}$ to $\varphi(C)$,
call it $\dot{J}_C$, is supported inside $D_\rho$.  Only $\dot{J}_C$
enters into equation \eqref{eqn:satisfying}; so we just need to modify
$\dot{J}_C$ (while changing the other data
$\dot{j},\dot{\varphi},\dot{u}$ as appropriate) so that $\dot{J}_C$
has an extension over $X$ satisfying conditions (i) and (v).

For this purpose, let $\Lambda\subset D$ denote the set of points
$u\in D$ such that $\varphi(u)$ intersects the $s\mapsto s - s_0$
translate of $\varphi$ for some
$s_0\in[-\infty,+\infty]\setminus\{0\}$.  As in the proof of
Lemma~\ref{lem:Zsub} Step 2, $\Lambda$ is a closed codimension $1$
subvariety of $D$.  Let $\varepsilon>0$ be small, and let
$\chi:D\to[0,1]$ be a smooth function which is $1$ where the distance
to $\Lambda$ is $\ge 2\varepsilon$ and $0$ where the distance to
$\Lambda$ is $\le \varepsilon$.  We now replace $\dot{J}_C$ by
$\chi\dot{J}_C$.  Note that $\chi\dot{J}_C$ still satisfies
condition (i) on $D$, because condition (i) is a system of homogeneous
linear equations for $\dot{J}$.  Furthermore, if $\delta>0$ is
sufficiently small then $\chi\dot{J}_C$ has an extension over $X$
satisfying (i) and (v); pick such an extension and call it $\dot{J}'$.

We now modify $\dot{j}$ and $\dot{\varphi}$ to restore equation
\eqref{eqn:satisfying}.  Since the operator $\widetilde{D}_\varphi$ is
surjective, there exists $\dot{j}_1\in T_jB$, and an $L^2_{1+}$ section
$\dot{\varphi}_1$ of $\varphi^*TX$, such that
\begin{equation}
\label{eqn:te1}
D_\varphi\dot{\varphi}_1 + \frac{1}{2}J \circ d\varphi \circ \dot{j}_1
= (\chi-1)\dot{J}_C.
\end{equation}
Moreover, these can be chosen so that
\begin{equation}
\label{eqn:te2}
\|\dot{\varphi}_1\|_{L^2_1}
\le
c\|(\chi-1)\dot{J}_C\|_{L^2},
\end{equation}
where $c$ is a constant which does not depend on $\varepsilon$.

It follows from \eqref{eqn:te1} that if we define $\dot{j}' \eqdef
\dot{j} + \dot{j}_1$ and $\dot{\varphi}' \eqdef \dot{\varphi} +
\dot{\varphi}_1$, then the triple $(\dot{J}',\dot{j}',\dot{\varphi}')$
satisfies equation \eqref{eqn:satisfying}, and hence all of the
conditions (i)--(v).

The quadruple $(\dot{J}',\dot{j}',\dot{\varphi}',\dot{u})$ might not
satisfy equation \eqref{eqn:wanteta}.  Rather, if we define
\[
(\eta_z',\eta_w') \eqdef
df_{(J,j,\varphi,0)}(\dot{J}',\dot{j}',\dot{\varphi}',\dot{u}),
\]
and if we write $\dot{\varphi}_1=(\dot{z}_1,\dot{w}_1)$ on $D$, then
it follows from equation \eqref{eqn:dfe} that
\begin{equation}
\label{eqn:mas}
(\eta_z',\eta_w') - (\eta_z,\eta_w) =
(\partial_u\dot{z}_1,\partial_u\dot{w}_1)\big|_{u=0}.
\end{equation}
To handle this discrepancy, note that by taking $\varepsilon$
sufficiently small, we can make the support of $(\chi-1)\dot{J}_C$ have
arbitrarily small measure, and so by \eqref{eqn:te2} we can make
$\dot{\varphi}_1$ have arbitrarily small $L^2_1$ norm.  It then
follows by elliptic regularity, as in Lemma~\ref{lem:ER}, that we can
make the expression in \eqref{eqn:mas} have arbitrarily small norm.

We conclude from the above discussion that for any nonzero
$(\eta_z,\eta_w)\in \C^2$, we can find
$(\dot{J},\dot{j},\dot{\varphi},\dot{u})\in
T_{(J,j,\varphi,0)}(\mc{C}\times D)$ such that
\[
\left| df_{(J,j,\varphi,0)} (\dot{J},\dot{j},\dot{\varphi},\dot{u}) -
(\eta_z,\eta_w) \right | \le \frac{1}{2} \left|(\eta_z,\eta_w)\right|.
\]
Since $df_{(J,j,\varphi,0)}$ is linear, it follows that it is
surjective.  This completes the proof of Theorem~\ref{thm:immersed}.
\end{proof}

\section{The gluing construction}
\label{sec:gluing}

As in \S\ref{sec:SMR}, fix a generic $J$ such that all
non-multiply-covered $J$-holomorphic curves are unobstructed, and let
$(U_+,U_-)$ be a gluing pair as in Definition~{I.1.9} satisfying the
additional assumptions \eqref{eqn:i} and \eqref{eqn:ii}.  Let $\mc{M}$
denote the moduli space of branched covers of $\R\times S^1$ as
specified in \eqref{eqn:specifyM}.

In this section we explain a construction for gluing $U_+$ and $U_-$
to a $J$-holomorphic curve by patching an element of $\mc{M}$ between
them.  This procedure finds such a gluing for each zero of a certain
section $\frak{s}$ of the obstruction bundle $\mc{O}$ from \S{I.2.3}
over a certain open subset of $\R^2\times \mc{M}$.  As a result, we
obtain a ``gluing map'' $G$ from $\frak{s}^{-1}(0)$ to the moduli
space of $J$-holomorphic curves $\mc{M}^J(\alpha_+,\alpha_-)$.

\subsection{Preliminaries}
\label{sec:glP}

\paragraph{(i)}
It follows from the definition of gluing pair that $U_\pm$ consists of
an immersed, non-multiply-covered, index $1$ component $u_\pm$,
together with a union $v_\pm$ of unbranched covers of $\R$-invariant
cylinders.  Index the negative ends of $U_+$ such that the negative
ends of $u_+$ are indexed by $1,\ldots,\overline{N}_+$, while the
negative ends of $v_+$ are indexed by $\overline{N}_++1,\ldots,N_+$.
Similarly, index the positive ends of $U_-$ such that the positive
ends of $u_-$ are indexed by $-1,\ldots,-\overline{N}_-$, while the
positive ends of $v_-$ are indexed by $-\overline{N}_--1,\ldots,-N_-$.
(In fact Lemma I.3.7 implies that $\overline{N}_+ \ge N_+-1$ and
$\overline{N}_-\ge N_--1$, but we will not need this.)

\paragraph{(ii)}
Fix an ``exponential map'' $e:\R\times S^1\times D \to \R\times Y$ for
$\alpha$ as given by Lemma~\ref{lem:EM}.  This $e$ defines coordinates
$(s,t,w)$ on a tubular neighborhood of $\R\times\alpha$ in $\R\times
Y$.  Fix $\delta_0>0$ sufficiently small so that $D$ contains the disc
of radius $4\delta_0$.  By translating $U_+$ upward, we may assume
that for $i=1,\ldots,\overline{N}_+$, the $i^{th}$ negative end of
$u_+$ is described in these coordinates by a map
\begin{equation}
\label{eqn:etai+}
\begin{split}
(-\infty,0]\times \widetilde{S^1} & \longrightarrow \R\times
    S^1\times\C,\\
(s,\tau) & \longmapsto (s,t,\eta_i(s,\tau)),
\end{split}
\end{equation}
where $\widetilde{S^1}$ denotes the $a_i$-fold cover of $S^1$; $t$
denotes the projection of $\tau$; and $|\eta_i|<\delta_0$.  Likewise,
by translating $U_-$ downward, we may assume that for
$i=-1,\ldots,-\overline{N}_-$, the $i^{th}$ positive end of $u_-$ is
described by a map
\begin{equation}
\label{eqn:etaj-}
\begin{split}
[0,\infty) \times \widetilde{S^1} & \longrightarrow \R\times
  S^1\times\C,\\
(s,\tau) & \longmapsto (s,t,\eta_i(s,\tau)),
\end{split}
\end{equation}
where $\widetilde{S^1}$ denotes the $a_i$-fold cover of $S^1$, and
$|\eta_i|<\delta_0$.

Next, as in Lemma~\ref{lem:EM}, choose an ``exponential map'' $e_-$,
from a small radius disc bundle in the normal bundle of $u_-$ to
$\R\times Y$, with the following properties.  First, $e_-$ is an immersion;
on the zero section $e_-$ agrees with $u_-$; and on each fiber disc
$u_-$ is a $J$-holomorphic embedding.  The constant $\delta_0$ above
should be chosen sufficiently small so that $e_-$ is defined on the
radius $\delta_0$ disc bundle.  In addition, on the positive ends of
$u_-$ we require $e_-$ to be compatible with $e$ in the
following sense: For $i= -1,\ldots,-\overline{N}_-$, in the notation of
\eqref{eqn:etaj-}, use $(s,\tau)$ as coordinates on the $i^{th}$
positive end of $u_-$, and use the coordinate $w$ to trivialize the
normal bundle to the $i^{th}$ positive end of $u_-$.  Then the
compatibility requirement is that if $|v|<\delta_0$ then
\begin{equation}
\label{eqn:ECC}
e_-((s,\tau),v) = e(s,t,\eta_i(s,\tau)+v).
\end{equation}
Choose an analogous exponential map $e_+$ from the radius $\delta_0$
disc bundle in the normal bundle of $u_+$ to $\R\times Y$.

\paragraph{(iii)}
Given a branched cover $\pi:\Sigma\to\R\times S^1$ in $\mc{M}$, let
$\Lambda\subset\Sigma$ denote the union of the components of the level
sets of $\pi^*s$ on $\Sigma$ that contain ramification points.  For
$i=1,\ldots,\overline{N}_+$, the $i^{th}$ positive end of $\Sigma$
corresponds to a component of $\Sigma\setminus\Lambda$, which the
asymptotic marking identifies with $(s_i-1,\infty)\times\R/2\pi a_i\Z$
for some real number $s_i$.  Likewise, for
$i=-1,\ldots,-\overline{N}_-$, the $i^{th}$ negative end of $\Sigma$
corresponds to a component of $\Sigma\setminus\Lambda$, which the
asymptotic marking identifies with $(-\infty,s_i+1)\times\R/2\pi
a_i\Z$.  Let $s_+\eqdef\max_{i>0}\{s_i\}$ and $s_-\eqdef\min_{i<0}\{s_i\}$.
Note that $s_i$, $s_+$, and $s_-$ define functions on
$\mc{M}$ which are continuous but not smooth.  It proves convenient
later to replace these functions by smooth functions which have
$C^0$-distance less than $1/2$ from the original functions.  We denote
these smoothings by the same symbols.

\paragraph{(iv)}
The gluing construction requires fixing two constants $0<h<1$ and
$r>h^{-1}$ which enter into the definitions of the relevant cutoff
functions.  The gluing construction will work for any $0<h<1$, as long
as $r$ is sufficiently large with respect to $h$ in a sense to be
explained below.  (In \S\ref{sec:deform} we will choose $h$ small in
order to obtain good estimates on the nonlinear part of the
obstruction section whose zero set characterizes the possible
gluings.)

Throughout this section, the letter `$c$' denotes a constant
which depends only on $U_+$ and $U_-$, and whose value may change from
one appearance to the next.

\subsection{Pregluing}
\label{sec:pregluing}

With $r$ and $h$ fixed, the ``gluing parameters'' consist of a
branched cover $\pi:\Sigma\to\R\times S^1$ in $\mc{M}$, together with
real numbers $T_+,T_-\ge 5r$.  Given $T_+$, $T_-$, and $\Sigma$, we now
define the ``$(T_+,T_-)$-pregluing'' of $U_+$ and $U_-$ along
$\Sigma$.  This will be a map $u_*:C_*\to\R\times Y$.

To define the domain $C_*$, let $\Sigma'\subset\Sigma$ be obtained
from $\Sigma$ by removing the $s>s_+ + T_+$ portion of the positive
ends indexed by $1,\ldots,\overline{N}_+$ and the $s<s_- - T_-$
portion of the negative ends indexed by $-1,\ldots,-\overline{N}_-$.
Let $u_{+T}$ denote the $s\mapsto s + s_+ + T_+$ translate of $u_+$,
and let $u_{+T}'$ denote the $s\ge s_++T_+$ portion of $u_{+T}$.  Let
$u_{-T}$ denote the $s\mapsto s + s_- - T_-$ translate of $u_-$, and
let $u_{-T}'$ denote the $s \le s_--T_-$ portion of $u_{-T}$.

Let $C_{\pm T}'$ denote the domain of $u_{\pm T}'$.  The domain $C_*$
is the quotient of $C_{+T}'\sqcup \Sigma' \sqcup C_{-T}'$ obtained by
identifying the $i^{th}$ negative boundary circle of $C_{+T}'$ with
the $i^{th}$ positive boundary circle of $\Sigma'$ for
$i=1,\ldots,\overline{N}_+$, and the $i^{th}$ positive boundary circle
of $C_{-T}'$ with the $i^{th}$ negative boundary circle of $\Sigma'$
for $i=-1,\ldots,-\overline{N}_-$.  The identification maps are well
defined, because the asymptotic markings of the ends of $\Sigma$ and
$u_\pm$ fix an identification of each such boundary circle with
$\R/2\pi m\Z$, where $m$ is the covering multiplicity of the
associated end.

For $i=1,\ldots,\overline{N}_+$, let $\Sigma_i\subset \Sigma'$ denote the
cylinder consisting of the $s_i \le s\le s_+ + T_+$ part of the
$i^{th}$ positive end of $\Sigma$.  As above, the cylinder
$\Sigma_i$ can also be naturally identified with the $s_i \le s\le
s_+ + T_+$ portion of the $i^{th}$ negative end of $u_{+T}$.  For
$i=-1,\ldots,-\overline{N}_-$, let $\Sigma_i\subset
\Sigma'$ denote the cylinder consisting of the $s_- - T_- \le s \le s_i$
part of the $i^{th}$ negative end of $\Sigma$.  This can also be
identified with the corresponding portion of the $i^{th}$ positive end
of $u_{-T}$.

Fix a smooth function $\beta:\R\to[0,1]$ which is non-decreasing,
equal to $0$ on $(-\infty,0]$, and equal to $1$ on $[1,\infty)$.
Define a function $\beta_+:C_*\to[0,1]$ as follows.  The function
$\beta_+$ equals $1$ on all of $C_{+T}'$ and $0$ on all of $C_{-T}'$.
On the cylinder $\Sigma_i\subset \Sigma'$ for $i>0$, define $\beta_+\eqdef
\beta((s-s_i-hr)/(hr))$.  On the rest of $\Sigma'$ define
$\beta_+\eqdef 0$.  Similarly, define $\beta_-:C_*\to[0,1]$ to equal
$1$ on all of $C_{-T}'$, to equal $0$ on all of $C_{+T}'$, to equal
$\beta((-s+s_i - hr)/(hr))$ on $\Sigma_i$ for $i<0$, and to equal $0$
on the rest of $\Sigma'$.

The map $u_*$ is defined as follows.  On $C_{\pm T}'$, the map $u_*$ agrees
with the map $u_{\pm T}$. On $\Sigma'$, off of the cylinders
$\Sigma_i$, the map $u_*$ agrees with the composition
\begin{equation}
\label{eqn:composition}
\Sigma\stackrel{\pi}{\longrightarrow} \R\times S^1
\stackrel{\op{id}\times\alpha}{\longrightarrow} \R\times Y.
\end{equation}
On $\Sigma_i$ for $i>0$, with the notation as in \eqref{eqn:etai+},
define
\[
\eta_{i,T}(s,\tau) \eqdef
\eta_i(s-(s_++T_+),\tau)
\]
and
\begin{equation}
\label{eqn:beta+}
u_*(s,\tau) \eqdef (s,t,\beta_+(s,\tau)
\eta_{i,T}(s,\tau) ).
\end{equation}
When $s\le s_i+hr$ this agrees with the composition
\eqref{eqn:composition}, and when $s\ge s_i+2hr$ this agrees with the
$s\mapsto s + s_+ + T_+$ translate of the $i^{th}$ negative end of $u_+$.
Likewise, on $\Sigma_i$ for $i<0$, with the notation as in \eqref{eqn:etaj-},
define
\[
\eta_{i,T}(s,\tau) \eqdef \eta_i(s-(s_--T_-),\tau)
\]
and
\begin{equation}
\label{eqn:beta-}
u_*(s,\tau) \eqdef (s,t,\beta_-(s,\tau) \eta_{i,T}(s,\tau)
).
\end{equation}
When $s\ge s_i - hr$ this agrees with the composition
\eqref{eqn:composition}, and when $s\le s_i - 2hr$ this agrees with
the $s\mapsto s + s_- - T_-$ translate of the $i^{th}$ positive end of $u_-$.

\subsection{Deforming the pregluing}
\label{sec:defpre}

Let $\psi_+$ be a section of the normal bundle of $u_{+T}$, let
$\psi_-$ be a section of the normal bundle of $u_{-T}$, and let
$\psi_\Sigma$ be a complex function on $\Sigma$.  Assuming that
$\psi_\pm$ and $\psi_\Sigma$ have pointwise norm less than $\delta_0$,
we now explain how to use the data $(\psi_-,\psi_\Sigma,\psi_+)$ to
define a deformation of the map $u_*$.

The coordinate $w$ on a neighborhood of $\R\times\alpha$ trivializes
the normal bundles to the positive ends of $u_-$ and the negative ends
of $u_+$ near $\R\times\alpha$.  Hence the normal bundles to $C_{-T}'$
and $C_{+T}'$ and the trivial complex line bundle over $\Sigma'$ fit
together to define a complex line bundle $E_*$ over $C_*$.  The
exponential maps $e_-$, $e$, and $e_+$ fit into a map $e_*$ from a
small radius disc bundle in $E_*$ to $\R\times Y$ defined as follows.
Over $C_{\pm T}'$, the map $e_*$ is defined on the radius $\delta_0$
disc bundle and agrees with the appropriate translate of $e_\pm$.  For
$x\in\Sigma'$, the map $e_*$ is defined on the radius $2\delta_0$ disc
bundle as follows: If $u_*(x)=(s,t,w)$, then
$e_*(x,v)\eqdef (s,t,w+v)$.

Next define a function $\beta_\Sigma:C_*\to[0,1]$ as follows.  The
function $\beta_\Sigma$ is identically zero on $C_{+T}'$ and
$C_{-T}'$.  On the cylinders $\Sigma_i$ for $i>0$, define
\[
\beta_\Sigma(s,\tau) \eqdef \beta((-s+s_++T_+-r)/r).
\]
This is $1$ where $s\le s_++T_+-2r$ and $0$ where $s\ge s_++T_+-r$.
On the cylinders $\Sigma_i$ for $i<0$, define
\[
\beta_\Sigma(s,\tau) \eqdef\beta((s-s_-+T_--r)/r).
\]
This is $1$ where $s\ge s_--T_-+2r$ and $0$ where $s\le s_--T_-+r$.
On the rest of $\Sigma'$, define $\beta_\Sigma\eqdef 1$.

Finally, the deformation of $u_*$ is defined to be the map
\begin{equation}
\label{eqn:deformation}
\begin{split}
C_* & \longrightarrow \R\times Y,\\
x & \longmapsto e_*(x,\beta_-\psi_{-} + \beta_\Sigma\psi_\Sigma +
\beta_+\psi_{+}).
\end{split}
\end{equation}
This is well-defined, because under the above identifications,
$\psi_\Sigma$ defines a section of $E_*$ over the support of
$\beta_\Sigma$, while $\psi_\pm$ defines a section of $E_*$ over the
support of $\beta_\pm$.  If $\psi_\pm$ and $\psi_\Sigma$ are smooth,
then the map \eqref{eqn:deformation} is an immersion, except possibly
at the ramification points in $\Sigma$.

\subsection{Equation for the deformation to be $J$-holomorphic}
\label{sec:eqn7}

We now write an equation for the map \eqref{eqn:deformation} to be
$J$-holomorphic, for some complex structure on $C_*$.  This
equation will have the form
\begin{equation}
\label{eqn:7}
\beta_-\Theta_-(\psi_-,\psi_\Sigma) +
\beta_\Sigma\Theta_\Sigma(\psi_-,\psi_\Sigma,\psi_+) +
\beta_+\Theta_+(\psi_\Sigma,\psi_+)=0,
\end{equation}
where $\Theta_\pm$ is defined on all of $u_{\pm T}$, while
$\Theta_\Sigma$ is defined on all of $\Sigma$.

We begin by describing the schematic form of the $\Theta$'s.  By way
of preparation, let $C$ denote $C_{\pm T}$ or $\Sigma$, and let $E$
denote respectively the normal bundle to $C_{\pm T}$ or the trivial
complex line bundle over $\Sigma$.

\begin{definition}
\label{def:quadratic}
Let us call a $(T_\pm,\Sigma)$-dependent function
\[
F:C^\infty(C;E) \longrightarrow C^\infty(C;E\tensor_{\C} T^{0,1}C)
\]
``type 1 quadratic'' if it can be written in the form
\begin{equation}
\label{eqn:t1q}
F(\psi) = P(\psi) + Q(\psi) \cdot \nabla\psi,
\end{equation}
where $P$ and $Q$ are (nonlinear) bundle maps with uniformly bounded
derivatives to any given order in the fiber direction, obeying
$|P(\psi)| < c |\psi|^2$ and $|Q(\psi)| < c|\psi|$.  Let us call a
$(T_\pm,\Sigma)$-dependent function
\[
Z: C^\infty(C;E) \times C^\infty(C;E) \longrightarrow
C^\infty(C,E\tensor_{\C} T^{0,1}C)
\]
``type 2 quadratic'' if it can be written as
\[
Z(\psi_1,\psi_2) = a(\psi_1,\psi_2) + b_1(\psi_1,\psi_2) \cdot
\nabla\psi_2 + b_2(\psi_1,\psi_2) \cdot \nabla\psi_1,
\]
where $a$, $b_1$, $b_2$ are (nonlinear) bundle maps with uniformly
bounded derivatives to any given order in the fiber direction, obeying
$|a(\psi_1,\psi_2)| < c|\psi_1||\psi_2|$, $|b_1(\psi_1,\psi_2)| <
c|\psi_1|$, and $|b_2(\psi_1,\psi_2)| < c|\psi_2|$.
\end{definition}

Now let $\psi_\pm$ be a section of the normal bundle of $u_{\pm T}$
with $|\psi_\pm|<\delta_0$.  Then as in \S\ref{sec:NRIC}, the
composition of $\psi_\pm$ with the exponential map $e_\pm$ defines a
$J$-holomorphic map $C_{\pm T}\to \R\times Y$, for some complex
structure on $C_{\pm T}$, if and only if $D_\pm\psi_\pm +
F_\pm(\psi_\pm)=0$, where $D_\pm$ denotes the usual linear deformation
operator, while $F_\pm$ is type 1 quadratic.  If $\psi_\Sigma$ is a
complex function on $\Sigma$ with $|\psi_\Sigma|<4\delta_0$, then the
map $\Sigma\to\R\times Y$ sending $x\mapsto e(\pi(x),\psi_\Sigma(x))$
is $J$-holomorphic, for some complex structure on $\Sigma$, if and
only if it satisfies an equation of the form
$D_\Sigma\psi_\Sigma + F_\Sigma(\psi_\Sigma)=0$.  Here, as in
\S\ref{sec:NRIC}, the operator $D_\Sigma$ has the form
\begin{equation}
\label{eqn:operatorD}
D_\Sigma\psi =
\overline{\partial}\psi + (\nu\psi + \mu
\overline{\psi})\tensor d\zbar.
\end{equation}
(For more about the operator
$D_\Sigma$, see
\S{I.2.3}.)  Meanwhile, $F_\Sigma$ is type 1 quadratic, except near
the ramification points in $\Sigma$ (see \eqref{eqn:GNC} below).

With the preceding understood, $\Theta_-$ has the form
\begin{equation}
\label{eqn:Theta-}
\Theta_- = D_-\psi_- + F_-'(\psi_-) +
\frac{\beta_-}{2}
\frac{\partial\beta_\Sigma}{\partial s} \big((d\overline{z}
+ \frak{m}) \psi_\Sigma + \frak{z}_-(\psi_-,\psi_\Sigma)\big) +
\frak{q}_-\cdot\psi_- + \frak{q}_-'\cdot\nabla\psi_-.
\end{equation}
Here $F_-'$ is type 1 quadratic, and differs from $F_-$ only in the
part of each cylinder $\Sigma_i$ for $i<0$ where $s_i-2hr < s < s_i-hr$.
Next, $\frak{m}$ is a bundle map on each cylinder $\Sigma_i$ for $i<0$
satisfying $|\frak{m}|<c|\eta_{-T}|$, where $\eta_{-T} \eqdef
\eta_{i,T}$.  The function $\frak{z}_-$ is type 2 quadratic.
 Finally, $\frak{q}_{-}$ and $\frak{q}_{-}'$ are bundle maps supported
in the cylinders $\Sigma_i$ for $i<0$ where $s_i - 2hr < s < s_i - hr$,
which satisfy $|\frak{q}_{-}|,|\frak{q}_{-}'| < c|\eta_{-T}|$.

Likewise, $\Theta_+$ has the form
\begin{equation}
\label{eqn:Theta+}
\Theta_+ = D_+\psi_+ + F_+'(\psi_+) +
\frac{\beta_+}{2}
\frac{\partial\beta_\Sigma}{\partial s} \big((d\overline{z}
+ \frak{m}) \psi_\Sigma + \frak{z}_+(\psi_+,\psi_\Sigma)\big) +
\frak{q}_{+}\cdot\psi_+ + \frak{q}_{+}'\cdot\nabla\psi_+.
\end{equation}
The terms in \eqref{eqn:Theta+} satisfy the obvious analogues of the
conditions on the terms in \eqref{eqn:Theta-}.

Finally, $\Theta_\Sigma$ has the form
\begin{equation}
\begin{split}
\label{eqn:ThetaSigma}
\Theta_\Sigma = \; & D_\Sigma\psi_\Sigma + F_\Sigma'(\psi_\Sigma) +
\frak{q}_0\cdot\psi_\Sigma + \frak{q}_0'\cdot\nabla\psi_\Sigma\\
&  +
\frak{p}_-(\eta_{-T})
+ \beta_-\frak{z}_{0-}(\psi_-,\psi_\Sigma) + 
\frac{1}{2}\frac{\partial\beta_-}{\partial s}\big((\eta_{-T} +
\psi_-)d\overline{z} +
\frak{z}_{0-}'(\psi_-,\psi_\Sigma)\big) \\
& 
+
\frak{p}_+(\eta_{+T})
+ \beta_+\frak{z}_{0+}(\psi_+,\psi_\Sigma) + 
\frac{1}{2}
\frac{\partial\beta_+}{\partial s}\big((\eta_{+T} +
\psi_+)d\overline{z} +
\frak{z}_{0+}'(\psi_+,\psi_\Sigma)\big).
\end{split}
\end{equation}
Here $F_\Sigma'$ is type 1 quadratic (except near the ramification
points), and differs from $F_\Sigma$ only in the cylinders $\Sigma_i$
for $i>0$ where $s_++T_+-2r<s<s_++T_+-r$, and in the cylinders
$\Sigma_i$ for $i<0$ where $s_--T_-+r < s < s_--T_-+2r$.  The terms
$\frak{p}_-$, and $\frak{p}_+$ are type 1 quadratic; $\frak{p}_+$ is
supported in the cylinders $\Sigma_i$ for $i>0$ where $s_i+hr < s <
s_i+2hr$; and $\frak{p}_-$ is supported in the cylinders $\Sigma_i$
for $i<0$ where $s_i-2hr<s<s_i-hr$.  The $\frak{q}$'s can be written
as $\frak{q}_0=\frak{q}_{0-} +
\frak{q}_{0+}$ and $\frak{q}_0'=\frak{q}_{0-}' + \frak{q}_{0+}'$, where
$\frak{q}_{0+}$ and $\frak{q}_{0+}'$ are supported in the cylinders
$\Sigma_i$ for $i>0$ where $s_i+hr < s < s_++T_+-r+1$ and satisfy
$|\frak{q}_{0+}|,|\frak{q}_{0+}'| <c|\eta_{+ T}|$.  Likewise,
$\frak{q}_{0-}$ and $\frak{q}_{0-}'$ are supported in the cylinders
$\Sigma_i$ for $i<0$ where $s_--T_-+r-1 < s < s_i-hr$ and satisfy
$|\frak{q}_{0-}|,|\frak{q}_{0-}'| <c|\eta_{-T}|$.  The functions
$\frak{z}_{0\pm}$ and $\frak{z}_{0\pm}'$ are supported in the
cylinders $\Sigma_i$ for $\pm i>0$, and are type 2 quadratic.

We formulate the above as a lemma:

\begin{lemma}
There exist functions $\Theta_-$, $\Theta_+$, and $\Theta_\Sigma$, of
the form \eqref{eqn:Theta-}, \eqref{eqn:Theta+}, and
\eqref{eqn:ThetaSigma} respectively, such that the map
\eqref{eqn:deformation} is $J$-holomorphic for some complex structure
on $C_*$ if and only if equation \eqref{eqn:7} holds.
\end{lemma}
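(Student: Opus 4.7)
The plan is to derive equation \eqref{eqn:7} by writing down the $J$-holomorphic equation for the deformed map \eqref{eqn:deformation} in local coordinates, then organizing the terms by which cutoff function multiplies them. The key observation driving the decomposition is that on the ``core'' regions where exactly one of $\beta_-,\beta_\Sigma,\beta_+$ equals $1$, the $J$-holomorphic equation reduces to the standard deformation equation for a single pseudoholomorphic piece, while on the transition cylinders $\Sigma_i$ the product rule generates coupling terms proportional to the derivatives $\partial_s\beta_\pm$ and $\partial_s\beta_\Sigma$, which are exactly the terms that appear in \eqref{eqn:Theta-}--\eqref{eqn:ThetaSigma}.

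First, I partition $C_*$ into: (i) the interior of $C_{+T}'$ (resp.\ $C_{-T}'$), where $\beta_+=1$ (resp.\ $\beta_-=1$) and the other two cutoffs vanish; (ii) the interior of $\Sigma'$ away from the cylinders $\Sigma_i$, where $\beta_\Sigma=1$ and $\beta_\pm=0$; (iii) the four families of transition subcylinders inside $\Sigma_i$, namely the $\beta_\pm$-ramp near $s=s_i$ for $\pm i>0$ and the $\beta_\Sigma$-ramp near $s=s_\pm\pm T_\pm\mp r$. On (i), the deformed map is $e_\pm(x,\psi_\pm)$, so as in Section~\ref{sec:NRIC} the $J$-holomorphicity condition (for some complex structure on the domain) has the form $D_\pm\psi_\pm+F_\pm(\psi_\pm)=0$ with $F_\pm$ type~1 quadratic; this produces the leading $D_\pm\psi_\pm+F_\pm'(\psi_\pm)$ terms in $\Theta_\pm$. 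On (ii), using the coordinates from Lemma~\ref{lem:EM} and the local expression \eqref{eqn:operatorD} for $D_\Sigma$ obtained from \eqref{eqn:hol2}--\eqref{eqn:L}, the equation becomes $D_\Sigma\psi_\Sigma+F_\Sigma(\psi_\Sigma)=0$, giving the leading part of $\Theta_\Sigma$.

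The main work is region (iii). Take for concreteness a positive cylinder $\Sigma_i$, $i>0$, in the $\beta_+$-ramp $s_i\le s\le s_i+2hr$. In the coordinates $(s,t,w)$, the deformed map has $w$-component
\[
W \;=\; \beta_+\eta_{i,T} \;+\; \psi_\Sigma \;+\; \beta_+\psi_+,
\]
and the $J$-holomorphicity equation is $\partial_{\bar z}W+\nu W+\mu\bar W+r_0(W)+r_1(W)\partial_zW=0$ from \eqref{eqn:hol2}. I expand using $\partial_{\bar z}\beta_+=\tfrac12\partial_s\beta_+$ and use that $\eta_{i,T}$ solves the same equation on $u_+$'s end. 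The terms proportional to $\beta_+$ (with no derivative on $\beta_+$) collect into $\beta_+(D_+\psi_+ + F_+'(\psi_+) + \frak{q}_+\psi_+ + \frak{q}_+'\nabla\psi_+)$, where the error bundle maps $\frak{q}_+,\frak{q}_+'$ are $O(|\eta_{i,T}|)$ coming from the quadratic Taylor expansion of $r_0,r_1,\nu,\mu$ in $W$ evaluated at the reference point $\eta_{i,T}$. The terms independent of $\beta_+$ and its derivative collect into $D_\Sigma\psi_\Sigma$ plus a correction $\frak{p}_+(\eta_{+T})+\beta_+\frak{z}_{0+}(\psi_+,\psi_\Sigma)+\frak{q}_{0+}\cdot\psi_\Sigma+\frak{q}_{0+}'\cdot\nabla\psi_\Sigma$, which exhibit precisely the support and size properties listed below \eqref{eqn:ThetaSigma}. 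The derivatives $\partial_{\bar z}\beta_+$ acting on $\eta_{i,T}$ and on $\psi_+$ produce the $\tfrac12\frac{\partial\beta_+}{\partial s}((\eta_{+T}+\psi_+)d\bar z+\frak{z}_{0+}'(\psi_+,\psi_\Sigma))$ term in $\Theta_\Sigma$. An analogous expansion on the $\beta_\Sigma$-ramp $s_++T_+-2r\le s\le s_++T_+-r$ (where the reference is the exact curve $u_+$, so that $F_+'$ is unchanged outside this ramp) generates the cross-coupling $\tfrac{\beta_+}{2}\frac{\partial\beta_\Sigma}{\partial s}((d\bar z+\frak{m})\psi_\Sigma+\frak{z}_+(\psi_+,\psi_\Sigma))$ term in $\Theta_+$; the bundle map $\frak{m}$ of size $|\eta_{+T}|$ arises from the deviation of the parametrization $e$ from the normal-bundle trivialization used by $e_+$, via \eqref{eqn:ECC}. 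Mirror calculations for $i<0$ produce $\Theta_-$ and the $i<0$ half of $\Theta_\Sigma$.

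Finally, I check that summing $\beta_-\Theta_-+\beta_\Sigma\Theta_\Sigma+\beta_+\Theta_+$ against the above expansions reproduces the full $J$-holomorphic equation on each region of $C_*$ (noting that the sum $\beta_-+\beta_\Sigma+\beta_+$ is identically $1$ on the support of the equation by the choices of cutoffs, since every point of $C_*$ lies in exactly one non-transition region or in exactly one transition ramp where the two active cutoffs sum to $1$). The main obstacle is purely bookkeeping: verifying that every leftover remainder produced by the product rule fits into one of the prescribed classes---type~1 or type~2 quadratic in the sense of Definition~\ref{def:quadratic}, or a bundle map of size $c|\eta_{\pm T}|$ with the correct support---so that the splitting into $\Theta_-,\Theta_\Sigma,\Theta_+$ is of the exact form stipulated in \eqref{eqn:Theta-}--\eqref{eqn:ThetaSigma}. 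The decay estimates of Lemma~\ref{lem:IDE} guarantee the $|\eta_{\pm T}|$ bounds are small where they need to be, which makes this organization natural even though tedious.
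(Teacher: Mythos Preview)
Your overall strategy---localize to the cylinders $\Sigma_i$ and track how the product rule on the cutoffs generates the coupling terms---is the right idea and matches the paper's approach in spirit. However, there is a genuine error in your organizational principle: you assert that ``the sum $\beta_-+\beta_\Sigma+\beta_+$ is identically $1$'' and that on a transition ramp ``the two active cutoffs sum to $1$.'' This is false. By the definitions in \S\ref{sec:pregluing}--\S\ref{sec:defpre}, on the $\beta_+$-ramp $s_i+hr\le s\le s_i+2hr$ (for $i>0$) one has $\beta_\Sigma\equiv 1$ while $\beta_+$ ramps from $0$ to $1$, so $\beta_\Sigma+\beta_+\in[1,2]$; likewise on the $\beta_\Sigma$-ramp. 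The cutoffs are deliberately \emph{overlapping}, not a partition of unity. Your decomposition scheme, which relies on uniquely attributing each term of the expanded equation to a single cutoff factor, therefore does not go through as stated.

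The paper exploits exactly this overlap. The key facts are that $\beta_\pm\equiv 1$ on the support of $\partial_s\beta_\Sigma$ and $\beta_\Sigma\equiv 1$ on the support of $\partial_s\beta_\pm$; these allow one to insert extra factors of $\beta_\pm$ or $\beta_\Sigma$ at will when expanding $D_\Sigma(\beta_-(\eta_{-T}+\psi_-)+\beta_\Sigma\psi_\Sigma)$ (see \eqref{eqn:DSigma}). The paper then invokes the identity $D_-\psi_-+F_-(\psi_-)=D_\Sigma(\eta_{-T}+\psi_-)+F_\Sigma(\eta_{-T}+\psi_-)$ (both sides measure the failure of the same surface to be $J$-holomorphic, via \eqref{eqn:ECC}) to convert the $D_\Sigma$-description into the $D_-$-description, and finally expands the nonlinear term $F_\Sigma(\beta_-(\eta_{-T}+\psi_-)+\beta_\Sigma\psi_\Sigma)-\beta_-F_\Sigma(\eta_{-T}+\psi_-)-\beta_\Sigma F_\Sigma(\psi_\Sigma)$ algebraically using the bilinear and trilinear defects $F_1,F_2$ of the type~1 quadratic map $F_\Sigma$. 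Each line of that expansion is then matched to one of the named terms $\frak{m},\frak{q}_\pm,\frak{p}_\pm,\frak{z}$, etc. In particular, $\frak{m}$ arises from $F_1(\beta_-\eta_{-T},\beta_\Sigma\psi_\Sigma)$ (the nonlinearity of $F_\Sigma$), not from any discrepancy between $e$ and $e_\pm$---by \eqref{eqn:ECC} those exponential maps are exactly compatible, so your stated source for $\frak{m}$ is also incorrect.
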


\begin{proof}
On $C_{\pm T}'$, equation \eqref{eqn:7} reads $D_\pm\psi_\pm +
F_\pm(\psi_\pm)=0$; and on $\Sigma'$, off of the cylinders
$\Sigma_i$, equation \eqref{eqn:7} reads
$D_\Sigma\psi_\Sigma + F_\Sigma(\psi_\Sigma)=0$.  Hence we need only
consider the cylinders $\Sigma_i$, w.l.o.g.\ with $i<0$.  Here, we need to
show that the various terms in $\Theta_-$ and $\Theta_\Sigma$ can be
chosen so that
\begin{equation}
\label{eqn:HWN}
D_\Sigma(\beta_-(\eta_{-T} + \psi_-) + \beta_\Sigma\psi_\Sigma) +
F_\Sigma(\beta_-(\eta_{-T} + \psi_-) + \beta_\Sigma\psi_\Sigma)
= \beta_-\Theta_- + \beta_\Sigma\Theta_\Sigma.
\end{equation}

To start, it follows from \eqref{eqn:operatorD} that
\begin{equation}
\label{eqn:DSigma}
\begin{split}
D_\Sigma(\beta_-(\eta_{-T} + \psi_-)+\beta_\Sigma\psi_\Sigma) = &
\beta_-\left(\frac{\beta_-}{2}\frac{\partial\beta_\Sigma}{\partial
s}\psi_\Sigma d\overline{z} +
D_\Sigma(\eta_{-T} + \psi_-)\right)\\
&
+
\beta_\Sigma\left(
\frac{1}{2}
\frac{\partial\beta_-}{\partial s}(\eta_{-T}+\psi_-)d\overline{z}
+ D_\Sigma\psi_\Sigma\right).
\end{split}
\end{equation}
Here we have inserted some extra factors of $\beta_-$ and
$\beta_\Sigma$, using the fact that $\beta_-=1$ on the support of
$\partial_s\beta_\Sigma$, and $\beta_\Sigma=1$ on the support of
$\partial_s\beta_-$.  Next,
\begin{equation}
\label{eqn:DVCR}
D_-\psi_- + F_-(\psi_-) = D_\Sigma(\eta_{-T}+\psi_-) +
F_\Sigma(\eta_{-T}+\psi_-),
\end{equation}
because by \eqref{eqn:ECC}, the two sides of
\eqref{eqn:DVCR} measure the failure of the same immersed surface to be
$J$-holomorphic.  By \eqref{eqn:DSigma} and \eqref{eqn:DVCR}, the
equation \eqref{eqn:HWN} that we need to prove reduces to
\begin{equation}
\label{eqn:HWN2}
\begin{split}
& F_\Sigma(\beta_-(\eta_{-T} + \psi_-) + \beta_\Sigma\psi_\Sigma) -
\beta_-F_\Sigma(\eta_{-T} + \psi_-) - \beta_\Sigma F_\Sigma(\psi_\Sigma)\\
& = \beta_-\left(
 F_-''(\psi_-) +
\frac{\beta_-}{2}\frac{\partial\beta_\Sigma}{\partial s}
\big(\frak{m} \psi_\Sigma + \frak{z}_-(\psi_-,\psi_\Sigma)\big) +
\frak{q}_-\cdot\psi_- + \frak{q}_-'\cdot\nabla\psi_-
\right)\\
& \quad\quad\quad + \beta_\Sigma\bigg(
F_\Sigma''(\psi_\Sigma) +
\frak{q}_0\cdot\psi_\Sigma +
\frak{q}_0'\cdot\nabla\psi_\Sigma\\
& \quad\quad\quad\quad\quad\quad\quad+
\frak{p}_-(\eta_{-T})
+ \beta_-\frak{z}_{0-}(\psi_-,\psi_\Sigma) +
\frac{1}{2}\frac{\partial\beta_-}{\partial s}\frak{z}_{0-}'(\psi_-,\psi_\Sigma)
\bigg).
\end{split}
\end{equation}
Here $F_-''\eqdef F_-'-F_-$ and $F_\Sigma''\eqdef F_\Sigma'-F_\Sigma$
are type 1 quadratic, and supported
in our cylinder $\Sigma_i$ where $s_i-2hr<s<s_i-hr$ and
$s_--T_-+r<s<s_--T_-+2r$ respectively.

To prepare for the proof of \eqref{eqn:HWN2}, first note that since
$F\eqdef F_\Sigma$ is type 1 quadratic, it follows that
\begin{equation}
\label{eqn:F1}
F(\psi_1+\psi_2) = F(\psi_1) + F(\psi_2) + F_1(\psi_1,\psi_2),
\end{equation}
where $F_1$ is type 2 quadratic.  This last condition
implies that
\begin{equation}
\label{eqn:F2}
F_1(\psi_1+\psi_2,\psi_3) = F_1(\psi_1,\psi_3) + F_1(\psi_2,\psi_3) +
F_2(\psi_1,\psi_2,\psi_3),
\end{equation}
where $F_2$ has the form
\begin{equation}
\nonumber
F_2(\psi_1,\psi_2,\psi_3) = a(\psi_1,\psi_2,\psi_3) + \sum_{i=1}^3
b_i(\psi_1,\psi_2,\psi_3)\cdot\nabla\psi_i
\end{equation}
with $|a(\psi_1,\psi_2,\psi_3)| < c|\psi_1||\psi_2||\psi_3|$,
$b_1(\psi_1,\psi_2,\psi_3)| < c |\psi_2||\psi_3|$, and so forth.

To prove \eqref{eqn:HWN2}, use \eqref{eqn:F1} and \eqref{eqn:F2} to
expand the left hand side as
\begin{equation}
\label{eqn:F}
\begin{split}
&F(\beta_-(\eta_{-T}+\psi_-) + \beta_\Sigma\psi_\Sigma) -
\beta_-F(\eta_{-T}+\psi_-) - \beta_\Sigma F(\psi_\Sigma)\\
&\quad\quad= F_1(\beta_-\eta_{-T},\beta_\Sigma\psi_\Sigma) 
\\
& \quad\quad\quad\quad + F_1(\beta_-\psi_-,\beta_\Sigma\psi_\Sigma) +
F_2(\beta_-\eta_{-T},\beta_-\psi_-,\beta_\Sigma\psi_\Sigma)
 \\
& \quad\quad\quad\quad\quad\quad + \big(F_1(\beta_-\eta_{-T},\beta_-\psi_-) -
\beta_-F_1(\eta_{-T},\psi_-)\big) 
\\
& \quad\quad\quad\quad\quad\quad\quad\quad
 + \big(F(\beta_-\eta_{-T}) - \beta_-F(\eta_{-T})\big)\\
& \quad\quad\quad\quad\quad\quad\quad\quad\quad\quad
+ \big(F(\beta_-\psi_-) -
\beta_-F(\psi_-)\big) \\
& \quad\quad\quad\quad\quad\quad\quad\quad\quad\quad\quad\quad
+ \big(F(\beta_\Sigma\psi_\Sigma) -
\beta_\Sigma F(\psi_\Sigma)\big).
\end{split}
\end{equation}
On the right side of \eqref{eqn:F}, the first line gives the
$\frak{m}$, $\frak{q}_0$, and $\frak{q}_0'$ terms on the
right side of
\eqref{eqn:HWN2}.  The second line gives the $\frak{z}_-$, $\frak{z}_0$,
and $\frak{z}_0'$ terms.  The third line gives the $\frak{q}_-$ and
$\frak{q}_-'$ terms.  The fourth line gives the $\frak{p}_-$ term, the
fifth line gives the $F_-''$ term, and the last line gives the
$F_\Sigma''$ term.  Here we have used the fact that $|\nabla\eta_{-T}| <
c|\eta_{-T}|$, which follows from the results in
\S\ref{sec:decay}.  Also, we are assuming that $r>rh>1$
 so that $|\partial_s\beta_-|, |\partial_s\beta_\Sigma| < c$.
\end{proof}

To complete the picture of equation \eqref{eqn:7}, we now describe the
behavior of the term $F_\Sigma'(\psi_\Sigma)$ in
\eqref{eqn:ThetaSigma} near a ramification point of the branched cover
$\pi:\Sigma\to\R\times S^1$.  Recall that
$F_\Sigma'(\psi_\Sigma)=F_\Sigma(\psi_\Sigma)$ near the ramification
points.  Also recall our coordinates $(z=s+it,w)$ on a neighborhood of
$\R\times\alpha$ in $\R\times Y$.  On the complement of the
ramification points, equations
\eqref{eqn:hol1} and \eqref{eqn:hol2} imply that
\begin{equation}
\label{eqn:GNC0}
F_\Sigma(\psi_\Sigma) =
a(t,\psi_\Sigma)\frac{\partial\psi_\Sigma}{\partial z}d\zbar +
P(t,\psi_\Sigma),
\end{equation}
where $a(t,w)$ and $P(t,w)$ are smooth functions of their arguments
which vanish where $w=0$.  Near a ramification point, choose a local
holomorphic coordinate $u$ on $\Sigma$ such that $\pi^*z=z_0+u^{q+1}$
with $q$ a positive integer.  It follows from \eqref{eqn:GNC0} that
near the ramification point,
\begin{equation}
\label{eqn:GNC}
F'_\Sigma(\psi_\Sigma) = F_\Sigma(\psi_\Sigma) =
\left(\frac{\ubar}{u}\right)^q a(t,\psi_\Sigma)
\frac{\partial\psi_\Sigma}{\partial u}d\ubar + P(t,\psi_\Sigma).
\end{equation}
In particular, $F'_\Sigma(\psi_\Sigma)$ is generally not continuous at
the ramification points, even when $\psi_\Sigma$ is smooth.

\subsection{Banach space setup}
\label{sec:BSS}

We now select appropriate Banach spaces to use in solving equation
\eqref{eqn:7}.

Let $C$ denote $C_\pm$ or $\Sigma$, and let $E$
denote respectively the normal bundle to $C_\pm$ or the trivial
complex line bundle on $\Sigma$.  Let $\mc{H}_0(C)$ denote the Banach
space obtained by completing the space of compactly supported sections
of $E\tensor T^{0,1}C$ using the norm $\|\cdot\|$ defined by
\[
\|\eta\| \eqdef \bigg(\int_C|\eta|^2\bigg)^{1/2} + \bigg(\sup_{x\in
C}\sup_{\rho\in(0,1]}\rho^{-\morrey}\int_{\text{dist}(x,\cdot)<\rho}
|\eta|^2\bigg)^{1/2}.
\]
Here we have chosen a metric on each $\Sigma\in\mc{M}$ as in \S I.2.3,
and we use the metric on $C_\pm$ induced by its immersion $u_\pm$ into
$\R\times Y$.  (Note that for our purposes, the exponent of $\rho$
above could be replaced by $-v$ for any $v\in(0,1)$.  We will fix
$v=\morrey$ for definiteness.)

Next, let $\mc{H}_1(C)$ denote the completion of the space of
compactly supported sections of $E$ using the norm $\|\cdot\|_*$
defined by
\[
\|\eta\|_* \eqdef \|\nabla\eta\| + \|\eta\|.
\]

\begin{lemma}
\label{lem:HC}
The tautological map $\psi \mapsto
\psi$ defines a bounded map from $\mc{H}_1(C)$ into the Banach space of
sections of $E$ that are Holder continuous with exponent
$\morreyovertwo$, and decay to zero on the ends of $C$.
\end{lemma}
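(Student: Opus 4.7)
The plan is to recognize the lemma as an instance of the Morrey--Campanato embedding into H\"older space, together with a short argument that $L^2$ integrability combined with uniform continuity forces decay at infinity.

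First, I would note that for any $\eta \in \mc{H}_1(C)$, the definition of $\|\eta\|_*$ directly encodes a Morrey bound on the gradient: namely,
\[
\int_{B(x,\rho)} |\nabla\eta|^2 \le \|\eta\|_*^2\, \rho^{\morrey}
\]
for every $x \in C$ and every $\rho \in (0,1]$. Combining this with the Poincar\'e inequality on a two--dimensional ball, $\int_{B(x,\rho)}|\eta - \bar\eta_{B(x,\rho)}|^2 \le c\,\rho^2 \int_{B(x,\rho)}|\nabla\eta|^2$, yields
\[
\int_{B(x,\rho)} \bigl|\eta - \bar\eta_{B(x,\rho)}\bigr|^2 \le c\,\|\eta\|_*^2\,\rho^{5/2}.
\]
By the standard Campanato characterization of H\"older spaces on a two--dimensional manifold, the exponent $5/2 = 2 + 2 \cdot \morreyovertwo$ implies that $\eta$ has a continuous representative that is H\"older continuous with exponent $\morreyovertwo$, and whose H\"older seminorm is bounded by $c\,\|\eta\|_*$. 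Combining this with the mean-value bound $|\bar\eta_{B(x,\rho)}|^2 \le c\rho^{-2}\int_{B(x,\rho)}|\eta|^2$ at unit scale gives a pointwise estimate $|\eta(x)| \le c\,\|\eta\|_*$ on all of $C$.

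Second, the decay of $\eta$ on the ends of $C$ follows from the uniform H\"older bound together with the $L^2$ integrability. Suppose for contradiction that $|\eta|$ does not decay to zero on some end. Then there exist $\varepsilon > 0$ and a sequence $\{x_n\}$ going to infinity on that end with $|\eta(x_n)| \ge \varepsilon$. The H\"older estimate above provides a radius $r_\varepsilon > 0$, depending only on $\varepsilon$ and $\|\eta\|_*$, such that $|\eta| \ge \varepsilon/2$ throughout each ball $B(x_n, r_\varepsilon)$. After passing to a subsequence we may assume the balls are pairwise disjoint, since the ends of $C$ are cylindrical of infinite area. This contradicts $\int_C |\eta|^2 < \infty$.

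Most of the content is already packaged inside the Morrey--Campanato machinery, so no part of this is really an obstacle. The one point to check is that the metric chosen on $\Sigma$ in \S{I.2.3}, in particular its behavior near the ramification points, is regular enough for the local Poincar\'e inequality on balls of radius at most $1$ to hold with a uniform constant; since $\Sigma$ is a smooth Riemann surface (the ramification points are smooth points of $\Sigma$, only the projection $\pi$ is singular there) equipped with a smooth metric, this is automatic, and the analogous statement for $C_{\pm}$ is immediate from the fact that its metric is pulled back via an immersion into $\R \times Y$.
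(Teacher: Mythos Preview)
Your argument is correct and is essentially the same as the paper's, which simply invokes \cite[Thm.~3.5.2]{morrey} (the Morrey--Campanato embedding) together with the bounded geometry of $C$; you have just spelled out what that citation means and added the standard $L^2$-plus-uniform-continuity argument for decay on the ends.
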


\begin{proof}
This is a consequence of \cite[Thm.\ 3.5.2]{morrey}, together with the
fact that $C$ has bounded geometry.
\end{proof}

Now let $D_C:C^\infty(E) \to C^\infty(E\tensor T^{0,1}C)$ denote the
deformation operator $D_\pm$ when $C=C_\pm$, or the operator
$D_\Sigma$ when $C=\Sigma$.  This extends as a bounded operator from
$L^2_1(E)$ to $L^2(E\tensor T^{0,1}C)$, and also as a bounded operator
from $\mc{H}_1(C)$ to $\mc{H}_0(C)$.

\begin{lemma}
\label{lem:fredholm}
There is a positive constant $\gamma_C$, such that
\begin{equation}
\label{eqn:gammaC}
\|D_C\eta\| \ge \gamma_C\|\eta\|_*
\end{equation}
for all $\eta$ in $\mc{H}_1(C)$ that are $L^2$-orthogonal to the
kernel of $D_C$.
\end{lemma}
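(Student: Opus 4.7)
The plan is to first establish the analogue of \eqref{eqn:gammaC} with the Morrey-type norms $\|\cdot\|_*$ and $\|\cdot\|$ replaced by the standard $L^2_1$ and $L^2$ norms, and then upgrade to the Morrey norms via local elliptic estimates.

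For the first step, I would argue that $D_C\colon L^2_1(C;E) \to L^2(C;E\tensor T^{0,1}C)$ is Fredholm. When $C = C_\pm$, this is standard cylindrical-end Fredholm theory: on each end, $D_C$ is asymptotic to the translation-invariant operator $\partial_s + L_\alpha$, and $L_\alpha$ is invertible because the Reeb orbit $\alpha$ is nondegenerate. When $C = \Sigma$, the same argument applies on cylindrical ends (using $L_{\alpha^{a_i}}$), and the form \eqref{eqn:GNC} shows that near each ramification point the operator remains elliptic with bounded coefficients in the local coordinate $u$; this Fredholm property is already used in \S{I.2.3}. The open mapping theorem then yields a constant $c_0 > 0$ with $\|\eta\|_{L^2_1} \le c_0\|D_C\eta\|_{L^2}$ for every $\eta \in L^2_1(C;E)$ that is $L^2$-orthogonal to $\ker(D_C)$. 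Elements of $\ker(D_C)$ are smooth with exponential decay on the ends by \S\ref{sec:decay}, so $\ker(D_C) \subset \mc{H}_1$ and the orthogonality condition is well-posed; the same estimate then applies to $\eta \in \mc{H}_1$ orthogonal to $\ker(D_C)$.

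For the second step, I would control the two Morrey seminorms appearing in $\|\eta\|$ and $\|\nabla\eta\|$, writing $[\,\cdot\,]_M^2 \eqdef \sup_{x,\,\rho\in(0,1]} \rho^{-\morrey}\int_{B(x,\rho)}|\cdot|^2$. For $\eta$ itself, the two-dimensional Sobolev embedding $L^2_1 \hookrightarrow L^p$ (valid for every $p<\infty$) combined with Hölder's inequality gives, for $\rho\le 1$ and any $p \ge 8/3$,
\[
\rho^{-\morrey}\int_{B(x,\rho)}|\eta|^2 \;\le\; c\,\rho^{-1/2+2(p-2)/p}\|\eta\|_{L^p(B(x,\rho))}^2 \;\le\; c\|\eta\|_{L^2_1}^2,
\]
so $[\eta]_M \le c\|\eta\|_{L^2_1} \le c\|D_C\eta\|$. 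For $\nabla\eta$ I would invoke the standard interior elliptic regularity in Morrey spaces for first-order elliptic operators with bounded coefficients (see e.g.\ \cite[Ch.\ 5]{morrey}): for each $x\in C$,
\[
\|\nabla\eta\|_{\mc{H}_0(B(x,1/2))} \;\le\; c\bigl(\|D_C\eta\|_{\mc{H}_0(B(x,1))} + \|\eta\|_{L^2(B(x,1))}\bigr).
\]
Taking the supremum over $x$ (using bounded geometry of $C$) and inserting the $L^2_1$ estimate already obtained yields $[\nabla\eta]_M \le c\|D_C\eta\|$. Combining, $\|\eta\|_* = \|\nabla\eta\| + \|\eta\| \le c\|D_C\eta\|$, which is \eqref{eqn:gammaC} with $\gamma_C = 1/c$.

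The main obstacle is verifying the Morrey-space local elliptic estimate near the ramification points of $\Sigma$, where \eqref{eqn:GNC} shows that $D_\Sigma$ acquires a multiplicative factor $(\bar u/u)^q$ of unit modulus. This factor is bounded but discontinuous at $u=0$, and does not disturb ellipticity of the principal symbol $\partial_{\bar u}$, so the standard Morrey estimate should go through with a short direct argument rather than an off-the-shelf citation. Everything else is standard bootstrapping from Fredholm theory and Sobolev/Morrey embeddings.
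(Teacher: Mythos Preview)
Your approach is essentially the same as the paper's: establish the $L^2_1\to L^2$ estimate from Fredholmness (nondegenerate Reeb orbits), then upgrade to the Morrey norms via local elliptic regularity, citing \cite[Ch.~5]{morrey}. The paper simply invokes \cite[Thm.~5.4.1]{morrey} for the upgrade in one line; your version unpacks this into the Sobolev bound on $[\eta]_M$ plus the interior Morrey estimate on $[\nabla\eta]_M$, which is fine.

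One correction to your discussion of the ``main obstacle'': the factor $(\bar u/u)^q$ in \eqref{eqn:GNC} appears in the \emph{nonlinear} term $F_\Sigma$, not in the linear operator $D_\Sigma$. From \eqref{eqn:operatorD}, in the local coordinate $u$ with $\pi^*z=z_0+u^{q+1}$ one has $d\bar z=(q+1)\bar u^q\,d\bar u$, so $D_\Sigma\psi=\big(\partial_{\bar u}\psi+(q+1)\bar u^q(\nu\psi+\mu\bar\psi)\big)d\bar u$, which has smooth (indeed vanishing at $u=0$) zeroth-order coefficients. So there is no obstacle at the ramification points for the linear estimate, and the Morrey elliptic regularity applies off the shelf.
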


\begin{proof}
Our assumption that all Reeb orbits are nondegenerate guarantees that
the operator $D_C$ is Fredholm.  Since $D_C$ has closed range, there
exists $\gamma>0$ such that $\|D_C\eta\|_{2} \ge
\gamma\|\eta\|_{L^2_1}$ whenever $\eta\in L^2_1$ is orthogonal to the
kernel of $D_C$.  The lemma follows from this and \cite[Thm.\
5.4.1]{morrey}.
\end{proof}

Recall from Lemma I.2.15(b) that the operator $D_\Sigma$ has trivial
kernel.  Lemma~\ref{lem:fredholm} then finds a positive constant
$\gamma_\Sigma$ for each branched cover $\Sigma\in\mc{M}$, such that
$\|D_\Sigma\eta\|
\ge \gamma_\Sigma\|\eta\|_*$ for all $\eta\in \mc{H}_1(\Sigma)$.  We will
need a positive lower bound on $\{\gamma_\Sigma\}$ as $\Sigma$ varies
over all of $\mc{M}$, where the multiplicities
$(a_1,\ldots,a_{N_+}\mid a_{-1},\ldots,a_{-N_-})$ entering into the
definition of $\mc{M}$ are fixed.

\begin{lemma}
\label{lem:SB}
There exists a $\Sigma$-independent constant $\gamma>0$ such that for
any $\Sigma\in\mc{M}$ and $\eta\in
\mc{H}_1(\Sigma)$, we have $\|D_\Sigma\eta\| \ge \gamma\|\eta\|_*$.
\end{lemma}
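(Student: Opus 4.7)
I would argue by contradiction. If no uniform $\gamma>0$ exists, then there are sequences $\Sigma_n\in\mc{M}$ and $\eta_n\in\mc{H}_1(\Sigma_n)$ with $\|\eta_n\|_*=1$ and $\|D_{\Sigma_n}\eta_n\|\to 0$. The goal is to extract a limit of $\eta_n$ that contradicts the triviality of $\Ker(D_\Sigma)$ asserted in Lemma~I.2.15(b).

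The first step is compactness of the base. By a standard compactness argument for the moduli space of genus zero connected branched covers of $\R\times S^1$ with fixed end data, after suitable $\R$-translations of each $\Sigma_n$ and passage to a subsequence, $\Sigma_n$ converges to a broken branched cover with finitely many levels $\Sigma_\infty^{(1)},\ldots,\Sigma_\infty^{(K)}$. Each connected component of each level is again a genus zero branched cover of $\R\times S^1$ with matching multiplicities at the breaking Reeb orbits, and so lies in a moduli space of the form $\mc{M}(\cdots\mid\cdots)$; the convergence is smooth on compact subsets away from the outer cylindrical ends and from the "necks" along which breaking occurs. Using the interior elliptic estimate of \cite[Thm.\ 5.5.3]{morrey} applied to $D_{\Sigma_n}\eta_n$ together with the uniform bound $\|\eta_n\|_*\le 1$, I would then extract, on each level, a subsequential $C^\infty_{\op{loc}}$ limit $\eta_\infty^{(k)}\in\mc{H}_1(\Sigma_\infty^{(k)})$ satisfying $D_{\Sigma_\infty^{(k)}}\eta_\infty^{(k)}=0$.

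The main obstacle is to rule out the possibility that the entire $\|\cdot\|_*$-mass of $\eta_n$ escapes either to the outer cylindrical ends of $\Sigma_n$ or into the long necks created by breaking. To handle this, I would adapt the argument of Lemma~\ref{lem:IDE} to the linear inhomogeneous equation $\partial_s\eta_n+L_m\eta_n = D_{\Sigma_n}\eta_n$ on any such cylindrical region. Here $L_m$ is the asymptotic operator for some multiplicity $m$ drawn from a finite $\mc{M}$-dependent set, and nondegeneracy of $\alpha$ endows each such $L_m$ with a spectral gap of uniform size $\delta>0$ around $0$. Running the differential-inequality argument of Lemma~\ref{lem:IDE} linearly gives that on the interior of a neck or far-end segment of length $L$, the $L^2$ norm of $\eta_n$ on any slice, and by Lemma~\ref{lem:HC} also the pointwise size of $\eta_n$ and $\nabla\eta_n$, is bounded by
\begin{equation*}
c\bigl(e^{-\delta L/3}\|\eta_n\|_* + \|D_{\Sigma_n}\eta_n\|\bigr).
\end{equation*}
Integrating this over the necks and far-end regions and invoking the definition of $\|\cdot\|_*$ (the Morrey part prevents concentration and the $L^2$ part is controlled directly) shows that the $\|\cdot\|_*$-contribution of these regions to $\eta_n$ tends to $0$ as $n\to\infty$.

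Since $\|\eta_n\|_* = 1$, a definite fraction of the mass must therefore concentrate on bounded neighborhoods of the ramification clusters within the levels of the broken limit, so at least one $\eta_\infty^{(k)}$ is nonzero in $\mc{H}_1(\Sigma_\infty^{(k)})$. But Lemma~I.2.15(b) asserts that $D$ has trivial kernel on each connected component of $\Sigma_\infty^{(k)}$, contradicting $\eta_\infty^{(k)}\ne 0$. This yields the desired uniform lower bound $\gamma$.
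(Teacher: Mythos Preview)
Your proposal is correct and follows essentially the same contradiction/compactness/kernel-triviality/neck-control strategy as the paper's proof. The one noteworthy difference is that the paper begins with a reduction step you omit: it first observes that it suffices to prove the uniform bound for the ordinary $L^2_1\to L^2$ norms, because the passage from these to the Morrey-type norms $\|\cdot\|_*$ and $\|\cdot\|$ (via \cite[Thm.~5.4.1]{morrey}) depends only on the local geometry in discs of radius $1$, which is uniformly controlled over all $\Sigma\in\mc{M}$. This lets the paper run the entire contradiction argument in plain $L^2$, normalizing $\|\eta_k\|_2=1$, and then control the neck contribution by the simple eigenfunction-expansion estimate $\|D_\Sigma\lambda\|_2\ge\gamma'\|\lambda\|_{L^2_1}$ for compactly supported $\lambda$ on a cylinder. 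Your route---staying in $\|\cdot\|_*$ throughout and invoking a linear version of Lemma~\ref{lem:IDE} for the necks---works, but requires you to track both the $L^2$ and the Morrey pieces of the norm separately on the necks; the paper's reduction buys a cleaner endgame.
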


\begin{proof}
First observe that it is enough to find $\gamma>0$ with
$\|D_\Sigma\eta\|_{2} \ge \gamma\|\eta\|_{L^2_1}$ for all
$\Sigma\in\mc{M}$ and $\eta\in L^2_1(\pi^*N)$.  This follows from the
proof of \cite[Thm.\ 5.4.1]{morrey}, because that argument uses only
the local geometry in discs of radius 1, and the local geometry is
uniformly controlled over all branched covers.

Now suppose that there does not exist $\gamma>0$ such that
$\|D_\Sigma\eta\|_{2} \ge \gamma\|\eta\|_{L^2_1}$ for all $\Sigma$ and
$\eta$.  Then we can find a sequence of branched covers
$\{\Sigma_k\}_{k=1,2,\ldots}$, and for each $\Sigma_k$ an element
$\eta_k$ of the domain of the corresponding operator $D_{\Sigma_k}$, such that
$\|D_{\Sigma_k}\eta_k\|_{2}\le 1/k$ and $\|\eta_k\|_{2} +
\|\nabla\eta_k\|_{2}=1$.  For any $\Sigma$, since $D_{\Sigma}$ is a
first order elliptic operator, there is a constant $b$ such that
$\|\nabla\eta\|_{2} \le b\left(\|D_{\Sigma}\eta\|_{2} +
\|\eta\|_{2}\right)$ for all $\eta\in L^2_1(\pi^*N)$.  The constant
$b$ can be chosen independently of $\Sigma$ because it depends only on
the local geometry of the branched cover.  Thanks to the existence of
$b$, we can choose a new sequence $\{(\Sigma_k,\eta_k)\}$ such that
$\|D_{\Sigma_k}\eta_k\|_2\le 1/k$ and $\|\eta_k\|_2 = 1$.

By Lemma~{I.2.28}, we can pass to a subsequence so that the sequence of
branched covers $\{\Sigma_k\}$ converges in $\mc{M}/\R$, in the sense of
Definition I.2.27, to a tree $T$ together with a branched cover
$\Sigma_{*j}$ for each internal vertex $j$ of $T$.  By a standard
compactness argument using a priori elliptic estimates, we can pass to
a further subsequence so that for each $j$, the sequence $\eta_k$,
after suitable translations of the $s$ coordinate, converges to a
function $\eta_{*j}$ on $\Sigma_{*j}$.  The function $\eta_{*j}$ is in
the kernel of $D_{\Sigma_{*j}}$, but we know that the latter operator
has trivial kernel, so $\eta_{*j}=0$ for each $j$.  We conclude that
when $k$ is large, all but a small amount of the $L^2$ norm of
$\eta_k$ comes from subcylinders in $\Sigma_k$ that project to long
cylinders in $\R\times S^1$ and that are far away from any
ramification points.

To get a contradiction from this, note that if $\lambda$ is compactly
supported on a cylinder in any $\Sigma\in\mc{M}$ which projects to a
cylinder of the form $(a,b)\times S^1$ in $\R\times S^1$, then there
is a constant $\gamma'>0$, depending only on $\mc{M}$, such that
$\|D_\Sigma\lambda\|\ge \gamma'\left(\|\nabla\lambda\|_2 +
\|\lambda\|_2\right)$.  This follows by expanding $\lambda$ in terms
of eigenfunctions of the operator $L_m$, where $m$ denotes the
covering multiplicity of the cylinder.  Granted this bound,
multiplication of $\eta_k$ by suitable cutoff functions shows that
long tubes as above cannot account for most of its $L^2$ norm.
\end{proof}

\subsection{Solving for $\psi_-$ and $\psi_+$ in terms of $\psi_\Sigma$}
\label{sec:solve-}

Our strategy for solving equation \eqref{eqn:7} is to solve the three
equations
\begin{align}
\label{eqn:7-}
\Theta_-(\psi_-,\psi_\Sigma) &= 0\quad\quad\mbox{on all of $u_{-T}$,}\\
\label{eqn:7+}
\Theta_+(\psi_\Sigma,\psi_+) &= 0\quad\quad\mbox{on all of $u_{+T}$,}\\
\label{eqn:7Sigma}
\Theta_\Sigma(\psi_-,\psi_\Sigma,\psi_+) &= 0 \quad\quad
 \mbox{on all of $\Sigma$.}
\end{align}
More precisely, let $\mc{H}_\Sigma\eqdef \mc{H}_1(\Sigma)$, and let
$\mc{H}_\pm$ denote the orthogonal complement of $\Ker(D_\pm)$ in
$\mc{H}_1(u_{\pm T})$; we will solve the above equations for
$\psi_\pm\in\mc{H}_\pm$ and $\psi_\Sigma\in\mc{H}_\Sigma$.

Given $\psi_\Sigma$, we now explain how to solve equations
\eqref{eqn:7-} and \eqref{eqn:7+} for $\psi_-$ and $\psi_+$
respectively. Later, we will plug the results into \eqref{eqn:7Sigma}
in order to view \eqref{eqn:7Sigma} as an equation for $\psi_\Sigma$
alone.

To prepare for subsequent estimates, for $i=1,\ldots,\Nbar_+$ define
$\lambda_i$ to be the largest negative eigenvalue of the asymptotic
operator $L_{a_i}$, and define $\lambda_+ \eqdef
\min\{|\lambda_i|\}_{i=1,\ldots,\Nbar_+}$.  Likewise, for
$i=-1,\ldots,-\Nbar_-$ let $\lambda_i$ denote the smallest positive
eigenvalue of $L_{a_i}$, and define $\lambda_- \eqdef
\min\{\lambda_i\}_{i=-1,\ldots,-\Nbar_-}$.

\begin{proposition}
\label{prop:CMT}
Fix $h\in(0,1)$.  There exist constants $r_0>h^{-1}$ and
$\varepsilon,c>0$ such that the following holds: Fix $r>r_0$ and
$T_-,T_+ \ge 5r$.  Fix $\Sigma\in\mc{M}$, and let $\mc{B}_\Sigma$ denote
the ball of radius $\varepsilon$ in $\mc{H}_\Sigma$.  Then:
\begin{description}
\item{(a)} There exist maps $\psi_-$ and $\psi_+$, from
$\mc{B}_\Sigma$ into the radius $\varepsilon$ balls in $\mc{H}_-$ and
$\mc{H}_+$ respectively, such that $\psi_-=\psi_-(\psi_\Sigma)$ solves
\eqref{eqn:7-} and $\psi_+=\psi_+(\psi_\Sigma)$ solves \eqref{eqn:7+}.
\item{(b)}
$\psi_\pm$, when identified with a section of the
normal bundle to the untranslated curve $u_\pm$, varies
smoothly as $(T_-,T_+)$ and $\psi_\Sigma$ are varied.
\item{(c)}
$\|\psi_\pm(\psi_\Sigma)\|_* \le c r^{-1}\|\psi_\Sigma\|_*$.
\item{(d)}
The derivative of $\psi_\pm$ at a point $\psi_\Sigma\in \mc{B}_\Sigma$
defines a bounded linear functional $\mc{D}:\mc{H}_\Sigma \to
\mc{H}_\pm$ obeying
\[
\|\mc{D}\eta\|_* \le c r^{-1}\|\eta\|_*.
\]
\end{description}
\end{proposition}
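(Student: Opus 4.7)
The plan is to treat \eqref{eqn:7-} and \eqref{eqn:7+} as perturbed fixed-point equations for the operators $D_\pm$ and apply the Banach contraction mapping theorem. I discuss only \eqref{eqn:7-}, since the treatment of \eqref{eqn:7+} is parallel. Since $u_-$ is an unobstructed, index $1$, non-multiply-covered curve, the Fredholm operator $D_- : \mc{H}_1(u_{-T}) \to \mc{H}_0(u_{-T})$ is surjective with one-dimensional kernel, and by Lemma~\ref{lem:fredholm} it admits a bounded right inverse $Q_- : \mc{H}_0(u_{-T}) \to \mc{H}_-$. Because $D_-$ and the norms are invariant under $\R$-translation, $\|Q_-\|$ is independent of $T_-$. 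Applying $Q_-$ to \eqref{eqn:Theta-} then rewrites \eqref{eqn:7-} as the fixed-point equation $\psi_- = \Psi_{\psi_\Sigma}(\psi_-)$, where
\[
\Psi_{\psi_\Sigma}(\psi_-) \eqdef -Q_-\!\left[F'_-(\psi_-) + \tfrac{\beta_-}{2}\partial_s\beta_\Sigma\!\left((d\bar z+\frak{m})\psi_\Sigma+\frak{z}_-(\psi_-,\psi_\Sigma)\right) + \frak{q}_-\psi_- + \frak{q}_-'\nabla\psi_-\right].
\]

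The proof will reduce to a few estimates in the $\mc{H}_0$ norm. First, $|\partial_s\beta_\Sigma| \le c/r$ on its support, which is a cylindrical annulus of bounded geometry in $\Sigma_i$ for $i<0$; combined with $|d\bar z + \frak{m}|\le c$ and a direct estimate of the Morrey part of the norm, this yields
\[
\left\|\tfrac{\beta_-}{2}\partial_s\beta_\Sigma(d\bar z+\frak{m})\psi_\Sigma\right\| \le cr^{-1}\|\psi_\Sigma\|_*,
\]
which is the source of the $r^{-1}$ factor in (c) and (d). Second, the bundle maps $\frak{q}_-, \frak{q}_-'$ are supported where $s_i-2hr<s<s_i-hr$ on $\Sigma_i$, and on that region $\eta_{-T}$ is evaluated at $s' \ge (5-2h)r \ge 3r$ on a positive end of $u_-$; Lemma~\ref{lem:IDE} then gives $|\frak{q}_-|,|\frak{q}_-'|\le c e^{-\lambda_- r}$, which is dominated by $cr^{-1}$ for $r$ large. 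Third, Lemma~\ref{lem:HC} combined with the type~1 and type~2 quadratic structure of $F'_-$ and $\frak{z}_-$ yields $\|F'_-(\psi_-)\| \le c\|\psi_-\|_*^2$ and $\|\frak{z}_-(\psi_-,\psi_\Sigma)\|\le c\|\psi_-\|_*\|\psi_\Sigma\|_*$.

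Putting these together, if $r$ is large and $\|\psi_-\|_*,\|\psi_\Sigma\|_*\le\varepsilon$ with $\varepsilon$ small, then $\Psi_{\psi_\Sigma}$ is a self-map of the closed radius-$\varepsilon$ ball in $\mc{H}_-$ that is Lipschitz with constant at most $\tfrac12$; the contraction mapping theorem produces a unique fixed point, and one more application of the key estimate gives $\|\psi_-\|_*\le cr^{-1}\|\psi_\Sigma\|_*$, proving (a) and (c). Smooth dependence on $(T_-,T_+,\psi_\Sigma)$ in (b) follows from the implicit function theorem applied to $\Theta_-$, using that $\partial_{\psi_-}\Theta_-$ is a small perturbation of $D_-$ and hence invertible with uniformly bounded inverse. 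For (d), implicit differentiation gives $\mc{D}=-(\partial_{\psi_-}\Theta_-)^{-1}\partial_{\psi_\Sigma}\Theta_-$, and the variation $\partial_{\psi_\Sigma}\Theta_-\cdot\eta$ is dominated by the source term $\tfrac{\beta_-}{2}\partial_s\beta_\Sigma(d\bar z+\frak{m})\eta$, so it has $\mc{H}_0$-norm at most $cr^{-1}\|\eta\|_*$. I expect the main obstacle to be the careful verification of the Morrey-norm estimate in the source-term bound, together with ensuring that every cutoff-error term and quadratic remainder combines without hidden factors growing in $r$ or in the complexity of $\Sigma\in\mc{M}$.
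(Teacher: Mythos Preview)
Your proposal is correct and follows essentially the same route as the paper: rewrite $\Theta_-=0$ as a fixed-point equation for $D_-^{-1}$ applied to the remaining terms, estimate the source term by $cr^{-1}\|\psi_\Sigma\|_*$ via $|\partial_s\beta_\Sigma|\le c/r$, bound $F_-'$ and $\frak{z}_-$ using Lemma~\ref{lem:HC} and their quadratic structure, and control the $\frak{q}_-,\frak{q}_-'$ terms by the exponential decay of $\eta_{-T}$; then (c) drops out of the fixed-point bound and (b),(d) follow by differentiating. The only cosmetic differences are that the paper bounds the $\frak{q}_-$ contribution by $c\,e^{-\lambda_- T_-/2}$ rather than your $c\,e^{-\lambda_- r}$ (either suffices), and it phrases (d) via the derivative of the contraction map, $\mc{D}=(1-\mc{D}_-)^{-1}\mc{D}_\Sigma$, which is equivalent to your implicit-differentiation formula.
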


\begin{proof}
(a) We will explain how to solve equation \eqref{eqn:7-} for $\psi_-$;
 an analogous procedure solves equation \eqref{eqn:7+} for $\psi_+$.
 To start, use \eqref{eqn:Theta-} to rewrite \eqref{eqn:7-} as
\begin{equation}
\label{eqn:7-'}
D_-\psi_- + \mc{F}_0(\psi_\Sigma) + \mc{F}_1(\psi_-,\psi_\Sigma) = 0,
\end{equation}
where
\begin{equation}
\label{eqn:F01}
\begin{split}
\mc{F}_0(\psi_\Sigma) & \eqdef
\frac{\beta_-}{2}
\frac{\partial\beta_\Sigma}{\partial s} (d\overline{z}
+ \frak{m}) \psi_\Sigma,\\
\mc{F}_1(\psi_-,\psi_\Sigma) & \eqdef
F_-'(\psi_-) +
\frac{\beta_-}{2}
\frac{\partial\beta_\Sigma}{\partial s} \frak{z}_-(\psi_-,\psi_\Sigma) +
\frak{q}_-\cdot\psi_- + \frak{q}_-'\cdot\nabla\psi_-.
\end{split}
\end{equation}
By virtue of Lemma~\ref{lem:HC}, there exists $\varepsilon>0$ such
that if $\psi_\Sigma\in \mc{H}_\Sigma$ satisfies
$\|\psi_\Sigma\|_*<\varepsilon$ then $|\psi_\Sigma|<\delta_0$; and if
$\psi_-\in \mc{H}_1(u_{-T})$ satisfies
$\|\psi_-\|_*<\varepsilon$ then $|\psi_-|<\delta_0$.  So if
$\|\psi_\Sigma\|_*<\varepsilon$, then $\mc{F}_0(\psi_\Sigma)\in
\mc{H}_0(u_{-T})$ is defined,
and $\mc{F}_1(\cdot,\psi_\Sigma)$ defines a smooth map from the radius
$\varepsilon$ ball in $\mc{H}_1(u_{-T})$ to $\mc{H}_0(u_{-T})$.

To solve \eqref{eqn:7-'}, we will apply the contraction mapping
theorem to a map $\mc{I}$ defined as follows.
Lemma~\ref{lem:fredholm} implies that $D_-$ has a bounded inverse
$D_-^{-1}:\mc{H}_0(u_{-T})\to\mc{H}_-$.  Consequently, for fixed
$\psi_\Sigma$ with $\|\psi_\Sigma\|_* <
\varepsilon$, the assigment
\begin{equation}
\label{eqn:CM}
\psi_- \longmapsto \mc{I}(\psi_-) \eqdef -D_-^{-1}(\mc{F}_0(\psi_\Sigma) +
\mc{F}_1(\psi_-,\psi_\Sigma))
\end{equation}
defines a smooth map from the radius $\varepsilon$ ball in
$\mc{H}_-$ to $\mc{H}_-$.

{\em Claim:\/} If $r$ and $T_-$ are sufficiently large and
$\varepsilon>0$ is sufficiently small, then the map $\mc{I}$ sends the
radius $\varepsilon$ ball in $\mc{H}_-$ to itself as a contraction
mapping with
\begin{equation}
\label{eqn:ICM}
\|\mc{I}(\psi_-) - \mc{I}(\psi_-')\|_* \le
\frac{1}{2} \|\psi_- - \psi_-'\|_*.
\end{equation}

{\em Proof of claim:\/}
It follows from the definition of $\beta_\Sigma$ that $|\partial_s
\beta_\Sigma| < c r^{-1}$, and so by \eqref{eqn:F01} we have
\[
\|\mc{F}_0(\psi_\Sigma)\|
\le c r^{-1} \|\psi_\Sigma\|_*.
\]
By Lemma~\ref{lem:HC} and the fact that $F_-'$ is type 1 quadratic, we
have
\[
\|F_-'(\psi_-)\|\le c
\|\psi_-\|_*^2.
\]
By Lemma~\ref{lem:HC} and the fact that $\frak{z}_-$ is type 2
quadratic, we have
\[
\|\frak{z}_-(\psi_-,\psi_\Sigma)\| \le
c\|\psi_-\|_*\|\psi_\Sigma\|_*.
\]
By the decay estimates on the ends of $u_-$ from
\S\ref{sec:decay}, we have
\[
\begin{split}
\|\frak{q}_-\cdot\psi_- + \frak{q}_-'\cdot\nabla\psi_-\| &\le c
\sum_{i=-1}^{-\Nbar_-}\exp(-\lambda_i(s_i - s_- + T_--2rh))\|\psi_-\|_* \\
& \le c\exp(-\lambda_-T_-/2)\|\psi_-\|_*.
\end{split}
\]
Since $D_-^{-1}$ is a bounded operator, the above estimates imply that
\begin{equation}
\label{eqn:IEstimate}
\|\mc{I}(\psi_-)\|_* \le c\left(\|\psi_-\|_*^2
+ r^{-1}(1+\|\psi_-\|_*)\|\psi_\Sigma\|_* + \exp(-\lambda_-
T_-/2)\|\psi_-\|_*.\right).
\end{equation}
If $\varepsilon$ is sufficiently small and if $r$ and $T_-$ are
sufficiently large, then the right hand side of
\eqref{eqn:IEstimate} is less than $\varepsilon$ whenever
$\|\psi_-\|_*,\|\psi_\Sigma\|_* < \varepsilon$.

We now prove the contraction property.  Since $F_-'$ is type 1
quadratic,
\[
\|F_-'(\psi_-) - F_-'(\psi_-')\| \le c\left(\|\psi_-\|_* +
\|\psi_-\|_*^2\right) \|\psi_- - \psi_-'\|_*.
\]
Since $\frak{z}_-$ is type 2 quadratic,
\begin{equation}
\label{eqn:z-CM}
\|\frak{z}_-(\psi_-,\psi_\Sigma) - \frak{z}_-(\psi_-',\psi_\Sigma)\|
\le c\left(\|\psi_\Sigma\|_* + \|\psi_-\|_*\|\psi_\Sigma\|_*
\right)\|\psi_- - \psi_-'\|_*.
\end{equation}
Thus for $\psi_-\neq\psi_-'$,
\begin{equation}
\label{eqn:contraction}
\begin{split}
\frac{\|\mc{I}(\psi_-) -
\mc{I}(\psi_-')\|}{\|\psi_- - \psi_-'\|_*}
\le &
c\big(
\|\psi_-\|_* +
\|\psi_-\|_*^2
+ r^{-1}\|\psi_\Sigma\|_*(1+\|\psi_-\|_*)\\
& \quad+ \exp(-\lambda_-T_-/2)
\big).
\end{split}
\end{equation}
If $\varepsilon$ is sufficiently small and if
$r$ and $T_-$ are sufficiently large, then the right hand side of
\eqref{eqn:contraction} is less than $\frac{1}{2}$ whenever $\|\psi_-\|_*,
\|\psi_\Sigma\|_* < \varepsilon$.

This completes the proof of the claim.  Part (a) of the proposition
now follows from the contraction mapping theorem.

(b) Smoothness of the map $\psi_-$ follows from smoothness of the maps
$\mc{F}_0$ and $\mc{F}_1$ used to define the contraction mapping
\eqref{eqn:CM}.

(c) By the estimate \eqref{eqn:IEstimate}, a fixed
point $\psi_-$ of \eqref{eqn:CM} satisfies
\begin{equation}
\label{eqn:psi-Estimate}
\|\psi_-\|_* \le cr^{-1}\|\psi_\Sigma\|_* + c\|\psi_-\|_*\left(
\|\psi_-\|_* + r^{-1}\|\psi_\Sigma\|_* + \exp(-\lambda_-T_-/2)\right).
\end{equation}
Recall that $\|\psi_\Sigma\|_*,\|\psi_-\|_*<\varepsilon$.  So if
$\varepsilon$ is sufficiently small and if $r$ and $T_-$ are
sufficiently large, then the sum in parentheses on the right hand side
of \eqref{eqn:psi-Estimate} is less than $c^{-1}/2$.

(d) Regard the right hand side of \eqref{eqn:CM} as a
function of both $\psi_-$ and $\psi_\Sigma$, and let $\mc{D}_-$
and $\mc{D}_\Sigma$ denote its derivatives with respect to
$\psi_-$ and $\psi_\Sigma$.  Then the derivative of $\psi_-$ as a
function of $\psi_\Sigma$ is given by
\begin{equation}
\label{eqn:mcD}
\mc{D} = (1-\mc{D}_-)^{-1}\mc{D}_\Sigma.
\end{equation}
By \eqref{eqn:ICM}, if $\varepsilon$ is sufficiently small and if $r$
and $T_-$ are sufficiently large, then the operator $\mc{D}_-$ has
norm less than $1/2$.  On the other hand, by the analogue of
\eqref{eqn:z-CM} in which the roles of $\psi_-$ and $\psi_\Sigma$ are
switched, we have
\[
\|\mc{D}_\Sigma\eta\|_* \le cr^{-1} \left(1 + \|\psi_-\|_* +
\|\psi_\Sigma\|_* \|\psi_-\|_*\right)\|\eta\|_*.
\]
Putting these estimates into \eqref{eqn:mcD} completes the proof.
\end{proof}

\subsection{Solving for $\psi_\Sigma$}

Let $h$, $r$, $T_-$, $T_+$ be as in Proposition~\ref{prop:CMT}.  Fix
$\Sigma\in\mc{M}$.  We now solve equation
\eqref{eqn:7Sigma} for $\psi_\Sigma\in \mc{B}_\Sigma$.
To start, write equation \eqref{eqn:7Sigma} as
\begin{equation}
\label{eqn:7SF}
D_\Sigma\psi_\Sigma + \mc{F}_{\Sigma}(\psi_\Sigma) = 0,
\end{equation}
where $\mc{F}_{\Sigma}(\psi_\Sigma)$ denotes the sum of the terms other than
$D_\Sigma\psi_\Sigma$ on the right hand side of
\eqref{eqn:ThetaSigma}.  Here Proposition~\ref{prop:CMT} is used to view
$\psi_-$ and $\psi_+$ as functions of $\psi_\Sigma$.

Equation \eqref{eqn:7SF} cannot be treated in the same way as equation
\eqref{eqn:7-'}, because the operator $D_\Sigma$ has a nontrivial
cokernel.  To deal with this issue, introduce the $L^2$-orthogonal
projection $\Pi$ from $L^2(\pi^*N\tensor T^{0,1}\Sigma)$ onto
$\Ker(D_\Sigma^*)$.  Equation
\eqref{eqn:7SF} is then equivalent to the two equations
\begin{gather}
\label{eqn:7SF1}
D_\Sigma\psi_\Sigma + (1-\Pi)\mc{F}_{\Sigma}(\psi_\Sigma) = 0,\\
\label{eqn:7SF2}
\Pi\mc{F}_{\Sigma}(\psi_\Sigma)=0.
\end{gather}
We now solve the first of these two equations.

\begin{proposition}
\label{prop:TRE}
Fix $h\in(0,1)$.  There exist constants $r_0>h^{-1}$ and
$\varepsilon>0$ such that when $r>r_0$ and $T_+,T_-\ge 5r$, the following
is true.  Fix $\Sigma\in\mc{M}$, and let $\mc{B}_\Sigma$ denote the
ball of radius $\varepsilon$ in $\mc{H}_\Sigma$.  Then:
\begin{description}
\item{(a)}
There exists a unique $\psi_\Sigma\in \mc{B}_\Sigma$ satisfying equation
\eqref{eqn:7SF1}.
\item{(b)}
This $\psi_\Sigma$ satisfies
\[
\begin{split}
\|\psi_\Sigma\|_* & < 
c \left(\sum_{i=1}^{\Nbar_+}\exp(-|\lambda_i|(s_+-s_i + T_+ -
2rh))\right. \\ &
\quad\;\; +
\left.\sum_{i=-1}^{-\Nbar_-} \exp(-\lambda_i(s_i - s_- + T_- - 2rh))\right)\\
& < c(\exp(-\lambda_- T_-/2) + \exp(-\lambda_+ T_+/2)).
\end{split}
\]
\item{(c)}
This $\psi_\Sigma$
defines a Lipschitz section of $\pi^*N$ which
is smooth except possibly at the ramification points of $\pi$.
\end{description}
\end{proposition}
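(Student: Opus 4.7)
The plan is to mirror Proposition~\ref{prop:CMT}, solving \eqref{eqn:7SF1} by the contraction mapping theorem. Lemma~\ref{lem:SB} (together with Lemma~I.2.15(b), which gives $\Ker(D_\Sigma)=0$) provides a bounded inverse $D_\Sigma^{-1}:(1-\Pi)\mc{H}_0(\Sigma)\to\mc{H}_\Sigma$ of operator norm at most $\gamma^{-1}$, with $\gamma>0$ independent of $\Sigma\in\mc{M}$. Substituting $\psi_\pm=\psi_\pm(\psi_\Sigma)$ from Proposition~\ref{prop:CMT}, I would recast \eqref{eqn:7SF1} as the fixed-point problem
\[
\psi_\Sigma = \mc{I}_\Sigma(\psi_\Sigma) \eqdef -D_\Sigma^{-1}(1-\Pi)\mc{F}_\Sigma(\psi_\Sigma)
\]
on the ball $\mc{B}_\Sigma\subset\mc{H}_\Sigma$.

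To show $\mc{I}_\Sigma$ is a self-contraction, I would estimate each line of \eqref{eqn:ThetaSigma} in the $\mc{H}_0$ norm. The type-1 quadratic $F_\Sigma'(\psi_\Sigma)$ contributes $c\|\psi_\Sigma\|_*^2$: the singular factor $(\ubar/u)^q$ from \eqref{eqn:GNC} has modulus one and is harmless for the Morrey-type norm. The $\frak{q}_0,\frak{q}_0'$ terms have coefficients bounded by $|\eta_{\pm T}|$, which the decay estimate \eqref{eqn:decay6} controls by $c\exp(-\lambda_\pm T_\pm/2)$ on their supports. The $\frak{z}_{0\pm}$ terms are type-2 quadratic in $(\psi_\pm,\psi_\Sigma)$, and Proposition~\ref{prop:CMT}(c) bounds them by $cr^{-1}\|\psi_\Sigma\|_*^2$. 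The only ``source'' contributions not carrying a factor of $\|\psi_\Sigma\|_*$ are $\frak{p}_\pm(\eta_{\pm T})$ and the $(\eta_{\pm T}+\psi_\pm)d\overline{z}$ pieces of $\tfrac12\partial_s\beta_\pm(\cdots)$; applying \eqref{eqn:decay6} to $\eta_{\pm T}$ on the cylindrical strips that support $\frak{p}_\pm$ and $\partial_s\beta_\pm$ yields a bound by $c\sum_i\exp(-\lambda_i\cdot(\text{offset}))$, which is precisely the estimate asserted in (b). Assembled together, these give
\[
\|\mc{I}_\Sigma(\psi_\Sigma)\|_* \le c\|\psi_\Sigma\|_*^2 + \mbox{(source bound from (b))},
\]
and the parallel difference estimate, using Proposition~\ref{prop:CMT}(d) to control how $\psi_\pm$ depends on $\psi_\Sigma$, produces Lipschitz constant $c(\varepsilon + r^{-1} + \exp(-\lambda_\pm T_\pm/2))$. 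For $\varepsilon$ small and $r,T_\pm$ large the contraction mapping theorem yields the unique fixed point in (a); feeding this fixed point back into the equation and absorbing the $\|\psi_\Sigma\|_*^2$ term into the left-hand side produces (b).

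For (c), off the ramification set the equation has smooth coefficients, so standard elliptic bootstrapping makes $\psi_\Sigma$ smooth. At a ramification point, the coefficient $(\ubar/u)^q$ in $F_\Sigma'$ is merely bounded, but because $\psi_\Sigma$ is small the term $(\ubar/u)^q a(t,\psi_\Sigma)\partial_u\psi_\Sigma\, d\ubar$ is a small, bounded perturbation of $\dbar$, and the remaining right-hand side lies in $L^p_{\mathrm{loc}}$ for every $p<\infty$; the Calder\'on--Zygmund estimates for $\dbar$ then place $\psi_\Sigma$ in $W^{1,p}_{\mathrm{loc}}$ for all finite $p$, and Sobolev embedding in two real dimensions upgrades this to the claimed Lipschitz regularity. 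The main obstacle throughout is the bookkeeping of the contraction estimate: a great many cross terms couple $\psi_\Sigma$ to $\psi_\pm$, and one must verify that each inherits at least one of the small parameters available ($r^{-1}$ from $\partial_s\beta_\Sigma,\partial_s\beta_\pm$, $\varepsilon$ from the ball, or $\exp(-\lambda_\pm T_\pm/2)$ from the asymptotic decay of $\eta_{\pm T}$) so that the overall Lipschitz constant drops below one --- this is where the careful placement of the cutoff supports in \S\ref{sec:defpre} is indispensable.
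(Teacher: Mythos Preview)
Your treatment of parts (a) and (b) is essentially the paper's proof: set up the contraction $\mc{I}(\psi_\Sigma)=-D_\Sigma^{-1}(1-\Pi)\mc{F}_\Sigma(\psi_\Sigma)$, use the $\Sigma$-uniform inverse bound from Lemma~\ref{lem:SB}, and estimate \eqref{eqn:ThetaSigma} termwise. One small correction: the supremum of $|\frak{q}_{0\pm}|$ on its support is of order $e^{-\lambda_\pm r}$, not $e^{-\lambda_\pm T_\pm/2}$, since the support extends to $s\approx s_\pm\pm(T_\pm-r)$ where $|\eta_{\pm T}|\sim e^{-\lambda_\pm r}$; this is the $e^{-\lambda r}\|\psi_\Sigma\|_*$ term in the paper's estimate \eqref{eqn:FSEE}, and it is still small. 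The paper also explicitly invokes the pointwise decay \eqref{eqn:pointwise-}, \eqref{eqn:pointwise+} of $\psi_\pm$ on the cylinders to handle the $\partial_s\beta_\pm\cdot\psi_\pm$ contributions, which your outline would need as well.

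Part (c) has a genuine gap. Near a ramification point the equation reads $\partial_{\bar u}\psi_\Sigma + A\,\partial_u\psi_\Sigma=(\text{smooth})$ with $A=(\bar u/u)^q a(t,\psi_\Sigma)$. You propose to absorb $A\,\partial_u\psi_\Sigma$ via the Calder\'on--Zygmund estimate $\|\partial_u f\|_p\le C_p\|\partial_{\bar u}f\|_p$. But $C_p\to\infty$ as $p\to\infty$ (the Beurling transform has $L^p$ norm of order $p$), so the condition $\|A\|_\infty C_p<1$ fails once $p$ exceeds roughly $1/\|A\|_\infty$. You therefore obtain $\psi_\Sigma\in W^{1,p}$ only up to a finite $p$, hence $C^{0,\alpha}$ only for $\alpha$ bounded strictly below $1$; and even $\bigcap_{p<\infty}W^{1,p}$ would give only $\bigcap_{\alpha<1}C^{0,\alpha}$, not Lipschitz. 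No bootstrap rescues this, because the factor $(\bar u/u)^q$ remains merely $L^\infty$ no matter how regular $\psi_\Sigma$ becomes.

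The paper (in \S\ref{sec:lipschitz}, to which the proof of (c) is deferred) circumvents this by first making the Lipschitz coordinate change $v=u\bigl(1-a_0(\bar u/u)^{q+1}\bigr)^{1/(q+1)}$, which kills the value $a_0$ of the singular coefficient at the origin; in the new variable the first-order coefficient $\frak{f}_1$ in \eqref{eqn:ACR} \emph{vanishes} at $v=0$. Then, rather than $L^p$ theory, it writes $\lambda$ via the Cauchy kernel \eqref{eqn:LHF}, decomposes the resulting integral into dyadic annuli, and uses the Morrey bound $\int_{B_\rho}|d\lambda|^2\le\rho^{1/2}\|\psi_\Sigma\|_*^2$ together with $|\frak{f}_1(v)|\le c|v|^{\sigma'}$ to show that any H\"older exponent $\sigma<1$ can be improved by a fixed increment, forcing $\sigma=1$. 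The essential idea your argument is missing is that the constant part of the discontinuous coefficient must be removed by a change of variables before any regularity bootstrap can reach Lipschitz.
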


\begin{proof}
To prove part (a), we apply the contraction mapping theorem to the map
 $\mc{I}:\mc{B}_\Sigma\to\mc{H}_\Sigma$ defined as follows.  Recall that
 the kernel of $D_\Sigma$ is trivial.  Thus it makes sense to define
\begin{equation}
\label{eqn:FCM}
\mc{I}(\psi_\Sigma) \eqdef
-D_\Sigma^{-1}(1-\Pi)\mc{F}_{\Sigma}(\psi_\Sigma),
\end{equation}
under the assumptions of Proposition~\ref{prop:CMT}.

To get estimates on $\mc{I}$, first recall from Lemma~\ref{lem:SB} that
there is a $\Sigma$-independent upper bound on the operator norm of
$D_\Sigma^{-1}$.  Next, given $\psi_\Sigma\in\mc{B}_\Sigma$, we claim
that on the $s\ge s_--T_-+2r$ part of the positive ends of $u_{-T}$,
the corresponding section $\psi_-$ satisfies
\begin{equation}
\label{eqn:pointwise-}
|\psi_-| + |\nabla\psi_-| \le c_- r^{-1} \|\psi_\Sigma\|_*
\exp(-\lambda_-(s-(s_--T_-+2r))),
\end{equation}
where $c_-$ depends only on $u_-$.  This follows from decay estimates as in
Lemma~\ref{lem:IDE}, together with Proposition~\ref{prop:CMT}(c).
Likewise, there is a constant $c_+,$ depending only on $u_+$, such that
on the $s\le s_+ + T_+ - 2r$ part of the negative ends of $u_{+T}$, we have
\begin{equation}
\label{eqn:pointwise+}
|\psi_+| + |\nabla\psi_+| \le c_+ r^{-1} \|\psi_\Sigma\|_*
 \exp(-\lambda_+((s_++T_+-2r) - s)).
\end{equation}
Estimating the individual terms in $\mc{I}$ as in the proof of
Proposition~\ref{prop:CMT}, and using Proposition~\ref{prop:CMT}(c)
together with \eqref{eqn:pointwise-} and \eqref{eqn:pointwise+}, we
find that
\begin{equation}
\label{eqn:FSEE}
\begin{split}
\|\mc{I}(\psi_\Sigma)\|_* & 
\le c\left(\|\psi_\Sigma\|_*^2 +
e^{-\lambda r}\|\psi_\Sigma\|_* \right.\\
&\quad\quad\quad\left.  + 
\sum_{i=1}^{\Nbar_+} e^{-|\lambda_i|(s_+-s_i + T_+ - 2rh)} 
+ \sum_{i=-1}^{-\Nbar_-} e^{-\lambda_i(s_i - s_- + T_-
- 2rh)}\right)\\ & \le c\left(\|\psi_\Sigma\|_*^2 + e^{-\lambda
r}\|\psi_\Sigma\|_* + e^{-\lambda_- T_-/2} + e^{-\lambda_+
T_+/2}\right)
\end{split}
\end{equation}
where $\lambda\eqdef\min\{\lambda_-,\lambda_+\}$.

It follows from \eqref{eqn:FSEE} that if $\varepsilon$ is sufficiently
small, if $r$ is sufficiently large, and if $T_-$ and $T_+$ are
sufficiently large with respect to $\varepsilon$, then $\mc{I}$ maps
$\mc{B}_\Sigma$ to itself.  Now if $\psi_\Sigma$ and $\psi_\Sigma'$
are distinct elements of $\mc{B}_\Sigma$, then using
Proposition~\ref{prop:CMT}(c),(d), assuming that $\varepsilon<1$, we
find that there is a constant $c$ with
\[
\frac{\|\mc{I}(\psi_\Sigma) - \mc{I}(\psi_\Sigma')\|_*}{\|\psi_\Sigma
- \psi_\Sigma'\|_*} \le c\left(\|\psi_\Sigma\|_* + r^{-1}\right).
\]
So $\mc{I}$ is a contraction mapping on ${B}_\Sigma$ provided that
$\varepsilon$ is sufficiently small and $r$ is sufficiently large.
Then $\mc{I}$ has a unique fixed point in ${B}_\Sigma$, which by
definition satisfies \eqref{eqn:7SF1}.

Part (b) follows from \eqref{eqn:FSEE} provided that $\varepsilon$ is
sufficiently small and $r$ is sufficiently large.

The proof of part (c) is deferred to \S\ref{sec:lipschitz}.
\end{proof}

\subsection{The obstruction section and the gluing map}
\label{sec:OSGM}

We now put the results of this section together.
Fix $h\in(0,1)$;  let $r_0,\varepsilon$ be as in
Proposition~\ref{prop:TRE}, and fix $r>r_0$.

\begin{definition}
Given $T_-,T_+\ge 5r$ and $\Sigma\in\mc{M}$, define the
``$(T_-,T_+)$-gluing along $\Sigma$'', denoted by $u(T_-,T_+,\Sigma)$,
to be the deformed pregluing \eqref{eqn:deformation}, where
$\psi_\Sigma$ is given by Proposition~\ref{prop:TRE} and $\psi_\pm$
are given by Proposition~\ref{prop:CMT}.
\end{definition}

Let $\mc{O}\to\times_2[5r,\infty)\times \mc{M}$ denote the
pullback of the obstruction bundle from
\S{I.2.3}.  This means that the fiber over $(T_-,T_+,\Sigma)$ is
\[
\mc{O}_{(T_-,T_+,\Sigma)} = \Hom(\Coker(D_\Sigma),\R).
\]

\begin{definition}
Define a section $\frak{s}:\times_2[5r,\infty)\times\mc{M}\to\mc{O}$
as follows: If $\sigma\in\Coker(D_\Sigma)$, then
\begin{equation}
\label{eqn:defs}
\frak{s}(T_-,T_+,\Sigma)(\sigma) \eqdef
\left\langle\sigma,
\mc{F}_{\Sigma}(\psi_\Sigma)
\right\rangle,
\end{equation}
where $\psi_\Sigma$ is the solution to \eqref{eqn:7SF1} given by
Proposition~\ref{prop:TRE}.
\end{definition}

Note that under the identification
$\Hom(\Coker(D_\Sigma),\R)\simeq\Coker(D_\Sigma)$ given by the inner
product, we have
\[
\frak{s}(T_-,T_+,\Sigma) =
\Pi\mc{F}_{\Sigma}(\psi_\Sigma).
\]
Thus by \eqref{eqn:7SF2}, $u(T_-,T_+,\Sigma)$ is $J$-holomorphic if
and only if $\frak{s}(T_-,T_+,\Sigma)=0$.

Recall that $\alpha_+$ denotes the list of Reeb orbits corresponding to the
positive ends of $U_+$, and $\alpha_-$ denotes the list of Reeb
orbits corresponding to the negative ends of $U_-$.  We now have a
well-defined ``gluing map''
\begin{equation}
\label{eqn:gluingMap}
G: \frak{s}^{-1}(0) \longrightarrow \mc{M}^J(\alpha_+,\alpha_-),
\end{equation}
sending $(T_-,T_+,\Sigma)\in\frak{s}^{-1}(0)$ to $u(T_-,T_+,\Sigma)$.

The next two sections sections establish important properties of the
obstruction section $\frak{s}$ and the gluing map $G$.

\section{Properties of the obstruction section}
\label{sec:properties}

Continue with the gluing setup from \S\ref{sec:glP}.  In this section
we prove that the obstruction section $\frak{s}$ defined in
\S\ref{sec:OSGM} is continuous.  We also show that the
restriction of $\frak{s}$ to the set of triples $(T_-,T_+,\Sigma)$,
such that the branched cover $\Sigma$ has only simple ramification
points, is smooth.  Finally, we show that if $J$ is generic, then all
zeroes of $\frak{s}$ occur in the latter set.

\subsection{Proof that $\psi_\Sigma$ is Lipschitz}
\label{sec:lipschitz}

We begin with the previously deferred:

\begin{proof}[Proof of Proposition~\ref{prop:TRE}(c).]
 Smoothness of $\psi_\Sigma$ off of the ramification points follows
by standard elliptic bootstrapping using \eqref{eqn:7SF1}.

To prove that $\psi_\Sigma$ is Lipschitz near a given ramification
point, identify a neighborhood of the ramification point in $\Sigma$
with a disc of radius $2\rho>0$ in $\C$, via a holomorphic local
coordinate $u$ for which $\pi^*z = z_0 + u^{q+1}$.  On this disc, equation
\eqref{eqn:7SF1} asserts that
\[
D_\Sigma\psi_\Sigma + F_\Sigma(\psi_\Sigma) = \eta,
\]
where $\eta$ is some element of $\Ker(D_\Sigma^*)$, and therefore
smooth.  It follows using \eqref{eqn:GNC} that $\psi_\Sigma$ obeys an
equation of the form
\[
\frac{\partial\psi_\Sigma}{\partial \overline{u}} +
\left(\frac{\overline{u}}{u}\right)^q a(u,\psi_\Sigma)\frac{\partial
\psi_\Sigma}{\partial u} + g(u,\psi_\Sigma) = 0,
\]
where $a(u,w)$ is the function depicted in \eqref{eqn:GNC}, and
$g(u,w)$ is a smooth function of its arguments.

To simplify the above equation, let $a_0\eqdef a(0,\psi_\Sigma(0))$,
and introduce a Lipschitz change of coordinates to
\begin{equation}
\label{eqn:LCC}
{v} \eqdef u\left(1 -
a_0\left(\frac{\overline{u}}{u}\right)^{q+1}\right)^{1/(q+1)}.
\end{equation}
This change of coordinates is invertible if $|a_0|$ is small, which we
can arrange by taking $\varepsilon$ small in
Proposition~\ref{prop:TRE}.

We now prove that $\psi_\Sigma$ is Lipschitz at ${v}=0$.  To do so,
define a function $\lambda$ of ${v}$, defined on the disc of radius
$\rho$, by writing
\[
\psi_\Sigma(u) = \lambda({v}(u)) + \psi_\Sigma(0) -
g(0,\psi_\Sigma(0))\overline{u}.
\]
Application of the chain rule finds that $\lambda$ obeys an
equation of the form
\begin{equation}
\label{eqn:ACR}
\frac{\partial\lambda}{\partial\overline{{v}}} +
\frak{f}_1\frac{\partial \lambda}{\partial {v}} +
\frak{f}_0=0.
\end{equation}
Here each of $\frak{f}_1$ and $\frak{f}_0$ can be written as a product
$\frak{m}({v})\cdot\frak{b}({v},\lambda({v}))$, where $|\frak{m}|\le
c$, while $\frak{b}$ is a smooth function of two variables with
$\frak{b}(0,0)=0$, whose derivatives have bounds depending only on
$u_+$, $u_-$, and $\eta$.

As a consequence of \eqref{eqn:ACR}, for $|{v}|<\rho$ the function
$\lambda$ can be written as
\begin{equation}
\label{eqn:LHF}
\lambda({v}) = \frac{1}{\pi}\int_{|x|<\rho} \frac{1}{{v}-x}
\left(\frak{f}_1
\frac{\partial\lambda}{\partial {v}} + \frak{f}_0\right)\bigg|_{x}
+ \lambda_0({v}),
\end{equation}
where $\lambda_0$ is a holomorphic function with
$|d\lambda_0({v})|<c\rho^{-1}$ for $|{v}|<\rho/2$.

Recall from Lemma~\ref{lem:HC} that $\psi_\Sigma$ is a Holder
continuous function of $u$.  Since $\lambda(0)=0$, it follows that
there exists $\sigma'\in(0,1]$ such that $|\lambda({v})| \le
c_{\sigma'}|{v}|^{\sigma'}$ where $c_{\sigma'}$ is a constant.  Let
$\sigma$ denote the supremum of the set of such $\sigma'$.  Then for
$\sigma'\in(0,\sigma)$, we know that $|\frak{f}_1|,
|\frak{f}_0| \le cc_{\sigma'}|{v}|^{\sigma'}$.

To bound $|\lambda({v})|$, fix $\epsilon\in(0,\sigma)$.  Since
$\lambda(0)=0$, we can subtract the instances of equation
\eqref{eqn:LHF} for $\lambda({v})$ and $\lambda(0)$ to find that if
$|{v}|<\rho/2$, then
\begin{equation}
\label{eqn:ITBE}
|\lambda({v})| \le c c_{\sigma-\epsilon} \int_{|x|<\rho}
 \frac{|{v}|}{|{v}-x||x|} |x|^{\sigma-\epsilon}
 |d\lambda| +
 c\rho^{-1}|{v}|.
\end{equation}

To bound the integral in \eqref{eqn:ITBE}, first consider the
integral over an annulus $2^n|{v}| < |x| \le 2^{n+1}|{v}|$, where $n$ is a
positive integer with $2^n|{v}|<\rho$.  The contribution from this
annulus is at most
\[
c |{v}|(2^n|{v}|)^{-2+\sigma-\epsilon}\int_{2^n|{v}|<|x|
 \le 2^{n+1}|{v}|}
 |d\lambda|,
\]
which by Holder's inequality is
\begin{equation}
\label{eqn:BHI}
\le
c 
 |{v}|(2^n|{v}|)^{-1+\sigma-\epsilon}\left(\int_{2^n|{v}|<|x| \le
 2^{n+1}|{v}|} |d\lambda|^2\right)^{1/2}.
\end{equation}
It follows from the definition of the norm on $\mc{H}_\Sigma$ that the
integral in \eqref{eqn:BHI} is at most $(2^{n+1}|{v}|)^\morrey
\|\psi_\Sigma\|_*$.  As a consequence, \eqref{eqn:BHI} is no greater
than $c
(2^n)^{\morreyovertwominusone+\sigma-\epsilon}
|{v}|^{\sigma-\epsilon+\morreyovertwo}$.
Summing up these annular contributions, we find that the contribution
to the integral in \eqref{eqn:ITBE} from the region where $|x|>2|{v}|$
is at most
\[
c |{v}|^{\sigma-\epsilon+\morreyovertwo}\sum_{n\ge 1, \;\;
2^n|{v}|<\rho} (2^\kappa)^n,\quad\quad\mbox{where $\kappa \eqdef
\morreyovertwominusone + \sigma - \epsilon$.}
\]

To bound the integral in \eqref{eqn:ITBE} over the region where
$|x|<|{v}|/2$, divide this region into annuli of the form
$2^{-n-1}|{v}|\le|x|\le 2^{-n}|{v}|$ where $n$ is a positive integer.
By the same trick as before, the contribution from the $n^{th}$ such
annulus is at most $c(2^{-n}|{v}|)^{\sigma-\epsilon+\morreyovertwo}$,
so the sum of these contributions is at most
$c|{v}|^{\sigma-\epsilon+\morreyovertwo}$.  Finally, the integral over
the region where $|{v}|/2 < |x| < 2|{v}|$ satisfies a bound of the
same form, as we can see by considering annuli centered where $x={v}$.

The conclusion from the above calculations is that if $|{v}|<\rho/2$, then
\begin{equation}
\label{eqn:PTTC}
|\lambda({v})| \le c c_{\sigma-\epsilon}
 |{v}|^{\sigma-\epsilon+\morreyovertwo}
\left( 1 + \sum_{n\ge 1, \; \; 2^n|{v}| < \rho} (2^\kappa)^n\right) +
c\rho^{-1}|{v}|
\end{equation}
where $\kappa=\morreyovertwominusone+\sigma-\epsilon$.  It follows
from \eqref{eqn:PTTC} that $\sigma$ must equal $1$.  Indeed, if
$\sigma$ were less than $1$, then one could choose $\epsilon$ so that
$\sigma'\eqdef\sigma-\epsilon+\morreyovertwo$ is greater than $\sigma$
and less than $1$.  Then $\kappa$ would be negative, so
\eqref{eqn:PTTC} would give $|\lambda({v})| \le \text{const}\cdot
|{v}|^{\sigma'}$, contradicting the maximality of $\sigma$.

Granted that $\sigma=1$, pick any $\epsilon\in(0,\morreyovertwo)$, and
do the sum in
\eqref{eqn:PTTC} to see that $|\lambda({v})|\le c_0|{v}|$, where $c_0$
has an a priori upper bound in terms of $\eta$ and the parameters used
in the gluing.  This proves that $\lambda$ is Lipschitz at ${v}=0$,
and thus $\psi_\Sigma$ is Lipschitz at $u=0$.

To prove that $\lambda$ is Lipschitz at ${v}=w\neq0$, introduce a new
coordinate function
\[
{v}' \eqdef {v} - w - \frak{f}_1(w)\cdot(\overline{{v}}-\overline{w}).
\]
Viewed as a function of ${v}'$, the function
\[
\lambda' \eqdef \lambda - \lambda(w) +
\frak{f}_0(w)\cdot(\overline{{v}}-\overline{w})
\]
obeys an equation of the form
\[
\frac{\partial\lambda'}{\partial\overline{{v}'}} +
\frak{f}_1'\frac{\partial\lambda'}{\partial{v}'} + \frak{f}_0' = 0,
\]
where $\frak{f}_1'$ and $\frak{f}_0'$ have the same properties as do
their unprimed counterparts in \eqref{eqn:ACR}.  Given this, a repeat
of the arguments just given to prove that $\lambda$ is Lipschitz at ${v}=0$
proves that $\lambda'$ is Lipschitz at ${v}'=0$.  As the Lipschitz
constant that appears in the latter argument has a uniform bound, this
proves that $\lambda$ is Lipschitz on a neighborhood of ${v}=0$, and
thus that $\psi_\Sigma$ is Lipschitz near $u=0$.
\end{proof}

\subsection{Continuity of the obstruction section}
\label{sec:sCont}

\begin{proposition}
\label{prop:sCont}
The section $\frak{s}:\times_2[5r,\infty)\times\mc{M}\to\mc{O}$ is
continuous.
\end{proposition}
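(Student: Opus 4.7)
The plan is to prove continuity of $\frak{s}$ at an arbitrary basepoint $(T_-^0,T_+^0,\Sigma_0)$ by combining continuity of the fixed point $\psi_\Sigma$ from Proposition~\ref{prop:TRE} with the smooth structure on the obstruction bundle $\mc{O}$ over $\mc{M}$ described in \S{I.2.3}. Because $\frak{s}$ takes values in the bundle $\mc{O}$, I will first reduce to a local statement about scalar-valued functions by trivializing $\mc{O}$ near $\Sigma_0$ via a finite collection of smooth sections whose values restricted to each fiber form a basis of the corresponding cokernel.

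First I would fix local models of the branched covers. Using the topology on $\mc{M}$ from Definition I.2.1, I would identify the domain of $\Sigma$ with that of $\Sigma_0$ on the complement of small discs $\{U_k\}$ surrounding the ramification points of $\Sigma_0$. Inside each $U_k$, one or more ramification points of $\Sigma$ can appear, whose orders sum to the order of the corresponding ramification point of $\Sigma_0$; there I would apply the Lipschitz coordinate change \eqref{eqn:LCC} simultaneously for $\Sigma$ and $\Sigma_0$ to identify their underlying domains and the normal bundles $\pi^*N$. Off of $\bigcup_k U_k$, all structures vary smoothly with $(T_-,T_+,\Sigma)$.

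The second step is to prove that $\psi_\Sigma$ depends continuously on $(T_-,T_+,\Sigma)$, viewed via the above identification as a section of a Banach bundle whose fiber is modeled on $\mc{H}_\Sigma$. The contraction map $\mc{I}$ from \eqref{eqn:FCM} depends on the parameters through the cutoff functions $\beta_\pm,\beta_\Sigma$ (built from the continuous functions $s_i,s_\pm$ on $\mc{M}$ of \S\ref{sec:glP}(iii)), the nonlinearities $\frak{m},\frak{q},\frak{z},\frak{p}$ appearing in \eqref{eqn:Theta-}--\eqref{eqn:ThetaSigma} (which depend smoothly on the underlying geometric data via the exponential maps $e,e_\pm$), the inverse $D_\Sigma^{-1}$ (whose continuous dependence on $\Sigma$ follows from the uniform bound in Lemma~\ref{lem:SB} together with the smooth variation of the coefficients of $D_\Sigma$ off of the exceptional discs), and the $L^2$-projection $\Pi$ onto $\Ker(D_\Sigma^*)$ (continuous by the smooth structure on $\mc{O}$). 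Proposition~\ref{prop:CMT} shows that the auxiliary maps $\psi_-,\psi_+$ are themselves smooth functions of these data and of $\psi_\Sigma$. Applying the standard fact that the fixed point of a uniformly contracting family of maps depends continuously on parameters, I conclude that $\psi_\Sigma$ varies continuously. Consequently $\mc{F}_\Sigma(\psi_\Sigma)\in\mc{H}_0(\Sigma)$ varies continuously, and its pairing with the trivializing frame of $\mc{O}$ gives the desired continuity of $\frak{s}$.

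The main obstacle is the behavior inside the exceptional discs $U_k$, where the nonlinearity \eqref{eqn:GNC} contains the discontinuous factor $(\bar u/u)^q$, where the ramification orders of $\Sigma$ and $\Sigma_0$ genuinely differ, and where the Banach space structures on $\mc{H}_\Sigma$ and $\mc{H}_{\Sigma_0}$ are not canonically isomorphic. The resolution is to exploit the uniform Lipschitz bound on $\psi_\Sigma$ across ramification points provided by Proposition~\ref{prop:TRE}(c), together with the pointwise bound on $\psi_\Sigma$ itself from Proposition~\ref{prop:TRE}(b) and Lemma~\ref{lem:HC}. These imply that the contribution of each small disc $U_k$ to $\|\mc{F}_\Sigma(\psi_\Sigma)\|$ and to the inner product in \eqref{eqn:defs} is bounded by a $\Sigma$-independent constant times a positive power of $\op{diam}(U_k)$. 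By first fixing $\op{diam}(U_k)$ small enough to make this contribution less than $\varepsilon/2$ for all $\Sigma$ close to $\Sigma_0$, and then applying the smooth comparison on the complement of $\bigcup_k U_k$, one obtains continuity of $\frak{s}$ at $(T_-^0,T_+^0,\Sigma_0)$.
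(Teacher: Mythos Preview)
Your overall architecture matches the paper's: reduce to continuity along a path, split $\mc{F}_\Sigma=F_\Sigma'+\mc{G}_\Sigma$ with $\mc{G}_\Sigma$ varying smoothly, use the standard contraction-mapping continuity argument \eqref{eqn:FCM1}--\eqref{eqn:FCM3} to get $\psi_\Sigma\to\psi_{\Sigma_0}$, and then conclude continuity of $\frak{s}$. Both you and the paper recognize that the only serious issue is the behavior of $F_\Sigma'$ near ramification points, and both invoke the Lipschitz bound on the fixed point from Proposition~\ref{prop:TRE}(c) at the crucial moment.

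There is one genuine misstep. Invoking the coordinate change \eqref{eqn:LCC} to ``identify the underlying domains'' of $\Sigma$ and $\Sigma_0$ inside $U_k$ does not make sense: \eqref{eqn:LCC} is a Lipschitz self-map of a single disc designed to straighten one equation, not a map between two different branched covers, and in any case a merely Lipschitz identification would not carry the Morrey--Sobolev space $\mc{H}_1(\Sigma)$ to $\mc{H}_1(\Sigma_0)$. The paper instead chooses smooth diffeomorphisms $\varphi_\tau:\Sigma\to\Sigma_\tau$ that are \emph{holomorphic} on a neighborhood of each ramification point $p$, so that in a local coordinate $u$ one has $\pi_\tau(u)=z_p+u^{q+1}+b_{q-1}(\tau)u^{q-1}+\cdots+b_0(\tau)$ with the $b_j$ smooth in $\tau$ and vanishing at $\tau=0$. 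This identifies the Banach spaces cleanly and gives the explicit formula \eqref{eqn:Ftau} for $F^\tau(\psi_0)$.

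With that setup, the paper's endgame differs slightly from yours. Rather than first shrinking $U_k$ and then comparing on the complement, the paper keeps a fixed disc of radius $R$ and directly estimates $\|F^\tau(\psi_0)-F^0(\psi_0)\|$ there. Since $\psi_0$ is Lipschitz, this reduces to bounding the Morrey integral
\[
\rho^{-1/2}\int_{|u-u_0|<\rho}\left|\frac{\partial_{\bar u}\overline{\pi_\tau}}{\partial_u\pi_\tau}-\Big(\frac{\bar u}{u}\Big)^q\right|^2,
\]
which is done by observing that the integrand is uniformly bounded and exceeds any fixed threshold $\varepsilon_3$ only on a set of radius $O(|\tau|^{1/q}\varepsilon_3^{-1/2q})$. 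Your ``shrink $U_k$ first'' localization would also work once the domain identification is fixed, but the paper's direct estimate is cleaner and avoids juggling two small parameters.
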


\begin{proof}
Consider a smooth map from a neighborhood of $0$ in $\R$ to
$\times_2[5r,\infty)\times\mc{M}$.  We will show that the restriction
of the section to this path is continuous at $0$.  The proof has four
steps.

{\em Step 1.\/} We first recall the precise meaning of continuity in
this context.  For $\tau\in\R$, denote the domain of the associated
branched cover in $\mc{M}$ by $\Sigma_\tau$, and set
$\Sigma\eqdef\Sigma_0$.  Identify each domain $\Sigma_\tau$ with
$\Sigma$ via a path of diffeomorphisms $\tau\mapsto\varphi_\tau :
\Sigma\to\Sigma_\tau$.  The diffeomorphism $\varphi_\tau$ identifies
the projection $\Sigma_\tau\to\R\times S^1$ with a map
$\pi_\tau:\Sigma\to\R\times S^1$.  We can choose the diffeomorphisms
$\varphi_\tau$ so that $\varphi_0$ is the identity map, and so that
$\pi_\tau$ agrees with $\pi\eqdef\pi_0$ on the ends of $\Sigma$.

Now identify $T^{0,1}\Sigma_\tau$ with $T^{0,1}\Sigma$ as follows.
First, use the diffeomorphism $\varphi_\tau$ to pull back
$T^{0,1}\Sigma_\tau$ to a subbundle of $T^*\Sigma\tensor_\R\C$.  Then
use orthogonal projection with respect to the metric on $\Sigma$ to
identify the latter subbundle with $T^{0,1}\Sigma$.

Under the above identifications, the kernel of $D_{\Sigma_\tau}^*$
defines a subspace $W_\tau$ of the space of $L^2$ sections of
$T^{0,1}\Sigma$.  Standard perturbation theory for linear operators
shows that $W_\tau$ varies smoothly with $\tau$.  In particular, for
$\tau$ close to $0$ in $\R$, orthogonal projection identifies $W_\tau$
with $W$.  Indeed, this is how the vector bundle structure on $\mc{O}$
is defined.

Now for $|\tau|$ small, the section $\frak{s}$ defines a vector in
$W_\tau$, which is identified with a vector $\frak{s}_\tau\in W$.
More explicitly,
\begin{equation}
\label{eqn:stau}
\frak{s}_\tau =
\Pi^\tau
\mc{F}^\tau(\psi_\tau),
\end{equation}
where $\Pi^\tau$ denotes the composition of the projection to $W_\tau$
with the identification $W_\tau\to W$; $\mc{F}^\tau$ is shorthand for
the map $\mc{F}_{\Sigma_\tau}$ in \eqref{eqn:7SF}; and
$\psi_\tau\eqdef \psi_{\Sigma_\tau}$ is given by
Proposition~\ref{prop:TRE} applied to the triple
$(T_-(\tau),T_+(\tau),\Sigma_\tau)$.  We want to show that the map
$\tau\mapsto\frak{s}_\tau$ is continuous at $0$.

{\em Step 2.\/} We now study the $\tau$ dependence of the various
parts of \eqref{eqn:stau}.  Since the subspace $W_\tau$ varies
smoothly with $\tau$, so does the projection $\Pi^\tau$. 
Next, for any $\psi\in\mc{B}_{\Sigma_\tau}$, not
necessarily the one given by Proposition~\ref{prop:TRE}, write
\[
\mc{F}^\tau(\psi) = F^\tau(\psi)
+ \mc{G}^\tau(\psi),
\]
where $F^\tau(\psi)$ is shorthand for $F'_{\Sigma_\tau}(\psi)$.  Thus
$\mc{G}^\tau(\psi)$ is the sum of all but the first two terms on the
right hand side of the $\tau$ version of equation
\eqref{eqn:ThetaSigma}, with $\psi_\Sigma=\psi$ and with $\psi_\pm$
given by Proposition~\ref{prop:CMT}.  By equation
\eqref{eqn:ThetaSigma} and Proposition~\ref{prop:CMT}, the assignment
$(\tau,\psi)\mapsto\mc{G}^\tau(\psi)$ defines a smooth function from a
neighborhood of $0$ in $\R$ cross $\mc{H}_\Sigma$ to
$\mc{H}_0(\Sigma)$.  The function $(\tau,\psi)\mapsto F^\tau(\psi)$ is
not necessarily smooth, but we have the following weaker statement:
\begin{lemma}
\label{lem:a1Cont}
For $\psi_0=\psi_{\Sigma_0}$ given by Proposition~\ref{prop:TRE} at
$\tau=0$, we have
\[
\lim_{\tau\to 0} \left\|F^\tau(\psi_0) -
F^0(\psi_0)\right\| = 0.
\]
\end{lemma}

{\em Step 3.\/} Assuming Lemma~\ref{lem:a1Cont}, we now complete the
proof of Proposition~\ref{prop:sCont}.  By Lemma~\ref{lem:a1Cont} and the
other conclusions of Step 2, it is enough to show that
$\psi_{\tau}\in\mc{H}_\Sigma$ is a continuous function of
$\tau$ at $\tau=0$.

To prove the latter statement, recall that $\psi_{\tau}$ is the
fixed point of a contraction mapping $\mc{I}^\tau$, which is defined
as in \eqref{eqn:FCM} but with all the terms depending on $\tau$.
Thus
\begin{equation}
\label{eqn:FCM1}
\begin{split}
\|\psi_{\tau} - \psi_{0}\|_* &=
\|\mc{I}^\tau(\psi_{\tau}) - \mc{I}^0(\psi_{0})\|_* \\
& \le 
\|\mc{I}^\tau(\psi_{\tau}) - \mc{I}^\tau(\psi_{0})\|_*
+
\|\mc{I}^\tau(\psi_{0}) - \mc{I}^0(\psi_{0})\|_*.
\end{split}
\end{equation}
The contraction property of $\mc{I}^\tau$ asserts that
\begin{equation}
\label{eqn:FCM2}
\|\mc{I}^\tau(\psi_{\tau}) - \mc{I}^\tau(\psi_{0})\|_*
\le
\frac{1}{2}
\|\psi_{\tau} - \psi_{0}\|_*
\end{equation}
for all $\tau$.  Meanwhile, Lemma~\ref{lem:a1Cont} and the other
conclusions of Step 2 imply that
\begin{equation}
\label{eqn:FCM3}
\lim_{\tau\to 0} \|\mc{I}^\tau(\psi_{0}) -
\mc{I}^0(\psi_{0})\|_*
= 0.
\end{equation}
It follows from \eqref{eqn:FCM1}--\eqref{eqn:FCM3} that
$\lim_{\tau\to 0} \|\psi_{\tau} - \psi_{0}\|_*=0$.

{\em Step 4.\/} We now prove Lemma~\ref{lem:a1Cont}.  Let
$\psi=\psi_{\Sigma_0}$ be the function given by
Proposition~\ref{prop:TRE} at $\tau=0$.  Away from the ramification
points of $\pi$, the function $F^\tau(\psi)$ varies smoothly with
$\tau$.  The only difficulty arises from the variation of
$F^\tau(\psi)$ near the ramification points.

To understand the latter, it proves convenient to choose the
diffeomorphisms $\varphi_\tau$ in Step 1 to have two additional
properties that concern each critical point $p$ of $\pi$.  First,
there is a neighborhood of $p$ in $\Sigma$ on which $\varphi_\tau$
sends the complex structure on $\Sigma$ to the complex structure on
$\Sigma_\tau$.  Second, there is a holomorphic coordinate $u$
identifying a smaller neighborhood of $p$ with the disk of radius
$R>0$ in $\C$, such that the projection $\pi_\tau$ in this
neighborhood is given by
\begin{equation}
\label{eqn:pitau}
\pi_\tau(u) = z_p + u^{q+1} + b_{q-1}u^{q-1} + \cdots +b_0,
\end{equation}
where each $b_j$ varies smoothly with $\tau$ and vanishes at
$\tau=0$.

In this neighborhood, as in equation \eqref{eqn:GNC}, we have
\begin{equation}
\label{eqn:Ftau}
F^\tau(\psi)
=
\frac{\partial_{\overline{u}}\overline{\pi_\tau}}{\partial_u\pi_\tau}
a(\pi_\tau^*t,\psi)
\partial_u\psi d\overline{u} + P(\pi_\tau^*t,\psi),
\end{equation}
where $P(t,w)$ is a smooth function of its arguments.  So to prove
Lemma~\ref{lem:a1Cont}, it suffices to show that for all
$\varepsilon_1>0$, there exists $\varepsilon_2>0$, such that if
$|\tau|<\varepsilon_2$, if $\rho\in(0,R/4)$, and if $u_0\in\C$ with
$|u_0|\le R/4$, then
\[
\rho^{-v} \int_{|u-u_0|<\rho}
\left|
\frac{\partial_{\overline{u}}\overline{\pi_\tau}}{\partial_u\pi_\tau}
a(\pi_\tau^*t,\psi)
-
\left(\frac{\overline{u}}{u}\right)^q
a(\pi_0^*t,\psi)\right|^2 |\partial_u\psi|^2 <
\varepsilon_1.
\]
Recall from Proposition~\ref{prop:TRE}(c) that $\psi$ is Lipschitz.
Also, the function $a$ and its derivatives are uniformly bounded.
Hence it is enough to show that
\begin{equation}
\label{eqn:mcP}
\rho^{-v} \int_{|u-u_0|<\rho}
\left|
\frac{\partial_{\overline{u}}\overline{\pi_\tau}}{\partial_u\pi_\tau}
-
\left(\frac{\overline{u}}{u}\right)^q
\right|^2
\end{equation}
can be made as small as desired by taking $|\tau|>0$ sufficiently small.

To prove this, note that for $|u|<R$ and for $|\tau|$ small,
$|\partial_u\pi_\tau - (q+1)u^q|\le \text{const}\cdot |\tau|$.  It follows
that for any $\varepsilon_3>0$, the integrand in
\eqref{eqn:mcP} is greater than $\varepsilon_3$ only where $|u|\le
\text{const}\cdot|\tau|^{1/q}\varepsilon_3^{-1/2q}$.  The contribution to
\eqref{eqn:mcP} outside of this region is bounded by a constant
multiple of $\varepsilon_3$.  Since the integrand in \eqref{eqn:mcP}
is uniformly bounded, the remaining contribution to
\eqref{eqn:mcP} is at most a constant multiple of
$\left(|\tau|^{1/q}\varepsilon_3^{-1/2q}\right)^{2-v}$, which can be
made arbitrary small by taking $|\tau|$ sufficiently small with
respect to $\varepsilon_3$.  This completes the proof of
Lemma~\ref{lem:a1Cont} and Proposition~\ref{prop:sCont}.
\end{proof}

\subsection{Smoothness of the obstruction section}
\label{sec:sos}

The moduli space $\mc{M}$ of branched covers has a natural
stratification defined as follows.  For $k=0,\ldots,N_++N_--2$, let
$\mc{M}_{(k)}$ denote the set of $\Sigma\in\mc{M}$ for which the set
of ramification points in $\Sigma$ has cardinality $N_++N_--2-k$.  In
particular, $\mc{M}_{(0)}$ is an open dense subset of $\mc{M}$,
consisting of branched covers in which every ramification point $p$ is
simple, meaning that the projection $\pi:\Sigma\to\R\times S^1$ is
described in local coordinates near $p$ by $\pi(u)=z_p+u^2$.  The set
$\mc{M}_{(k)}$ is a complex manifold of complex dimension
$N_++N_--2-k$.

\begin{lemma}
\label{lem:sos}
For each $k$, the restriction of $\frak{s}$ to
$\times_2[5r,\infty)\times\mc{M}_{(k)}$ is smooth.
\end{lemma}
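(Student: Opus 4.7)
The plan is to apply a parametric Banach-space implicit function theorem after trivializing the relevant bundles over a local chart in the stratum. The key observation is that on $\mc{M}_{(k)}$ the number and multiplicities of ramification points are locally constant, so the projections $\pi$ can be put in a \emph{parametric} normal form near each ramification point, which was exactly what obstructed smoothness (but not continuity) in the proof of Proposition~\ref{prop:sCont}.

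Fix $(T_-^0,T_+^0,\Sigma_0)\in [5r,\infty)^2\times\mc{M}_{(k)}$, and choose smooth local coordinates $\tau=(\tau_1,\ldots,\tau_d)$ on $\mc{M}_{(k)}$ near $\Sigma_0$, where $d=2(N_++N_--2-k)$. Following Step~1 of the proof of Proposition~\ref{prop:sCont}, I would use a smooth family of diffeomorphisms $\varphi_\tau:\Sigma_0\to\Sigma_\tau$ that are the identity on the ends and pull back $T^{0,1}\Sigma_\tau$ to a smoothly $\tau$-varying subbundle of $T^*\Sigma_0\tensor_\R\C$. The improvement over the continuity setup is the following \emph{parametric normal form}: because every branched cover in $\mc{M}_{(k)}$ has exactly the same ramification pattern as $\Sigma_0$, the $\varphi_\tau$'s can be chosen so that near each ramification point $p$ of $\pi_0$ there is a $\tau$-independent holomorphic coordinate $u$ on $\Sigma_0$ in which
\[
\pi_\tau\circ\varphi_\tau(u) \;=\; z_p(\tau) + u^{q+1},
\]
where $q$ is the order of $p$ and $z_p(\tau)\in\R\times S^1$ depends smoothly on $\tau$. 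In other words, the lower-order coefficients $b_{q-1},\ldots,b_0$ in \eqref{eqn:pitau} can be eliminated by a parametric holomorphic change of source coordinate, a standard fact about families of holomorphic maps with critical points of fixed order.

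With this normal form in hand, the bundle $\mc{O}$, the Banach spaces $\mc{H}_1(\Sigma_\tau)$ and $\mc{H}_0(\Sigma_\tau)$, the operators $D_{\Sigma_\tau}$, and the projections $\Pi^\tau$ trivialize smoothly in $\tau$. I would then verify that the pulled-back nonlinearity $(\tau,T_-,T_+,\psi)\mapsto \mc{F}^\tau(\psi)$ is a smooth map from a neighborhood in $\R^d\times[5r,\infty)^2\times\mc{H}_\Sigma$ into $\mc{H}_0(\Sigma_0)$. All the terms in \eqref{eqn:ThetaSigma} that are supported away from ramification points are smooth in $\tau$ by inspection, using Proposition~\ref{prop:CMT}(b) to handle the dependence of $\psi_\pm$ on $\psi_\Sigma$. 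The only delicate term is $F'_{\Sigma_\tau}(\psi)$ near a ramification point, which by \eqref{eqn:Ftau} equals $(\partial_{\overline u}\overline{\pi_\tau}/\partial_u\pi_\tau)\,a(\pi_\tau^*t,\psi)\,\partial_u\psi\,d\overline u + P(\pi_\tau^*t,\psi)$. In our normal form, $\partial_{\overline u}\overline{\pi_\tau}/\partial_u\pi_\tau = (\overline u/u)^q$ is independent of $\tau$, while $\pi_\tau^*t$ depends smoothly on $\tau$; since $a$ and $P$ are smooth in their arguments, the resulting element of $\mc{H}_0(\Sigma_0)$ is smooth in $(\tau,\psi)$. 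The spatial nonsmoothness at $u=0$ is harmless because it sits inside the target Banach space.

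Once smoothness of $\mc{F}^\tau$ is established, the equation \eqref{eqn:7SF1} can be rewritten as $\psi = \mc{I}^\tau(\psi)$ for the contraction mapping \eqref{eqn:FCM}, and the function $\Phi(\tau,T_-,T_+,\psi)\eqdef \psi - \mc{I}^\tau(\psi)$ is smooth. Its differential with respect to $\psi$ at the fixed point is $1-D_\psi\mc{I}^\tau$, which is invertible because the contraction estimate in the proof of Proposition~\ref{prop:TRE} gives $\|D_\psi\mc{I}^\tau\|\le 1/2$. The Banach-space implicit function theorem then yields that the fixed point $\psi_\Sigma(\tau,T_-,T_+)$ is smooth, whence $\frak{s}(T_-,T_+,\Sigma_\tau)=\Pi^\tau\mc{F}^\tau(\psi_\Sigma(\tau,T_-,T_+))$ is smooth as a composition of smooth maps. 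The main obstacle is genuinely the parametric normal form for $\pi_\tau$; once that is in place, everything else reduces to the standard implicit function theorem together with the uniform estimates already established in \S\ref{sec:gluing}.
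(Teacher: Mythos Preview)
Your proposal is correct and follows essentially the same approach as the paper. The paper's argument is slightly more economical on the one point you flag as the main obstacle: rather than invoking a further parametric holomorphic change of source coordinate, it simply observes that staying on $\mc{M}_{(k)}$ already forces $b_1=\cdots=b_{q-1}=0$ in the normal form \eqref{eqn:pitau} (since $\partial_u\pi_\tau$ is a degree-$q$ polynomial with no $u^{q-1}$ term and must have a single root of order $q$, hence that root is at $u=0$), so $\pi_\tau(u)=z_p+u^{q+1}+b_0(\tau)$ and the ratio $\partial_{\overline u}\overline{\pi_\tau}/\partial_u\pi_\tau=(\overline u/u)^q$ is $\tau$-independent without any additional normalization.
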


\begin{proof}
This follows from a slight upgrading of the proof of
Proposition~\ref{prop:sCont}, so we will carry over the notation from
that proof.  Consider a smooth map from a neighborhood of $0$ in $\R$
to $\times_2[5r,\infty)\times\mc{M}_{(k)}$.  We want to prove that the
expression in \eqref{eqn:stau} varies smoothly with $\tau$, where
$\psi_{\tau}$ is the fixed point of the contraction mapping
$\mc{I}^\tau$.  For this purpose it is enough to show that
$\mc{I}^\tau$ varies smoothly with $\tau$.  The only missing step is
to show that the function $(\tau,\psi)\mapsto F^\tau(\psi)$ is smooth.
Since our path stays in a fixed stratum $\mc{M}_{(k)}$, the
polynomials $\pi_\tau$ in \eqref{eqn:pitau} must have the form
\[
\pi_\tau(u)=z_p+u^{q+1} + b_0(\tau).
\]
Then $\partial_u\pi_\tau$ is independent of $\tau$, so the ratio
$\partial_{\ubar}\overline{\pi_\tau}/\partial_u\pi_\tau$ that appears
in \eqref{eqn:Ftau} does not depend on $\tau$.  It follows from
\eqref{eqn:Ftau} that $F$ is smooth as required.
\end{proof}

\subsection{Zeroes of $\frak{s}$ have simple ramification points when
  $J$ is generic}
\label{sec:zss}

\begin{lemma}
\label{lem:zss}
If $J$ is generic, then all zeroes of $\frak{s}$ are contained in the
open stratum $\times_2[5r,\infty)\times\mc{M}_{(0)}$.
\end{lemma}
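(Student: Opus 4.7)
Plan: The plan is to reduce the statement to a standard Sard--Smale transversality argument at the level of $J$-holomorphic curves, in the spirit of Propositions~\ref{prop:G1}, \ref{prop:G2} and Theorem~\ref{thm:immersed}. First I will interpret geometrically what a zero on a higher stratum means. Suppose $\frak{s}(T_-,T_+,\Sigma)=0$ with $\Sigma\in\mc{M}_{(k)}$ for some $k\ge 1$, so that the glued curve $\varphi\eqdef u(T_-,T_+,\Sigma)$ is $J$-holomorphic; then at least one ramification point $p_0\in\Sigma$ has branching order $q+1\ge 3$. In a local holomorphic coordinate $\zeta$ at $p_0$ with $\pi(\zeta)=z_{p_0}+\zeta^{q+1}$, the pregluing formulas \eqref{eqn:beta+}--\eqref{eqn:beta-} together with Lemma~\ref{lem:EM} give
\[
\varphi(\zeta)=e\bigl(z_{p_0}+\zeta^{q+1},\,\psi_\Sigma(\zeta)\bigr),
\]
so the pullback of $z=s+it$ to $\varphi$ satisfies $\partial^j_\zeta(z\circ\varphi)|_{\zeta=0}=0$ for $j=1,\ldots,q$, i.e.\ $z\circ\varphi$ has a critical point of order $\ge q+1\ge 3$ at $p_0$.

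Next I will show that for generic $J$ no index-$2$ curve in $\mc{M}^J(\alpha_+,\alpha_-)$ admits such a critical point. For each $q\ge 2$, let $\mc{N}^{(q)}$ denote the universal moduli space of triples $(J,u,p)$ with $J\in\mc{J}$, $u\in\mc{M}^J(\alpha_+,\alpha_-)$ non-multiply-covered of Fredholm index~$2$, and $p$ in the domain of $u$ satisfying $u(p)\in e(\R\times S^1\times D)$ and $\partial^j_\zeta(z\circ u)|_p=0$ for $j=1,\ldots,q$ in any local holomorphic coordinate $\zeta$ at $p$. Following Steps~4--9 of the proof of Theorem~\ref{thm:immersed}, I will verify that $\mc{N}^{(q)}$ is cut out transversely: variations of $J$ supported in a small ball around $u(p)$---away from $\R\times\alpha$ itself and from the self-intersection and tangency loci $\Lambda\cup\mc{T}$ of $u$---translate, through the bundle map $j_C$ of \eqref{eqn:jC} and the right inverse $D_u^{-1}$, into prescribed perturbations of the $q$-jet of $z\circ u$ at $p$. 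Granted transversality, the projection $\mc{N}^{(q)}/\R\to\mc{J}$ is Fredholm of expected fiber dimension
\[
2+2-2q-1=3-2q,
\]
where the two $2$'s come from the index of the curve moduli and the marked point, $-2q$ is the real codimension of the $q$ complex jet conditions, and $-1$ is from the free $\R$-translation quotient. For $q\ge 2$ this is $\le -1$, so Sard--Smale produces a Baire subset $\mc{J}_q\subset\mc{J}$ over which $\mc{N}^{(q)}/\R$ has empty fiber.

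Intersecting the $\mc{J}_q$ over $q\ge 2$, together with the Baire sets from Propositions~\ref{prop:G1}, \ref{prop:G2}, and Theorem~\ref{thm:immersed}, will yield a Baire set of admissible $J$ over which no index-$2$ curve in $\mc{M}^J(\alpha_+,\alpha_-)$ has a critical point of $z\circ u$ of order $\ge 3$ anywhere in the tubular neighborhood of $\R\times\alpha$; by the first paragraph this will forbid zeros of $\frak{s}$ in every $\mc{M}_{(k)}$ with $k\ge 1$. The principal technical obstacle will be verifying the transversality of $\mc{N}^{(q)}$: given $(J_0,u_0,p_0)\in\mc{N}^{(q)}$, one must exhibit, for any prescribed element of the $q$-jet space at $p_0$, a $J$-variation realizing it. The geometry is favorable because $u_0(p_0)$ lies in the open tubular neighborhood of $\R\times\alpha$ where admissible $J$ may be perturbed almost freely (only the $\xi\to\xi$ constraint and $\R$-invariance apply), and the derivative of the $q$-jet with respect to $J$-variations can be computed by a localized Taylor-expansion argument parallel to Steps~3--5 of the proof of Lemma~\ref{lem:Zsub}, where the analogous derivative $\nabla_j\widehat{a}_{\mc{E}}$ was shown to be nonzero by a carefully placed cutoff.
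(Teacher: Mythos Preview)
Your proposal is correct and follows essentially the same route as the paper: both reduce to a Sard--Smale argument showing that, by varying $J$ near a ramification point of the glued curve, the higher-order branching condition on $z\circ u$ cuts out a codimension-$2$ locus in the universal index-$2$ moduli space. The paper packages the jet condition as a map $\mc{P}:\mc{C}\to\C^q$ (coefficients of the local expansion $v^{q+1}+b_{q-1}v^{q-1}+\cdots+b_0$) whose discriminant preimage is the bad set, and it makes the transversality step concrete by introducing a disc operator $D_{C,B}$ whose kernel basis $f_{k,A}=i^Av^k+O(|v|^{k+1})$ supplies normal deformations hitting every jet direction---this is exactly the mechanism you invoke by analogy with Theorem~\ref{thm:immersed} and Lemma~\ref{lem:Zsub}.
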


\begin{proof}
The proof has four steps.

\medskip

{\em Step 1.\/} Here is the setup: Fix
$(T_-,T_+,\Sigma)\in\frak{s}^{-1}(0)$, and let $C$ denote the
corresponding $J$-holomorphic curve produced by the gluing
construction.  By Theorem~\ref{thm:immersed}, we can assume that $J$
is generic so that $C$ is unobstructed and immersed.  Fix $\delta>0$
very small and $l>>2$, and let $\mc{U}$ denote a small neighborhood of
$J$ in the space of $C^l$ admissible almost complex structures $J'$
that agree with $J$ within distance $\delta$ of $u_-$ and $u_+$.  We
then have a universal moduli space $\mc{C}$ consisting of pairs
$(J',C')$ where $J'\in\mc{U}$ and $C'$ is a $J'$-holomorphic curve
that is a deformation of $C$.  Let $\mc{Z}\subset\mc{C}$ denote the
set of pairs $(J',C')$ such that $C'$ is obtained by the $J'$ version
of the gluing construction from a zero of $\frak{s}$ on
$\times_2[5r,\infty)\times\mc{M}_{(k)}$ with $k>0$.  Note that
$\mc{Z}$ is invariant under the $\R$ action on $\mc{C}$.  As in our
previous genericity arguments, it is then enough to show that if
$\delta>0$ is sufficiently small, then $\mc{Z}$ is a codimension $2$
subvariety of $\mc{C}$.

\medskip

{\em Step 2.\/} Fix a point $p\in C$ arising from a ramification point
of $\Sigma$ under the gluing construction.  Given $(J',C')$ in a small
neighborhood $\mc{N}$ of $(J,C)$ in $\mc{C}$, we now describe the
local structure of $C'$ near $p$.

Let $B$ denote a disc containing the origin in $\C$ with coordinate
$v$.  For each $(J',C')$, fix a smooth embedding
$\varphi_{(J',C')}:B\to C'$, such that $\varphi_{(J,C)}$ maps $B$ into
a neighborhood of $p$, and such that $\varphi_{(J',C')}$ depends
smoothly on $(J',C')$.

Recall our local coordinates $z$ and $w$ in a neighborhood of
$\R\times\alpha$ from \S\ref{sec:NRIC}.  It follows from
\eqref{eqn:t10ry} that given $(J',C')$, the function $x\eqdef
\varphi_{(J',C')}^*z$ on $B$ obeys the equation
\[
\frac{\partial x}{\partial\vbar} - a
\frac{\partial\overline{x}}{\partial\vbar} = 0.
\]
As a consequence, for $(J',C')$ in a small neighborhood $\mc{N}$ of
$(J,C)$ in $\mc{C}$, the function $x$ near
$v=0$ has the form
\[
x = x_0 + (1-|a_0|^2)^{-1} (\mc{P} + a_0\overline{\mc{P}}) +
O(|v|^{q+2}),
\]
where $a_0$ denotes the value of the function $a$ at $v=0$, while $q$
denotes the ramification index of $p$ in $C$, and $\mc{P}$ is a
holomorphic polynomial of degree $q+1$ whose coefficients depend
smoothly on $(J',C')$.  Moreover, the maps $\varphi_{(J',C')}$ can be
chosen so that
\begin{equation}
\label{eqn:Pv}
\mc{P}(v) = v^{q+1} + b_{q-1}v^{q-1} + \cdots + b_0.
\end{equation}
Ramification points in $C'$ near $p$ correspond to
roots of $\partial_v\mc{P}$.

The coefficients $b_0,\ldots,b_{q-1}$ identify $\mc{P}$ with an
element of $\C^q$, so we have defined a smooth map
$\mc{P}:\mc{N}\to\C^q$.  There is a complex codimension 1 subvariety
$\Delta\subset\C^q$ such that $\partial_v\mc{P}$ has $q$ distinct
roots if and only if the coefficients of $\mc{P}$ correspond to a
point in $\C^q\setminus\Delta$.  So by Step 1, to prove
Lemma~\ref{lem:zss}, it suffices to show that the differential
\begin{equation}
\label{eqn:dPJC}
d\mc{P}_{(J,C)}:T_{(J,C)}\mc{C}\longrightarrow \C^q
\end{equation}
is surjective.

\medskip
{\em Step 3.\/} To prepare for the proof that \eqref{eqn:dPJC} is
surjective, we now construct some useful tangent vectors in
$T_{(J,C)}\mc{C}$.

As in \S\ref{sec:G1}, a tangent vector in $T_{(J,C)}\mc{C}$ consists
of a pair $(j,\zeta)$, where $j$ is a $(0,1)$ bundle automorphism of
$T(\R\times Y)$, and $\zeta$ is an $L^2_1$ section of the normal
bundle $N_C$, such that
\[
D_C\zeta = j_C.
\]
Here $D_C:C^\infty(N_C)\to C^\infty(N_C\tensor T^{0,1}C)$ denotes the
linear deformation operator associated to $C$, and
$j_C\in\Hom^{0,1}(TC,NC)$ is defined in \eqref{eqn:jC}.

The operator $D_C$ can be described more explicitly near $p$ as
follows.  By choosing $B$ sufficiently small, we can assume that the
vector field $\partial_s$ is not tangent to $C$ on the image of $B$.
Over $B$, we can then use $\partial_s$ to trivialize $N_C$, and
$d\vbar$ to trivialize $T^{0,1}C$.  In these trivializations, if $f$
is a complex function on $B$, then
\[
D_Cf = \partial_{\vbar} f + \nu_C f + \mu_C \overline{f}
\]
on $B$, where $\nu_C$ and $\mu_C$ are complex functions on $B$.

Now to construct some useful tangent vectors, let $\rho$ denote the
diameter of $B$.  Fix a smooth function
$\beta:[0,\infty)\to[0,1]$ which equals $1$ on $[0,\rho/2]$ and
$0$ on $[\rho,\infty)$.  Let $\mc{H}_B$ denote the space of
$L^2_1$ functions on $B$ whose restriction to $\partial B$
is in the span of $\{\rho^{-p}v^p\}_{-\infty < p < q}$.  Define an
operator
\begin{equation}
\label{eqn:DCB}
\begin{split}
D_{C,B}f:\mc{H}_B & \longrightarrow L^2(B;\C),\\
f & \longmapsto \partial_{\vbar}f + \beta(\nu_C f + \mu_C
\overline{f}).
\end{split}
\end{equation}

\begin{lemma}
\label{lem:DCB}
The operator $D_{C,B}$ in \eqref{eqn:DCB} is Fredholm.  Its index
is $2q$ and its cokernel is trivial.  Its kernel has a basis
$\{f_{k,A}\}_{0\le k < q,\, A\in\{0,1\}}$ such that
\begin{equation}
\label{eqn:fkA}
f_{k,A} = i^A v^k + O(|v|^{k+1})
\end{equation}
as $v\to 0$.
\end{lemma}

\begin{proof}
The operator $D_{C,B}$ differs from $\partial_{\vbar}$ by a zeroth
order term.  Since the latter is a Fredholm, index $2q$ operator from
$\mc{H}_B$ to $L^2(B;\C)$, so is $D_{C,B}$.

Note that each $f\in\Ker(D_{C,B})$ extends from $B$ to the
whole of $\C$ as a function that is holomorphic on $\C\setminus
B$.  Moreover, if $f$ is not identically zero, then $f$ behaves
at large $|v|$ as $c v^k + O(|v|^{k-1})$ with $c\neq 0$ and $k<q$.
Finally, all zeroes of $f$ have positive multiplicity.  It follows
that $f$ has at most $q-1$ zeroes.  This implies that
$\dim\Ker(D_{C,B})\le 2q$, and hence $\Coker(D_{C,B})=\{0\}$.
It also follows that a zero of a kernel element has multiplicity at
most $q-1$, and this implies that $\Ker(D_{C,B})$ has a basis of
the desired form.
\end{proof}

As in \S\ref{sec:G1}, there is a codimension 1 subvariety $B'\subset
B$, such that any $C^k$ function $f$ with support on the interior of
$B$ can be realized as $j_C$ for some $j\in T_J\mc{U}$ for some
$\delta>0$, provided that $f$ vanishes on a neighborhood of $B'$.  For
$\varepsilon>0$ small, let $\chi_\varepsilon:B\to[0,1]$ be a smooth
function which is $0$ within distance $\varepsilon$ of $B'$ and which is
$1$ where the distance to $B'$ is $\ge 2\varepsilon$.  Fix a basis
$\{f_{k,A}\}$ for $\ker(D_{C,B})$ as in Lemma~\ref{lem:DCB}, and
choose $j_{\varepsilon,k,A}\in T_J\mc{U}$ such that
\[
(j_{\varepsilon,k,A})_C = \chi_{\varepsilon} D_C(\beta f_{k,A}).
\]

To complete this to a tangent vector to $\mc{C}$, let $D_C^{-1}$
denote the unique right inverse of $D_C$ that maps to the $L^2$
orthogonal complement of $\Ker(D_C)$.  Define
\begin{equation}
\label{eqn:zetae}
\zeta_{\varepsilon,k,A} \eqdef D_C^{-1}(\chi_{\varepsilon}
D_C(\beta f_{k,A}))
+ (\beta f_{k,A})_0,
\end{equation}
where $(\cdot)_0$ denotes $L^2$ orthogonal projection onto
$\Ker(D_C)$.  Then
\begin{equation}
\label{eqn:tv}
(j_{\varepsilon,k,A}, \zeta_{\varepsilon,k,A}) \in T_{(J,C)}\mc{C}
\end{equation}
Define $\mc{T}_\varepsilon$ to be the span of the
tangent vectors \eqref{eqn:tv} for $0\le k < q$ and $A\in\{0,1\}$.

\medskip

{\em Step 4.\/} We now complete the proof of Lemma~\ref{lem:zss}.  By
Step 2, it suffices to show that if $\varepsilon>0$ is sufficiently
small, then $d\mc{P}_{(J,C)}$ restricts to an isomorphism from
$\mc{T}_\varepsilon$ to the space of polynomials of the form
\eqref{eqn:Pv}.

Note that for $(j,\zeta)\in T_{(J,C)}\mc{C}$, if
$\zeta=cv^k+O(|v|^{k+1})$ as $v\to 0$ with $c\neq 0$ and $k<q$, then
\[
d\mc{P}_{(J,C)}(j,\zeta) = c v^k + O(|v|^{k+1}).
\]
If we could take $\varepsilon=0$ in \eqref{eqn:zetae}, then we would
be done by \eqref{eqn:fkA}, since $\zeta_{\varepsilon,k,A} = \beta
f_{k,A}$ when $\varepsilon=0$.  The claim still holds when
$\varepsilon>0$ is small, because $\zeta_{\varepsilon,k,A}$ converges
in the $C^\infty$ topology on compact sets to $\beta f_{k,A}$ as
$\varepsilon\to 0$.
\end{proof}

\section{Bijectivity of the gluing map}
\label{sec:map}

Continue with the setup and notation from \S\ref{sec:gluing}.  The
goal of this section is to prove Theorem~\ref{thm:GT} below, which
asserts roughly that the gluing map \eqref{eqn:gluingMap}, applied to
triples $(T_-,T_+,\Sigma)$ with $T_-,T_+$ large, describes all curves
in $\mc{M}_J(\alpha_+,\alpha_-)$ that are ``close to breaking'' into
$U_+$ and $U_-$ along branched covers of $\R\times\alpha$.

\subsection{Statement of the gluing theorem}
\label{sec:gluingStatement}

The set of curves that are ``close to breaking'' in the above sense is
denoted by $\mc{G}_\delta(U_+,U_-)$.  The precise definition of
$\mc{G}_\delta(U_+,U_-)$ in general was given in Definition~{I.1.10}.
We now recall this definition for convenience, using our standing
assumptions
\eqref{eqn:i} and \eqref{eqn:ii} to recast it slightly.

We will use the following notation: If $\psi_-$ is a section of the
normal bundle to $u_-$ with $|\psi_-|<\delta_0$, then $e_-\circ\psi_-$
denotes the immersed surface in $\R\times Y$ whose domain is that of
$u_-$, given by composing the section $\psi_-$ with the exponential
map $e_-$.  If $\psi_+$ is a section of the normal bundle to $u_+$
with $|\psi_+|<\delta_0$, define $e_+\circ\psi_+$ likewise.

\begin{definition}
\label{def:CTB}
For $\delta>0$, define $\widetilde{\mc{G}}_\delta(U_+,U_-)$ to be the set of
immersed (except possibly for finitely many singular points) surfaces
in $\R\times Y$ that can be decomposed as $C_-\cup C_0
\cup C_+$, such that the following hold:
\begin{itemize}
\item
There is a real number $R_-$, and a section $\psi_-$ of the normal
bundle to $u_-$ with $|\psi_-|<\delta$, such that $C_-$ is the
$s\mapsto s+R_-$ translate of the $s\le 1/\delta$ part of $e_-\circ\psi_-$.
\item
Likewise, there is a real number $R_+$, and a section $\psi_+$ of the
normal bundle to $u_+$ with $|\psi_+|<\delta$, such that $C_+$ is the
$s\mapsto s+R_+$ translate of the $s\ge -1/\delta$ part of
$e_+\circ\psi_+$.
\item
$R_+-R_->2/\delta$.
\item
$C_0$ is a connected genus zero surface with boundary which is
contained in the radius $\delta$ tubular neighborhood of
$\R\times\alpha$, such that the tubular neighborhood projection
$C_0\to\R\times\alpha$ is a branched covering.  Moreover $C_0$ has
positive ends of multiplicities $a_{\Nbar_++1},\ldots,a_{N_+}$, and
 negative ends of multiplicities $a_{-\Nbar_--1},\ldots,a_{-N_-}$.
\item
$\partial C_0=\partial C_- \sqcup \partial C_+$, where the positive
boundary circles of $C_-$ agree with the negative boundary circles of
$C_0$, and the positive boundary circles of $C_0$ agree with the
negative boundary circles of $C_+$.
\end{itemize}
Let $\mc{G}_\delta(U_+,U_-)$ denote the set of surfaces
$C\in\widetilde{\mc{G}}_\delta(U_+,U_-)$ such that $C$ is
$J$-holomorphic.  Note that the definition implies that any element of
$\mc{G}_\delta(U_+,U_-)$ is in $\mc{M}^J(\alpha_+,\alpha_-)$ and has
index $2$.
\end{definition}

\begin{definition}
Given $\delta>0$, define
$\mc{U}_\delta\subset\times_2[5r,\infty)\times\mc{M}$ to be
the set of $(T_-,T_+,\Sigma)$ such that
$u(T_-,T_+,\Sigma)\in\widetilde{\mc{G}}_\delta(U_+,U_-)$.
\end{definition}

\begin{theorem}
\label{thm:GT}
Fix $h\in(0,1)$, and let $r_0,\varepsilon$ be as in
Proposition~\ref{prop:TRE}.  Then:
\begin{description}
\item{(a)}
If $R$ is sufficiently large with respect to $\delta$, then
\[
\times_2[R,\infty)\times\mc{M}\subset\mc{U}_\delta.
\]
\item{(b)}
If $r>r_0$ is chosen sufficiently large and if $\delta>0$ is
sufficiently small with respect to $r$, then the gluing map
\eqref{eqn:gluingMap} restricts to a homeomorphism
\begin{equation}
\label{eqn:GM}
G:\frak{s}^{-1}(0)\cap\mc{U}_\delta\stackrel{\simeq}{\longrightarrow}
\mc{G}_\delta(U_+,U_-).
\end{equation}
\end{description}
\end{theorem}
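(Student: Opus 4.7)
Part (a) should follow directly from the quantitative estimates already in place. By Proposition~\ref{prop:TRE}(b), $\|\psi_\Sigma\|_*$ is bounded by a multiple of $e^{-\lambda_- T_-/2} + e^{-\lambda_+ T_+/2}$; by Proposition~\ref{prop:CMT}(c) the sections $\psi_\pm$ obey similar bounds; and Lemma~\ref{lem:HC} converts these into pointwise $C^0$ smallness. The pregluing is built to agree with the $s\mapsto s\pm(s_\pm \pm T_\pm)$ translates of $u_\pm$ outside the gluing region and to lie over $\R\times\alpha$ in the middle, so the $J$-holomorphic perturbation $u(T_-,T_+,\Sigma)$ inherits exactly the decomposition $C_-\cup C_0\cup C_+$ demanded by Definition~\ref{def:CTB}, with all sections bounded pointwise by $\delta$ as soon as $T_\pm \ge R$ for $R$ large.

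For part (b), injectivity is the easier direction. Given a gluing $u(T_-,T_+,\Sigma)$, the constraint $\psi_\pm \perp \Ker(D_\pm)$ excludes the infinitesimal $\R$-translation mode, pinning down the numbers $T_\pm$ uniquely. The middle portion then determines the branched cover $\Sigma$, and the triple $(\psi_-,\psi_\Sigma,\psi_+)$ is forced by the uniqueness of the fixed points in Propositions~\ref{prop:CMT} and~\ref{prop:TRE}. For surjectivity, the plan is as follows. Given $C\in\mc{G}_\delta$ with decomposition $C=C_-\cup C_0\cup C_+$, I first re-choose $R_\pm$ via the implicit function theorem so that the resulting $\psi_\pm$ become $L^2$-orthogonal to $\Ker(D_\pm)$; this is possible because shifting $R_\pm$ changes $\psi_\pm$ to leading order by the $\R$-translation mode, so the transverse projection is invertible. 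Next, since $C_0$ lies in a tubular neighborhood of $\R\times\alpha$, the tubular projection $C_0\to\R\times\alpha$ is itself a branched cover, giving us $\pi:\Sigma\to\R\times S^1$, together with a section $\psi_\Sigma$ over $\Sigma$ measuring the deviation of $C_0$ from $e\circ(\mathrm{id}\times\alpha)\circ\pi$. The matching of boundary circles between $C_0$ and $C_\pm$ then determines $T_\pm$ in terms of $R_\pm$ and $s_\pm$.

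The final step is to verify that the data extracted above coincide with the output of the gluing construction. Since $C$ is $J$-holomorphic and, by construction, is the deformed pregluing associated to $(T_-,T_+,\Sigma,\psi_-,\psi_\Sigma,\psi_+)$, the master equation \eqref{eqn:7} holds. Splitting this into the three components \eqref{eqn:7-}, \eqref{eqn:7+} and \eqref{eqn:7Sigma}, the first two identify $\psi_\pm$ with the fixed point of Proposition~\ref{prop:CMT}, while the third, after projecting onto the image of $D_\Sigma$ and its cokernel, yields \eqref{eqn:7SF1} (so $\psi_\Sigma$ is the fixed point of Proposition~\ref{prop:TRE}) and \eqref{eqn:7SF2} (so $\frak{s}(T_-,T_+,\Sigma)=0$). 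Thus $C$ lies in the image of $G$, and $G^{-1}$ is well defined. Continuity of $G$ in both directions then follows because the pregluing, the two contraction mappings, and the extraction procedure all depend continuously on their inputs in the relevant norms.

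The main technical obstacle is the jump from the pointwise $C^0$ smallness provided by Definition~\ref{def:CTB} to the weighted Sobolev smallness $\|\cdot\|_*$ required to place the extracted sections inside the balls where Propositions~\ref{prop:CMT} and~\ref{prop:TRE} assert uniqueness. The plan is an elliptic bootstrap: each extracted $\psi$ satisfies an inhomogeneous Cauchy--Riemann type equation whose data are pointwise small on the ends, and the exponential decay analysis of \S\ref{sec:decay} together with the uniform elliptic estimate in Lemma~\ref{lem:ER} upgrades this to $\mc{H}_1$-smallness. A secondary subtlety is to make the gauge-fixing step for $R_\pm$ genuinely unique, which amounts to checking that the $\R$-translation is a nondegenerate direction transverse to the orthogonality constraint, uniformly as the remaining data vary in the $\delta$-neighborhood; this should follow from the immersedness of $u_\pm$ and continuity of the splittings established in Proposition~\ref{prop:sCont}.
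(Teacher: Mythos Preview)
Your outline for part (a) matches the paper. For part (b), however, there is a genuine gap in the surjectivity direction, located at the sentence ``Splitting this into the three components \eqref{eqn:7-}, \eqref{eqn:7+} and \eqref{eqn:7Sigma}.'' The master equation \eqref{eqn:7} on $C_*$ only asserts that the \emph{weighted sum} $\beta_-\Theta_- + \beta_\Sigma\Theta_\Sigma + \beta_+\Theta_+$ vanishes; on the overlap cylinders $\Sigma_i$ two of the cutoffs are simultaneously nonzero, so you cannot conclude that $\Theta_-=0$ and $\Theta_\Sigma=0$ hold there separately. Worse, the sections extracted from $C=C_-\cup C_0\cup C_+$ are not defined on the full domains that Propositions~\ref{prop:CMT} and~\ref{prop:TRE} require: $\psi_-$ satisfies \eqref{eqn:7-} only on $u_{-T}'$, and $\psi_\Sigma$ is only naturally defined on $\Sigma'$ with the cylinders $\Sigma_i$ removed. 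Without extensions of $\psi_\pm$ over the far ends of $u_{\pm T}$ and of $\psi_\Sigma$ over the cylinders and half-infinite ends of $\Sigma$, the fixed-point uniqueness statements in those propositions cannot be invoked at all.

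The paper handles this in the opposite order from your plan. It first \emph{extends} $\psi_\pm$ and $\psi_\Sigma$ across the missing cylinders so that each individual equation $\Theta_\pm=0$, $\Theta_\Sigma=0$ holds on its full domain. This is a genuine inverse-function-theorem step on half-cylinders (Lemmas~\ref{lem:IFT1} and~\ref{lem:IFT2}), in which the asymptotic operator $L_m$ splits boundary data into incoming and outgoing modes; uniqueness of the extension then forces the compatibility condition \eqref{eqn:ii-}, \eqref{eqn:ii+} with the original $\psi_0$ on the overlaps. Only afterward does the paper adjust $R_\pm$ to achieve $\psi_\pm\perp\Ker(D_\pm)$, and it does so by a degree-type topological argument rather than the implicit function theorem. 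Your proposed bootstrap from $C^0$ to $\mc{H}_1$ smallness is a separate and real issue, but it does not by itself address the extension and splitting problem.

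Your injectivity sketch is also too quick. The constraint $\psi_\pm\perp\Ker(D_\pm)$ is a condition on sections of the normal bundles to \emph{different} translates $u_{\pm T}$ and $u_{\pm T'}$, so one must actually compare these across translates before the orthogonality says anything. The paper instead recovers $\Sigma$ from the branched-cover structure of $C_0$, then fixes a point of $u_\pm$ away from $\R\times\alpha$ and combines a local geometric distance estimate there with a bound on $\partial\psi_\pm/\partial T_\pm$, obtaining $c_\pm|T_\pm-T_\pm'| \le c\,e^{-\lambda R}\big(|T_- - T_-'|+|T_+-T_+'|\big)$ and hence equality for $R$ large.
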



\begin{proof}
Part (a) follows from Propositions \ref{prop:TRE}(b) and
\ref{prop:CMT}(c).
To prove part (b), we will show in Lemmas~\ref{lem:IGM} and
\ref{lem:SGM} below that if $r$ is sufficiently large and if $\delta$ is
sufficiently small with respect to $r$, then the map \eqref{eqn:GM} is
a bijection.  Continuity of the map \eqref{eqn:GM} follows from
Proposition~\ref{prop:CMT}(b) together with the proof of
Proposition~\ref{prop:sCont}.
\end{proof}

\subsection{Injectivity of the gluing map}
\label{sec:IGM}

\begin{lemma}
\label{lem:IGM}
Fix $h\in(0,1)$.  If $r>r_0$ is sufficiently large, and if
$\delta>0$ is sufficiently small, then the restricted gluing map
\eqref{eqn:GM} is injective.
\end{lemma}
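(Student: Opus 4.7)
The plan is to show that a glued curve $C = u(T_-,T_+,\Sigma)$ determines the triple $(T_-,T_+,\Sigma)$ uniquely. Suppose that $u(T_-,T_+,\Sigma) = u(T_-',T_+',\Sigma') = C$ as $J$-holomorphic curves in $\R\times Y$ with the prescribed asymptotic data; the aim is to conclude $(T_-,T_+,\Sigma) = (T_-',T_+',\Sigma')$, after which uniqueness of the fixed point in Propositions~\ref{prop:CMT} and \ref{prop:TRE} forces the whole constructions to coincide.

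First I would recover $\Sigma$ from $C$ intrinsically. By Propositions~\ref{prop:CMT}(c) and \ref{prop:TRE}(b), the perturbations $\psi_\pm$ and $\psi_\Sigma$ entering \eqref{eqn:deformation} have norms bounded by $\varepsilon$, with the pointwise decay along the matching ends recorded in \eqref{eqn:pointwise-} and \eqref{eqn:pointwise+}. Define the \emph{central piece} $C_0\subset C$ to be the maximal compact connected subset that lies inside the radius-$\delta$ tubular neighborhood of $\R\times\alpha$, stays a definite distance from any translate of $u_\pm$, and contains every point where $C$ fails to be an immersion. For $\delta$ small and $r,T_\pm$ large, the perturbation bounds guarantee that this subset is well-defined, and that for the triple $(T_-,T_+,\Sigma)$ it coincides with the image of the subregion of $\Sigma'$ where $\beta_\Sigma\equiv 1$ and $\beta_\pm\equiv 0$ under $x\mapsto e(\pi(x),\psi_\Sigma(x))$. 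Since $\psi_\Sigma$ enters only the $D$-factor of $e$, composing the inclusion $C_0 \hookrightarrow \R\times S^1\times D$ with the projection to $\R\times S^1$ reproduces $\pi\colon\Sigma\to\R\times S^1$ exactly on this subregion---the projection is untouched by $\psi_\Sigma$. Performing the identical construction for the second triple yields $\pi_{\Sigma'}$; and since an element of $\mc{M}$ is determined by its ramification data together with its asymptotic markings, $\Sigma=\Sigma'$.

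With $\Sigma=\Sigma'$ in hand I would recover $T_\pm$ by matching the top and bottom of $C$. The portion of $C$ above $C_0$ is the $s\mapsto s+s_++T_+$ translate of $u_+$ deformed by $\psi_+$, which by \eqref{eqn:pointwise+} is an $O(r^{-1})$ perturbation (times an exponentially small factor in the matching cylinder). Simultaneously this must agree with the $s\mapsto s+s_++T_+'$ translate of $u_+$ deformed by a comparably small $\psi_+'$. Because $u_+$ is immersed, not $\R$-invariant, and has index $1$, two distinct $\R$-translates of it are geometrically separated in a quantitative sense (e.g., tested against a point where $\partial_s$ is transverse to $u_+$) that cannot be bridged by $O(r^{-1})$-small perturbations for $r$ large; hence $T_+=T_+'$. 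The symmetric argument on the lower end gives $T_-=T_-'$. Thus the triples agree, and the perturbations agree by the uniqueness clause of the contraction-mapping fixed-point theorems, completing injectivity.

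The hard part will be making the intrinsic definition of $C_0$ truly independent of which triple is used to construct $C$. A priori the decomposition in Definition~\ref{def:CTB} is not canonical: the $u_\pm$-pieces also intersect the radius-$\delta$ tubular neighborhood of $\R\times\alpha$ near their matching ends, and one must rule out the possibility that two different generating triples yield incompatible candidate central pieces. The resolution will require quantitative exponential-decay estimates---combining the asymptotic results of \S\ref{sec:decay} with the perturbation bounds of Propositions~\ref{prop:CMT}(c) and \ref{prop:TRE}(b)---so that, for $r$ sufficiently large and $\delta$ sufficiently small, the ``interior'' of $C_0$ is forced to see every ramification point of $\Sigma$ while the ``exterior'' is forced to lie within $O(r^{-1})$ of a translate of $u_\pm$, leaving no ambiguity in either the branched cover or the translation amounts $T_\pm$ to be extracted.
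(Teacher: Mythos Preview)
Your two-step plan---recover $\Sigma$ from $C$, then recover $T_\pm$---is the paper's plan, and your Step~2 is essentially the paper's argument (the paper makes it precise by picking a point $p_-\in u_-$ whose projection to $Y$ is locally embedded and disjoint from the projection of $u_+$, obtaining a Lipschitz bound $\op{dist}(e_-(\psi_-(p_-)),\Phi_{x_-}(e_-(\psi_-(p_-'))))\ge c_-|x_-|$ that beats the perturbation size $\|\partial\psi_-/\partial T_\pm\|\le ce^{-\lambda R}$; it also inserts a preliminary claim that membership in $\widetilde{\mc{G}}_\delta$ forces $T_\pm>1/\delta+\text{const}$, needed to make $R$ large enough, which you implicitly assume).

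For Step~1, however, there is both a simplification you missed and a genuine gap. The paper does not attempt an intrinsic definition of the central piece. It takes \emph{any} decomposition $C_-\cup C_0\cup C_+$ as in Definition~\ref{def:CTB} (one exists by hypothesis, since the triple lies in $\mc{U}_\delta$), projects $C_0\to\R\times S^1$ via $(z,w)\mapsto z$, and extends by cylinders to get $\widetilde{C}_0\in\mc{M}$. The gluing construction for $(T_-,T_+,\Sigma)$ already parametrizes $C_0$ by a subset $\Sigma_0\subset\Sigma$ via $x\mapsto e(\pi(x),\psi_\Sigma(x))$; since $e$ preserves the $z$-coordinate this parametrization satisfies $\frak{p}\circ f=\pi$, hence is holomorphic for the pulled-back complex structure on $C_0$ and extends to a biholomorphism $\Sigma\simeq\widetilde{C}_0$ respecting the markings. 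Running the same argument with the second triple gives $\Sigma'\simeq\widetilde{C}_0$, so $\Sigma=\Sigma'$---no canonical choice of $C_0$ is needed, because the output $\widetilde{C}_0\in\mc{M}$ is canonical even though $C_0\subset C$ is not.

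The gap in your version is the clause ``contains every point where $C$ fails to be an immersion.'' Under the standing genericity hypotheses (Theorem~\ref{thm:immersed}), every non-multiply-covered index-$2$ $J$-holomorphic curve is immersed, so this set is empty and the clause is vacuous. The ramification points of $\Sigma$ are visible in $C$ only as branch points of the \emph{projection} $C\to\R\times\alpha$, not as singular points of $C$ itself. Without that clause you are left with ``the maximal compact connected subset in the $\delta$-tube that stays a definite distance from any translate of $u_\pm$,'' which is ill-posed as stated (the ends of every translate of $u_\pm$ enter the $\delta$-tube, and ``definite'' is unspecified) and in any case carries no mechanism forcing it to see the full ramification locus of $\pi$.
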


\begin{proof} The proof has two steps.

{\em Step 1.\/} Fix $r>r_0$ and $\delta>0$, and let
$(T_-,T_+,\Sigma)\in\mc{U}_\delta$.  We now show that if
$u(T_-,T_+,\Sigma)$ is $J$-holomorphic, then $u(T_-,T_+,\Sigma)$
determines $\Sigma$.

Choose a decomposition $C_-\cup C_0\cup C_+$ of $u(T_-,T_+,\Sigma)$ as
in Definition~\ref{def:CTB}.  Recall the coordinates $(z,w)$ on a
tubular neighborhood of $\R\times \alpha$.  Let $\frak{p}: C_0 \to
\R\times S^1$ denote the tubular neighborhood projection sending
$(z,w)\mapsto (z,0)$.  Since $C_0$ and the $z=\text{constant}$ disks
are $J$-holomorphic, it follows that the map $\frak{p}$ is a branched
cover on (the domain of) $C_0$.  As such, it pulls back the complex
structure on $\R\times S^1$ to a complex structure $j$ on $C_0$ (which
generally does not agree with the restriction of the almost complex
structure $J$ on $\R\times Y$).  Let $\widetilde{\frak{p}}:
(\widetilde{C}_0,\widetilde{j}) \to \R\times S^1$ denote the element
of $\mc{M}$ obtained by attaching half-infinite cylinders to the
$\Nbar_+$ positive boundary circles and the $\Nbar_-$ negative
boundary circles of $(C_0,j)$; the orderings and asymptotic markings
of the resulting ends are induced from those of the negative ends of
$u_+$ and the positive ends of $u_-$ respectively via the
identification $\partial C_0 = \partial C_+
\sqcup \partial C_-$.

We claim that $\widetilde{C}_0$ and $\Sigma$ (with their maps to
$\R\times S^1$ and orderings and asymptotic markings of their ends)
define the same element of $\mc{M}$.  To see this, let $\Sigma_0$ be
obtained from $\Sigma$ by removing the $s>R_+$ part of the first
$\Nbar_+$ positive ends and the $s<R_-$ part of the first
$\Nbar_-$ negative ends.  The gluing construction defines a
parametrization $f:\Sigma_0\stackrel{\simeq}{\longrightarrow} C_0$
with $\frak{p}\circ f = \pi$.  It follows from the definition of the
complex structure $j$ on $C_0$ that the map $f$ is holomorphic with
respect to $j$.  Then $f$ extends to a biholomorphic map
$\widetilde{f}:\Sigma\to (\widetilde{C}_0,\widetilde{j})$, which
satisfies $\widetilde{\frak{p}}\circ\widetilde{f}=\pi$ and preserves
the orderings and asymptotic markings of the ends.

{\em Step 2.\/} We now show that if $r>r_0$ is sufficiently large, if
$\delta$ is sufficiently small, and if
$u(T_-,T_+,\Sigma)\in\widetilde{\mc{G}}_\delta(U_+,U_-)$, then $T_-$
and $T_+$ are determined by $u(T_-,T_+,\Sigma)$.  It suffices to prove
the following two claims:

\begin{description}
\item{(i)}
If $r>r_0$ is sufficiently large, then for any given $R$, if $\delta$
is sufficiently small with respect to $R$, then
$u(T_-,T_+,\Sigma)\in\widetilde{\mc{G}}_\delta(U_+,U_-)$ implies
$T_-,T_+ > R$.
\item{(ii)}
For any $r>r_0$, if $R$ is sufficiently large, if
$T_-,T_+,T_-',T_+'>R$, and if $u(T_-,T_+,\Sigma) =
u(T_-',T_+',\Sigma)$, then $(T_-,T_+)=(T_-',T_+')$.
\end{description}

Proof of (i): Given $p\in Y$ and $\rho>0$, let $B(p,\rho)\subset Y$
denote the ball of radius $\rho$ around $p$ in $Y$.  If $\delta_1>0$,
is sufficiently small, then there exist points $p_-,p_+\in Y$ with the
following two properties: First, $p_\pm$ is contained in the
projection of $u_\pm$ to $Y$.  Second, $\R\times B(p_-,2\delta_1)$
does not intersect $u_+$, and $\R\times B(p_+,2\delta_1)$ does not
intersect $u_-$.  Fix $\delta_1$ and $p_\pm$ as above.

If $r>r_0$ is sufficiently large, then the estimates in
\S\ref{sec:gluing} imply that for any $(T_-,T_+,\Sigma) \in
\times_2(5r,\infty)\times\mc{M}$, the sections $\psi_\pm$ produced by
the gluing construction satisfy $|\psi_\pm(p_\pm)|<\delta_1$.  Fix $r$
with this property.

Next, fix $\delta<\delta_1$, and suppose that $C\eqdef
u(T_-,T_+,\Sigma) \in\widetilde{\mc{G}}_\delta(U_-,U_+)$.  Choose a
decomposition $C=C_-\cup C_0\cup C_+$ as in Definition~\ref{def:CTB}.
Let $a$ denote the supremum of $s$ on the intersection of $u_-$ with
$\R\times B(p_-,2\delta_1)$, and let $b\eqdef s(p_-)$.  It follows
from the conditions on $C_0$ and $C_+$ that any point in $C\cap
(\R\times B(p_-,\delta_1))$ must be in $C_-$, and hence must have
$s\le a + R_- + \delta_1$.  On the other hand, since
$C=u(T_-,T_+,\Sigma)$, it follows from our choice of $r$ that under
the gluing construction, $p_-$ gives rise to a point in $C\cap
(\R\times B(p_-,\delta_1))$ with $s\ge b + s_- - T_- - \delta_1$.
Finally, the conditions in Definition~\ref{def:CTB} imply that $s_-\ge
R_-+1/\delta$.  Combining the above inequalities, we find that $T_-$
is greater than $1/\delta$ plus a constant depending only on $u_-$.
Similarly, $T_+$ is greater than $1/\delta$ plus a constant depending
only on $u_+$.

Proof of (ii): Given $x\in\R$, let $\Phi_x$ denote the automorphism of
$\R\times Y$ sending $(s,y)\mapsto(s+x,y)$.  We can find a point $p_-$
in the $s\le 0$ part of $u_-$, and a real number $0 < \rho <
\delta_0$, such that $\R\times B(p_-,\rho)$ does not intersect $u_+$,
and such that the intersection of $u_-$ with $\R\times B(p_-,\rho)$ is
a single disc $B_-$ on which the projection to $Y$ is an embedding.
It follows from this last condition that there exist constants
$c_-,\epsilon_->0$ with the following property:
\begin{description}
\item{(*)}
Let $\psi_-$ be a section of the normal bundle to
$B_-$ with $|\psi_-|,|\nabla\psi_-|<\epsilon_-$.  Then for any
$x_-\in\R$ and for any $p_-'\in B_-$, we have
\begin{equation}
\label{eqn:NBS}
\text{dist}(e_-(\psi_-(p_-)),\Phi_{x_-}(e_-(\psi_-(p_-')))) \ge c_-|x_-|.
\end{equation}
\end{description}

Now fix $r>r_0$ and $R$. Let $T_-,T_+,T_-',T_+'>R$ and suppose that
$u(T_-,T_+,\Sigma)=u(T_-',T_+',\Sigma)$.  Let $\psi_-$ and $\psi_-'$
denote the sections of the normal bundles to $u_{-T}$ and $u_{-T'}$
respectively coming from the gluing construction.  Use the
translations $\Phi_{T_-}$ and $\Phi_{T_-'}$ to regard both $\psi_-$
and $\psi_-'$ as sections of the normal bundle to $u_-$.  By
Propositions~\ref{prop:CMT}(c) and \ref{prop:TRE}(b), there are
constants $c,\lambda>0$ depending only on $u_+$ and $u_-$ such that
$|\psi_-|, |\psi_-'| < c\exp(-\lambda R)$.  In particular, if $R$ is
sufficiently large then $|\psi_-|,|\psi_-'|<\rho/2$.

The point $p_-$ in $u_-$ gives rise to the point
$\Phi_{s_--T_-}(e_-(\psi_-(p_-)))$ in the gluing $u(T_-,T_+,\Sigma)$.
Since this point is also in $u(T_-',T_+',\Sigma)$, there must exist
$p_-'$ in $B_-$ with
\begin{equation}
\label{eqn:MSW}
e_-(\psi_-(p_-)) = \Phi_{x_-}(e_-(\psi_-'(p_-'))),
\end{equation}
where $x_-\eqdef T_- - T_-'$.

Now the bound on $|\psi_-|$, together with the elliptic regularity in
Lemma~\ref{lem:ER}, leads to a bound of the same form on
$|\nabla\psi_-|$.  Hence if $R$ is sufficiently large, then (*) is
applicable so that the inequality \eqref{eqn:NBS} holds.

On the other hand, by bounding the derivatives of the contraction
mappings used to define $\psi_-$, one can show that $\psi_-$ depends
smoothly on $T_+$ and $T_-$, with
$\left\|\frac{\partial\psi_-}{\partial T_\pm}\right\|\le
c\exp(-\lambda R)$, where again $c,\lambda>0$ depend only on $u_+$ and
$u_-$.  Therefore
\[
\text{dist}(e_-(\psi_-(p_-')),
e_-(\psi_-'(p_-'))) \le c\exp(-\lambda R)(|x_-| + |x_+|)
\]
where $x_+ \eqdef T_+ - T_+'$.  Combining this with \eqref{eqn:NBS}
and \eqref{eqn:MSW}, we obtain
\[
c_-|x_-| \le c\exp(-\lambda R)(|x_-| + |x_+|).
\]

By a symmetric argument, there is a constant $c_+$ depending only on
$u_+$ such that
\[
c_+|x_+| \le c\exp(-\lambda R)(|x_-| + |x_+|).
\]
If $R$ is sufficiently large, then the above two inequalities together
imply that $x_-=x_+=0$, so that $(T_-,T_+)=(T_-',T_+')$.
\end{proof}

\subsection{Surjectivity of the gluing map}

\begin{lemma}
\label{lem:SGM}
For fixed $h\in(0,1)$, if $r>r_0$ is chosen sufficiently large and if
$\delta>0$ is sufficiently small with respect to $r$, then the
restricted gluing map \eqref{eqn:GM} is surjective.
\end{lemma}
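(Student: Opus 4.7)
The plan is the standard converse to the construction of \S\ref{sec:gluing}: given $C\in\mc{G}_\delta(U_+,U_-)$, I will extract from its decomposition $C=C_-\cup C_0\cup C_+$ a triple $(T_-,T_+,\Sigma)\in\mc{U}_\delta$ together with sections $(\psi_-,\psi_\Sigma,\psi_+)$ that make $C$ equal to the deformed pregluing \eqref{eqn:deformation}, and then invoke the uniqueness clauses of Propositions~\ref{prop:CMT} and \ref{prop:TRE} to conclude that these sections coincide with those produced by the contraction mapping constructions, so that $\frak{s}(T_-,T_+,\Sigma)=0$ and $G(T_-,T_+,\Sigma)=C$.

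First, I produce $\Sigma$ exactly as in Step~1 of the proof of Lemma~\ref{lem:IGM}: the tubular neighborhood projection $\frak{p}\colon C_0\to\R\times\alpha$ is a branched cover by positivity of intersections with the $J$-holomorphic $z$-discs, and pulling back the complex structure on $\R\times S^1$ and capping off the boundary circles by half-infinite cylinders yields an element $\Sigma\in\mc{M}$ whose orderings and asymptotic markings are inherited from those of the ends of $u_\pm$. I then select $T_\pm$ so that the translates $u_{\pm T}$ from \S\ref{sec:pregluing} match the $C_\pm$ pieces of $C$ modulo a small normal perturbation; concretely $T_+\eqdef R_+-s_+$ and $T_-\eqdef s_--R_-$, where $R_\pm$ are the real numbers appearing in Definition~\ref{def:CTB}. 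The condition $R_+-R_->2/\delta$, together with the bound on $|\psi_\pm|$ in Definition~\ref{def:CTB}, forces $T_\pm>5r$ once $\delta$ is sufficiently small with respect to $r$; this is essentially Step~2(i) of the proof of Lemma~\ref{lem:IGM}.

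Next, the domain of $C$ is naturally identified with the domain $C_*$ of the pregluing $u_*(T_-,T_+,\Sigma)$, and $C$ equals $e_*(x,\psi(x))$ for a section $\psi$ of the bundle $E_*\to C_*$ of pointwise norm at most a constant times $\delta$. Using that $\beta_-+\beta_\Sigma+\beta_+\equiv 1$ on the pieces where $\psi$ is nontrivial, I split $\psi$ into $(\psi_-,\psi_\Sigma,\psi_+)$: $\psi_\pm$ is the normal section describing $C_\pm$ via $e_\pm$, and $\psi_\Sigma$ is the complex function describing $C_0$ in the coordinate $w$. The compatibility identity \eqref{eqn:ECC} guarantees that these agree on the overlap cylinders $\Sigma_i$. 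Elliptic regularity (Lemma~\ref{lem:ER}) and the asymptotic decay of \S\ref{sec:decay} upgrade the pointwise bound to a bound in the Morrey-type norm $\|\cdot\|_*$; taking $\delta$ sufficiently small with respect to $r$ places each of $\psi_-,\psi_+,\psi_\Sigma$ inside the $\varepsilon$-ball of the appropriate space $\mc{H}_\pm,\mc{H}_\Sigma$. The orthogonality $\psi_\pm\perp\Ker(D_\pm)$ is arranged by a small adjustment of $T_\pm$ using the $\R$-translation symmetry that generates $\Ker(D_\pm)$.

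Since $C$ is $J$-holomorphic, the triple $(\psi_-,\psi_\Sigma,\psi_+)$ satisfies the system \eqref{eqn:7-}--\eqref{eqn:7Sigma}. By the uniqueness clause of Proposition~\ref{prop:CMT}(a), $\psi_\pm$ must coincide with the sections $\psi_\pm(\psi_\Sigma)$ produced by that contraction mapping; then by the uniqueness clause of Proposition~\ref{prop:TRE}(a) applied to the full equation \eqref{eqn:7SF} (not merely its $(1-\Pi)$ projection), $\psi_\Sigma$ coincides with the fixed point there, and in addition $\Pi\mc{F}_\Sigma(\psi_\Sigma)=0$, i.e. $\frak{s}(T_-,T_+,\Sigma)=0$. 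Hence $G(T_-,T_+,\Sigma)=u(T_-,T_+,\Sigma)=C$. The main obstacle in this program is making the choice of $T_\pm$ precise, since the decomposition $C_-\cup C_0\cup C_+$ is not literally canonical: I will either normalize the boundaries of $C_0$ to lie at prescribed $s$-levels relative to the ramification points, or note that the fixed-point equations are insensitive to such a reparametrization, and in either case verify via the decay estimates of Proposition~\ref{prop:CMT}(c) and Proposition~\ref{prop:TRE}(b) that the extracted triple lies in the ball where the uniqueness assertions are available.
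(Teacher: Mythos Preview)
There is a genuine gap. The sections $\psi_\pm$ and $\psi_\Sigma$ you extract from the decomposition $C=C_-\cup C_0\cup C_+$ are \emph{not} defined on the full domains $u_{\pm T}$ and $\Sigma$: by Definition~\ref{def:CTB}, $\psi_-$ lives only on the $s\le 1/\delta$ part of $u_-$, and the $w$-coordinate of $C_0$ gives a function only on the compact piece $\Sigma'\subset\Sigma$. But the uniqueness clauses of Propositions~\ref{prop:CMT} and \ref{prop:TRE} concern global sections in $\mc{H}_\pm$ and $\mc{H}_\Sigma$ solving the \emph{separate} equations $\Theta_\pm=0$ and $\Theta_\Sigma=0$ on all of $u_{\pm T}$ and $\Sigma$. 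On the overlap cylinders $\Sigma_i$ (say $i<0$), the fact that $C$ is $J$-holomorphic gives only the coupled equation $\beta_-\Theta_-+\beta_\Sigma\Theta_\Sigma=0$, not each summand separately zero. The substantive analytic content of the paper's proof is its Step~2: it simultaneously \emph{extends} $\psi_-$ over the half-infinite positive ends of $u_{-T}$ and $\psi_\Sigma$ over the half-infinite ends of $\Sigma$, via inverse-function-theorem lemmas on half-cylinders (Lemmas~\ref{lem:IFT1} and \ref{lem:IFT2}), and then uses the uniqueness in Lemma~\ref{lem:IFT1} to verify the consistency relation \eqref{eqn:ii-} on the overlap. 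Your proposal skips this step entirely.

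Relatedly, the claim $\beta_-+\beta_\Sigma+\beta_+\equiv 1$ is false: by construction (\S\ref{sec:pregluing}--\S\ref{sec:defpre}) both $\beta_-$ and $\beta_\Sigma$ equal $1$ on a long subcylinder of each $\Sigma_i$ with $i<0$, so there is no partition-of-unity splitting of a single section $\psi$ of $E_*$ into $(\psi_-,\psi_\Sigma,\psi_+)$. Finally, the orthogonality $\psi_\pm\perp\Ker(D_\pm)$ is not just a ``small adjustment of $T_\pm$'': once the extensions are in hand, the paper varies $(R_-,R_+)$ over a square and finds the zero of the resulting map $f:\Omega\to\R^2$ by a degree-type argument (Step~3), since the dependence of the extended $\psi_\pm$ on $(T_-,T_+)$ is not simply the $\R$-translation.
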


\begin{proof} The proof has three steps.

{\em Step 1.\/} Here is the setup: Let $C\in\mc{G}_\delta(U_+,U_-)$,
and decompose $C=C_-\cup C_0 \cup C_+$ as in Definition~\ref{def:CTB}.
Let $T_-$ denote the real number for which the smallest critical value
of $s|_{C_0}$ is $R_-+T_-+1$.  Let $T_+$ denote the real number for
which the largest critical value of $s|_{C_0}$ is $R_+-T_+-1$.  It
follows from the conditions in Definition~\ref{def:CTB} that if
$1/\delta \ge 5r+5$ (which we assume for the rest of this proof), then
$T_-,T_+ \ge 5r$.  Also, as in the proof of Lemma~\ref{lem:IGM}, the
decomposition of $C$ determines a branched cover $\Sigma$ in $\mc{M}$,
with $s_-$ close to $R_-+T_-$ and $s_+$ close to $R_+-T_+$.

The section $\psi_-$ of the normal bundle to $u_-$ given by
Definition~\ref{def:CTB} determines a section of the normal bundle to
$u_{-T}$, which we also denote by $\psi_-$, and which satisfies equation
\eqref{eqn:7-} on $u_{-T}'$.  Likewise, we have a
section $\psi_+$ of the normal bundle to $u_{+T}$ satisfying equation
\eqref{eqn:7+} on $u_{+T}'$.  Part of the curve $C$ consists of the
exponential map images of the sections $\psi_\pm$ over $u_{\pm T}'$.
The rest of $C$ is described, in our coordinates $((s,t),w)$ on a
tubular neighborhood of $\R\times\alpha$, by a map on $\Sigma'$
sending $x\mapsto (\pi(x),\psi_0(x))$ where $\psi_0$ is a
complex-valued function on $\Sigma'$.  Let $\Sigma''$ be obtained from
$\Sigma'$ by removing the cylinders $\Sigma_i$.  Let $\psi_\Sigma$
denote the restriction of $\psi_0$ to $\Sigma''$.  Then $\psi_\Sigma$
satisfies equation \eqref{eqn:7Sigma} on $\Sigma''$.

To show that $C$ is obtained from the gluing construction, we want to
extend $\psi_-$ over the rest of $u_{-T}$, extend $\psi_+$ over the rest of
$u_{+T}$, and extend $\psi_\Sigma$ over the rest of $\Sigma$, so that:
\begin{description}
\item{(i)}
Equation \eqref{eqn:7-} holds on all of $u_{-T}$, equation
\eqref{eqn:7+} holds on all of $u_{+T}$, and equation
\eqref{eqn:7Sigma} holds on all of $\Sigma$.
\item{(ii)}
On each cylinder $\Sigma_i$ with $i<0$, we have
\begin{equation}
\label{eqn:ii-}
\beta_-(\eta_{-T} + \psi_-) + \beta_\Sigma\psi_\Sigma = \psi_0.
\end{equation}
Likewise, on each cylinder $\Sigma_i$ with $i>0$, we have
\begin{equation}
\label{eqn:ii+}
\beta_+(\eta_{+T} + \psi_+) + \beta_\Sigma\psi_\Sigma = \psi_0.
\end{equation}
\item{(iii)}
$\|\psi_\Sigma\|_* < \varepsilon$, where $\varepsilon$ is given by
Proposition~\ref{prop:TRE}.
\item{(iv)}
$\psi_-$ is orthogonal to the kernel of $D_-$, and $\psi_+$ is
orthogonal to the kernel of $D_+$.
\end{description}

{\em Step 2.\/} We now show that there exist $r_1>1$ and $\delta_1>0$
such that if $r>r_1$ and $\delta\le\delta_1$, then $\psi_\pm$ and
$\psi_\Sigma$ can be extended to satisfy conditions (i)--(iii) above.
This step has two substeps.

{\em Step 2.1.\/} Consider one of the cylinders $\Sigma_i$, identified
with $[A,B]\times\widetilde{S^1}$ with coordinates $(s,\tau)$.  Here
$\widetilde{S^1}$ denotes the $m$-fold cover of $S^1$, where $m$ is
the degree of the restriction of the covering $\Sigma\to\R\times S^1$
to $\Sigma_i$.  On this cylinder, the function $\psi\eqdef\psi_0$
satisfies an equation of the form
\begin{equation}
\label{eqn:42}
(\partial_s + L_m)\psi + F(\psi)=0
\end{equation}
where $F$ is type 1 quadratic in the sense of
Definition~\ref{def:quadratic}.  The purpose of this substep is to
establish some properties of equation \eqref{eqn:42}.

Recall that $L^2_{3/2}(\widetilde{S^1};\R^2)$ denotes the completion
of the space of smooth $\R^2$-valued functions on $\widetilde{S^1}$
using the norm defined by
\[
\|\eta\|_{L^2_{3/2}}^2 \eqdef \int_{\widetilde{S^1}} \left\langle \eta,
|L_m|^{3}\eta\right\rangle.
\]
Let $\Pi_+$ (resp.\ $\Pi_-$) denote the $L^2$ orthogonal projection
from $L^2_{3/2}(\widetilde{S^1};\R^2)$ to the span of the eigenvectors
of $L_m$ with positive (resp.\ negative) eigenvalues.

\begin{lemma}
\label{lem:IFT1}
There exist constants $\epsilon_0>0$ and $c$ with the
following property.  Suppose $B-A\ge 1$ and $\epsilon<\epsilon_0$.
Let $\lambda_A\in\Pi_+L^2_{3/2}(\widetilde{S^1};\R^2)$ and
$\lambda_B\in\Pi_- L^2_{3/2}(\widetilde{S^1};\R^2)$ be given with
$L^2_{3/2}$ norm less than $\epsilon$.  Then there exists a unique
solution $\psi$ to equation \eqref{eqn:42} on $[A,B] \times
\widetilde{S^1}$ with $\|\psi\|_{L^2_2} < c\epsilon$ that satisfies
the boundary conditions $\Pi_+\psi(A,\cdot)=\lambda_A$ and
$\Pi_-\psi(B,\cdot) =
\lambda_B$.
\end{lemma}

\begin{proof}
Define a map
\[
\begin{split}
\mc{F}: L^2_2([A,B]\times\widetilde{S^1}) & \to
\Pi_+L^2_{3/2}(\widetilde{S^1};\R^2) \times \Pi_-
L^2_{3/2}(\widetilde{S^1};\R^2) \times
L^2_1([A,B]\times\widetilde{S^1}),\\
\psi & \mapsto (\Pi_+\psi(A,\cdot), \Pi_-\psi(B,\cdot),
(\partial_s + L_m)\psi + F(\psi)).
\end{split}
\]
Calculations as in the proof of Lemma~\ref{lem:CC} show that the
derivative of $\mc{F}$ at $0$ is invertible, and the operator norm of
its inverse has an upper bound independent of $B-A$. In addition, 
since $F$ is type 1 quadratic, it follows that
\[
\|(d\mc{F}_\psi - d\mc{F}_0)\xi\|_{L^2_1} \le
c\|\psi\|_{L^2_2}\|\xi\|_{L^2_2}
\]
where $c$ is independent of $B-A\ge 1$.  The lemma now follows from
the inverse function theorem.
\end{proof}

{\em Proof of Lemma~\ref{lem:SGM}, Step 2.2.\/} Fix
$i\in\{-1,\ldots,-\Nbar_-\}$, and consider the problem of extending $\psi_-$
over the $s\ge s_--T_-$ portion of the $i^{th}$ positive end of
$u_{-T}$, and extending $\psi_\Sigma$ over the $s \le s_i$
portion of the $i^{th}$ negative end of $\Sigma$.  Both of these
cylinders are identified with subcylinders of
$\R\times\widetilde{S^1}$.  Thus we need to find extensions of $\psi_-$
over $[s_--T_-,\infty)\times\widetilde{S^1}$, and of $\psi_\Sigma$ over
$(-\infty,s_i]\times\widetilde{S^1}$, that satisfy the equations
\begin{equation}
\label{eqn:7-Sigma}
\begin{split}
\Theta_-(\psi_-,\psi_\Sigma) &= 0, \quad \quad s\ge s_--T_-,\\
\Theta_\Sigma(\psi_-,\psi_\Sigma,0) &= 0, \quad \quad s\le s_i,
\end{split}
\end{equation}
and that also satisfy equation \eqref{eqn:ii-} when $s_--T_-\le s \le
s_i$.

The following lemma provides solutions to the equations
\eqref{eqn:7-Sigma}.

\begin{lemma}
\label{lem:IFT2}
There exist constants $\epsilon_0>0$, $r_1>1$, and $c$ with the
following property.  Suppose $r>r_1$ and $\epsilon < \epsilon_0$.  Let
$\lambda_-\in \Pi_+ L^2_{3/2}(\widetilde{S^1};\R^2)$ and
$\lambda_\Sigma\in\Pi_- L^2_{3/2}(\widetilde{S^1};\R^2)$ be given with
$L^2_{3/2}$ norm less than $\epsilon$.  Then there exists a unique
solution $(\psi_-,\psi_\Sigma)$ to \eqref{eqn:7-Sigma} with
$\|\psi_-\|_{L^2_2}, \|\psi_\Sigma\|_{L^2_2} < c\epsilon$ that
satisfies the boundary conditions
$\Pi_+\psi_-(s_--T_-,\cdot)=\lambda_-$ and
$\Pi_-\psi_\Sigma(s_i,\cdot) =
\lambda_\Sigma$.
\end{lemma}

\begin{proof}
This is an application of the inverse function theorem similar to the
proof of Lemma~\ref{lem:IFT1}.  In more detail, write $A\eqdef
s_--T_-$ and $B\eqdef s_i$.  Define a map
\begin{gather*}
\mc{F}: L^2_2(s\ge A) \times L^2_2(s\le B)  \rightarrow
\Pi_+L^2_{3/2} \times \Pi_-L^2_{3/2} \times L^2_1(s\ge A) \times
L^2_1(s\le B),\\
(\psi_-,\psi_\Sigma) \longmapsto \big(\Pi_+\psi_-(A,\cdot),
\Pi_-\psi_\Sigma(B,\cdot), \Theta_-(\psi_-,\psi_\Sigma),
\Theta_\Sigma(\psi_-,\psi_\Sigma,0)\big).
\end{gather*}
The derivative of $\mc{F}$ at $(0,0)$ has the schematic form
\begin{equation}
\label{eqn:DFS}
\begin{split}
(\psi_-,\psi_\Sigma) \longmapsto & \big(\Pi_+\psi_-(A,\cdot),
\Pi_-\psi_\Sigma(B,\cdot),\\
& \;\;(\partial_s + L_m)\psi_- +
a_-(\psi_-,\psi_\Sigma), (\partial_s+L_m)\psi_\Sigma +
a_\Sigma(\psi_-,\psi_\Sigma)\big).
\end{split}
\end{equation}
As in the proof of Lemma~\ref{lem:IFT1}, the map \eqref{eqn:DFS}, with
the $a_-$ and $a_\Sigma$ terms removed, is invertible, and the
operator norm of its inverse is less than some constant $c_0$ which
does not depend on $A$ and $B$.  To prove that the map \eqref{eqn:DFS}
itself is invertible, with its inverse bounded independently of $A$
and $B$, it is enough to show that
\begin{equation}
\label{eqn:AMS}
\|a_-(\psi_-,\psi_\Sigma)\|_{L^2_1},
\|a_\Sigma(\psi_-,\psi_\Sigma)\|_{L^2_1} \le
\frac{1}{4c_0}\left(\|\psi_-\|_{L^2_2} + \|\psi_\Sigma\|_{L^2_2}\right).
\end{equation}
The bound \eqref{eqn:AMS} follows directly from equations
\eqref{eqn:Theta-} and \eqref{eqn:ThetaSigma} and the decay estimates
on $\eta_{-T}$, provided that $r$ is sufficiently large.  It also
follows from \eqref{eqn:Theta-} and \eqref{eqn:ThetaSigma} that
\[
\|(d\mc{F}_{(\psi_-,\psi_\Sigma)} -
d\mc{F}_{(0,0)})(\xi_-,\xi_\Sigma)\|_{L^2_1} \le
c\big(\|\psi_-\|_{L^2_2} + \|\psi_\Sigma\|_{L^2_2}\big)
\big(\|\xi_-\|_{L^2_2} + \|\xi_\Sigma\|_{L^2_2}\big).
\]
where $c$ is independent of $A$ and $B$ provided that $r\ge 1$.  The
lemma now follows from the inverse function theorem.
\end{proof}

If $\delta$ is sufficiently small, then we can apply
Lemma~\ref{lem:IFT2} with $\lambda_- =
\Pi_+(\psi_0-\eta_{-T})(s_--T_-,\cdot)$ and $\lambda_\Sigma =
\Pi_-\psi_0(s_i,\cdot)$, to obtain a solution $(\psi_-,\psi_\Sigma)$
to the equations \eqref{eqn:7-Sigma} satisfying the above boundary
conditions and with $\|\psi_-\|_{L^2_2},
\|\psi_\Sigma\|_{L^2_2}<c\delta$.  By the uniqueness assertion in
Lemma~\ref{lem:IFT1}, equation \eqref{eqn:ii-} also holds, because by
equation \eqref{eqn:HWN}, both $\psi_0$ and $\beta_-(\eta_{-T}+\psi_-) +
\beta_\Sigma\psi_\Sigma$ satisfy equation \eqref{eqn:42}.  

By an analogous process, if $r$ is sufficiently large and $\delta$ is
sufficiently small, then for $i=\{1,\ldots,\Nbar_+\}$ we can extend $\psi_+$
over the $s\le s_++T_+$ portion of the $i^{th}$ negative end of
$u_{+T}$ and extend $\psi_\Sigma$ over the $s\ge s_i$ portion of the
$i^{th}$ positive end of $\Sigma$.  In this way we find
$(\psi_-,\psi_\Sigma,\psi_+)$ satisfying conditions (i) and (ii) from
Step 1.  By the $L^2_2$ bounds on the $\psi$'s from
Lemma~\ref{lem:IFT2}, condition (iii) will also hold if $\delta$ is
sufficiently small.

\medskip

{\em Proof of Lemma~\ref{lem:SGM}, Step 3.\/} We now show that if
$r>r_1$ is sufficiently large, then there exists
$\delta_2\in(0,\delta_1)$ such that if $\delta\le\delta_2$, then the
inputs coming from Definition~\ref{def:CTB} can be modified so that
the extensions $\psi_-,\psi_\Sigma,\psi_+$ produced by Step 2 also
satisfy condition (iv) in Step 1.

To measure the failure of condition (iv), let $\nu_-$ denote the
section of the normal bundle to $u_{-}$ (or $u_{-T}$) determined by
infinitesimal translation of $u_{-}$ in the $\R$ direction in
$\R\times Y$.  Then $|\nu_-|\le 1$; and since $u_-$ has index $1$ and is
unobstructed, $\nu_-$ spans $\Ker(D_-)$.  Define $\nu_+$ analogously
for $u_{+}$ (or $u_{+T}$).  Then condition (iv) holds if and only if
the $L^2$ inner products $\langle
\psi_-,\nu_-\rangle, \langle\psi_+,\nu_+\rangle$ both vanish.

To understand the inner product $\langle
\psi_-,\nu_-\rangle$, it proves convenient to write the
$L^2$ inner product on $u_{-T}$ as $\langle \psi_1,\psi_2\rangle =
\langle \psi_1,\psi_2\rangle_- + \langle \psi_1,\psi_2\rangle_+$, where
$\langle \psi_1,\psi_2\rangle_+$ denotes the contribution from the
$s\ge s_- -T_-+r/2$ portion of $u_{-T}$.  By the asymptotic analysis
in \S\ref{sec:decay}, there are $r$-independent constants
$c_1,\lambda>0$ such that
\begin{equation}
\label{eqn:c1}
|\langle \psi_-,\nu_-\rangle_+| \le c_1\delta\exp(-\lambda r).
\end{equation}
By Lemma~\ref{lem:IFT2}, there is a constant $c_2$ with
\begin{equation}
\label{eqn:c2}
|\langle \psi_-,\nu_-\rangle_-| \le
 c_2\delta.
\end{equation}

Now suppose $\delta<\delta_1$.  Given real numbers $x_-,x_+$ with
$|x_\pm|\le\delta_1 - \delta$, consider modifying the data from
Definition~\ref{def:CTB} by replacing $(R_-,R_+)$ with
$(R_--x_-,R_+-x_+)$, while adjusting the sections $\psi_\pm$
accordingly so as to describe the same curve $C$.  This has the effect
of replacing $(T_-,T_+)$ by $(T_-+x_-,T_+-x_+)$ in Step 1.  The
conditions in Definition~\ref{def:CTB} still hold with $\delta$
replaced by $\delta_1$, so we can repeat the procedure in Step 2, to
obtain a new triple $(\psi_-^x,\psi_\Sigma^x,\psi_+^x)$ obeying
conditions (i)--(iii) in Step 1.

To compare $\psi_-^x$ with $\psi_-^0$, use translation of $s$ to
regard both as sections of $u_-$.  Observe that where $s\le r/2$ on
$u_-$, the image of $e_-\circ\psi_-^x$ is the $s\mapsto s+x_-$
translate of the image of $e_-\circ\psi_-^0$.  Now the $s\mapsto
s+x_-$ translate of $u_-$ is the image of $e_-(x_-\nu_-+\zeta_-)$
where $|\zeta_-|\le c|x_-|^2$.  It follows that where $s\le r/2$ on
$u_-$, we can write
\begin{equation}
\label{eqn:psi-x}
\psi_-^x = \psi_-^0 + x_-\nu_- + \gamma_-, \quad\quad |\gamma_-| \le c
|x_-|(|x_-| + \delta).
\end{equation}

This representation of $\psi_-^x$ 
is not valid on all of $u_-$.
Nonetheless, $\psi_-^x$ and $\psi_+^x$ vary smoothly on the whole of
$u_-$ and $u_+$ as $x=(x_-,x_+)$ varies in the square
$\Omega\subset\R^2$ where both coordinates are less than
$\delta_1-\delta$.  This follows from the inverse function theorem,
since the map $\mc{F}$ that appears in the proof of
Lemma~\ref{lem:IFT2} varies smoothly as $x$ varies.

Now define $f:\Omega\to\R^2$ by sending $x=(x_1,x_2)\in\Omega$
to the pair $(\langle\psi_-^x,\nu_-\rangle,
\langle\psi_+^x,\nu_+\rangle)$.  By the previous paragraph, $f$
is a smooth map.  Expanding $\psi_-^x$ as in \eqref{eqn:psi-x}, using
the estimate \eqref{eqn:c2} for $\psi_-^0$, and using the estimate
\eqref{eqn:c1} for $\psi_-^x$, we find that
\begin{equation}
\label{eqn:psix-}
\langle\psi_-^x,\nu_-\rangle = c_-x_- + \frak{r}_-,
\end{equation}
where $c_-$ depends only on $u_-$ and
\begin{equation}
\label{eqn:r-}
|\frak{r}_-| \le
c(\delta+\exp(-\lambda r)|x_-|+|x_-|^2).
\end{equation}
Studying $\psi^x_+$ in the same way, we obtain analogues of
\eqref{eqn:psix-} and \eqref{eqn:r-} with the `$-$' subscripts replaced
by `$+$' subscripts.  Since $f$ is continuous, a standard topological
argument finds a point $x_0\in\Omega$ with $f(x_0)=0$, provided that
$r$ is sufficiently large and $\delta$ is sufficiently small.
\end{proof}

\section{Deforming to the linearized section}
\label{sec:deform}

Continue with the gluing setup from \S\ref{sec:gluing}.  Recall that
Theorem~\ref{thm:GT} relates gluings of $U_+$ and $U_-$ along a
branched covered cylinder to zeroes of a section
$\frak{s}:\times_2[5r,\infty)\times\mc{M}\to\mc{O}$.  To count the
ends of the corresponding index 2 moduli space, we need to count the
zeroes of $\frak{s}$ over an appropriate slice of the quotient of the
domain by an $\R$-action, as explained below.  It is difficult to
count these zeroes directly because the section $\frak{s}$ involves
functions that are defined only implicitly through
Proposition~\ref{prop:TRE}.  To facilitate this count, we now explain
how to deform $\frak{s}$ to a simpler section, the ``linearized
section'' $\frak{s}_0$, without any zeroes crossing the boundary of
the relevant slice of the domain, so that the count of zeroes does not
change.  After defining the linearized section in \S\ref{sec:DLS}, we
state the deformation result in
\S\ref{sec:SDR} and prove it in
\S\ref{sec:NLE}-\S\ref{sec:PDR}.

\subsection{The linearized section $\frak{s}_0$}
\label{sec:DLS}

We now define the linearized section $\frak{s}_0$.  Note that we
previously defined a version of the linearized section in \S{I.3},
over a slightly different domain; the definition given here is
essentially equivalent to the definition given there, as we will
explain in Remark~\ref{rem:cf}.

Recall the notation $\lambda_i$ from \S\ref{sec:solve-}.  For
$i=1,\ldots,\Nbar_+$ or $i=-1,\ldots,-\Nbar_-$, let $\mc{B}_i$ denote
the $\lambda_i$ eigenspace of $L_{a_i}$.  Recall from the asymptotic
analysis in Proposition~\ref{prop:AF} that there is a constant
$\kappa>0$, and for each $i$ as above there is an eigenfunction
$\gamma_i\in\mc{B}_i$, such that for $i=1,\ldots,\Nbar_+$, the function
$\eta_i$ describing the $i^{th}$ negative end of $u_+$ satisfies
\begin{equation}
\label{eqn:gammai+}
\left|\eta_i(s,\tau) - e^{|\lambda_i| s}\gamma_i(\tau)\right|
\le c e^{(|\lambda_i|+\kappa)s}.
\end{equation}
Likewise, for $i=-1,\ldots,-\Nbar_-$, the function $\eta_i$ describing
the $i^{th}$ positive end of $u_-$ satisfies
\begin{equation}
\label{eqn:gammai-}
\left|\eta_i(s,\tau) - e^{-\lambda_i s}\gamma_i(\tau)\right|
 \le c e^{(-\lambda_i-\kappa)s}.
\end{equation}
Here, as usual, $c$ denotes a constant that depends only on $u_+$ and
$u_-$, but which may change from one appearance to the next. 

We will need to assume that the collection of eigenfunctions
$\gamma=\{\gamma_i\}$ given by \eqref{eqn:gammai+} and
\eqref{eqn:gammai-} is admissible in the sense of Definition~{I.3.2}.
This means that the following two conditions hold:
\begin{description}
\item{(1)} All negative ends of $u_+$ and all positive ends of $u_-$
are nondegenerate.  That is, $\gamma_i\neq 0$ for each
$i\in\{1,\ldots,\Nbar_+\}\cup\{-1,\ldots,-\Nbar_-\}$.
\item{(2)} $u_+$ does not have a pair of overlapping negative ends,
and $u_-$ does not have a pair of overlapping positive ends.  That is,
suppose $i,j\in\{1,\ldots,\Nbar_+\}$ satisfy
$\ceil{a_i\theta}/a_i=\ceil{a_j\theta}/a_j$, or
$i,j\in\{-1,\ldots,-\Nbar_-\}$ satisfy
$\floor{a_i\theta}/a_i=\floor{a_j\theta}/a_j$, so that that the
eigenspaces $\mc{B}_i$ and $\mc{B}_j$ are identified via coverings.
Then for all $g_i\in\Z/a_i$ and $g_j\in\Z/a_j$, the action by deck
transformations satifies $g_i\cdot\gamma_i\neq g_j\cdot\gamma_j$.
\end{description}
Propositions~\ref{prop:G1} and \ref{prop:G2} guarantee that $\gamma$
is admissible if $J$ is generic.

Now fix a branched cover $\Sigma\in\mc{M}$, and let
$\sigma\in\Coker(D_\Sigma)$.  Recall from \S{I.2.3} that the metric on
each $\Sigma\in\mc{M}$ is chosen to agree with the pullback of the
metric on $\R\times S^1$, except on neighborhoods of the ramification
points that project to balls of radius $1$ in $\R\times S^1$.  Let
$D_\Sigma^*$ denote the formal adjoint of $D_\Sigma$ with respect to
this metric, and identify $\sigma$ with an element of
$\Ker(D_\Sigma^*$).  On the complement of the ramification points in
$\Sigma$, use $d\zbar$ to trivialize $T^{0,1}\Sigma$, and thereby
regard $\sigma$ as a complex valued function.  On the complement of
the aforementioned neighborhoods of the ramification points, $\sigma$
satisfies the equation
\begin{equation}
\label{eqn:afrp}
(\partial_s - i\partial_t - S(t))\sigma=0,
\end{equation}
where $S(t)$ is a symmetric $2\times 2$ real matrix, see
\S{I.2.2}.  For $i=1,\ldots,\Nbar_+$, the
restriction of $\sigma$ to the $i^{th}$ positive end of $\Sigma$
determines a function $\sigma_i:[s_i,\infty) \times
\widetilde{S^1}\to\C$, where $\widetilde{S^1}$ denotes the $a_i$-fold
cover of $S^1$.  Likewise, for $i=-1,\ldots,-\Nbar_-$, the restriction
of $\sigma$ to the $i^{th}$ negative end of $\Sigma$ determines a
function $\sigma_i:(-\infty,s_i]\times\widetilde{S^1}\to\C$.  In
either case, equation \eqref{eqn:afrp} on the end becomes
\begin{equation}
\label{eqn:boe}
(\partial_s - L_{a_i})\sigma_i(s,\cdot) = 0.
\end{equation}
Let $\Pi_{\mc{B}}$ denote the projection in
$L^2(\widetilde{S^1};\R^2)$ to $\mc{B}_i$. Then it follows from
\eqref{eqn:boe} that there is an eigenfunction $\beta_i\in\mc{B}_i$ such that
\begin{equation}
\label{eqn:betai}
\Pi_{\mc{B}}\sigma_i(s,\cdot) = e^{\lambda_i(s-s_i)}\beta_i.
\end{equation}

\begin{definition}
Define the {\em linearized section\/}
$\frak{s}_0:\times_2(5r,\infty)\times\mc{M}\to\mc{O}$ as follows: If
$\sigma\in\Coker(D_\Sigma)$ has associated eigenfunctions
$\beta_i\in\mc{B}_i$, then
\begin{equation}
\label{eqn:defsL}
\frak{s}_0(T_-,T_+,\Sigma)(\sigma) \eqdef \sum_{i=1}^{\Nbar_+}
e^{-\nu_i}\langle\gamma_i,\beta_i\rangle -
\sum_{i=-1}^{-\Nbar_-}e^{-\nu_i}\langle\gamma_i,\beta_i\rangle,
\end{equation}
where the brackets denote the inner product on
$L^2(\widetilde{S^1},\R^2)$, and
\begin{equation}
\label{eqn:nui}
\nu_i\eqdef \left\{\begin{array}{cl} |\lambda_i|(s_+-s_i+T_+), &
i=1,\ldots,\Nbar_+,\\
\lambda_i(s_i-s_-+T_-), & i=-1,\ldots,-\Nbar_-.
\end{array}\right.
\end{equation}
\end{definition}

The linearized section $\frak{s}_0$ appears as part of the original
section $\frak{s}$, as follows.  By equation \eqref{eqn:defs}, we can write
\begin{equation}
\label{eqn:WOS}
\frak{s}(T_-,T_+,\Sigma)(\sigma) =
\left\langle\sigma,\eta'+\mc{R}(\psi_\Sigma)\right\rangle,
\end{equation}
where
\[
\eta' \eqdef \frac{1}{2}\left(\frac{\partial\beta_-}{\partial
  s}\eta_{-T} + \frac{\partial\beta_+}{\partial
  s}\eta_{+T}\right)d\overline{z},
\]
while $\mc{R}(\psi_\Sigma)$ denotes the sum of all the other terms in
\eqref{eqn:ThetaSigma} that enter into $\mc{F}_\Sigma(\psi_\Sigma)$.
Recall that $\eta'$ is supported on the ends of $\Sigma$ labeled by
$1,\ldots,\Nbar_+$ and $-1,\ldots,-\Nbar_-$.  Let $\Pi_{\mc{B}}\eta'$
denote the $(0,1)$-form on $\Sigma$ obtained from $\eta'$ by
projecting, for each $i$, the part of $\eta'$ on the $i^{th}$ end onto
the eigenspace $\mc{B}_i$.  Then equation
\eqref{eqn:defsL} can be rewritten as
\begin{equation}
\label{eqn:WSLCF}
\frak{s}_0(T_-,T_+,\Sigma)(\sigma) = 
\sqrt{2} \langle\sigma,\Pi_{\mc{B}}\eta'\rangle.
\end{equation}

\subsection{Counting zeroes of the obstruction section and its linearization}
\label{sec:SDR}

Given $R \ge 10r$,
let $\mc{V}_R$ denote the set of triples
$(T_-,T_+,\Sigma)\in\times_2[5r,\infty)\times\mc{M}$ such that
\begin{equation}
\label{eqn:R}
T_++s_+-s_-+T_-=R.
\end{equation}
This means that the curves $U_+$ and $U_-$ are translated away from
each other by distance $R$ in the pregluing.  We will see in
\S\ref{sec:signs} that the signed count of gluings $\#G(u_+,u_-)$ is
determined by a count of zeroes of $\frak{s}$ on $\mc{V}_R$, modulo a
certain $\R$ action, where $R$ is fixed and large.  We now want to
show that counting zeroes of $\frak{s}_0$ will give the same result.

For this purpose we will linearly interpolate from $\frak{s}$ to
$\frak{s}_0$.  For each $t\in[0,1]$ define a section
\[
\frak{s}_t\eqdef t\frak{s} + (1-t)\frak{s}_0.
\]
The following proposition implies that $\frak{s}_t$ has no zeroes on
the boundary of $\mc{V}_R$ when $R$ is fixed and large.  To state it,
let $\lambda$ denote the smallest of the numbers $|\lambda_i|$ for
$i=1,\ldots,\Nbar_+$ and $i=-1,\ldots,-\Nbar_-$, and let $\Lambda$
denote the largest of these numbers.

\begin{proposition}
\label{prop:deform}
Assume that $\gamma$ is admissible as in \S\ref{sec:DLS}.  In the
gluing construction, if we choose $r$ sufficiently large and
$h<\lambda/4\Lambda$, then for all $t\in[0,1]$, every triple
$(T_-,T_+,\Sigma)\in\times_2[5r,\infty)\times\mc{M}$ with $
\frak{s}_t(T_-,T_+,\Sigma)=0
$
satisfies
\[
T_+, T_- > \frac{\lambda R}{3\Lambda},
\]
where $R$ is defined by \eqref{eqn:R}.
\end{proposition}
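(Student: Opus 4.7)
The plan is to argue by contradiction. Suppose there exist $t \in [0,1]$ and $(T_-, T_+, \Sigma) \in \times_2[5r,\infty)\times\mc{M}$ with $\frak{s}_t(T_-, T_+, \Sigma) = 0$ and, without loss of generality, $T_- \le \lambda R/(3\Lambda)$. The definition $R = T_+ + s_+ - s_- + T_-$ then forces $T_+ \ge (1-\lambda/(3\Lambda))R - (s_+-s_-)$, so $T_+$ is much larger than $T_-$ when $R$ is large. From $\frak{s}_t = \frak{s}_0 + t(\frak{s}-\frak{s}_0)$ and $\frak{s}_t = 0$ I obtain the operator-norm inequality $\|\frak{s}_0\| \le \|\frak{s}-\frak{s}_0\|$ on $\Coker(D_\Sigma)$, which I will contradict. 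Identify the dominant scale: let $i_0 \in \{-1,\ldots,-\Nbar_-\}$ minimize $\nu_i$, so that $\lambda T_- \le \nu_{i_0} \le \Lambda T_-$, while $\nu_i \ge \lambda T_+ \gg \nu_{i_0}$ for every $i > 0$. Thus both sides of the inequality are on the scale $e^{-\nu_{i_0}}$, and only the $i<0$ ends contribute meaningfully.

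The upper bound proceeds from the decomposition
\begin{equation*}
(\frak{s}-\frak{s}_0)(\sigma) = \langle\sigma,\, \eta'-\sqrt{2}\,\Pi_{\mc{B}}\eta'\rangle + \langle\sigma,\, \mc{R}(\psi_\Sigma)\rangle
\end{equation*}
coming from \eqref{eqn:WOS} and \eqref{eqn:WSLCF}. The first summand is handled by the refined asymptotic formula \eqref{eqn:gammai+}--\eqref{eqn:gammai-}: on the $i$th end, $\eta_i - e^{-|\lambda_i|s}\gamma_i$ decays at rate $e^{-(|\lambda_i|+\kappa)s}$, so after pairing with $\partial_s\beta_\pm$ (supported at distance at least $rh$ from the ramification region) and with $\sigma$ one gains a factor of order $e^{-\kappa' r}$ beyond $e^{-\nu_{i_0}}$. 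The condition $h < \lambda/(4\Lambda)$ enters here to keep certain intermediate-eigenvalue cross terms on the cylinders $\Sigma_i$ from overwhelming this gain. For the second summand, the structural description of $\mc{R}$ in \eqref{eqn:ThetaSigma}, together with Proposition~\ref{prop:TRE}(b), gives: linear-in-$\psi_\Sigma$ coefficients that are $O(|\eta_{\pm T}|)$ on supports of the $\frak{q}$'s, type-1 quadratic $\frak{p}_\pm(\eta_{\pm T})$ of size $O(e^{-2\lambda T_\pm})$, and the genuinely nonlinear $F_\Sigma'$ and $\frak{z}_{0\pm}$ pieces of size $O(\|\psi_\Sigma\|_*^2) = O(e^{-\lambda T_-})$. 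All together, $\|\frak{s}-\frak{s}_0\| \le c\, e^{-\nu_{i_0}-\epsilon r}$ once $r$ is large.

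The lower bound is where admissibility enters. The goal is to produce a unit $\sigma \in \Coker(D_\Sigma)$ with $|\frak{s}_0(\sigma)| \ge c\, e^{-\nu_{i_0}}$. From the analysis of the obstruction bundle in \S{I.2.3} and \S{I.3.2}, the restriction map $\Coker(D_\Sigma) \to \bigoplus_i \mc{B}_i$ sending $\sigma$ to its leading asymptotic data $(\beta_i)$ admits a lower bound uniform in $\Sigma\in\mc{M}$, where uniformity is established by combining index-theoretic considerations with the degeneration compactness of Lemma~I.2.28. Admissibility condition (1) guarantees $\gamma_{i_0}\neq 0$, and condition (2) rules out the accidental linear dependencies that could otherwise cause the sum in \eqref{eqn:defsL} to vanish identically on $\Coker(D_\Sigma)$. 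Together these let me pick $\sigma$ with $\langle\gamma_{i_0},\beta_{i_0}\rangle$ bounded away from zero, yielding $|\frak{s}_0(\sigma)| \ge c\, e^{-\nu_{i_0}}$. Combined with the upper bound, the inequality $\|\frak{s}_0\| \le \|\frak{s}-\frak{s}_0\|$ forces $c \le c' e^{-\epsilon r}$, contradicting largeness of $r$.

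The hard part will be the uniform lower bound on $\frak{s}_0$ invoked in the previous paragraph. The subtlety is that this bound must persist as $\Sigma$ approaches the noncompact frontier of $\mc{M}$, where, by Lemma~I.2.28, $\Sigma$ degenerates into a tree of branched covers linked by long cylinders. Admissibility is used twice here: condition (1) provides the obvious pointwise nonvanishing, but condition (2) is essential to ensure that the linear combination $\sum_i e^{-\nu_i}\gamma_i$ of asymptotic eigenfunctions cannot lie in the orthogonal complement of the image of $\Pi_{\mc{B}}\Ker(D_\Sigma^*)$, even at the degenerate limits. Turning this geometric input into a quantitative estimate uniform over the stratification of $\mc{M}$, and verifying that the choices $h < \lambda/(4\Lambda)$ and $r$ large suffice to absorb all of the bookkeeping exponents, will be the bulk of the work.
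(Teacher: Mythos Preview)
Your overall architecture matches the paper's: argue by contradiction, bound the nonlinear discrepancy $\frak{s}-\frak{s}_0$ from above by $c\,e^{-\nu-\epsilon r}$ (this is essentially Lemma~\ref{lem:NLE} plus the asymptotic estimate on $\eta'-\Pi_{\mc B}\eta'$), and bound $\frak{s}_0$ from below by $c\,e^{-\nu}$ on a well-chosen cokernel element. You also correctly flag the lower bound as the hard part.

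The gap is that your lower-bound argument is not a proof. You assert that one can ``pick $\sigma$ with $\langle\gamma_{i_0},\beta_{i_0}\rangle$ bounded away from zero'' and that admissibility ``rules out accidental linear dependencies,'' but you give no mechanism for producing such a $\sigma$ with \emph{uniformly bounded} $L^2$ norm, nor for controlling the other $\beta_i$ so they do not cancel the leading term. The paper's proof supplies exactly this mechanism, and it is substantially more intricate than a compactness-plus-nondegeneracy appeal. Concretely: one passes to a subsequence so that the branched covers degenerate to a tree $\tau_*$ (Lemma~I.2.28); one locates the internal vertex $j$ of $\tau_*$ adjacent to the leaf $i_1$ realizing $\nu$; and one invokes Lemma~I.2.18 to build a cokernel element $\sigma$ with $\Pi_{\mc A}\sigma_{i_1}$ prescribed and $\Pi_{\mc A}\sigma_i=0$ on all but two other ends $i_2,i_3$. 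Corollary~I.2.23 and Propositions~I.2.21, I.2.25 then give the uniform $L^2$ bound \eqref{eqn:s2c} and the exponential decay \eqref{eqn:DACV} away from the central vertex, which is what forces the $i_2,i_3$ contributions to be $O(e^{-\nu-\kappa n})$ (Step~3).

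The case you cannot reach by your sketch is when the vertex $j$ has at most one inessential edge, i.e.\ several positive leaves $i_1,\ldots,i_k$ adjacent to $j$ all share the eigenvalue $\lambda_{i_1}$, so their $\nu_i$'s are all within $O(1)$ of $\nu$ and no decay separates them. Here the paper needs two further ingredients: Lemma~\ref{lem:keylemma}, which says that (after a deck transformation $g\in G$) the leading eigenfunctions $\zeta_i$ of any cokernel element satisfy $\sum\zeta_i=0$; and Proposition~I.6.3, which shows that near the degeneration the cokernel element is approximated by one on a simpler branched cover $\Sigma_Z$. Together these force $\beta_{i_2}\approx -g\cdot\gamma_{i_1}$, and only then does admissibility condition~(2) enter, via $\gamma_{i_1}\neq g\cdot\gamma_{i_2}$, to make the pairing in \eqref{eqn:part} nonzero. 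Your invocation of condition~(2) is at the right place but without Lemma~\ref{lem:keylemma} there is no reason the relevant linear combination reduces to a two-term expression where nonoverlapping can bite.
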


To relate this to counting zeroes, first recall that $\R$ acts on the
moduli space of branched covers $\mc{M}$ by translating the $s$
coordinate.  We extend this to an action on
$\times_2[5r,\infty)\times\mc{M}$ fixing the $[5r,\infty)$ factors.
This $\R$ action extends to the obstruction bundle.  That is, if
$\Sigma_1,\Sigma_2\in\mc{M}$ are in the same orbit under the $\R$
action, then there is a canonical isomorphism
$\Coker(D_{\Sigma_1})=\Coker(D_{\Sigma_2})$.  It follows directly from
the definitions that under the above identification,
\[
\frak{s}_t(T_-,T_+,\Sigma_1) = \frak{s}_t(T_-,T_+,\Sigma_2)
\]
for each $t\in[0,1]$.  Thus $\frak{s}_t$ is well defined on
$\times_2[5r,\infty)\times\mc{M}/\R$.

We now want to count zeroes of $\frak{s}_t$ over $\mc{V}_R/\R$, where
$R$ is fixed and large.  For this purpose, note that there is a
natural identification
\begin{equation}
\label{eqn:identifyV}
\mc{V}_R/\R \simeq \{\Sigma\in\mc{M} \mid  
-R/2+5r\le s_-,
s_+\le R/2-5r\}.
\end{equation}
Given a branched cover $\Sigma$ for which $-R/2+5r\le s_-$ and $s_+\le
R/2-5r$, this identification sends
\[
\Sigma\longmapsto(s_-+R/2,R/2-s_+,[\Sigma]) \in \mc{V}_R/\R.
\]
Since $\mc{M}$ is a smooth manifold, it follows that the interior of
$\mc{V}_R/\R$ is a smooth manifold, and the boundary
$\partial(\mc{V}_R/\R)$ is identified with the set of branched covers
$\Sigma$ for which $s_-=-R/2+5r$ or $s_+=R/2-5r$.  Such branched
covers correspond to equivalence classes of triples
$(T_-,T_+,\Sigma)\in\mc{V}_R$ with $T_-=5r$ or $T_+=5r$.  Moreover,
since $\mc{M}$ has a canonical orientation as a complex manifold, the
identification
\eqref{eqn:identifyV} defines an orientation of $\op{int}(\mc{V}_R/\R)$.

\begin{definition}
Assume that $\gamma$ is admissible.  Fix $h<\lambda/4\Lambda$ and
$r>>0$ in the gluing construction.  Given $R>15r\Lambda/\lambda$ and
$t\in[0,1]$, define the {\em relative Euler class\/}
\[
e(\mc{O}\to\mc{V}_R/\R,\frak{s}_t)\in\Z
\]
as follows: Let $\frak{s}_t'$ be a section of $\mc{O}$ over
$\mc{V}_R/\R$ such that $\frak{s}_t'=\frak{s}_t$ on
$\partial(\mc{V}_R/\R)$, and such that all zeroes of $\frak{s}_t'$ are
nondegenerate.  Define $e(\mc{O}\to\mc{V}_R/\R,\frak{s}_t)$ to be the
signed count of zeroes of $\frak{s}_t'$, using the orientation of
$\mc{V}_R/\R$ determined by \eqref{eqn:identifyV} and the orientation of
$\mc{O}$ defined in \S{I.2.6}.  We usually denote this count by
$\#(\frak{s}_t^{-1}(0)\cap\mc{V}_R/\R)$, even though the zeroes of
$\frak{s}_t$ itself on $\mc{V}_R/\R$ may be degenerate.
\end{definition}

\begin{lemma}
The relative Euler class $\#(\frak{s}_t^{-1}(0)\cap\mc{V}_R/\R)$ is
well defined and does not depend on the choice of
$R>15r\Lambda/\lambda$ or $t\in[0,1]$.
\end{lemma}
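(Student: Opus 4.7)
The plan is to deduce the lemma from Proposition~\ref{prop:deform} via standard relative Euler class arguments. The key observation I would use is that, under the hypothesis $R > 15r\Lambda/\lambda$, Proposition~\ref{prop:deform} gives $\lambda R/(3\Lambda) > 5r$, so every zero $(T_-,T_+,\Sigma)$ of any $\frak{s}_t$ satisfies $T_\pm > 5r$. Since the boundary $\partial(\mc{V}_R/\R)$ corresponds under \eqref{eqn:identifyV} to the locus $\{T_-=5r\}\cup\{T_+=5r\}$, this shows that $\frak{s}_t$ is nowhere zero on $\partial(\mc{V}_R/\R)$ for every $t\in[0,1]$.

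To establish well-definedness, I would first check that $\mc{V}_R/\R$ is itself compact: under \eqref{eqn:identifyV} it corresponds to the subset of $\mc{M}$ consisting of branched covers whose ramification points are confined to the slab $[-R/2+5r,R/2-5r]\times S^1$, and the ramification points of a sequence in this subset lie in a compact set, so a subsequence converges (with possible coincidences of ramification points); no breaking in the sense of Lemma~I.2.28 can occur, since breaking would force some ramification point's $s$-coordinate to diverge. Together with the nonvanishing of $\frak{s}_t$ on the compact boundary, this implies that any generic perturbation $\frak{s}_t'$ of $\frak{s}_t$ agreeing with it on $\partial(\mc{V}_R/\R)$ has finitely many nondegenerate zeros, and the usual cobordism argument, connecting two such perturbations by a generic path of sections fixed on $\partial(\mc{V}_R/\R)$, shows the signed count is independent of the perturbation.

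For invariance under $t\in[0,1]$, I would apply the same cobordism argument to the family $\{\frak{s}_t\}$ itself: by Proposition~\ref{prop:deform} this family is nonvanishing on $\partial(\mc{V}_R/\R)$, so a small generic perturbation fixed on the boundary produces a compact oriented $1$-dimensional cobordism in $\mc{V}_R/\R\times[0,1]$ between $\frak{s}_0^{-1}(0)$ and $\frak{s}_1^{-1}(0)$. For invariance under $R$, I would fix $t$ and, given $R_1<R_2$ both larger than $15r\Lambda/\lambda$, consider the compact stratified region
\[
\mc{W}\eqdef\bigcup_{R\in[R_1,R_2]}\mc{V}_R/\R\subset\times_2[5r,\infty)\times\mc{M}/\R.
\]
Its codimension-one boundary consists of $\mc{V}_{R_1}/\R$, $\mc{V}_{R_2}/\R$ (with opposite induced orientations), and the two side faces $\{T_\pm=5r\}\cap\mc{W}$; Proposition~\ref{prop:deform} guarantees $\frak{s}_t$ is nonvanishing on the side faces, so a generic perturbation fixed there provides the desired $1$-cobordism between the zero sets on $\mc{V}_{R_1}/\R$ and $\mc{V}_{R_2}/\R$. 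The main technical point will be verifying the compactness of $\mc{V}_R/\R$; once that is settled, everything else is standard.
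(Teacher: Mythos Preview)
Your overall strategy matches the paper's: reduce everything to the compactness of $\mc{V}_R/\R$ together with the nonvanishing of $\frak{s}_t$ on $\partial(\mc{V}_R/\R)$ supplied by Proposition~\ref{prop:deform}. The cobordism arguments for $t$- and $R$-independence are fine once those two ingredients are in place.

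The gap is in your compactness argument. You assert that under \eqref{eqn:identifyV} the set $\mc{V}_R/\R$ ``corresponds to the subset of $\mc{M}$ consisting of branched covers whose ramification points are confined to the slab $[-R/2+5r,R/2-5r]\times S^1$.'' But the defining conditions in \eqref{eqn:identifyV} are bounds on $s_-$ and $s_+$, and by definition $s_+=\max_{1\le i\le\Nbar_+}s_i$ and $s_-=\min_{-\Nbar_-\le i\le -1}s_i$ are computed using only the ends indexed by $1,\ldots,\Nbar_+$ and $-1,\ldots,-\Nbar_-$. When $\Nbar_+<N_+$ or $\Nbar_-<N_-$ (i.e.\ when $U_\pm$ contains $\R$-invariant cylinders), the branched cover $\Sigma$ has additional positive or negative ends, and a priori there could be ramification points with $\pi^*s>s_+$ or $\pi^*s<s_-$ sitting on those extra ends. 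Such ramification points are not constrained by the inequalities in \eqref{eqn:identifyV}, so a sequence in $\mc{V}_R/\R$ could have them escaping to $\pm\infty$, destroying compactness. The paper closes this gap by proving the key claim $(*)$: every ramification point of any $\Sigma\in\mc{M}$ has $\pi^*s\in[s_-,s_+]$. The proof of $(*)$ uses precisely the ``partition minimality'' condition in the definition of a gluing pair (the fourth condition in Definition~I.1.9), together with the index-zero constraint \eqref{eqn:ii}; this is in fact the first place in the paper where that condition is invoked. Once $(*)$ is established, $\mc{V}_R/\R$ sits inside $\mc{M}_{R/2-5r}$, whose compactness is Lemma~I.2.8, and the rest of your argument goes through.
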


\begin{proof}
We know from Proposition~\ref{prop:sCont} that the family of sections
$\frak{s}_t$ is continuous.  Furthermore,
Proposition~\ref{prop:deform} guarantees that $\frak{s}_t$ is
nonvanishing near $\partial(\mc{V}_R/\R)$.  Hence the only issue is to
check that $\mc{V}_R/\R$ is compact.

For this purpose, the key is to show that
\begin{description}
\item{(*)}
For any $\Sigma\in\mc{M}$, all ramification points have
$\pi^*s\in[s_-,s_+]$.
\end{description}
To prove (*), recall that since $(U_+,U_-)$ is a gluing pair, it is
required that under the partial order $\ge_\theta$ in
Definition~{I.1.8}, the partition $(a_{\Nbar_++1},\ldots,a_{N_+})$ is
minimal and the partition $(a_{-\Nbar_--1},\ldots,a_{-N_-})$ is
maximal.  Now suppose that $\Sigma\in\mc{M}$ has a ramification point
with $\pi^*s>s_+$.  Then we can decompose
$\Sigma=\Sigma_1\sqcup\Sigma_2$, where $\Sigma_1$ contains this
ramification point and has positive ends indexed by
$\Nbar_++1,\ldots,N_+$.  Our standing assumption
\eqref{eqn:ii} implies that $\Sigma$ has index zero, and hence so do
$\Sigma_1$ and $\Sigma_2$, see \S{I.1.2}.  The existence of $\Sigma_1$
directly contradicts the minimality of the partition
$(a_{\Nbar_++1},\ldots,a_{N_+})$.  Likewise, maximality of the
partition $(a_{-\Nbar_--1},\ldots,a_{-N_-})$ forbids the existence of
a ramification point with $\pi^*s<s_-$.

It follows from (*) that \eqref{eqn:identifyV} identifies
$\mc{V}_R/\R$ with a subset of $\mc{M}_{R/2-5r}$.  Now
$\mc{M}_{R/2-5r}$ is compact by the assumption \eqref{eqn:ii} and
Lemma~{I.2.8}.  Hence any sequence in $\mc{V}_R/\R$ has a subsequence
whose corresponding branched covers converge to some element of
$\mc{M}_{R/2-5r}$.  By continuity of the functions $s_+$ and $s_-$,
this limiting branched cover corresponds to an element of
$\mc{V}_R/\R$.
\end{proof}

\begin{remark}
\label{rem:cf}
To coordinate the counting here with that in Part I, we need to
compare the formalism here with that of \S{I.3}.  In \S{I.3.1}, given
$R,r>0$ and given admissible eigenfunctions $\gamma$, we defined
$\frak{s}_0$ as a section over $\mc{M}_R$.  By (*) above,
\eqref{eqn:identifyV} identifies $\mc{V}_R/\R$ with a subset of
$\mc{M}_{R/2-5r}$ that contains all of $\mc{M}_{R/2-5r-1}$.  Under
this identification, the definition of $\frak{s}_0$ over $\mc{V}_R/\R$
given here is a special case of the definition in
\S{I.3.1}, where $(R,r)$ here corresponds to $(R/2-5r,5r)$ there, and
where we take $\gamma$ to be the eigenfunctions determined by the
negative ends of $u_+$ and the positive ends of $u_-$.  In particular,
the $R$-independent count $\#(\frak{s}_0^{-1}(0)\cap\mc{V}_R)/\R)$
defined above agrees with the corresponding count
$\#\frak{s}_0^{-1}(0)$ defined in \S{I.3.2}.  The reason is that
Proposition~\ref{prop:deform} implies that $\frak{s}_0$ has no zeroes
on $\mc{M}_{R/2-5r}\setminus\mc{M}_{R/2-5r-1}$ if $R>>r>>0$.
\end{remark}

In conclusion, we have:

\begin{corollary}
\label{cor:ecd}
Assume that $J$ is generic so that $\gamma$ is admissible.  If $r$ is
chosen sufficiently large and if $h$ is chosen sufficiently small in
the gluing construction, then for $R$ sufficiently large, the relative
Euler class $\#(\frak{s}^{-1}(0)\cap\mc{V}_R/\R)$ is well-defined,
does not depend on $R$, and satisfies
\begin{equation}
\label{eqn:thecount2}
\#(\frak{s}^{-1}(0)\cap\mc{V}_R/\R) =
\#(\frak{s}_0^{-1}(0)\cap\mc{V}_R/\R).
\end{equation}
\end{corollary}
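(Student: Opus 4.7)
The corollary follows from combining Proposition~\ref{prop:deform} with standard relative Euler class and cobordism arguments. The plan is as follows.

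First I would assemble the prerequisites. By the compactness argument in the lemma preceding the corollary (which uses the minimality and maximality of the partitions built into the notion of a gluing pair to force all ramification points to satisfy $\pi^*s\in[s_-,s_+]$), $\mc{V}_R/\R$ is compact for every $R$. Proposition~\ref{prop:sCont} shows that $\frak{s}$ is continuous on $\times_2[5r,\infty)\times\mc{M}/\R$, and $\frak{s}_0$ is continuous by its explicit formula \eqref{eqn:defsL}; hence $\{\frak{s}_t\}_{t\in[0,1]}$ is jointly continuous. Granted Proposition~\ref{prop:deform}, any zero of any $\frak{s}_t$ satisfies $T_\pm>\lambda R/(3\Lambda)$, so for $R>15r\Lambda/\lambda$ no zero of any $\frak{s}_t$ lies on $\partial(\mc{V}_R/\R)=\{T_-=5r\}\cup\{T_+=5r\}$. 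Combined with compactness, this makes $\#(\frak{s}_t^{-1}(0)\cap\mc{V}_R/\R)$ well defined for each $t\in[0,1]$.

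Next, for independence of $t$, I would run the standard one-parameter cobordism argument. The family $\{\frak{s}_t\}$ is a continuous section of the pullback of $\mc{O}$ to $\mc{V}_R/\R\times[0,1]$ that is nonvanishing on the lateral boundary $\partial(\mc{V}_R/\R)\times[0,1]$. A small perturbation, supported away from the lateral boundary, yields a transverse section whose zero set is a compact oriented $1$-manifold with boundary contained in the interior of $\mc{V}_R/\R\times\{0,1\}$. Counting signed boundary components gives
\[
\#(\frak{s}_1^{-1}(0)\cap\mc{V}_R/\R)-\#(\frak{s}_0^{-1}(0)\cap\mc{V}_R/\R)=0,
\]
which is \eqref{eqn:thecount2}.

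For independence of $R$ I would carry out the analogous cobordism argument in the $R$ direction. For $R_1<R_2$ with $R_1>15r\Lambda/\lambda$, the region
\[
\mc{W}\eqdef\left\{(T_-,T_+,\Sigma)\in\times_2[5r,\infty)\times\mc{M}/\R\;\middle|\;R_1\le T_-+T_++s_+-s_-\le R_2\right\}
\]
is a compact manifold with corners (compactness follows by the same reasoning that shows $\mc{V}_{R_2}/\R$ is compact, since all branched covers in $\mc{W}$ have ramification points inside a fixed bounded region). Its boundary consists of $\mc{V}_{R_1}/\R$, $\mc{V}_{R_2}/\R$, and faces inside $\{T_-=5r\}\cup\{T_+=5r\}$; Proposition~\ref{prop:deform} applied with $R\ge R_1$ forbids zeroes of $\frak{s}_t$ on the latter faces. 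Orienting $\mc{W}$ as the product of the complex orientation on $\mc{M}$ with the $R$-direction makes $\partial\mc{W}$ equal to $\mc{V}_{R_2}/\R$ minus $\mc{V}_{R_1}/\R$ in oriented terms, and the same perturbation-and-cobordism argument then gives
\[
\#(\frak{s}_t^{-1}(0)\cap\mc{V}_{R_2}/\R)=\#(\frak{s}_t^{-1}(0)\cap\mc{V}_{R_1}/\R).
\]
The main obstacle in the whole argument is Proposition~\ref{prop:deform} itself, which is proved in \S\ref{sec:NLE}--\S\ref{sec:PDR}; once that nonvanishing input is granted, the corollary reduces to the routine compactness/continuity/cobordism bookkeeping above.
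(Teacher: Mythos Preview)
Your proposal is correct and follows essentially the same approach as the paper. In the paper the corollary is stated as an immediate consequence (``In conclusion, we have:'') of the preceding lemma, whose proof assembles exactly the ingredients you list---continuity of $\frak{s}_t$ (Proposition~\ref{prop:sCont}), nonvanishing near the boundary (Proposition~\ref{prop:deform}), and compactness of $\mc{V}_R/\R$---and then appeals to the standard relative Euler class invariance; you simply spell out the cobordism arguments that the paper takes as routine.
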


In \S\ref{sec:signs} we will relate the the left hand side of
\eqref{eqn:thecount2} to the signed count of gluings
$\#G(u_+,u_-)$.  The rest of \S\ref{sec:deform} is devoted to the
proof of Proposition~\ref{prop:deform}, beginning with some
preliminary lemmas.

\subsection{Nonlinear estimate}
\label{sec:NLE}

The proof of Proposition~\ref{prop:deform} will use an upper bound
on the term $\langle\sigma,\mc{R}(\psi_\Sigma)\rangle$ in equation
\eqref{eqn:WOS}.  To state this bound, 
let $\nu$ denote the smallest of the numbers $\nu_i$ defined in
\eqref{eqn:nui}.  Also recall the norms $\|\cdot\|$ and $\|\cdot\|_*$ from
\S\ref{sec:BSS}.

\begin{lemma}
\label{lem:NLE}
Suppose that in the gluing construction, $h$ is chosen so
that $4h\Lambda<\lambda$.  Then
\begin{equation}
\label{eqn:NLE}
\|\mc{R}(\psi_\Sigma)\| \le c e^{-\nu -
\lambda r/2}.
\end{equation}
\end{lemma}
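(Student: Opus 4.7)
The plan is to prove the estimate by enumerating the six families of terms that make up $\mc{R}(\psi_\Sigma)$, as extracted from the formula \eqref{eqn:ThetaSigma} for $\Theta_\Sigma$: namely (i) $F_\Sigma'(\psi_\Sigma)$; (ii) the linear-in-$\psi_\Sigma$ terms $\frak{q}_0\cdot\psi_\Sigma+\frak{q}_0'\cdot\nabla\psi_\Sigma$; (iii) the pregluing-failure terms $\frak{p}_\pm(\eta_{\pm T})$; (iv) the interior type 2 quadratic terms $\beta_\pm\frak{z}_{0\pm}(\psi_\pm,\psi_\Sigma)$; (v) the linear-in-$\psi_\pm$ cutoff terms $\tfrac{1}{2}\partial_s\beta_\pm \cdot\psi_\pm\, d\overline{z}$; and (vi) the type 2 cutoff terms $\tfrac{1}{2}\partial_s\beta_\pm\cdot\frak{z}_{0\pm}'(\psi_\pm,\psi_\Sigma)$. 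The three inputs are (a) the sharp estimate from Proposition~\ref{prop:TRE}(b), which gives $\|\psi_\Sigma\|_*\le c\,e^{-\nu+2rh\Lambda}$; (b) the pointwise decay bounds \eqref{eqn:pointwise-} and \eqref{eqn:pointwise+} on $\psi_\pm$, with their $r^{-1}\|\psi_\Sigma\|_*$ prefactor; and (c) the exponential decay bounds on $|\eta_{\pm T}|$ and $|\nabla\eta_{\pm T}|$ from Lemma~\ref{lem:IDE}, of the form $c\,e^{-|\lambda_i|(s_\pm\mp s + T_\pm)}$ on the $i$th end.

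Next I will bound each family. For (i), using that $F_\Sigma'$ is type 1 quadratic (away from ramification points, where the singularity is an $L^2_{\text{loc}}$ matter and contributes a bounded factor to $\|\cdot\|$), $\|F_\Sigma'(\psi_\Sigma)\|\le c\|\psi_\Sigma\|_*^2\le c\,e^{-2\nu+4rh\Lambda}$. For (ii), since $|\frak{q}_0|,|\frak{q}_0'|\le c|\eta_{\pm T}|$ and the support of these terms lies in $s\ge s_i+hr$ on positive ends and $s\le s_i-hr$ on negative ends, the bound $|\eta_{\pm T}|\le c\,e^{-|\lambda_i|(s_\pm\mp s+T_\pm)}$ combined with $\|\psi_\Sigma\|_*,\|\nabla\psi_\Sigma\|\le\|\psi_\Sigma\|_*$ produces $\|(\text{ii})\|\le c\,e^{-\nu+hr\Lambda}\|\psi_\Sigma\|_*\le c\,e^{-2\nu+3rh\Lambda}$. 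For (iii), $\frak{p}_\pm$ is type 1 quadratic in $\eta_{\pm T}$ and supported in the thin region $s_i+hr<s<s_i+2hr$ (resp.\ its negative analogue), so pointwise $|\frak{p}_\pm(\eta_{\pm T})|\le c\,e^{-2|\lambda_i|(T_\pm+s_\pm\mp s)}\le c\,e^{-2\nu_i+4hr\Lambda}$, integrated over bounded volume. For (iv), the type 2 estimate gives $\le c(|\psi_\pm||\psi_\Sigma| + |\psi_\pm||\nabla\psi_\Sigma| + |\psi_\Sigma||\nabla\psi_\pm|)$, and the pointwise bounds \eqref{eqn:pointwise-}, \eqref{eqn:pointwise+} on the region $s\ge s_--T_-+2r$ or $s\le s_++T_+-2r$ (which is where $\beta_\pm=1$ meets the cylinder interior) give an extra $e^{-\lambda_\pm r}\|\psi_\Sigma\|_*$ factor. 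For (v) and (vi), the support of $\partial_s\beta_\pm$ on the cylinders $\Sigma_i$ is the thin transition region near $s_i$, and at those values of $s$ the estimates \eqref{eqn:pointwise-} and \eqref{eqn:pointwise+} yield $|\psi_\pm|\le c\, r^{-1}\|\psi_\Sigma\|_*\, e^{-\lambda_\pm(T_\pm-2r-2hr)}$, which multiplied by $\|\psi_\Sigma\|_*\le c\,e^{-\nu+2rh\Lambda}$ produces an $e^{-\nu}$ already gained by the $\psi_\Sigma$ factor and a further $e^{-\lambda_\pm(T_\pm-2r-2hr)}$ gained from $|\psi_\pm|$.

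The final step is to check that every one of the above bounds is dominated by $c\,e^{-\nu-\lambda r/2}$. For this I use the two basic inputs: since $T_\pm\ge 5r$ and $s_+\ge s_i$ (resp.\ $s_i\ge s_-$), we have $\nu\ge 5r\lambda$; and the hypothesis $4h\Lambda<\lambda$. Then $-2\nu+4rh\Lambda = -\nu+(-\nu+4rh\Lambda)\le -\nu-5r\lambda+4rh\Lambda$, and $5r\lambda-4rh\Lambda\ge \lambda r/2$ is equivalent to $9\lambda\ge 8h\Lambda$, which is implied by $4h\Lambda<\lambda$. The same arithmetic, with slight adjustments in the exponent constants, handles the exponents $-2\nu+3rh\Lambda$, $-2\nu_i+4hr\Lambda$, $-\nu+2rh\Lambda-\lambda r$, and $-\nu+2rh\Lambda-\lambda_\pm(T_\pm-2r-2hr)$ appearing in (i)--(vi), so long as $r$ is chosen large enough relative to the fixed ratios.

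The main obstacle is a bookkeeping one rather than a genuinely new analytic difficulty: one must keep careful track of where each bundle map is supported (the cylinders $\Sigma_i$, the thin annuli $\{s_i+hr<s<s_i+2hr\}$, the transition annuli for $\partial_s\beta_\Sigma$, etc.), and in each support region substitute the sharpest available pointwise decay bound for $\eta_{\pm T}$ or $\psi_\pm$ before passing to the $\mc{H}_0(\Sigma)$ norm. The hypothesis $4h\Lambda<\lambda$ is exactly what allows us to absorb the loss $2rh\Lambda$ inherited from Proposition~\ref{prop:TRE}(b) into the gain $\lambda r/2$ demanded by the statement. Once this absorption is done uniformly across all six families, the lemma follows.
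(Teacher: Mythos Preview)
Your approach is essentially the same as the paper's: bound the terms in $\mc{R}(\psi_\Sigma)$ using $\|\psi_\Sigma\|_*$ and the decay of $\eta_{\pm T}$, then feed in Proposition~\ref{prop:TRE}(b) together with $\nu\ge 5\lambda r$ and $4h\Lambda<\lambda$. The paper does this in two lines by citing the already-proved estimate \eqref{eqn:FSEE} to get $\|\mc{R}(\psi_\Sigma)\|\le c(\|\psi_\Sigma\|_*^2+e^{-\lambda r}\|\psi_\Sigma\|_*)$ directly, whereas you redo that estimate term by term.

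There is, however, a genuine slip in your treatment of family (ii). The support of $\frak{q}_{0+},\frak{q}_{0+}'$ is the interval $s_i+hr<s<s_++T_+-r+1$, and on that interval $|\eta_{+T}|\le c\,e^{-|\lambda_i|(s_++T_+-s)}$ is \emph{increasing} in $s$; its supremum is attained near $s=s_++T_+-r+1$, where it is $\le c\,e^{-|\lambda_i|(r-1)}\le c\,e^{-\lambda r}$. You instead evaluated at $s=s_i+hr$, which gives the \emph{infimum} $e^{-\nu_i+hr|\lambda_i|}$, so your claimed bound $\|(\mathrm{ii})\|\le c\,e^{-\nu+hr\Lambda}\|\psi_\Sigma\|_*$ is too optimistic and not actually true. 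The correct bound is
\[
\|(\mathrm{ii})\|\le c\,e^{-\lambda r}\|\psi_\Sigma\|_*\le c\,e^{-\lambda r}\cdot e^{-\nu+2rh\Lambda}\le c\,e^{-\nu-\lambda r/2},
\]
the last inequality using $2rh\Lambda<\lambda r/2$. So the error is easily repaired and the conclusion survives; but as written, your estimate for (ii) asserts a false inequality. Once you fix that endpoint, the remaining arithmetic in your final paragraph goes through exactly as you wrote it.
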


\begin{proof}
Estimating $\|\mc{R}(\psi_\Sigma)\|$ as in \eqref{eqn:FSEE}, we
find that
\begin{equation}
\label{eqn:NLE1}
\|\mc{R}(\psi_\Sigma)\| \le c\left(\|\psi_\Sigma\|_*^2 + e^{-\lambda
r}\|\psi_\Sigma\|_*\right).
\end{equation}
Note that the term $e^{-\lambda r}\|\psi_\Sigma\|_*$ appears here
because of the $\frak{q}_0\cdot\psi_\Sigma$ and
$\frak{q}_0'\cdot\nabla\psi_\Sigma$ terms in $\mc{R}(\psi_\Sigma)$.

Using Proposition~\ref{prop:TRE}(b) and our assumption that
$4h\Lambda<\lambda$, we estimate
\begin{equation}
\label{eqn:NLE2}
\begin{split}
\|\psi_\Sigma\|_* & \le 
c \left(\sum_{i=1}^{\Nbar_+}e^{-|\lambda_i|(s_+-s_i + T_+ - 2rh)}
+
\sum_{i=-1}^{-\Nbar_-} e^{-\lambda_i(s_i - s_- + T_- - 2rh)}\right)\\
&= 
c \left(\sum_{i=1}^{\Nbar_+}e^{-\nu_i + 2rh|\lambda_i|}
+
\sum_{i=-1}^{-\Nbar_-} e^{-\nu_i + 2rh\lambda_i}\right) \\
& \le c e^{-\nu + \lambda r/2}.
\end{split}
\end{equation}
Putting \eqref{eqn:NLE2} into \eqref{eqn:NLE1} gives
\begin{equation}
\label{eqn:NLE3}
\|\mc{R}(\psi_\Sigma)\| \le c\left(e^{-2\nu + \lambda r} + e^{-\nu -
\lambda r/2}\right).
\end{equation}
It follows from the definitions that $\nu \ge 5 \lambda r$.  Hence
\eqref{eqn:NLE3} implies \eqref{eqn:NLE}.
\end{proof}

\subsection{Ends with the same eigenvalue}
\label{sec:keylemma}

The proof of Proposition~\ref{prop:deform} will also need
Lemma~\ref{lem:keylemma} below, regarding the structure of the
cokernel in the case when $N_-=1$ and the eigenvalues
$\lambda_1,\ldots,\lambda_{N_+}$ are all equal, say to $\lambda$.  By
Remark~{I.2.12}, this last condition is equivalent to
\begin{equation}
\label{eqn:sev}
\frac{\ceil{a_1\theta}}{a_1} =\cdots=\frac{\ceil{a_{N_+}\theta}}{a_{N_+}}.
\end{equation}

Recall from \S{I.3.1} that the assumption
\eqref{eqn:sev} allows us to identify all the eigenspaces
$\mc{B}_1,\ldots,\mc{B}_{N_+}$ with each other via coverings as
follows.  Write $\ceil{a_1\theta}/a_1 = \eta_0/m_0$ where $\eta_0$ and
$m_0$ are integers and $m_0>0$ is as small as possible.  Then $a_i$ is
divisible by $m_0$ for each $i=1,\ldots,N_+$.  Fix an eigenfunction
$\varphi_{m_0}$ of $L_{m_0}$ with eigenvalue $\lambda$.  Then for
every positive integer $d$, the eigenfunction $\varphi_{m_0}$ pulls
back to an eigenfunction $\varphi_{dm_0}$ of $L_{dm_0}$ with the same
eigenvalue $\lambda$.  There is now a canonical isomorphism from
$\mc{B}_i$ to $\mc{B}_j$ sending $\varphi_{a_i}$ to $\varphi_{a_j}$.
This identification is made implicitly below.

Also note that the product of cyclic groups
\[
G \eqdef \Z/a_1\times\cdots\times \Z/{a_{N_+}}
\]
acts on $\bigoplus_{i=1}^{N_+}\mc{B}_i$.  Here the $i^{th}$ factor
$\Z/a_i$ acts on $\mc{B}_i$ by deck transformations of the
corresponding eigenfunctions, and trivially on $\mc{B}_j$ for $j\neq
i$.

Now given $\Sigma\in\mc{M}$, define $\Pi^+:\Coker(D_\Sigma)\to
\bigoplus_{i=1}^{N_+}\mc{B}_i$ as follows.  Given
$\sigma\in\Coker(D_\Sigma)$, for $i=1,\ldots,N_+$ write
$\Pi_{\mc{B}}\sigma_i(s,\cdot)=e^{\lambda s}\zeta_i$; then
\[
\Pi^+(\sigma) \eqdef (\zeta_1,\ldots,\zeta_{N_+}).
\]

\begin{lemma}
\label{lem:keylemma}
Suppose that $N_-=1$ and that \eqref{eqn:sev} holds.  Then for any
$\Sigma\in\mc{M}$, there exists $g\in G$ such that
\begin{equation}
\label{eqn:key}
g\cdot \Pi^+(\Coker(D_\Sigma)) \subset \left\{(\zeta_1,\ldots,\zeta_{N_+})
\;\bigg|\;
\sum_{i=1}^{N_+}\zeta_i=0\right\}.
\end{equation}
\end{lemma}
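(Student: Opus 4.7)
The plan is to derive the desired relation via an integration-by-parts identity using a family of test functions $\{F_g\}_{g\in G}$ on $\Sigma$. Under assumption \eqref{eqn:sev}, let $\lambda$ denote the common eigenvalue $\lambda_1=\cdots=\lambda_{N_+}$, and fix an eigenfunction $\varphi_{m_0}$ of $L_{m_0}$ with eigenvalue $\lambda$, so that its pullback $\varphi_{a_i}$ spans $\mc{B}_i$ over $\C$. For each $g=(g_1,\ldots,g_{N_+})\in G$, let $F_g:\Sigma\to\C$ be any smooth function equal to $e^{-\lambda(s-s_i)}(g_i\cdot\varphi_{a_i})(t)$ on the $s\ge s_i+1$ portion of the $i$-th positive end, vanishing on the $s\le s_{-1}-1$ portion of the negative end, and extended arbitrarily in between. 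Because $e^{-\lambda(s-s_i)}\varphi_{a_i}$ lies in the kernel of the cylindrical asymptotic model $\tfrac{1}{2}(\partial_s+L_{a_i})$ of $D_\Sigma$, the section $D_\Sigma F_g$ has compact support; two choices of interior extension differ by a compactly supported $L^2_1$ function, so the class $[D_\Sigma F_g]\in L^2(\Sigma)/\op{Img}(D_\Sigma|_{L^2_1(\Sigma)})\cong\Coker(D_\Sigma)$ depends only on $g$.

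For any $\sigma\in\Coker(D_\Sigma)=\Ker(D_\Sigma^*)$ with positive-end expansion $\sigma_i=e^{\lambda(s-s_i)}\beta_i+O(e^{(\lambda-\kappa)(s-s_i)})$, apply integration by parts to the compactly supported function $\tilde\chi_R F_g$, where $\tilde\chi_R=\pi^*\chi_R$ is a smooth cutoff that equals $1$ for $s\le R$ and $0$ for $s\ge R+1$ on the positive ends. The identity $0=\int_\Sigma\langle D_\Sigma(\tilde\chi_R F_g),\sigma\rangle$ combined with the limit $R\to\infty$ (where the boundary contribution is computed using the leading asymptotic of $\sigma_i$) yields
\[
\langle[D_\Sigma F_g],\sigma\rangle_{L^2(\Sigma)}\;=\;c\cdot\sum_{i=1}^{N_+}\langle g_i\cdot\varphi_{a_i},\beta_i\rangle_{L^2(\widetilde{S^1}_{a_i})}
\]
for a nonzero constant $c$. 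Under the canonical identifications $\mc{B}_i\cong\mc{B}_{m_0}$, the right-hand side is proportional to $\langle\varphi_{m_0},\sum_i g_i\cdot\beta_i\rangle$, and since $\varphi_{m_0}$ spans $\mc{B}_{m_0}$, the lemma will follow as soon as there exists $g\in G$ with $[D_\Sigma F_g]=0\in\Coker(D_\Sigma)$; for such a $g$ the right side vanishes for every $\sigma$, forcing $\sum_i g_i\cdot\beta_i=0$, which is exactly the assertion that $g\cdot\Pi^+(\sigma)\in\{\sum\zeta_i=0\}$.

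The existence of such a $g$ is the crux of the argument. It is equivalent to $F_g$ (modulo a compactly supported $L^2_1$ correction) lying in $\Ker(D_\Sigma^w)$, where $D_\Sigma^w$ denotes $D_\Sigma$ acting on the weighted Sobolev space $L^2_{1,w}(\Sigma)$ whose weight permits growth up to $e^{-\lambda s}$ on each positive end and enforces the standard $L^2$ decay on the single negative end. A Fredholm index calculation—using $\Ker(D_\Sigma)=0$ and $\dim_\R\Coker(D_\Sigma)=2(N_+-1)$ from Lemma~{I.2.15}, together with the fact that each positive-end weight change crosses the $2$-real-dimensional eigenspace $\mc{B}_i$ at $\lambda$—gives $\op{ind}(D_\Sigma^w)=2$; a unique-continuation argument on the negative end shows $\Coker(D_\Sigma^w)=0$, so $\dim_\R\Ker(D_\Sigma^w)=2$. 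The leading-coefficient map $\Ker(D_\Sigma^w)\hookrightarrow\bigoplus_i\mc{B}_i$ is injective (since $\Ker(D_\Sigma)=0$), and by the pairing identity above its image $W$ is precisely the annihilator of $\Pi^+(\Coker(D_\Sigma))$.

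The final—and expected most difficult—step is to exhibit an element of $W$ of the form $(g_1\cdot\varphi_{a_1},\ldots,g_{N_+}\cdot\varphi_{a_{N_+}})$ for some $g\in G$; such an element provides a genuine $F_g\in\Ker(D_\Sigma^w)$ and hence $[D_\Sigma F_g]=0$. The plan is an explicit construction on the compactification $\widehat\Sigma\cong\mathbb{CP}^1$: because $m_0$ divides each local ramification index of the branched cover $\widehat\pi:\widehat\Sigma\to\mathbb{CP}^1$, the multi-valued function $e^{-\lambda z}\varphi_{m_0}$ (well-defined on the $m_0$-fold cover of $\R\times S^1$) admits $m_0$ distinct local lifts near each positive-end puncture of $\widehat\Sigma$, with a choice of lift on each end parametrized exactly by an element of $G$. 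The hard analytical point is then to verify that the topological constraints from $\widehat\Sigma$ being simply connected (after puncturing) combined with the index count above force at least one choice of $g$ for which these local lifts extend, after an $L^2_1$ correction absorbing the $(\nu,\mu)$ lower-order perturbation in $D_\Sigma$, to a global element of $\Ker(D_\Sigma^w)$.
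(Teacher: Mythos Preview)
Your approach is correct in outline but takes a long detour around a very short argument. The paper's proof does in a few lines what you set up elaborate machinery for: it constructs the global test function directly, rather than defining $F_g$ end-by-end and then searching for a $g$ that makes $[D_\Sigma F_g]=0$.

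The key observation---which you correctly locate at the very end as the crux, but mislabel as a ``hard analytical point''---is purely topological and elementary. Since $m_0$ divides each $a_i$ (and hence, by downward induction on the tree $\tau(\Sigma)$, divides every edge multiplicity $m(e)$), the branched cover $\pi:\Sigma\to\R\times S^1$ lifts to a continuous map $\widetilde\pi:\Sigma\to\R\times\R/2\pi m_0\Z$. Writing $\widetilde\pi=(s,\widetilde t)$, the function
\[
f \;=\; \chi(s)\,e^{-\lambda s}\,\varphi_{m_0}(\widetilde t)
\]
is globally defined on $\Sigma$, lies in $L^2_1$ (since $\lambda<0$), and satisfies $D_\Sigma f=\tfrac12(\partial_s\chi)\,e^{-\lambda s}\varphi_{m_0}(\widetilde t)\,d\overline{z}$, which is compactly supported. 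The identifications of the positive ends via $\widetilde\pi$ differ from the given asymptotic markings by some $g_i\in\Z/a_i$; this \emph{is} the element $g\in G$ in the statement. Then $0=\langle D_\Sigma^*\sigma,f\rangle=\langle\sigma,D_\Sigma f\rangle$ gives $\sum_i\langle\zeta_i,\varphi_{a_i}\rangle=0$ for every $\sigma$ in a single integration by parts, and the proof is done.

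Your weighted-Sobolev apparatus ($D_\Sigma^w$, the index count, the subspace $W$) is correct but unnecessary: once you have the lift $\widetilde\pi$, the function $f$ above \emph{is} the element of $\Ker(D_\Sigma^w)$ with leading asymptotics $(g_1\cdot\varphi_{a_1},\ldots,g_{N_+}\cdot\varphi_{a_{N_+}})$ that you were trying to prove exists abstractly. Two minor points: (i) the claim ``$\Coker(D_\Sigma^w)=0$ by unique continuation on the negative end'' is not actually needed---you only use $\dim\Ker(D_\Sigma^w)\ge 2$, which follows from the index alone---and would require more care than you give it; (ii) what matters for the lift is that $m_0$ divides the \emph{edge multiplicities} $m(e)$ (equivalently the end multiplicities $a_i$), not the local ramification indices.
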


\begin{proof}
Assume below that the eigenvalue $\lambda$ of
$L_{a_1},\ldots,L_{a_{N_+}}$ is not repeated; the proof when $\lambda$
is repeated is similar.

Let $\Sigma\in\mc{M}$.  Recall from \S{I.2.1} that $\Sigma$ determines
an oriented weighted tree $\tau(\Sigma)$ whose edges correspond to
cylinders in $\Sigma$ between ramification points.  By downward
induction, for each edge $e$ of the tree $\tau(\Sigma)$, the covering
multiplicity $m(e)$ of the corresponding cylinder in $\Sigma$ is
divisible by $m_0$. Hence we can lift the branched covering $\pi$ on
$\Sigma$ to a continuous map
\[
\widetilde{\pi}: \Sigma \longrightarrow \R\times\R/2\pi m_0\Z.
\]
For each $i=1,\ldots,N_+$, choose an identification of the $i^{th}$
positive end of $\Sigma$ with $[s_i,\infty)\times\R/2\pi a_i\Z$, such
that the projection to $[s_i,\infty)\times\R/2\pi m_0\Z$ agrees with
$\widetilde{\pi}$.  This identification will differ from the
asymptotic marking by the action of some $g_i\in\Z/a_i$.  Let $g\eqdef
(g_1,\ldots,g_{N_+})$.

Next, let $\sigma\in\Coker(D_\Sigma)$, and write $g\cdot\Pi^+\sigma =
(\zeta_1,\ldots,\zeta_{N_+})$.  Fix a smooth function
$\chi:\R\to[0,1]$ such that $\chi(s)=1$ when $s\le s_++2$ and
$\chi(s)=0$ when $s\ge s_++3$.  Write $\widetilde{\pi} =
(s,\widetilde{t})$.  Define a function $f:\Sigma\to\C$ by
\[
f \eqdef \chi(s) e^{-\lambda s}
\varphi_{m_0}(\widetilde{t}).
\]
Recall that on any cylinder in $\Sigma$ corresponding to an edge $e$
of $\tau(\Sigma)$, the operator $D_\Sigma$ has the form
$\frac{1}{2}d\zbar\tensor(\partial_s+L_{m(e)})$.  It follows that
\[
D_\Sigma f = \frac{1}{2}
(\partial_s\chi) e^{-\lambda s}\varphi_{m_0}(\widetilde{t})\, d\zbar.
\]
In particular, $D_\Sigma f$ is supported only where $s_++2 \le s \le
s_++3$, and here the metric on $\Sigma$ agrees with the pullback of
the metric on $\R\times S^1$.  Also, $f$ is $L^2_1$ since $\lambda <
0$.  We then have
\[
0 = \langle D_\Sigma^*\sigma, f\rangle = \langle \sigma, D_\Sigma f
\rangle = \sum_{i=1}^{N_+}\langle
\zeta_i,\varphi_{a_i}\rangle,
\]
where the brackets denote the relevant $L^2$ inner products.  Under
our identifications of the $\mc{B}_i$'s, this means that
$\sum_{i=1}^{N_+}\zeta_i=0$.
\end{proof}

It is not hard to further show, using Lemma~{I.2.18}(a), that the inclusion in
\eqref{eqn:key} is actually an equality.  However we will not need this.

\subsection{Proof of the deformation result}
\label{sec:PDR}

With the preliminaries in place, we now prove
Proposition~\ref{prop:deform}.  Fix $h<\lambda/4\Lambda$ and suppose
that Proposition~\ref{prop:deform} is false for this $h$.  Then:
\begin{description}
\item{(*)}
For each $n=1,2,\ldots$ there exist real numbers $r_n\ge n$ and
$t_n\in[0,1]$, and a triple $(T_{-n},T_{+n},\Sigma_n)\in\times_2
[5r_n,\infty)\times\mc{M}$, such that if we fix $r=r_n$ in the gluing
construction, then $\frak{s}_{t_n}(T_{-n},T_{+n},\Sigma_n)=0$ and
$\min\{T_{-n},T_{+n}\}
\le \lambda R_n/3\Lambda$.
\end{description}
Here $R_n\eqdef T_{+n}+s_{+n} - s_{-n}+T_{-n}$ where $s_{\pm n}$ denotes the
value of $s_\pm$ for $\Sigma_n$.  We will use (*) to deduce a
contradiction, in four steps.

\medskip

{\em Step 1.\/} We begin with some setup.  Recall that associated to
each $\Sigma_n$ is a tree $\tau(\Sigma_n)$, with a projection
$p:\Sigma_n\to\tau(\Sigma_n)$ and a metric coming from the $s$
coordinate.  By passing to a subsequence, we may assume that the
sequence of branched covers $\{\Sigma_n\}$ in $\mc{M}/\R$ converges in
the sense of Definition~{I.2.27} to a tree $\tau_*$ together with a
branched cover $\Sigma_{*j}$ for each internal vertex $j$ of $\tau_*$.
Note that conditions (a)--(d) in Definition~{I.2.27} imply that:
\begin{itemize}
\item
For each $\Sigma_n$ in the sequence and for each internal vertex $j$
of $\tau_*$, there is a corresponding set $\Lambda_{nj}$ of
ramification points in $\Sigma_n$.  The tree $\tau_*$ is obtained from
the tree $\tau(\Sigma_n)$ by, for each $j$, collapsing all the
vertices in $\tau(\Sigma_n)$ corresponding to ramification points in
$\Lambda_{nj}$ and all the edges between them to the $j^{th}$ vertex of
$\tau_*$.
\item
There is an $n$-independent constant $\Delta_*$ such that any two
ramification points in the same $\Lambda_{nj}$ project to points in
the tree $\tau(\Sigma_n)$ with distance $\le
\Delta_*$.
\item
If $j$ and $j'$ are distinct internal vertices of $\tau_*$, then for
each $n$, in the tree $\tau(\Sigma_n)$ we have
$\lim_{n\to\infty} \text{dist}(p(\Lambda_{nj}),p(\Lambda_{nj'}))=\infty$.
\end{itemize}
By passing to a further subsequence, we may improve this last
condition to
\[
\text{dist}(p(\Lambda_{nj}),p(\Lambda_{nj'}))\ge n.
\]

Now fix $n$ large and drop the `$n$' subscripts below.  Choose $i_1$
such that $\nu_{i_1}=\nu$ for $\Sigma=\Sigma_n$.  Without loss of
generality, $i_1\in\{1,\ldots,\Nbar_+\}$.  Let $j$ denote the internal
vertex of $\tau_*$ that is adjacent to the leaf $i_1$.  If $e$ is an
edge of $\tau_*$ incident to $j$, call $e$ ``essential'' if $e$ is
incident to a leaf $i$ with $\lambda_i=\lambda_{i_1}$.  In particular,
this requires that $i$ is positive.  If $e$ is an edge of $\tau_*$
incident to $j$ which is either internal or incident to a leaf $i$
with $\lambda_i\neq\lambda_{i_1}$, call $e$ ``inessential''.

\medskip

{\em Step 2.\/}
We claim that there is an $n$-independent constant $\kappa>0$ such
that if there is an inessential edge from the vertex $j$ to the leaf
$i$, then $\nu_i \ge \nu + \kappa n$ when $n$ is large.

To prove this when $i$ is positive, we compute that
\begin{equation}
\label{eqn:inessential}
\nu_i - \nu  = (|\lambda_i| - |\lambda_{i_1}|)(s_+-s_{i_1}+T_+) +
|\lambda_i|(s_{i_1}-s_i),
\end{equation}
and observe that $s_+-s_{i_1}+T_+ \ge 5n$ and
$|s_{i_1}-s_i|\le
\Delta_*$.
Note that $|\lambda_i|<|\lambda_{i_1}|$ is impossible when $n$ is
large, because then \eqref{eqn:inessential} would imply that
$\nu_i<\nu$, contradicting the definition of $\nu$.  Since the edge
from $j$ to $i$ is essential, the only remaining possibility is that
$|\lambda_i|>|\lambda_{i_1}|$.  The claim now follows immediately from
\eqref{eqn:inessential}.

Suppose next that $i$ is negative.  It follows from the definitions
that
\[
\begin{split}
R 
&= \frac{\nu}{|\lambda_{i_1}|} + \frac{\nu_i}{\lambda_i} +
(s_{i_1} - s_i)
\le \frac{\nu+\nu_i}{\lambda} + \Delta_*,
\end{split}
\]
and so
\begin{equation}
\label{eqn:RD}
\nu_i \ge \lambda R - \nu - \lambda\Delta_*.
\end{equation}
Now $T_+\ge \nu/\Lambda$, because there exists a positive end $i'$
with $s_{i'}=s_+$.  Likewise $T_-\ge \nu/\Lambda$.  The assumption in
(*) that $\min\{T_-,T_+\}\le \lambda R/3\Lambda$ then implies that
$\lambda R \ge 3 \nu$.  Putting this into \eqref{eqn:RD} and using the
fact that $\nu\ge 5\lambda n$ proves the claim.

\medskip

{\em Step 3.\/} We now complete the proof of
Proposition~\ref{prop:deform} in the case when there are at least two
inessential edges incident to $j$.

Choose paths in $\tau_*$ starting along these edges to leaves $i_2$,
$i_3$.  Let $w$ denote the central vertex in the tree
$\tau(\Sigma)$ for $i_1$, $i_2$, and $i_3$.  The vertex $w$ in
$\tau(\Sigma)$ projects to the vertex $j$ in $\tau_*$.

By Lemma~{I.2.18}, there is a unique, nonvanishing
$\sigma\in\Coker(D_\Sigma)$ with
\[
\Pi_\mc{A}\sigma_{i_1}(s,\cdot)= e^{\lambda_{i_1}(s-s_{i_1})}\gamma_{i_1}
\]
and $\Pi_\mc{A}\sigma_i=0$ for $i\notin\{i_1,i_2,i_3\}$.  Here
$\Pi_{\mc{A}}\sigma_i$ denotes the projection of $\sigma_i$ onto the
subspace $\mc{A}_i\supset\mc{B}_i$ consisting of the (two-dimensional)
span of the eigenfunctions of $L_{a_i}$ that have the same winding
number as the eigenfunctions with eigenvalue $\lambda_i$.

For each leaf $i$, let $\beta_i$ denote the corresponding
eigenfunction associated to $\sigma$ via \eqref{eqn:betai}.  In
particular $\beta_{i_1}=\gamma_{i_1}$.  The plan is to show that if
$n$ is large, then $\frak{s}_{t}(T_-,T_+,\Sigma)(\sigma)$ is dominated
by the term $e^{-\nu_{i_1}}\langle\gamma_{i_1},\beta_{i_1}\rangle$,
and in particular nonzero.  This will give the desired contradiction
to (*).

To start, we claim that the special cokernel element $\sigma$ decays
away from the central vertex $w$, in the following sense: There are
$n$-independent constants $c,\kappa>0$ such that for any point
$x\in\Sigma=\Sigma_n$,
\begin{equation}
\label{eqn:DACV}
|\sigma(x)| < c e^{-\kappa\cdot \text{dist}(p(x),w)}.
\end{equation}
Indeed, Corollary {I.2.23} gives \eqref{eqn:DACV} with the right hand
side multiplied by $|\sigma(\widetilde{w})|$, where
$\widetilde{w}\in\Sigma$ projects to $w$;  and Propositions {I.2.21} and
{I.2.25} imply that $|\sigma(\widetilde{w})|<c$.

The inequality \eqref{eqn:DACV} has two important consequences.
First,
\begin{equation}
\label{eqn:s2c}
\|\sigma\|_{L^2} < c.
\end{equation}
Second, if a leaf $i\in\{i_2,i_3\}$ is not adjacent to the internal vertex
$j$ of $\tau_*$, then
\begin{equation}
\label{eqn:bkn}
\|\beta_i\| \le \|\Pi_\mc{A}\sigma_i(s_i,\cdot)\| < c e^{-\kappa n}.
\end{equation}
Likewise, if $i\in\{i_2,i_3\}$ is adjacent to $j$, then
\begin{equation}
\label{eqn:bka}
\|\beta_i\| < c.
\end{equation}

Now to show that $\frak{s}_t(T_-,T_+,\Sigma)(\sigma)$ is nonzero, use
\eqref{eqn:WOS} and \eqref{eqn:WSLCF} to write the latter
 as a sum of three terms:
\begin{equation}
\label{eqn:so3t}
\begin{split} 
\frak{s}_t(T_-,T_+,\Sigma)(\sigma) = & 
\big(1+t\big(\sqrt{2}-1\big)\big)\frak{s}_0(T_-,T_+,\Sigma)(\sigma)\\
&
+
t\langle\sigma,\mc{R}(\psi_\Sigma)\rangle
+
t\langle\sigma,\eta'-\Pi_{\mc{B}}\eta'\rangle.
\end{split}
\end{equation}
Our choice of $\sigma$ implies that the first term in \eqref{eqn:so3t}
is given by
\begin{equation}
\label{eqn:sb1}
\frak{s}_0(T_-,T_+,\Sigma)(\sigma) = e^{-\nu}\|\gamma_{i_1}\|^2 \pm
e^{-\nu_{i_2}}\langle\gamma_{i_2},\beta_{i_2}\rangle \pm
e^{-\nu_{i_3}}\langle\gamma_{i_3},\beta_{i_3}\rangle.
\end{equation}
Also observe that if $i\in\{i_2,i_3\}$, then
\begin{equation}
\label{eqn:sb2}
\left|e^{-\nu_{i}}\left\langle\gamma_{i},
\beta_{i}\right\rangle\right|
 < c e^{-\nu - \kappa n}.
\end{equation}
If the leaf $i$ is adjacent to the vertex $j$ in $\tau_*$, then this
follows from \eqref{eqn:bka} and Step 2; otherwise this follows from
\eqref{eqn:bkn}.  Next, the inequality \eqref{eqn:s2c} and
Lemma~\ref{lem:NLE}, together with the fact that $r=r_n\ge n$, imply
that
\begin{equation}
\label{eqn:sb3}
|\langle\sigma,\mc{R}(\psi_\Sigma)\rangle| \le c e^{-\nu-\lambda n/2}.
\end{equation}
Finally, Proposition~{I.2.25} and the inequality
\eqref{eqn:DACV}, together with the fact that $r\ge n$, imply that if
$\kappa>0$ is chosen sufficiently small, then
\begin{equation}
\label{eqn:sb4}
\left|\langle\sigma,\eta'-\Pi_{\mc{B}}\eta'\rangle\right| \le
 c e^{-\nu - \kappa n}.
\end{equation}
By the nondegenerate ends assumption, $\|\gamma_{i_1}\|>0$.  Hence
\eqref{eqn:so3t}--\eqref{eqn:sb4} imply that
$\frak{s}_t(T_-,T_+,\Sigma)(\sigma)\neq 0$ if $n$ is sufficiently
large.

\medskip

{\em Step 4.\/} To complete the proof of
Proposition~\ref{prop:deform}, we need to handle the case where there
is at most one inessential edge incident to $j$.  In this case $j$ has
one incoming, inessential edge, and $k\ge 2$ outgoing edges, all of
which are essential.  Denote the positive leaves corresponding to the
essential edges by $i_1,\ldots,i_k$, where as before, $\nu_{i_1}=\nu$.

Pick a leaf $i_0\notin\{i_1,\ldots,i_k\}$.  By
Lemma~{I.2.18}, there is a unique, nonvanishing
$\sigma\in\Coker(D_\Sigma)$ with
\[
\Pi_{\mc{A}}\sigma_{i_1}(s,\cdot) = e^{\lambda(s-s_{i_1})}\gamma_{i_1}
\]
and $\Pi_{\mc{A}}\sigma_i=0$ for $i\notin\{i_0,i_1,i_2\}$.  For each
leaf $i$, let $\beta_i$ denote the corresponding eigenfunction
associated to $\sigma$ via \eqref{eqn:betai}.  In particular
$\beta_{i_1}=\gamma_{i_1}$.

Proposition~{I.6.3} tells us (roughly) that if $n$ is large, then
near the part of $\Sigma$ that gets collapsed to the vertex $j$ of
$\tau_*$, the $(0,1)$-form $\sigma$ is well approximated by a
nonvanishing cokernel element $\sigma_Z$ for a branched cover
\[
\Sigma_Z\in\mc{M}(a_{i_1},\ldots,a_{i_k} \mid a_{i_1}+\cdots+a_{i_k}).
\]
Note that $\sigma_Z$ will be an honest cokernel element, and not one
of the more general elements of $\widetilde{\Coker}(D_{\Sigma_Z})$
allowed by Proposition~{I.6.3}, as a consequence of Lemma~{I.2.20}.
Combining the precise result of Proposition~{I.6.3} with
Lemma~\ref{lem:keylemma}, we find that for any $\epsilon>0$, if $n$ is
sufficiently large then there exists a deck transformation $g$ of the
covering $\widetilde{S^1}\to S^1$ such that in the notation from
\S\ref{sec:keylemma},
\[
\zeta_{i_2} = -(1+O(\epsilon))g\cdot\zeta_{i_1},
\]
where `$O(\epsilon)'$ here denotes a number with absolute value less
than $\epsilon$.  Hence the part of
$\frak{s}_0(T_-,T_+,\Sigma)(\sigma)$ coming from the leaves $i_1$ and
$i_2$ is given by
\begin{equation}
\label{eqn:part}
e^{-\nu_{i_1}}\langle\gamma_{i_1},\beta_{i_1}\rangle 
+
e^{-\nu_{i_2}}\langle\gamma_{i_2},\beta_{i_2}\rangle
=
e^{-\nu}\langle\gamma_{i_1}-(1+O(\epsilon))g\cdot
\gamma_{i_2},\gamma_{i_1}\rangle.
\end{equation}

On the other hand, by the nonoverlapping ends assumption,
$\gamma_{i_1}\neq g\cdot\gamma_{i_2}$.  Since $\gamma_{i_1}\neq 0$, it
follows that there is a constant $c>0$ such that if $n$ is
sufficiently large, then the expression in \eqref{eqn:part} is greater
than $c e^{-\nu}$.  A virtual repeat of the arguments in Step 3
concludes that $\frak{s}_t(T_-,T_+,\Sigma)(\sigma)\neq 0$ if $n$ is
sufficiently large.  This completes the proof of
Proposition~\ref{prop:deform}.
\qed

\section{Coherent orientations}
\label{sec:coherent}


This section consists of a lengthy digression on how to ``coherently''
orient all moduli spaces of unobstructed, immersed, $J$-holomorphic
curves in $\R\times Y$, so that the orientations behave well under
gluing of the usual kind where there is no obstruction bundle.  This
is, up to some choices, an established procedure (and one can also
allow non-immersed curves). However we will need to rework it from a
special perspective in order to set up the discussion of signs in
obstruction bundle gluing in \S\ref{sec:signs}.

\subsection{Algebraic preliminaries}
\label{sec:AP}

We begin by reviewing some very basic material about orientations, in
order to fix notation.

If $V$ is a finite dimensional vector space over $\R$, let $\mc{O}(V)$
denote the set of orientations of $V$.  If $\frak{o}\in\mc{O}(V)$, we
denote the opposite orientation by $-\frak{o}$.  If $W$ is another
finite dimensional vector space over $\R$, define
$\mc{O}(V)\tensor\mc{O}(W)$ to be the set of pairs
$(\frak{o}_V,\frak{o}_W)\in\mc{O}(V)\times\mc{O}(W)$, modulo the
relation $(\frak{o}_V,\frak{o}_W)\sim(-\frak{o}_V,-\frak{o}_W)$.
There is a canonical isomorphism
\begin{equation}
\label{eqn:conbas}
\mc{O}(V)\tensor\mc{O}(W)\stackrel{\simeq}{\longrightarrow}\mc{O}(V\oplus W)
\end{equation}
obtained by concatenating bases.  More generally, an exact sequence of
finite dimensional vector spaces over $\R$,
\begin{equation}
\label{eqn:lesfvs}
0 \longrightarrow V_1 \stackrel{f_1}{\longrightarrow} V_2
\stackrel{f_2}{\longrightarrow} \cdots
\stackrel{f_{k-1}}{\longrightarrow} V_k \longrightarrow 0,
\end{equation}
induces an element
\[
\frak{o}(f_1,\ldots,f_{k-1})\in
\mc{O}(V_1)\tensor\cdots\tensor\mc{O}(V_k)
\]
defined as follows.  Choose a basis $(v_{1,1},\ldots,v_{1,n_1})$ of
$V_1$, and let $\frak{o}_1\in\mc{O}(V_1)$ denote the corresponding
orientation.  For $i=2,\ldots,k-1$, choose elements
$v_{i,1},\ldots,v_{i,n_i}\in V_i$ such that
$(f_{i-1}(v_{i-1,1}),\ldots,
f_{i-1}(v_{i-1,n_{i-1}}),v_{i,1},\ldots,v_{i,n_i})$ is a basis of
$V_i$, and let $\frak{o}_i\in\mc{O}(V_i)$ denote the corresponding
orientation.  Then
\begin{equation}
\label{eqn:oes}
\frak{o}(f_1,\ldots,f_{k-1})\eqdef
\frak{o}_1\tensor\cdots\tensor\frak{o}_k.
\end{equation}
This orientation does not depend on the choice of the elements
$v_{i,j}$; in fact it is induced by an isomorphism of tensor
products of determinant lines
\[
\bigotimes_{\mbox{\scriptsize $i$ even}}\det(V_i) \simeq
\bigotimes_{\mbox{\scriptsize $i$ odd}}\det(V_i) 
\]
which depends only on the long exact sequence \eqref{eqn:lesfvs}, see
e.g.\ \cite{fh}.  In addition, \eqref{eqn:oes} is invariant under
homotopy of exact sequences.  That is, if $\{f_i(t)\}_{t\in[0,1]}$ are
homotopies of maps $V_i\to V_{i+1}$ for $i=1,\ldots,k-1$, such that
the sequence given by $f_1(t),\ldots,f_{k-1}(t)$ is exact for each
$t\in[0,1]$, then
\[
\frak{o}(f_1(0),\ldots,f_{k-1}(0)) =
\frak{o}(f_1(1),\ldots,f_{k-1}(1)).
\]

If $D$ is a Fredholm operator, define
\[
\mc{O}(D)\eqdef\mc{O}(\Ker(D))\tensor\mc{O}(\Coker(D)).
\]
If $\{E_t\}_{t\in[0,1]}$ and $\{F_t\}_{t\in[0,1]}$ are Banach space
bundles over $[0,1]$, then a continuous path $\{D_t:E_t\to
F_t\}_{t\in[0,1]}$ of Fredholm operators induces an isomorphism
\begin{equation}
\label{eqn:continuation}
\Phi_{\{D_t\}}:\mc{O}(D_0)\stackrel{\simeq}{\longrightarrow}\mc{O}(D_1),
\end{equation}
defined as follows.  One can choose decompositions $F_t\simeq
V_t\oplus W_t$, depending continuously on $t$, such that $V_t$ is
finite dimensional, and if $\Pi_{W_t}:F_t\to W_t$ denotes the
projection to $W_t$, then $\Pi_{W_t}D_t$ is surjective.  There is
then, for each $t$, an exact sequence
\begin{equation}
\label{eqn:OES}
0 \longrightarrow \Ker(D_t) \longrightarrow \Ker(\Pi_{W_t}D_t)
\stackrel{D_t}{\longrightarrow} V_t \longrightarrow \Coker(D_t)
\longrightarrow 0.
\end{equation}
The exact sequence \eqref{eqn:OES} induces an isomorphism
\begin{equation}
\label{eqn:sesii}
\mc{O}(D_t) \simeq \mc{O}(\Pi_{W_t}D_t)\tensor\mc{O}(V_t).
\end{equation}
Now the family of subspaces $\{\Ker(\Pi_{W_t}D_t)\}_{t\in[0,1]}$ defines a
vector bundle over $[0,1]$, as does the family of subspaces
$\{V_t\}_{t\in[0,1]}$.  These vector bundles induce
isomorphisms $\mc{O}(\Pi_{W_0}D_0)\simeq\mc{O}(\Pi_{W_1}D_1)$ and
$\mc{O}(V_0)\simeq\mc{O}(V_1)$.  Combining these isomorphisms with
\eqref{eqn:sesii} gives the isomorphism \eqref{eqn:continuation}.
The latter does not depend on $V_t$ and $W_t$, and is invariant under
homotopy of the path $\{D_t\}$ rel endpoints.

\subsection{A linear gluing exact sequence}
\label{sec:LGES}

We now present a variant of the ``linear gluing'' construction of
\cite{bm,fh}, designed to fit well in the obstruction bundle context.

We first introduce a class of Fredholm operators that one needs to
orient, in order to orient moduli spaces of pseudoholomorphic curves. Fix a
positive integer $n$. (The main concern of this paper is the case
$n=1$.)

\begin{definition}
An {\em orientation triple\/} is a triple $\widetilde{C}=(C,E,\{S_k\})$, where:
\begin{itemize}
\item
$C$ is a Riemann surface with cylindrical ends, such that each end is
designated ``positive'' or ``negative''; each positive end is
identified with $[0,\infty)\times S^1$; each negative end is
identified with $(-\infty,0]\times S^1$; the positive ends are labeled
$1,\ldots,N_+$; and the negative ends are labeled $-1,\ldots,-N_-$.
On each end, denote the $[0,\infty)$ or $(-\infty,0]$ coordinate by
$s$ and the $S^1$ coordinate by $t$.
\item
$E$ is a rank $n$ Hermitian vector bundle over $C$, with a fixed
trivialization on each end.
\item
Associated to the $k^{th}$ end of $C$ is a smooth family $S_k(t)$ of
symmetric $2n\times 2n$ matrices parametrized by $t\in S^1$, such that
zero is not an eigenvalue of the operator $i\partial_t + S_k$.
\end{itemize}
\end{definition}

\begin{definition}
For $\widetilde{C}$ as above, define $\mc{D}(\widetilde{C})$ to be the
set of differential operators $D:C^\infty(E)\to
C^\infty(T^{0,1}C\tensor E)$ with the following properties:
\begin{itemize}
\item
There is a complex structure $j$ on $C$, agreeing with the standard
one on the ends, such that in local coordinates and trivializations,
$D$ equals
$\overline{\partial}$ plus a zeroth order term.
\item
On the $k^{th}$ end
of $C$, write
\[
D\psi = \frac{1}{2}(\partial_s + i\partial_t + M_k(s,t))\psi \tensor (ds -
idt),
\]
where $M_k(s,t)$ is a $2n\times 2n$ matrix.  Then $\lim_{|s|\to\infty}
M_k(s,\cdot) = S_k(\cdot)$ in the sense of \cite[\S2]{fh}.
\end{itemize}
\end{definition}

It is a standard fact that any such $D$ extends to a Fredholm operator
$L^2_1(E)\to L^2(T^{0,1}C\tensor E)$.  Moreover, a homotopy of such
differential operators defines a continuous path of Fredholm
operators \cite[Prop. 7]{fh}.  Finally, the space $\mc{D}$ is
contractible, so for any two operators $D,D'\in\mc{D}(\widetilde{C})$,
there is a canonical bijection $\mc{O}(D)=\mc{O}(D')$.  We denote this
set of orientations by $\mc{O}(\widetilde{C})$.

We now consider gluing two orientation triples
$\widetilde{C}_-=(C_-,E_-,\{S_k^-\})$ and $\widetilde{C}_+=
(C_+,E_+,\{S_k^+\})$.  Assume that the first $l$ positive ends of
$\widetilde{C}_-$ agree with the first $l$ negative ends of
$\widetilde{C}_+$, in the sense that $S_k^-=S_{-k}^+$ for
$k=1,\ldots,l$.  Fix a large $R>0$.  Define a new surface $C$ by
identifying, for each $k=1,\ldots,l$, the $s=2R$ circle in the
$k^{th}$ end of $C_-$ with the $s=-2R$ circle in the $-k^{th}$ end of
$C_+$.  For each $k=1,\ldots,l$, the part of $C$ coming from the
$k^{th}$ end of $C_-$ and the $-k^{th}$ end of $C_+$ is a cylinder
$Z_k$.  We identify $Z_k\simeq [-2R,2R]\times S^1$, so that
translation of $s$ by $\pm 2R$ identifies $Z_k$ with the $0\le s\le
4R$ portion of the $k^{th}$ end of $C_-$, or with the $-4R\le s\le 0$
of the $-k^{th}$ end of $C_+$.  Use the fixed trivializations of $E_-$
and $E_+$ over the ends to glue them to a bundle $E_C$ over $C$.
Denote the glued orientation triple by
\[
\widetilde{C}_-\#_{l}\widetilde{C}_+ \eqdef
\left(C,E_C,\{S_k^-\}_{k\notin\{1,\ldots,l\}}
\cup\{S_k^+\}_{k\notin\{-1,\ldots,-l\}}\right). 
\]
Order the positive ends of $C$ so that the positive ends of $C_+$ come
first (in their given order), followed by the unglued positive ends of
$C_-$ (in their given order).  Likewise, order the negative ends of
$C$ so that the negative ends of $C_-$ come first (in the order
$-1,-2,\ldots$), followed by the unglued negative ends of $C_+$.

We will use ``linear gluing'' to define a canonical isomorphism
\[
\mc{O}(\widetilde{C}_-) \tensor \mc{O}(\widetilde{C}_+)
= \mc{O}(\widetilde{C}_- \#_l \widetilde{C}_+).
\]
Choose operators $D_-\in\mc{D}(\widetilde{C}_-)$ and
$D_+\in\mc{D}(\widetilde{C}_+)$.  Let
$D_C\in\mc{D}(\widetilde{C_-}\#_l\widetilde{C}_+)$ be an operator that
agrees with $D_-$ on $C_-$ off of the $s\ge R$ part of the first $l$
positive ends, and that agrees with $D_+$ on $C_+$ off of the $s\le
-R$ part of the first $l$ negative ends.  Note that $D_C-D_-$ and
$D_C-D_+$ are zeroth order operators on the cylinders $Z_k$.  Choose a
finite dimensional subspace $V_\pm$ of $L^2(T^{0,1}C_\pm\tensor E_\pm)$
such that if $W_\pm$ denotes the orthogonal complement of $V_\pm$, and
if $\Pi_{W_\pm}:L^2(T^{0,1}C_\pm\tensor E_\pm)\to W_\pm$ denotes the
orthogonal projection, then $\Pi_{W_\pm}\circ D_\pm$ is surjective.
Here is the version of linear gluing that we will need.

\begin{proposition}
\label{prop:LGES}
If $R$ is sufficiently large, and if $|D_C-D_-|$ and $|D_C-D_+|$ are
sufficiently small on the cylinders $Z_k$ for $k=1,\ldots,l$, then
there is an exact sequence
\begin{equation}
\label{eqn:LGES}
0 \to \Ker(D_C) \stackrel{f}{\to} \Ker(\Pi_{W_-}D_-) \oplus
\Ker(\Pi_{W_+}D_+) \stackrel{g}{\to} V_-\oplus V_+ 
\stackrel{h}{\to} \Coker(D_C) \to 0
\end{equation}
\end{proposition}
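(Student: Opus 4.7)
I would prove \eqref{eqn:LGES} by constructing $f$, $g$, $h$ explicitly via pregluing and ungluing with cutoff functions, in the spirit of Floer--Hofer's linear gluing, adapted to the stabilized setting in which $V_\pm$ absorbs the cokernel of $D_\pm$. Fix a partition of unity $\beta_-+\beta_+=1$ on $C$ with $\beta_\pm$ supported on the ``$C_\pm$ half'' of each gluing cylinder $Z_k$, and a slightly sharper pair $\chi_-+\chi_+=1$ whose transition occurs strictly inside $\{\beta_\pm\in\{0,1\}\}$. Multiplication by $\beta_\pm$ turns $C_\pm$-sections into $C$-sections via the end trivializations; multiplication by $\chi_\pm$ does the reverse. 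A routine Leibniz calculation bounds the commutators of these operations with the $\dbar$-type operators by $O(|D_C-D_-|+|D_C-D_+|)$ plus a term that decays exponentially in $R$.

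\textbf{Definitions of the maps.} Define $h(v_-,v_+)\eqdef[\beta_-v_-+\beta_+v_+]\in\Coker(D_C)$. For $\psi\in\Ker(D_C)$ the identity
\[
D_\pm(\chi_\pm\psi)=(\dbar\chi_\pm)\psi+\chi_\pm(D_\pm-D_C)\psi
\]
shows that $(\chi_-\psi,\chi_+\psi)$ lies approximately in $\Ker(\Pi_{W_-}D_-)\oplus\Ker(\Pi_{W_+}D_+)$; using surjectivity of $\Pi_{W_\pm}D_\pm$, let $\zeta_\pm\perp\Ker(\Pi_{W_\pm}D_\pm)$ be the unique element with $\Pi_{W_\pm}D_\pm\zeta_\pm=\Pi_{W_\pm}D_\pm(\chi_\pm\psi)$, and set $f(\psi)\eqdef(\chi_-\psi-\zeta_-,\chi_+\psi-\zeta_+)$. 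Start with the naive $g_0(\psi_-,\psi_+)\eqdef(D_-\psi_-,D_+\psi_+)$ and correct by a small linear perturbation to obtain $g=g_0+g_1$ with $g\circ f=0$ and $h\circ g=0$ holding exactly: both uncorrected compositions are exponentially small in $R$ by the commutator bounds above, and the existence of a simultaneous such $g_1$ is a purely algebraic exercise once $f$ is injective and $h$ is surjective (which we check next).

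\textbf{Exactness.} Injectivity of $f$: the identity $\psi=\chi_-\psi+\chi_+\psi=\zeta_-+\zeta_+$ forces $\|\psi\|_{L^2_1}$ to be bounded by an exponentially small multiple of itself. Surjectivity of $h$: every element of $\Ker(D_C^*)$ decays exponentially on the ends of $C$, and its $\chi_\pm$-split pairs appropriately with $V_\pm$, which by hypothesis spans the cokernel directions of $D_\pm$. For the two middle exactness statements, the key input is approximate invertibility of the pregluing/ungluing pair: their compositions differ from the identity by operators of norm $O(e^{-\delta R})$ on the relevant finite-dimensional subspaces. Given $(\psi_-,\psi_+)\in\ker g$, the preglued section $\beta_-\psi_-+\beta_+\psi_+$ is approximately $D_C$-closed, and a linear contraction-mapping argument analogous to Proposition~\ref{prop:CMT} produces a unique nearby $\psi\in\Ker(D_C)$; approximate invertibility then identifies $f(\psi)$ with $(\psi_-,\psi_+)$. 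The analogous construction at the other middle position produces preimages in $V_-\oplus V_+$ of any element of $\ker h$.

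\textbf{Main obstacle.} The hardest step is the approximate-invertibility estimate for pregluing/ungluing on kernels and cokernels: this is the core analytic content of linear gluing and relies on exponential decay estimates for kernel elements on the gluing cylinders together with the smallness assumption on $|D_C-D_\pm|$. Once this estimate is in hand, all the exactness statements follow by the contraction mapping theorem, and the index additivity $\mathrm{ind}(D_C)=\mathrm{ind}(D_-)+\mathrm{ind}(D_+)$ (itself a consequence of the same linear gluing) provides a dimension-count consistency check that the alternating sum of dimensions in \eqref{eqn:LGES} vanishes.
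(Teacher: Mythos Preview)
Your overall architecture---preglue, unglue, cut-off commutators---is close in spirit to the paper, and your $h$ is exactly the paper's.  But the paper takes a genuinely different route to $f$ and $g$, and your perturbative approach has a real gap.

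\textbf{The paper's route.}  Rather than define approximate maps and correct them, the paper proves a sharp decomposition lemma (Lemma~\ref{lem:uniquepsi}): any $\psi$ with $D_C\psi=\beta_-\theta_-+\beta_+\theta_+$ admits a \emph{unique} splitting $\psi=\beta_-\psi_-+\beta_+\psi_+$ with $\Theta_\pm(\psi_-,\psi_+)=\theta_\pm$, where $\Theta_\pm$ are the coupled linear operators from~\eqref{eqn:definethetas}.  The key step is inverting, on each gluing cylinder, an operator $\mc{F}_k$ built from $\Theta_\pm$ and boundary projections $\Pi_{k\pm}$ onto positive/negative eigenspaces of the asymptotic operator.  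With this lemma in hand, $f$ is defined by taking the unique splitting with $\theta_\pm=0$ and projecting; $g$ is defined by solving $\Pi_{W_\pm}\Theta_\pm=0$ for the unique $(\zeta_-,\zeta_+)$ orthogonal to the kernels (a finite-dimensional linear system made invertible by the smallness hypotheses) and outputting $(\Theta_-,\Theta_+)$.  Exactness at all four positions is then checked by direct inspection---no perturbation, no dimension count.

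\textbf{The gap in your route.}  Your claim that choosing $g_1$ with $g\circ f=0$ and $h\circ g=0$ is ``a purely algebraic exercise once $f$ is injective and $h$ is surjective'' is not right.  Those two conditions, plus index additivity, only give you a \emph{complex} with the correct alternating-dimension sum; they do not force $\operatorname{rank}(g)=\dim\Ker(h)$.  You could easily end up with $\Ker(g)$ strictly larger than $\operatorname{Im}(f)$.  To rule this out you must show that $g_0$ (hence $g$, for small $g_1$) restricted to a complement of $\operatorname{Im}(f)$ is an isomorphism onto $\Ker(h)$---and that is precisely the substantive analytic content you deferred to ``approximate invertibility.''  Relatedly, your middle exactness argument (``approximate invertibility then identifies $f(\psi)$ with $(\psi_-,\psi_+)$'') only produces $f(\psi)\approx(\psi_-,\psi_+)$, not equality; you still owe the rank statement above.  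These gaps are fixable, but they are exactly where the work lies, and the paper's decomposition lemma bypasses them entirely.

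\textbf{What each approach buys.}  The paper's explicit $f,g,h$ are reused verbatim in \S\ref{sec:GO}--\S\ref{sec:HES} (the alternate description via $\Phi$, associativity, and the comparison with the derivative of the nonlinear gluing map).  A perturbatively defined $g$ would make those later arguments harder to track.  Your approach, carried through correctly, would be closer to the original Floer--Hofer treatment and is more portable to settings where an exact splitting lemma is unavailable.
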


\begin{proof}
The construction has three steps.

{\em Step 1.\/} We first introduce some notation.  Fix a smooth
function $\beta:\R\to[0,1]$ such that $\beta(s)=1$ for $s\le 0$ and
$\beta(s)=0$ for $s\ge 1$.  (This notation differs from that of
\S\ref{sec:pregluing}.)  Define functions
$\beta_-,\beta_+:C\to[0,1]$ as follows.  On the cylinder $Z_k$,
\[
\beta_-(s,t) \eqdef \beta(s/R), \quad\quad \beta_+(s,t) \eqdef \beta(-s/R).
\]
On $C_-$ off of the first $l$ positive ends, define $\beta_-\eqdef 1$ and
$\beta_+\eqdef 0$.  On $C_+$ off of the first $l$ negative ends, define
$\beta_+\eqdef 1$ and $\beta_-\eqdef 0$.

Now consider $\psi_-\in L^2_1(E_-)$ and $\psi_+\in L^2_1(E_+)$, and
define
\begin{equation}
\label{eqn:definepsi}
\psi \eqdef \beta_-\psi_- + \beta_+\psi_+ \in L^2_1(E_C).
\end{equation}
We can then express
\begin{equation}
\label{eqn:dcp}
D_C\psi = \beta_-\Theta_-(\psi_-,\psi_+) + \beta_+\Theta_+(\psi_-,\psi_+),
\end{equation}
where
\begin{equation}
\label{eqn:definethetas}
\begin{split}
\Theta_-(\psi_-,\psi_+)
&\eqdef
D_-\psi_- + (D_C-D_-)\psi_- + (\dbar\beta_+)\psi_+ \in
L^2(T^{0,1}C_-\tensor E_-),\\
\Theta_+(\psi_-,\psi_+) &\eqdef D_+\psi_+ + (D_C-D_+)\psi_+ +
(\dbar\beta_-)\psi_- \in L^2(T^{0,1}C_+\tensor E_+).
\end{split}
\end{equation}
Here we interpret $D_C-D_-\eqdef 0$ off of the support of $\beta_-$,
and $D_C-D_+\eqdef 0$ off of the support of $\beta_+$.  Note that
\eqref{eqn:dcp} follows from \eqref{eqn:definethetas} because
$\beta_-=1$ on the support of $d\beta_+$ and $\beta_+=1$ on the
support of $d\beta_-$.

{\em Step 2.\/}
We now prove a key lemma:

\begin{lemma}
\label{lem:uniquepsi}
Suppose $\psi\in L^2_1(E_C)$ satisfies
$D_C\psi=\beta_-\theta_-+\beta_+\theta_+$ where $\theta_\pm\in
L^2(T^{0,1}C_\pm\tensor E_\pm)$.  Then there exist unique $\psi_\pm\in
L^2_1(E_\pm)$ such that $\psi=\beta_-\psi_-+\beta_+\psi_+$ and
\begin{equation}
\label{eqn:uniquepsi}
\Theta_-(\psi_-,\psi_+) = \theta_-, \quad \quad
\Theta_+(\psi_-,\psi_+) = \theta_+.
\end{equation}
\end{lemma}

\begin{proof}
We first introduce a linear version of Lemma~\ref{lem:IFT2}.  Fix
$k\in\{1,\ldots,l\}$, let $S_k(t)\eqdef S_k^-(t)=S_{-k}^+(t)$, and
consider the asymptotic operator
\[
L_k\eqdef i\partial_t+S_k(t):C^\infty(S^1;\R^{2n})\to C^\infty(S^1;\R^{2n}).
\]
Let $\Pi_{k+}$ and $\Pi_{k-}$ denote the projections from $L^2(S^1;\R^{2n})$ to
the sums of the positive and negative eigenspaces of $L_k$
respectively.  Consider the following operator (where all functions in the
various function spaces take values in $\R^{2n}$):
\begin{gather*}
\mc{F}_k:L^2_1([-R,\infty)\times S^1) \oplus L^2_1((-\infty,R]\times
S^1) \longrightarrow
\quad\quad\quad\quad\quad\quad\quad\quad\quad\quad\quad\quad
\\
\quad\quad\; \Pi_{k+}L^2_{1/2}(S^1) \oplus \Pi_{k-}L^2_{1/2}(S^1)
\oplus L^2([-R,\infty)\times S^1) \oplus L^2((-\infty,R]\times
S^1),\\
(\psi_-,\psi_+) \longmapsto (\Pi_{k+}\psi_-(-R,\cdot),
\Pi_{k-}\psi_+(R,\cdot), \Theta_-(\psi_-,\psi_+), \Theta_+(\psi_-,\psi_+)).
\end{gather*}
Here $\Theta_-$ and $\Theta_+$ are defined by identifying
$[-R,R]\times S^1$ with the $R\le s \le 3R$ portion of the $k^{th}$
positive end of $C_-$ and with the $-3R\le s \le -R$ portion of the $k^{th}$
negative end of $C_+$.  A linear version of Lemma~\ref{lem:IFT2} shows that
under the hypotheses of Proposition~\ref{prop:LGES}, the map $\mc{F}_k$
is an isomorphism.

Proceeding with the proof of Lemma~\ref{lem:uniquepsi}, choose an
arbitrary decomposition $\psi=\beta_-\psi_-'+\beta_+\psi_+'$ with
$\psi_\pm'\in L^2_1(E_\pm)$.  Then by \eqref{eqn:dcp} we have
\begin{equation}
\label{eqn:uh}
\beta_-(\theta_--\Theta_-(\psi_-',\psi_+')) + \beta_+(\theta_+ -
\Theta_+(\psi_-',\psi_+')) = 0.
\end{equation}
Now write $\psi_-=\psi_-'+\psi_-''$ and $\psi_+ = \psi_+'+\psi_+''$.
Since $\Theta_\pm$ is a linear function on $L^2_1(E_-)\oplus
L^2_1(E_+)$, the desired equations \eqref{eqn:uniquepsi} are
equivalent to the equations
\begin{align}
\label{eqn:Th-}
\Theta_-(\psi_-'',\psi_+'') &= \theta_- - \Theta_-(\psi_-',\psi_+'),\\
\label{eqn:Th+}
\Theta_+(\psi_-'',\psi_+'') &= \theta_+ - \Theta_+(\psi_-',\psi_+').
\end{align}
By \eqref{eqn:uh}, the right hand side of \eqref{eqn:Th-} is supported
on the $s\ge R$ portion of the first $l$ positive ends of $C_-$, while
the right hand side of \eqref{eqn:Th+} is supported on the $s\le -R$
portion of the first $l$ negative ends of $C_+$.  The required
$\psi_-''$ and $\psi_+''$ are now given on the $s \ge R$ part of the
$k^{th}$ positive end of $C_-$ and on the $s\le -\R$ part of the
$k^{th}$ negative end of $C_+$ for $k=1,\ldots,l$ by
\[
(\psi_-'',\psi_+'') = \mc{F}_k^{-1}(0,0,\theta_- -
\Theta_-(\psi_-',\psi_+'), \theta_+ - \Theta_+(\psi_-',\psi_+')).
\]
We can, and must, take $\psi_\pm''=0$ on the rest of $C_\pm$.
\end{proof}

{\em Step 3.\/} We now define the maps in the sequence
\eqref{eqn:LGES} and prove exactness.

{\em Definition of $f$.\/}
Let $\psi\in\Ker(D_C)$ be given.  By Lemma~\ref{lem:uniquepsi},
there exist unique $\psi_-\in L^2_1(E_-)$ and $\psi_+\in L^2_1(E_+)$
such that $\psi=\beta_-\psi_-+\beta_+\psi_+$ and
\begin{equation}
\label{eqn:awte}
\Theta_-(\psi_-,\psi_+)=0,\quad\quad\Theta_+(\psi_-,\psi_+)=0.
\end{equation}
Let $\phi_\pm$ denote the $L^2$ orthogonal projection of $\psi_\pm$
onto $\Ker(\Pi_{W_\pm}D_\pm)$.  Define
\[
f(\psi)\eqdef (\phi_-,\phi_+).
\]

{\em Definition of $g$.\/}
Let $\phi_-\in\Ker(\Pi_{W_-}D_-)$ and
$\phi_+\in\Ker(\Pi_{W_+}D_+)$ be given.
We claim that there are unique $\zeta_-\in L^2_1(E_-)$ and $\zeta_+\in
L^2_1(E_+)$ such that $\zeta_-$ is $L^2$ orthogonal to
$\Ker(\Pi_{W_-}D_-)$, $\zeta_+$ is $L^2$ orthogonal to $\Ker(\Pi_{W_+}D_+)$,
and the pair
\begin{equation}
\label{eqn:defpair}
(\psi_-,\psi_+)\eqdef(\phi_-+\zeta_-,\phi_++\zeta_+)
\end{equation}
solves the equations
\begin{equation}
\label{eqn:tts}
\Pi_{W_-}\Theta_-(\psi_-,\psi_+)=0, \quad\quad
\Pi_{W_+}\Theta_+(\psi_-,\psi_+)=0.
\end{equation}
To see this, let $F_-:W_-\to \Ker(\Pi_{W_-}D_-)^\perp$ denote the
inverse of $\Pi_{W_-}D_-$, and let
$F_+:W_+\to\Ker(\Pi_{W_+}D_+)^\perp$ denote the inverse of
$\Pi_{W_+}D_+$.  Then applying $F_-$ and $F_+$ to the first and second
equations in \eqref{eqn:tts} respectively, we obtain a pair of
equations which can be written as
\begin{gather*}
\begin{pmatrix}
1 + F_-\Pi_{W_-}(D_C-D_-) & F_-\Pi_{W_-}(\dbar\beta_+) \\
F_+\Pi_{W_+}(\dbar\beta_-) & 1+F_+\Pi_{W_+}(D_C-D_+)
\end{pmatrix}
\begin{pmatrix}
\zeta_- \\ \zeta_+ \end{pmatrix}\quad\quad\quad\quad\quad\quad\quad\\
\quad\quad\quad\quad\quad\quad\quad\quad\quad\quad\quad\quad =
\begin{pmatrix}
-F_-\Pi_{W_-}((D_C-D_-)\phi_- + (\dbar\beta_+)\phi_+) \\
-F_+\Pi_{W_+}((D_C-D_+)\phi_+ + (\dbar\beta_-)\phi_-)
\end{pmatrix}.
\end{gather*}
If $|D_C-D_-|$ and $|D_C-D_+|$ are sufficiently small, and $R$ is
sufficiently large, with respect to the operator norms of $F_+$ and
$F_-$, then these equations have a unique solution.  In terms of this
unique solution, define
\[
g(\phi_-,\phi_+) \eqdef
(\Theta_-(\psi_-,\psi_+),\Theta_+(\psi_-,\psi_+)).
\]

{\em Definition of $h$.\/} Given $(\theta_-,\theta_+)\in V_-\oplus
V_+$, define $h(\theta_-,\theta_+)$ to be the equivalence class of
$\beta_-\theta_-+\beta_+\theta_+$ in $\Coker(D_C)$.

{\em $f$ is injective:\/} Let $\psi\in\Ker(D_C)$ and suppose that
$f(\psi)=0$.  This means that $\psi=\beta_-\zeta_-+\beta_+\zeta_-$,
where $\zeta_\pm$ is $L^2$-orthogonal to $\Ker(\Pi_{W_\pm} D_\pm)$ and
$\Theta_\pm(\zeta_-,\zeta_+)=0$.  By uniqueness of the solution to the
equations \eqref{eqn:tts} for $\phi_-=\phi_+=0$, it follows that
$\zeta_-=\zeta_+=0$, and so $\psi=0$.

{\em $\op{Im}(f)\subset\Ker(g)$:\/}  Immediate from the definitions.

{\em $\Ker(g)\subset \op{Im}(f)$:\/} Suppose $g(\phi_-,\phi_+)=0$.
This means that there exist $\zeta_\pm$ orthogonal to $\Ker(\Pi_{W_\pm}
D_\pm)$ such that the pair $(\psi_-,\psi_+)$ defined in
\eqref{eqn:defpair} satisfies the equations
\eqref{eqn:awte}.  Then $\psi\eqdef\beta_-\psi_-+\beta_+\psi_+$
 is in $\Ker(D_C)$ by equation \eqref{eqn:dcp}, and by definition
$f(\psi)=(\phi_-,\phi_+)$.

{\em $\op{Im}(g)\subset\Ker(h)$:\/}  Immediate from \eqref{eqn:dcp}.

{\em $\Ker(h) \subset \op{Im}(g)$:\/} Let $(\theta_-,\theta_+)\in
V_-\oplus V_+$ be given, and suppose that $h(\theta_-,\theta_+)=0$.
This means that there exists $\psi\in L^2_1(E_C)$ with
\[
D_C\psi = \beta_-\theta_-+\beta_+\theta_+.
\]
By Lemma~\ref{lem:uniquepsi}, there exist $\psi_\pm\in
 L^2_1(E_\pm)$ such that
\[
(\Theta_-(\psi_-,\psi_+),\Theta_+(\psi_-,\psi_+)) = (\theta_-,\theta_+).
\]
Let $\phi_\pm$ denote the $L^2$ orthogonal projection of $\psi_\pm$
onto $\Ker(\Pi_{W_\pm}D_\pm)$.  Then by definition,
$g(\phi_-,\phi_+) = (\theta_-,\theta_+)$.

{\em $h$ is surjective:\/} Given $\xi\in L^2(E_C)$, we need to find
$(\theta_-,\theta_+)\in V_-\oplus V_+$ and $\psi\in L^2_1(E)$ such
that
\begin{equation}
\label{eqn:ldi}
D_C\psi + \beta_-\theta_- + \beta_+\theta_+ = \xi.
\end{equation}
Choose any decomposition $\xi=\beta_-\xi_-+\beta_+\xi_+$ with
$\xi_\pm\in L^2(T^{0,1}C_\pm\tensor E_\pm)$.  Since $\Pi_{W_\pm} D_\pm$ is
surjective, there exist $\psi_\pm\in L^2_1(E_\pm)$ and $\theta_\pm\in
V_\pm$ such that
\begin{equation}
\label{eqn:upds}
D_-\psi_-+\theta_-=\xi_-,\quad\quad
D_+\psi_++\theta_+=\xi_+.
\end{equation}
Now write $\psi=\beta_-(\psi_-+\psi_-')+\beta_+(\psi_++\psi_+')$.
Then to solve the desired equation \eqref{eqn:ldi}, it is enough to
find $\psi_\pm'$ such that
\begin{align}
\label{eqn:foo1}
\Theta_-(\psi_-',\psi_+') & \eqdef
-\Theta_-(\psi_-,\psi_+)-\theta_-+\xi_-,\\
\label{eqn:foo2}
\Theta_+(\psi_-',\psi_+') & \eqdef -\Theta_+(\psi_-,\psi_+) - \theta_+
+ \xi_+.
\end{align}
By \eqref{eqn:definethetas} and \eqref{eqn:upds}, the right hand side
of \eqref{eqn:foo1} is supported in the $s\ge R$ portion of the first
$l$ positive ends of $C_-$, while the right hand side of
\eqref{eqn:foo2} is supported in the $s\le -R$ portion of the first
$l$ negative ends of $C_+$.  It follows that as in the proof of
Lemma~\ref{lem:uniquepsi}, we can use the maps $\mc{F}_k^{-1}$ to find the
required $\psi_-'$ and $\psi_+'$.
\end{proof}

\begin{remark}
Counting dimensions in the exact sequence \eqref{eqn:LGES} recovers
the standard fact that $\op{ind}(D_C)=\op{ind}(D_-)+\op{ind}(D_+)$.
\end{remark}

\subsection{Gluing orientations}
\label{sec:GO}

Proposition~\ref{prop:LGES} allows us to glue orientations as follows.
The exact sequence \eqref{eqn:LGES} induces an isomorphism
\begin{equation}
\label{eqn:or0}
\mc{O}(D_C)\simeq \mc{O}(\Pi_{W_-}D_-)\tensor
\mc{O}(\Pi_{W_+}D_+)\tensor \mc{O}(V_-)\tensor \mc{O}(V_+).
\end{equation}
Combining this with the $D_-$ and $D_+$ versions of \eqref{eqn:sesii},
we obtain an isomorphism
\begin{equation}
\label{eqn:LGI}
\mc{O}(D_-)\tensor\mc{O}(D_+)
\simeq
\mc{O}(D_C).
\end{equation}

\begin{lemma}
\label{lem:HI}
The isomorphism \eqref{eqn:LGI} does not depend on $V_-$ and $V_+$,
and is invariant under homotopy of the data $(D_-,D_+;R,D_C)$, so that
it induces a well-defined isomorphism
\[
\mc{O}(\widetilde{C}_-) \tensor \mc{O}(\widetilde{C}_+)
\simeq \mc{O}(\widetilde{C}_- \#_l \widetilde{C}_+).
\]
\end{lemma}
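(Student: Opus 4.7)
The plan is to exploit the fact that the set of orientations of any Fredholm operator is a discrete two-element set, so any continuous family of isomorphisms between orientation sets is locally constant. Thus the entire statement reduces to showing that any two choices of the auxiliary data can be connected by a continuous path along which the linear gluing construction of Proposition~\ref{prop:LGES} deforms continuously, with the finite-dimensional pieces of the exact sequence \eqref{eqn:LGES} (and of the sequences \eqref{eqn:OES} for $D_\pm$) assembling into vector bundles over the parameter interval.

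First I would handle independence of the stabilizations $V_\pm$. Because two arbitrary finite-dimensional subspaces $V_-$ and $V_-'$ need not have the same dimension, I would not try to homotope directly between them; instead I would pass through the common enlargement $V_- + V_-'$ (which also makes $\Pi_{W}D_-$ surjective) and reduce to the case $V_- \subset V_-'$. Writing $V_-' = V_- \oplus A$ where $A\eqdef V_-'\cap W_-$, one also has $W_- = W_-' \oplus A$, and $\Pi_{W_-}D_-$ restricts to a surjection onto $A$ from the orthogonal complement of $\Ker(\Pi_{W_-}D_-)$ inside $\Ker(\Pi_{W_-'}D_-)$. This gives a short exact sequence
\[
0 \longrightarrow \Ker(\Pi_{W_-}D_-) \longrightarrow \Ker(\Pi_{W_-'}D_-) \longrightarrow A \longrightarrow 0,
\]
and a diagram chase comparing the two versions of \eqref{eqn:or0} and \eqref{eqn:sesii} shows that the factor $\mc{O}(A)$ coming from this sequence cancels against the factor $\mc{O}(A)$ coming from the splitting $\mc{O}(V_-') = \mc{O}(V_-)\tensor \mc{O}(A)$. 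The same argument applies for $V_+$, so the isomorphism \eqref{eqn:LGI} depends only on $(D_-,D_+,D_C,R)$.

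Next, for homotopy invariance, I would consider a continuous family $\{(D_{-t},D_{+t};R_t,D_{Ct})\}_{t\in[0,1]}$ of allowable data. By the previous paragraph I am free to enlarge $V_\pm$, and the hypotheses of Proposition~\ref{prop:LGES} are open conditions, so after a single common enlargement one can arrange that the same $V_\pm$ and the gluing construction work uniformly in $t$. Then $\Ker(\Pi_{W_\pm}D_{\pm t})$, $\Ker(D_{Ct})$, and $\Coker(D_{Ct})$ (after a further stabilization if needed to keep ranks locally constant) fit into vector bundles over $[0,1]$, and the maps $f,g,h$ in \eqref{eqn:LGES} vary continuously. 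The induced orientation isomorphism is then a continuous function from $[0,1]$ to a discrete set, hence constant. The main obstacle is producing such a continuous homotopy connecting two given sets of data: but the space $\mc{D}(\widetilde{C}_\pm)$ of Fredholm operators is affine (hence contractible), and for $R$ large the subspace of $D_C\in\mc{D}(\widetilde{C}_-\#_l\widetilde{C}_+)$ compatible with prescribed $D_\pm$ is a nonempty convex set, so one can concatenate straight-line homotopies to connect any two choices of $(D_-,D_+,D_C,R)$. This yields a well-defined isomorphism $\mc{O}(\widetilde{C}_-)\tensor\mc{O}(\widetilde{C}_+) \simeq \mc{O}(\widetilde{C}_-\#_l\widetilde{C}_+)$, as claimed.
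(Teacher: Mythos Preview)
Your overall strategy is right and matches the paper's: both the independence of $V_\pm$ and the homotopy invariance ultimately come down to the discreteness of orientation sets together with continuity of the construction. Your reduction of the $V_\pm$-question to the case of an enlargement $V_-\subset V_-'$ is also what the paper does (its Step~2), though the paper organizes the bookkeeping differently, as explained below.

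The gap is in your homotopy argument. You correctly flag that $\Ker(D_{Ct})$ and $\Coker(D_{Ct})$ may jump and propose ``a further stabilization if needed to keep ranks locally constant''. But you do not say \emph{which} stabilization of $D_C$ to use, nor do you check that it is compatible with the exact sequence \eqref{eqn:LGES}. Concretely: the continuation isomorphism $\mc{O}(D_{C0})\simeq\mc{O}(D_{C1})$ is defined via a choice of $V_C(t)$ with $\Pi_{W_C}D_{Ct}$ surjective, and to conclude that \eqref{eqn:HCD} commutes you must verify that the gluing isomorphism \eqref{eqn:LGI}, which was built from \eqref{eqn:LGES}, matches the one you would get from \eqref{eqn:OES2} for this $V_C$. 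That compatibility is not automatic; establishing it is the content of the paper's Step~1. There the paper takes the natural ``glued'' stabilization
\[
V_C\eqdef\{\beta_-\theta_-+\beta_+\theta_+\mid\theta_\pm\in V_\pm\}\subset L^2(T^{0,1}C\tensor E_C),
\]
constructs an explicit isomorphism
\[
\Phi:\Ker(\Pi_{W_C}D_C)\;\stackrel{\simeq}{\longrightarrow}\;\Ker(\Pi_{W_-}D_-)\oplus\Ker(\Pi_{W_+}D_+)
\]
(using Lemma~\ref{lem:uniquepsi}), and checks via a five-lemma diagram that $\Phi$ induces exactly \eqref{eqn:LGI}. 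Once this is in hand, every space in sight has constant rank along the homotopy, the family $\{\Phi_t\}$ is a continuous isomorphism of vector bundles over $[0,1]$, and your ``continuous map to a discrete set'' argument goes through cleanly. The same repackaging also streamlines the $V_\pm$-independence step: rather than the diagram chase you sketch (which is more delicate than it looks, since the map $f$ in \eqref{eqn:LGES} involves an orthogonal projection whose target changes with $V_-$), the paper just observes that the two versions of $\Phi$ fit into a commuting square of inclusions.
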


\begin{proof}
The proof has three steps.

{\em Step 1.\/} To prove that \eqref{eqn:LGI} is homotopy invariant,
the main difficulty is that the dimensions of $\Ker(D_C)$ and
$\Coker(D_C)$ may jump during a homotopy.  To deal with this issue, we
first give an alternate description of the isomorphism \eqref{eqn:LGI}
which does not directly refer to the kernel or cokernel of $D_C$.

Continuing with the notation from the proof of
Proposition~\ref{prop:LGES}, define $V_C\subset L^2(T^{0,1}C\tensor
E_C)$ to be the subspace consisting of sections
$\beta_-\theta_-+\beta_+\theta_+$ where $\theta_\pm\in V_\pm$.  Assume
that no nonzero element of $V_-$ or $V_+$ is supported entirely on the
$|s|\ge R$ part of the first $l$ positive or negative ends, so that
the map $V_-\oplus V_+\to V_C$ sending
$(\theta_-,\theta_+)\mapsto\beta_-\theta_-+\beta_+\theta_+$ is an
isomorphism.  (In general one can arrange this by a slight
perturbation of $V_\pm$.)  Let $W_C$ denote the orthogonal complement
of $V_C,$ and let $\Pi_{W_C}:L^2(T^{0,1}C\tensor E_C)\to W_C$ denote
the orthogonal projection.  Note that $\Pi_{W_C}D_C$ is surjective,
because the map $h$ in \eqref{eqn:LGES} is surjective.  So as in
\eqref{eqn:OES}, there is an exact sequence
\begin{equation}
\label{eqn:OES2}
0 \longrightarrow \Ker(D_C) \longrightarrow \Ker(\Pi_{W_C}D_C)
\stackrel{D_C}{\longrightarrow} V_C \longrightarrow
\Coker(D_C) \longrightarrow 0.
\end{equation}

We now define a map
\begin{equation}
\label{eqn:altphi}
\Phi: \Ker(\Pi_{W_C}D_C) \longrightarrow \Ker(\Pi_{W_-}D_-)\oplus
\Ker(\Pi_{W_+}D_+)
\end{equation}
as follows.  Suppose $\psi\in\Ker(\Pi_{W_C}D_C)$.  This means that
there exist unique $\theta_\pm\in V_\pm$ such that
\[
D_C\psi = \beta_-\theta_- + \beta_+\theta_+.
\]
By Lemma~\ref{lem:uniquepsi}, there are unique $\psi_\pm\in
L^2_1(E_\pm)$ satisfying equations \eqref{eqn:definepsi} and
\eqref{eqn:uniquepsi}.  Now let $\phi_\pm$ denote the $L^2$ orthogonal
projection of $\psi_\pm$ onto $\Ker(\Pi_{W_\pm}D_\pm)$, and define
\[
\Phi(\psi) \eqdef (\phi_-,\phi_+).
\]

It follows directly from the definitions that the exact sequences
\eqref{eqn:LGES} and \eqref{eqn:OES2} fit into a commutative diagram

\begin{footnotesize}
\[
\begin{CD}
\Ker(D_C) @>>> \Ker(\Pi_{W_C}D_C) @>{D_C}>> V_C @>>> \Coker(D_C)
\\
@| @VV{\Phi}V @AA{\simeq}A @| \\
\Ker(D_C) @>f>> \Ker(\Pi_{W_-}D_-)\oplus\Ker(\Pi_{W_+}D_+) @>g>>
V_-\oplus V_+ @>h>> \Coker(D_C).
\end{CD}
\]
\end{footnotesize}

\noindent
By the five lemma, $\Phi$ is an isomorphism, and hence induces an
isomorphism
\begin{equation}
\label{eqn:or4}
\mc{O}(\Pi_{W_C}D_C) \simeq \mc{O}(\Pi_{W_-}D_-)\tensor
\mc{O}(\Pi_{W_+}D_+).
\end{equation}
Moreover, it follows from the above commutative diagram that under the
canonical isomorphisms \eqref{eqn:sesii} for $D_\pm$ and $D_C$, the
isomorphism \eqref{eqn:or4} agrees with \eqref{eqn:LGI}.

{\em Step 2.\/} We now show that the isomorphism \eqref{eqn:LGI} does
not depend on $V_-$ and $V_+$.  For this purpose it is enough to show
that the isomorphism \eqref{eqn:LGI} is unchanged under replacing
$V_-$ and $V_+$ by larger subspaces $V_-'\supset V_-$ and $V_+'\supset
V_+$.  It follows directly from the definitions that the primed and
unprimed versions of $\Phi$ fit into a commutative diagram
\[
\begin{CD}
\Ker(\Pi_{W_C}D_C) @>{\Phi}>> \Ker(\Pi_{W_-}D_-)\oplus
\Ker(\Pi_{W_+}D_+) \\
@VVV @VVV \\
\Ker(\Pi_{W_C'}D_C) @>{\Phi'}>> \Ker(\Pi_{W_-'}D_-) \oplus
\Ker(\Pi_{W_+'}D_+)
\end{CD}
\]
where the vertical maps are the inclusions.  It follows readily that
the primed and unprimed versions of the isomorphism
\eqref{eqn:LGI} agree.

{\em Step 3.\/} We now prove homotopy invariance.  Given a homotopy
\[
\left\{(D_-(t),D_+(t),D_C(t))\right\}_{t\in[0,1]}
\]
of triples of operators, we need to prove that the diagram
\begin{equation}
\label{eqn:HCD}
\begin{CD}
\mc{O}(D_-(0))\tensor\mc{O}(D_+(0)) @>{\simeq}>> \mc{O}(D_C(0)) \\
@VV{\simeq}V @VV{\simeq}V \\
\mc{O}(D_-(1)) \tensor \mc{O}(D_+(1)) @>{\simeq}>> \mc{O}(D_C(1))
\end{CD}
\end{equation}
commutes, where the horizontal isomorphisms are the $t=0$ and $t=1$
versions of \eqref{eqn:LGI}, while the vertical isomorphisms are
defined in \eqref{eqn:continuation}.  To do so, choose subspaces $V_\pm(t)$
with the required properties that depend continuously on $t$.  Then the
family of maps
\[
\left\{\Phi_t:\Ker(\Pi_{W_C(t)}D_C(t)) \longrightarrow
\Ker(\Pi_{W_-(t)}D_-(t)) \oplus \Ker(\Pi_{W_+(t)}D_+(t))\right\}_{t\in[0,1]}
\]
defines an isomorphism of vector bundles over $[0,1]$.  It now follows
after unraveling the definitions that the diagram
\eqref{eqn:HCD} commutes.
\end{proof}

If $\frak{o}_-\in\mc{O}(\widetilde{C}_-)$ and
$\frak{o}_+\in\mc{O}(\widetilde{C}_+)$, denote the corresponding glued
orientation by
\[
\frak{o}_-\#_l\frak{o}_+ \in
\mc{O}(\widetilde{C}_-\#_l\widetilde{C}_+).
\]

\subsection{Associativity of linear gluing}

We now show that the operation of gluing orientations is associative.
More precisely, consider three orientation triples $\widetilde{C}_-,
\widetilde{C}_0, \widetilde{C_+}$, such that the first $l_-$ negative ends of
$\widetilde{C}_0$ (numbered $-1,\ldots,-l_-$) agree with the first
$l_-$ positive ends of $\widetilde{C}_-$ in that order, while the
first $l_+$ positive ends of $\widetilde{C}_0$ likewise agree with the
first $l_+$ negative ends of $\widetilde{C}_+$.

\begin{lemma}
\label{lem:associative}
If $\frak{o}_\pm\in\mc{O}(\widetilde{C}_\pm)$ and
$\frak{o}_0\in\mc{O}(\widetilde{C}_0)$, then
\begin{equation}
\label{eqn:associative}
(\frak{o}_-\#_{l_-}\frak{o}_0)\#_{l_+}\frak{o}_+ =
\frak{o}_-\#_{l_-}(\frak{o}_0\#_{l_+}\frak{o}_+)
\end{equation}
in $\mc{O}(\widetilde{C}_-\#_{l_-}\widetilde{C}_0\#_{l_+}\widetilde{C}_+)$.
\end{lemma}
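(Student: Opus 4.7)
The strategy is to introduce a ``ternary'' linear gluing construction that glues all three orientation triples simultaneously, and then to identify both sides of \eqref{eqn:associative} with the resulting ternary isomorphism. To set up, choose operators $D_\pm\in\mc{D}(\widetilde{C}_\pm)$, $D_0\in\mc{D}(\widetilde{C}_0)$, and gluing parameters $R_-, R_+$ both large with respect to the pairs of matching ends. Let $C$ denote the triply glued surface $\widetilde{C}_-\#_{l_-}\widetilde{C}_0\#_{l_+}\widetilde{C}_+$ and choose a single operator $D_C\in\mc{D}(C)$ that agrees with each $D_*$ off the corresponding gluing region. Pick intermediate operators $D_{-0}$ and $D_{0+}$ on the binary gluings that are compatible with $D_C$ on their shared subsurfaces, and finite-dimensional subspaces $V_-,V_0,V_+$ such that $\Pi_{W_*}D_*$ is surjective for each $*\in\{-,0,+\}$.

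Next I would prove a three-piece analog of Proposition~\ref{prop:LGES}. Introduce cutoff functions $\beta_-,\beta_0,\beta_+:C\to[0,1]$ forming a partition of unity with $\beta_*$ concentrated on the $*$-piece, generalizing the $\beta_\pm$ of \S\ref{sec:LGES} in the obvious way across both gluing regions. Writing every $\psi\in L^2_1(E_C)$ as $\beta_-\psi_-+\beta_0\psi_0+\beta_+\psi_+$, one obtains three operators $\Theta_*$ with $D_C\psi=\sum_* \beta_*\Theta_*$ as in \eqref{eqn:definethetas}. Since the two gluing regions are geometrically disjoint, the isomorphisms $\mc{F}_k$ used in Lemma~\ref{lem:uniquepsi} act independently on the two regions, yielding a ternary version of Lemma~\ref{lem:uniquepsi}. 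The four-step argument of Proposition~\ref{prop:LGES} then produces a ternary exact sequence
\[
0\to\Ker(D_C)\to \bigoplus_{*}\Ker(\Pi_{W_*}D_*)\to \bigoplus_{*} V_*\to\Coker(D_C)\to 0,
\]
inducing a ternary gluing isomorphism $\mc{O}(D_-)\tensor\mc{O}(D_0)\tensor\mc{O}(D_+)\simeq \mc{O}(D_C)$.

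To identify the left-hand side of \eqref{eqn:associative} with this ternary isomorphism, take the stabilizing subspace for $D_{-0}$ to be the image of $V_-\oplus V_0$ under a cutoff embedding into $L^2(T^{0,1}(C_-\#_{l_-}C_0)\tensor E_{-0})$ analogous to the map $(\theta_-,\theta_+)\mapsto \beta_-\theta_-+\beta_+\theta_+$ from Step 1 of the proof of Lemma~\ref{lem:HI}, and similarly take the stabilizing subspace for $D_C$ in the $(D_{-0},D_+)$ gluing to be the image of $(V_-\oplus V_0)\oplus V_+$. Working throughout with the kernel-level map $\Phi$ introduced in that proof, the binary exact sequence for $(D_-,D_0)\Rightarrow D_{-0}$ and the binary sequence for $(D_{-0},D_+)\Rightarrow D_C$ fit into a commutative diagram with the ternary exact sequence, with all vertical maps isomorphisms. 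A diagram chase as in Step 1 of the proof of Lemma~\ref{lem:HI} then shows that the composite of the two binary gluing isomorphisms equals the ternary one. The symmetric construction with $D_{0+}$ playing the role of $D_{-0}$ gives the same identification for the right-hand side of \eqref{eqn:associative}, so the two sides agree.

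The main obstacle is a bookkeeping one: making sure that the cutoff functions, stabilizing subspaces, and orderings of factors chosen for the two binary gluings are sufficiently compatible that the diagrams commute on the nose, rather than merely up to a global sign. Two tools make this manageable. First, by working with the canonical kernel-level map $\Phi$ rather than the exact sequence itself, the identifications become associative at the vector-space level, so associativity of the induced orientation isomorphisms is automatic. Second, Lemma~\ref{lem:HI} guarantees that the final isomorphism $\mc{O}(\widetilde{C}_-)\tensor\mc{O}(\widetilde{C}_0)\tensor\mc{O}(\widetilde{C}_+)\simeq\mc{O}(\widetilde{C}_-\#_{l_-}\widetilde{C}_0\#_{l_+}\widetilde{C}_+)$ is independent of all the auxiliary choices (operators, subspaces, gluing parameters, cutoffs), so once associativity is verified for one convenient choice it holds in general.
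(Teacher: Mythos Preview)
Your proposal is correct and follows essentially the same approach as the paper: introduce a simultaneous triple linear gluing with its own exact sequence and kernel-level map $\widetilde{\Phi}$, then show that each of the two iterated binary gluings factors through $\widetilde{\Phi}$ via a commutative diagram of $\Phi$-maps. The paper carries this out exactly as you outline, taking $V_{0-}$ to be the cutoff image of $V_-\oplus V_0$ and verifying that both compositions $\Phi\circ\Phi$ equal $\widetilde{\Phi}$ on the nose.
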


\begin{proof}
The plan is to relate both sides of \eqref{eqn:associative} to an
analogue of linear gluing which glues the three orientation triples
$\widetilde{C}_-,\widetilde{C}_0,\widetilde{C}_+$ together
simultaneously.

{\em Step 1.\/} We begin by explaining the triple linear gluing
operation, in somewhat more detail than is necessary here, because it
will play an important role in the proof of Theorem~\ref{thm:count} in
\S\ref{sec:HES}.

Let $C\eqdef C_-\#_{l_-}C_0\#_{l_+}C_+$ denote the glued curve, and
define functions $\beta_-,\beta_0,\beta_+:C\to[0,1]$ as follows.  For
$k\in\{-1,\ldots,-l_-\}$, let $Z_k$ denote the cylinder where the
$-k^{th}$ end of $C_-$ is glued to the $k^{th}$ end of $C_0$.  For
$k\in\{1,\ldots,l_+\}$, let $Z_k$ denote the cylinder where the
$k^{th}$ end of $C_0$ is glued to the $-k^{th}$ end of $C_+$.  For
each $k$ identify
\[
Z_k \simeq [-2R, 2R] \times S^1.
\]
On the cylinder $Z_k$ for
$k\in\{-1,\ldots,-l_-\}$, define
\[
\beta_-(s,t)\eqdef\beta(s/R), \quad\quad \beta_0(s,t)\eqdef
\beta(-s/R), \quad\quad \beta_+(s,t)\eqdef 0,
\]
where $\beta$ is the function defined in \S\ref{sec:LGES}.  On the
cylinder $Z_k$ for $k\in\{1,\ldots,l_+\}$, define
\[
\beta_-(s,t)\eqdef 0, \quad\quad \beta_0(s,t) \eqdef \beta(s/R),
\quad\quad \beta_+(s,t)\eqdef \beta(-s/R).
\]
On $C_-$, off of the first $l_-$ positive ends, define $\beta_-\eqdef
1$ and $\beta_0,\beta_+\eqdef 0$.  On $C_0$, off the first $l_-$
negative ends and the first $l_+$ positive ends, define $\beta_0\eqdef
1$ and $\beta_-,\beta_+\eqdef 0$.  On $C_+$, off of the first $l_+$
negative ends, define $\beta_+\eqdef 1$ and $\beta_-,\beta_0\eqdef 0$.

Choose operators $D_\pm\in\mc{D}(\widetilde{C}_\pm)$ and
$D_0\in\mc{D}(\widetilde{C}_0)$.  Let
\[
D_C\in\mc{D}(\widetilde{C}_-\#_{l_-}\widetilde{C}_0\#_{l_+}\widetilde{C}_+)
\]
be an operator that agrees with $D_-$ on $C_-$ off of the first $l_-$
positive ends, agrees with $D_0$ on $C_0$ off of the first $l_-$
negative ends and the first $l_+$ positive ends, and agrees with $D_+$
on $C_+$ off of the first $l_+$ negative ends.  Assume that $R$ is
large and that $|D_C-D_\pm|$ and $|D_C-D_0|$ are small on the cylinders $Z_k$
for $k\in\{\pm 1,\ldots,\pm l_\pm\}$.

Now consider $\psi_\pm\in L^2_1(E_\pm)$ and $\psi_0\in L^2_1(E_0)$,
and define
\begin{equation}
\label{eqn:psi3}
\psi\eqdef \beta_-\psi_- + \beta_0\psi_0 + \beta_+\psi_+ \in L^2_1(E_C).
\end{equation}
We can then express
\[
D_C\psi = \beta_-\Theta_-(\psi_-,\psi_0) +
\beta_0\Theta_0(\psi_-,\psi_0,\psi_+) + \beta_+\Theta_+(\psi_0,\psi_+),
\]
where the $\Theta$'s are defined by the obvious analogue of
\eqref{eqn:definethetas}.

Now choose finite dimensional subspaces $V_\pm\subset
L^2(T^{0,1}C_\pm\tensor E_\pm)$ and $V_0\subset L^2(T^{0,1}C_0\tensor
E_0)$ such that if $W_\pm$ and $W_0$ denote their orthogonal
complements, then $\Pi_{W_{\pm}}D_\pm$ and $\Pi_{W_0}D_0$ are
surjective.  Then the exact sequence \eqref{eqn:LGES} has a
straightforward generalization to an exact sequence
\begin{gather}
\nonumber
0 \longrightarrow \Ker(D_C) \longrightarrow \Ker(\Pi_{W_-}D_-) \oplus
\Ker(\Pi_{W_0}D_0) \oplus \Ker(\Pi_{W_+}D_+) \longrightarrow
\quad\quad\;\\
\label{eqn:TLG}
\quad\quad\quad\quad\quad\longrightarrow V_-\oplus V_0\oplus
V_+ \longrightarrow \Coker(D_C) \longrightarrow 0.
\end{gather}
As before, this exact sequence induces a homotopy invariant isomorphism
\begin{equation}
\label{eqn:hii}
\mc{O}(D_-)\tensor\mc{O}(D_0)\tensor\mc{O}(D_+) \simeq \mc{O}(D_C).
\end{equation}

As in \S\ref{sec:GO}, one can give an alternate description of the
isomorphism \eqref{eqn:hii} as follows.  Let
\[
V_C \eqdef \left\{\beta_-\theta_-+\beta_0\theta_0+\beta_+\theta_+\mid
\theta_-\in V_-,\theta_0\in V_0,\theta_+\in V_+\right\}.
\]
Choose $V_\pm,V_0$ such that the map $V_-\oplus V_0\oplus V_+\to V_C$
is an isomorphism, and let $W_C$ denote the orthogonal complement of
$V_C$.  The map \eqref{eqn:altphi} then has an obvious analogue
\[
\widetilde{\Phi}: \Ker(\Pi_{W_C}D_C) \longrightarrow
\Ker(\Pi_{W_-}D_-)\oplus \Ker(\Pi_{W_0}D_0) \oplus
\Ker(\Pi_{W_+}D_+),
\]
which is an isomorphism and induces the map \eqref{eqn:hii} on
orientations.

{\em Step 2.\/} We now relate triple linear gluing to the composition
of two ordinary linear gluings.  Let $D_{0-}$ denote the operator on
$C_-\#_{l_-} C_0$ that agrees with $D_0$ on the ends $1,\ldots,l$ of
$C_0$, and that agrees with $D_C$ on the rest of $C_-\#_{l_-} C_0$.
Let $\beta_-,\beta_{0-}:C_-\#_{l_-} C_0\to[0,1]$ denote the cutoff
functions for this gluing, and let
\[
V_{0-} \eqdef \left\{\beta_-\theta_- + \beta_{0-}\theta_0 \mid
\theta_-\in V_-,\theta_0\in V_0\right\}.
\]
Define $D_{0+}, \beta_{0+}, V_{0+}$ likewise for $C_0\#_{l_+}C_+$.
Tracing through the definitions shows that the diagram

\begin{footnotesize}
\[
\begin{CD}
\Ker(\Pi_{W_C}D_C) @>{\Phi}>> \Ker(\Pi_{W_{0-}}D_{0-}) \oplus
\Ker(\Pi_{W_+}D_+) \\
@VV{\Phi}V @VV{\Phi\times\text{id}}V \\
\Ker(\Pi_{W_-}D_-)\oplus \Ker(\Pi_{W_{0+}}D_{0+}) @>{\text{id}\times\Phi}>> 
\Ker(\Pi_{W_-}D_-)\oplus \Ker(\Pi_{W_0}D_0) \oplus
\Ker(\Pi_{W_+}D_+)
\end{CD}
\]
\end{footnotesize}

\noindent
commutes, because both compositions are equal to the map
 $\widetilde{\Phi}$.  The lemma follows.
\end{proof}

\subsection{Orienting the moduli spaces}
\label{sec:orienting}

We now use the linear gluing operation to orient all moduli spaces
of immersed, unobstructed pseudoholomorphic curves in $\R\times Y$, so
that the orientations behave well under gluing.  We follow the
approach of \cite{bm}, but with some slightly different choices.

\paragraph{Orienting the Fredholm operators.}
For each orientation triple $\widetilde{C}=(C,E,\{S_k\})$, we will choose an
orientation $\frak{o}_{\widetilde{C}}\in\mc{O}(\widetilde{C})$.  We
want these choices to satisfy four axioms.  The first axiom concerns
the complex linear case:
\begin{description}
\item{(OR1)}
If $C$ has no ends, and if $D\in\mc{D}(\widetilde{C})$ is $\C$-linear,
then $\frak{o}_{\widetilde{C}}$ corresponds to the canonical
orientation of $\mc{O}(D)$ coming from the complex vector space
structure on $\Ker(D)$ and $\Coker(D)$.
\end{description}
The second axiom describes the behavior of the orientations under
``complete'' gluing, where we glue all the positive ends of one curve
to all the negative ends of another:
\begin{description}
\item{(OR2)}
If $\widetilde{C}_-$ has exactly $l$ positive ends, if
$\widetilde{C}_+$ has exactly $l$ negative ends, and if the $k^{th}$
positive end of $\widetilde{C}_-$ agrees with the $k^{th}$ negative end
of $\widetilde{C}_+$ for each $k=1,\ldots,l$, then
\[
\frak{o}_{\left(\widetilde{C}_-\#_l\widetilde{C}_+\right)} =
\frak{o}_{\widetilde{C}_-} \#_l
\frak{o}_{\widetilde{C}_+}.
\]
\end{description}

Before stating the third axiom, we need to introduce some mod 2
indices.  Consider a loop of symmetric $2n\times 2n$ matrices
$\{S(t)\}_{t\in S^1}$ such that zero is not an eigenvalue of the
operator $i\partial_t+S$.  Define
\[
\varepsilon(s) \eqdef n + \mu\left(\{\Psi(t)\}_{t\in [0,2\pi]}\right)
\mod 2,
\]
where $\Psi(t)$ is the path of symplectic matrices with $\Phi(0)=1$
generated by $S(t)$ as in equation (I.2.7), and $\mu$ denotes the
Maslov index.  Given an orientation triple
$\widetilde{C}=(C,E,\{S_k\})$, define
\[
\varepsilon_-(\widetilde{C}) \eqdef
\sum_{k=-1}^{-N_-}\varepsilon(S_k), \quad\quad
\varepsilon_+(\widetilde{C}) \eqdef \sum_{k=1}^{N_+}\varepsilon(S_k),
\quad\quad \varepsilon(\widetilde{C}) \eqdef
\varepsilon_+(\widetilde{C}) - \varepsilon_-(\widetilde{C}).
\]

The third axiom concerns the disjoint union
$\widetilde{C}_1\sqcup\widetilde{C}_2$ of two orientation triples
$\widetilde{C}_1$ and $\widetilde{C}_2$.  Here the positive or
negative ends of $\widetilde{C}_1\sqcup\widetilde{C}_2$ are ordered so
that the ends of $\widetilde{C}_1$ come first, followed by the ends of
$\widetilde{C}_2$, in their given order.  If
$D_1\in\mc{D}(\widetilde{C}_1)$ and $D_2\in\mc{D}(\widetilde{C}_2)$,
then \eqref{eqn:conbas} defines a canonical isomorphism
$\mc{O}(D_1\oplus D_2)=\mc{O}(D_1)\tensor\mc{O}(D_2)$, and hence
$\mc{O}(\widetilde{C}_1\sqcup\widetilde{C}_2)=\mc{O}(\widetilde{C}_1)
\tensor\mc{O}(\widetilde{C}_2)$. The axiom is now:

\begin{description}
\item{(OR3)}
\[
\frak{o}_{\widetilde{C}_1 \sqcup \widetilde{C}_2} =
(-1)^{\varepsilon_-(\widetilde{C}_1)\varepsilon(\widetilde{C}_2)}
\frak{o}_{\widetilde{C}_1}\tensor\frak{o}_{\widetilde{C}_2}.
\]
\end{description}

To find orientations satisfying the above three axioms, we first
choose arbitrary orientations for certain special orientation triples.
For each path of symmetric matrices $\{S(t)\}_{t\in S^1}$ as above,
consider the orientation triple
\[
\widetilde{C}_S\eqdef(\C,\C\times\C^n,S(\cdot)),
\]
where the end of $\C$ is identified with $[0,\infty)\times S^1$ via
the exponential function.

\begin{lemma}
Given orientation choices $\frak{o}_{\widetilde{C}_S}$ for each $S$ as
above, there is a unique way to extend these to choose orientations
$\frak{o}_{\widetilde{C}}$ for all orientation triples
$\widetilde{C}=(C,E,\{S_k\})$, such that axioms (OR1)--(OR3) hold.
\end{lemma}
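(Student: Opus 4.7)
The plan is to reduce the orientation of an arbitrary triple to that of a closed triple, where (OR1) fixes the orientation canonically via a $\C$-linear representative, by capping off every end with a plane carrying a specified orientation.

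First I would extend the choices $\{\frak{o}_{\widetilde{C}_S}\}$ to ``negative caps''. For each loop $S$, let $\widetilde{C}_S^{\mathrm{op}} \eqdef (\C,\C\times\C^n,S)$, with its single end identified with $(-\infty,0]\times S^1$ so that it is a plane with one negative end. Then $\widetilde{C}_S \#_1 \widetilde{C}_S^{\mathrm{op}}$ is a closed triple (the underlying surface is a $2$-sphere) and carries a canonical orientation by (OR1). Since linear gluing is bijective on orientations (Lemma~\ref{lem:HI}), requiring (OR2) for this gluing uniquely determines $\frak{o}_{\widetilde{C}_S^{\mathrm{op}}}$ from $\frak{o}_{\widetilde{C}_S}$.

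Next, for a general triple $\widetilde{C}$ I would iteratively $\#_1$-glue the appropriate cap onto each end in a fixed order, producing a closed triple $\widetilde{C}^{\#}$. Lemma~\ref{lem:associative} implies that the iterated gluing is independent of bracketing, and one checks in the same way that the resulting glued orientation does not depend on the order in which caps are attached. Then $\frak{o}_{\widetilde{C}^\#}$ is canonical via (OR1), the iterated instance of (OR2) reads
\[
\frak{o}_{\widetilde{C}^{\#}} = \frak{o}_{\mathrm{caps}} \# \frak{o}_{\widetilde{C}},
\]
and bijectivity of linear gluing uniquely produces $\frak{o}_{\widetilde{C}}$. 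Any extension satisfying the axioms must agree with this one, giving uniqueness.

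For existence, (OR1) is immediate. Axiom (OR2) for a gluing $\widetilde{C}_-\#_l\widetilde{C}_+$ reduces, after capping all remaining ends, to the equality of two ways of computing $\frak{o}_{(\widetilde{C}_-\#_l\widetilde{C}_+)^{\#}}$: either cap $\widetilde{C}_-\#_l\widetilde{C}_+$ directly, or cap the unmatched ends of $\widetilde{C}_-$ and $\widetilde{C}_+$ first and then perform the $\#_l$ gluing. Associativity of linear gluing (Lemma~\ref{lem:associative}) matches these. For (OR3), observe that capping respects disjoint unions, $(\widetilde{C}_1\sqcup\widetilde{C}_2)^{\#} = \widetilde{C}_1^{\#}\sqcup\widetilde{C}_2^{\#}$, and that by (OR1) the orientation of a disjoint union of closed triples is the tensor product of complex orientations with no sign. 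Comparing the two recipes for $\frak{o}_{\widetilde{C}_1\sqcup\widetilde{C}_2}$---directly versus via the individual $\widetilde{C}_i^{\#}$---produces a Koszul sign that arises from reordering cap-orientation factors to match the ordering conventions for the ends of $\widetilde{C}_1\sqcup\widetilde{C}_2$.

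The main obstacle is tracking these Koszul signs and verifying they give exactly $(-1)^{\varepsilon_-(\widetilde{C}_1)\varepsilon(\widetilde{C}_2)}$. The key identity is that for any operator $D\in\mc{D}(\widetilde{C}_S)$ or $\mc{D}(\widetilde{C}_S^{\mathrm{op}})$ one has $\op{ind}(D)\equiv\varepsilon(S)\pmod{2}$; this follows from the standard index formula expressing the index in terms of the Maslov indices at the ends. Interchanging orientation factors of two operators of indices $d_1$ and $d_2$ introduces a Koszul sign $(-1)^{d_1d_2}$, and summing the parities of the necessary reorderings yields the advertised sign in (OR3).
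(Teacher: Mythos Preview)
Your outline is essentially correct and reconstructs the Bourgeois--Mohnke argument that the paper simply cites (the paper's own proof reads: ``This follows from the argument in \cite[\S3]{bm}, using the associativity property \eqref{eqn:associative}''). So you and the paper are taking the same route; you have just filled in what the citation contains.

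One point of imprecision is worth flagging. Axiom (OR2) governs only \emph{complete} gluings, where all positive ends of $\widetilde{C}_-$ are matched with all negative ends of $\widetilde{C}_+$. Your iterated $\#_1$-capping of a triple with several positive ends is therefore not literally an instance of (OR2), so the line ``the iterated instance of (OR2) reads $\frak{o}_{\widetilde{C}^{\#}}=\frak{o}_{\mathrm{caps}}\#\frak{o}_{\widetilde{C}}$'' needs justification. The clean fix (and what Bourgeois--Mohnke actually do) is to cap all positive ends at once by gluing $\widetilde{C}$ to the disjoint union of the relevant negative caps, and likewise for the negative ends; each of these \emph{is} a complete gluing, so (OR2) applies directly, and the orientation of the disjoint union of caps is then governed by (OR3). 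Your associativity remark, together with a routine check that disjoint union is compatible with linear gluing, shows this agrees with your iterated $\#_1$ description, so nothing is lost---but the uniqueness argument should be phrased via the disjoint-union-of-caps step so that it rests squarely on the stated axioms.

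Your index-parity claim $\op{ind}(D)\equiv\varepsilon(S)\pmod 2$ for a single plane is correct (for the positive cap $\op{ind}=n+\mu(S)$, for the negative cap $\op{ind}=n-\mu(S)$), and the Koszul bookkeeping you describe does produce the sign in (OR3); this is exactly the computation underlying the conventions in \cite{bm}, with the caveat the paper notes about reversing the ordering of negative ends.
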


\begin{proof}
This follows from the argument in \cite[\S3]{bm}, using the
associativity property \eqref{eqn:associative}.  Note that to
translate between the conventions in \cite{bm} and those here, one
needs to reverse the ordering of the negative ends of each $C$.
\end{proof}

\paragraph{Orienting the moduli spaces.}
We now explain how a system of orientations as above with $n=1$
orients the moduli spaces of immersed, unobstructed $J$-holomorphic
curves in $\R\times Y$.  Fix a parametrization of each Reeb orbit by a
map $\alpha:S^1\to\R\times Y$ such that $\partial_t\alpha$ is a
constant positive multiple of the Reeb vector field.  Also fix a
trivialization of the contact plane field $\xi$ over each Reeb orbit
in $Y$.  In this trivialization, the linearized Reeb flow on the
contact planes along $\alpha$ is given by $\partial_t - J_0
S_\alpha(t)$ where $S_\alpha(t)$ is a $2\times 2$ symmetric matrix and
$J_0\eqdef
\begin{pmatrix} 0 & -1 \\ 1 & 0 \end{pmatrix}$.


Now let $\alpha_+=(\alpha_1,\ldots,\alpha_{N_+})$ and
$\alpha_-=(\alpha_{-1},\ldots,\alpha_{-N_+})$ be ordered lists of Reeb
orbits, let $C\in\mc{M}^J(\alpha_+,\alpha_-)$ be immersed, and let
$N_C$ denote the normal bundle to $C$.  Over the $k^{th}$ end of $C$,
trivialize $N_C$ by using a choice of coordinates $(z,w)$ in a
neighborhood of the corresponding Reeb orbit provided by
Lemma~\ref{lem:EM}, and identifying $N_C$ with the tangent spaces to
the $z=\text{constant}$ disks.  Then the deformation operator $D_C$ is
an element of the space $\mc{D}(\widetilde{C})$ where
$\widetilde{C}=(C,N_C,\{S_{\alpha_k}\})$.  Thus the chosen orientation
$\frak{o}_{\widetilde{C}}$ determines an orientation in $\mc{O}(D_C)$.
If $C$ is unobstructed so that $\Coker(D_C)=\{0\}$, then this orients
$\Ker(D_C)=T_C\mc{M}^J(\alpha_+,\alpha_-)$.

\begin{definition}
A system of {\em coherent orientations\/} of the moduli spaces of
immersed unobstructed $J$-holomorphic curves in $\R\times Y$ is a
system of orientations determined as above from orientations
$\frak{o}_{\widetilde{C}}$ for all $\widetilde{C}$ with $n=1$
satisfying axioms (OR1)--(OR3) above, together with axiom (OR4) below.
\end{definition}

To state axiom (OR4), consider a family $\{S(t)\}_{t\in S^1}$ of
$2\times 2$ symmetric matrices such that the associated symplectic
matrix $\Phi(2\pi)$ is elliptic, i.e.\ has eigenvalues $e^{\pm2\pi
i\theta}$ for some $\theta\in\R\setminus\Z$.  We now describe a
canonical orientation in $\mc{O}(\widetilde{C}_S)$.

First consider the case where $S(t)=\theta$ for all $t$.  Then there
is a canonical orientation in $\mc{O}(\widetilde{C}_\theta)$, because
the operator
\[
D\eqdef\overline{\partial} + \theta d\overline{z} \in
\mc{D}(\widetilde{C}_\theta)
\]
is complex linear.  In general, if $\Phi(2\pi)$ is elliptic, then
there is some $\theta\in\R\setminus\Z$ such that
\begin{description}
\item{(*)} the path $\{\Phi(t)\}_{t\in[0,2\pi]}$ is homotopic rel
  endpoints to $\left\{
e^{2\pi i\theta t}
\right\}_{t\in[0,2\pi]}$.
\end{description}
Here $e^{2\pi i \theta t}$ is regarded as an element of $\op{Sp}(2)$
via the inclusion $\op{U}(1) = \op{SO}(2) \subset \op{Sp}(2)$.  A
homotopy as just described can be used to define a continuous path of
Fredholm operators, and hence an isomorphism
\begin{equation}
\label{eqn:hdi}
\mc{O}(\widetilde{C}_S)\simeq
\mc{O}(\widetilde{C}_\theta)
\end{equation}
via \eqref{eqn:continuation}.  Moreover, for any given
$\theta\in\R\setminus\Z$, the set of families $\{S(t)\}$ satisfying
(*) is contractible, so the isomorphism \eqref{eqn:hdi} is canonical.
Thus the canonical orientation in $\mc{O}(\widetilde{C}_\theta)$
induces a canonical orientation in $\mc{O}(\widetilde{C}_S)$.
Our last axiom is now:

\begin{description}
\item{(OR4)}
If $\{S(t)\}_{t\in S^1}$ is a family of $2\times 2$ symmetric matrices
such that the associated $2\times 2$ symplectic matrix $\Phi(2\pi)$ is
elliptic, then $\frak{o}_{\widetilde{C}_S}$ is the canonical choice
described above.
\end{description}

\begin{remark}
\label{rem:morgen}
Axioms (OR1)--(OR4) imply the following generalization of (OR4).
Consider an orientation triple $\widetilde{C}=(C,E,\{S_k\})$ such that
each end is elliptic, i.e.\ for each $k$, if
$\{\Phi_k(t)\}_{t\in[0,2\pi]}$ is the path of symplectic matrices
determined by $S_k$, then $\Phi_k(2\pi)$ has eigenvalues on the unit
circle.  Then there is a canonical orientation in
$\mc{O}(\widetilde{C})$, obtained by deforming to the complex linear
case as in \eqref{eqn:hdi}.  (For example this is how we orient the
operator $D_\Sigma$, and with it the obstruction bundle $\mc{O}$, in
\S{I.2.6}.)  Axioms (OR1)--(OR4) imply that $\frak{o}_{\widetilde{C}}$
agrees with this canonical orientation.
\end{remark}

\section{Counting ends of the index 2 moduli space}
\label{sec:signs}

Let $(U_+,U_-)$ be a gluing pair satisfying \eqref{eqn:i} and
\eqref{eqn:ii}.  Throughout this section, fix $h<\lambda/4\Lambda$ in
the gluing construction as in \S\ref{sec:SDR}, and fix a system of
coherent orientations.  Also assume that $J$ is generic so that all
non-multiply-covered $J$-holomorphic curves are unobstructed, all
non-multiply-covered $J$-holomorphic curves of index $\le 2$ are
immersed (see Theorem~\ref{thm:immersed}), the obstruction section
$\frak{s}$ vanishes only for simple branched covers (see
Lemma~\ref{lem:zss}) where it is smooth (by Lemma~\ref{lem:sos}), and
the collection of eigenfunctions $\gamma$ in
\S\ref{sec:DLS} is admissible (see Propositions~\ref{prop:G1} and
\ref{prop:G2}).

Recall that $\alpha_+$ denotes the list of Reeb orbits corresponding
to the positive ends of $U_+$, and $\alpha_-$ denotes the list of Reeb
orbits corresponding to the negative ends of $U_-$.  This section will
relate the count of zeroes of $\frak{s}$ defined in
\S\ref{sec:SDR}, to a count of those ends
of the index $2$ part of the moduli space
$\mc{M}^J(\alpha_+,\alpha_-)/\R$ that are ``close to breaking'' into
$U_+$ and $U_-$ along branched covers of $\R\times\alpha$.  This
entails putting together the previous results, and then comparing
signs of zeroes of $\frak{s}$ with signs associated to these ends via
the coherent orientations.

\subsection{Statement of the result}

Recall the definition of $\mc{G}_\delta(U_+,U_-)$ from
\S\ref{sec:gluingStatement};  this describes curves that are ``close
to breaking'' in the above sense.  In \S{I.1.3} we defined an integer
$\#G(U_+,U_-)$ which counts ends of the index $2$ moduli space in
$\mc{G}_\delta(U_+,U_-)/\R$.  We recall the definition here for convenience:

\begin{definition}
\label{def:count}
Let $0<\delta'<\delta$ be small, and let
$\mc{U}\subset\mc{M}^J(\alpha_+.\alpha_-)/\R$ be an open set such
that:
\begin{itemize}
\item
$\mc{G}_{\delta'}(U_+,U_-)/\R \subset \mc{U} \subset
 \mc{G}_\delta(U_+,U_-)/\R$.
\item
The closure $\overline{\mc{U}}$ has only finitely many boundary points.
\end{itemize}
Define $\#G(U_+,U_-)\in\Z$ to be minus the signed count of boundary
points of $\overline{\mc{U}}$, where $\overline{\mc{U}}$ is oriented
via the coherent orientations.  (The orientation on $\mc{M}^J/\R$ is
induced from that of $\mc{M}^J$ via the ``$\R$-direction first''
convention, see \S{I.1.1}.)  Lemma~{I.1.11} implies that if $\delta>0$
is sufficiently small, then this count is well-defined and independent
of choices.
\end{definition}

The main result of this section is the following theorem, which
relates $\#G(U_+,U_-)$ to the count of zeroes of $\frak{s}$ that was
defined in \S\ref{sec:SDR}.
 
\begin{theorem}
\label{thm:count}
In the gluing construction, if we choose $r$ sufficiently large, then
for $R$ sufficiently large,
\[
\#G(U_+,U_-) = \epsilon(U_+)\cdot\epsilon(U_-)
\cdot\#(\frak{s}^{-1}(0)\cap\mc{V}_R/\R).
\]
\end{theorem}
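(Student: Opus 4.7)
The plan is to exhibit both sides as signed counts of points in the same finite set, and then to compare signs term by term at each point.

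First I would establish the underlying set-theoretic bijection. For $R \gg 1/\delta$, Theorem~\ref{thm:GT}(a) places $\mc{V}_R/\R$ inside $\mc{U}_\delta/\R$, and the gluing map $G$ injects $\frak{s}^{-1}(0)\cap\mc{V}_R/\R$ into $\mc{G}_\delta(U_+,U_-)/\R\subset\mc{U}$. Conversely, any point of $\partial\overline{\mc{U}}$ is the limit of a sequence in $\mc{U}$ converging to a broken configuration, which by the surjectivity in Theorem~\ref{thm:GT}(b) comes from $\frak{s}^{-1}(0)$ with $T_\pm\to\infty$. Using Proposition~\ref{prop:deform} to confine zeroes of $\frak{s}$ to the interior of $\mc{V}_R/\R$, and choosing $\mc{U}$ in Definition~\ref{def:count} to consist of gluings $G(T_-,T_+,\Sigma)$ with $T_++T_->R$, I would identify $\partial\overline{\mc{U}}$ with $\frak{s}^{-1}(0)\cap\mc{V}_R/\R$ as a set. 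For generic $R$ we may further assume $\frak{s}$ is transverse to zero here, so that both sides of the theorem are honest finite signed sums indexed by the same set.

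Next I would compare the signs, point by point. At a transverse zero $(T_-,T_+,\Sigma)\in\frak{s}^{-1}(0)\cap\mc{V}_R/\R$ with glued curve $C=u(T_-,T_+,\Sigma)$, the triple linear gluing of Lemma~\ref{lem:associative} decomposes the deformation operator $D_C$ as a gluing of $D_{U_+}$, $D_\Sigma$, $D_{U_-}$. The triple linear gluing exact sequence \eqref{eqn:TLG}, combined with $\Ker(D_\Sigma)=0$ and $\Coker(D_\Sigma)\simeq\mc{O}|_\Sigma$, yields a canonical isomorphism
\[
\mc{O}(D_C)\;\simeq\;\mc{O}(D_{U_+})\tensor\mc{O}(D_{U_-})\tensor\mc{O}\bigl(T_\Sigma\mc{M}\bigr)\tensor\mc{O}\bigl(\mc{O}|_\Sigma\bigr)^{\ast},
\]
in which $T_\Sigma\mc{M}$ appears because the $\mc{M}$-directions enlarge $\Ker$ of the glued operator at a transverse zero, and the factor $\mc{O}(\mc{O}|_\Sigma)^{\ast}$ is supplied by the derivative of $\frak{s}$ along $\mc{M}$. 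By Remark~\ref{rem:morgen} together with assumption \eqref{eqn:i}, $\mc{M}$ and $\mc{O}$ carry canonical complex orientations, so under this identification the local sign of the zero of $\frak{s}$ matches the sign relating the coherent orientation of $\Ker(D_C)$ to the tensor product of coherent orientations of $\Ker(D_{U_+})$ and $\Ker(D_{U_-})$.

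Finally, I would match this algebraic comparison with the geometric boundary sign. The orientation of $\mc{V}_R/\R$ used to count zeroes is induced from the complex orientation of $\mc{M}$ via \eqref{eqn:identifyV}; the orientation of $\partial\overline{\mc{U}}$ used to define $\#G(U_+,U_-)$ is the outward boundary orientation of $\mc{M}^J(\alpha_+,\alpha_-)/\R$ along the ``$T_-+T_+\to\infty$'' direction. The ``$\R$-direction first'' convention on $\mc{M}^J/\R$, applied together with the decomposition of the $T_-,T_+$ parameters into an overall $\R$-translation and the slice $\mc{V}_R$, produces the outward boundary orientation from the complex orientation on $\mc{V}_R/\R$, up to a universal sign. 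The hard part will be to confirm that the remaining algebraic signs, involving the $(-1)^{\varepsilon_-(\cdot)\varepsilon(\cdot)}$ factor from axiom (OR3) when partitioning the Reeb orbits of $\alpha_+,\alpha_-$ between $U_\pm$ and the branched cover, cancel to produce precisely the factor $\epsilon(U_+)\epsilon(U_-)$. Here assumption \eqref{eqn:i} is essential: all ends of $\Sigma$ are at covers of the single elliptic orbit $\alpha$, so all cokernel and tangent orientations on the $\mc{O}$ and $\mc{M}$ side are canonical complex ones (Remark~\ref{rem:morgen}) and introduce no further choices. This sign bookkeeping, which occupies most of the work, completes the identification
\[
\#G(U_+,U_-)\;=\;\epsilon(U_+)\epsilon(U_-)\cdot\#\bigl(\frak{s}^{-1}(0)\cap\mc{V}_R/\R\bigr).
\]
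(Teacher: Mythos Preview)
Your overall strategy matches the paper's: both sides are shown to be signed counts over the same finite set $\frak{s}^{-1}(0)\cap\mc{V}_R/\R$ (this is the paper's Lemma~\ref{lem:count0}), and then the signs are compared pointwise (Lemma~\ref{lem:count1}). The set-theoretic part of your argument is essentially correct.

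The gap is in your sign comparison. You invoke the triple linear gluing exact sequence~\eqref{eqn:TLG} to produce an orientation isomorphism, and then assert that the factor $T_\Sigma\mc{M}$ ``appears because the $\mc{M}$-directions enlarge $\Ker$'' and that $\mc{O}(\mc{O}|_\Sigma)^*$ ``is supplied by the derivative of $\frak{s}$''. But this conflates two genuinely different isomorphisms. The paper makes them explicit as $\Phi_1$ in~\eqref{eqn:Phi1}, coming from the exact sequence
\[
0\longrightarrow\Ker(D_C)\stackrel{\mc{I}}{\longrightarrow}\R^2\oplus T_\Sigma\mc{M}\stackrel{\nabla\frak{s}}{\longrightarrow}\Coker(D_\Sigma)\longrightarrow 0
\]
built from the derivative of the \emph{nonlinear} gluing map, and $\Phi_2$ in~\eqref{eqn:Phi2}, coming from the abstract \emph{linear} gluing sequence~\eqref{eqn:TLG2}. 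Showing $\Phi_1=\Phi_2$ is the actual content of the theorem, and it is not automatic: the paper devotes \S\ref{sec:tsnkm}--\S\ref{sec:HES} to it. One must first construct an explicit isomorphism $\rho:\Ker(\Pi_{W_0}D_0)\to T_\Sigma\mc{M}$ by reading off the displacement of ramification points (Lemma~\ref{lem:tsnkm}), and then build a homotopy of exact sequences (Lemma~\ref{lem:last}(b)) interpolating between the $\nabla\frak{s}$-sequence and the linear gluing sequence via operators $\Theta^\tau$ that blend the derivative of the nonlinear $\Theta$'s from~\eqref{eqn:7} with the linear $\Theta$'s from~\eqref{eqn:definethetas}. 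This analytic step, not sign bookkeeping, is where the work lies.

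Relatedly, you misidentify the ``hard part'' as tracking the $(-1)^{\varepsilon_-\varepsilon}$ factors from (OR3). Under assumption~\eqref{eqn:i} every end in sight is at a cover of the elliptic orbit $\alpha$, so all the $\varepsilon$'s vanish and that bookkeeping is trivial; the canonical orientation on $D_0$ is handled directly by Remark~\ref{rem:morgen} and Lemma~\ref{lem:last}(a).
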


Here $\epsilon(U_\pm)$ denotes the sign associated to $U_\pm$ by the
system of coherent orientations.  Theorem~\ref{thm:count}, together
with Corollary~\ref{cor:ecd} (see Remark~\ref{rem:cf}), implies the main
Theorem~\ref{thm:main}.

\subsection{Reducing to a local statement}

We now use the gluing theorem~\ref{thm:GT} to reduce
Theorem~\ref{thm:count} to a ``local'' statement involving comparing
orientations.

Recall from Theorem~\ref{thm:GT}(b) that if $\delta>0$ is sufficiently
small with respect to $r$, then the gluing map identifies
$\mc{G}_\delta(U_+,U_-)$ with a subset of $\frak{s}^{-1}(0)$.
Moreover, by Theorem~\ref{thm:GT}(a) and
Proposition~\ref{prop:deform}, if $R$ is sufficiently large then the
gluing map sends $\frak{s}^{-1}(0)\cap\mc{V}_R$ into
$\mc{G}_\delta(U_+,U_-)$.  For such $R$, our fixed coherent
orientations determine an orientation of the $1$-manifold
$\frak{s}^{-1}(0)/\R$ in a neighborhood of $\mc{V}_R/\R$.

We will see in \S\ref{sec:ECO} and \S\ref{sec:HES} that our
assumptions on $J$ imply that $\frak{s}$ is transverse to the zero
section, so that $\frak{s}^{-1}(0)$ is smooth.  Choose $R$ large as
above, and generic so that $\frak{s}^{-1}(0)/\R$ intersects
$\mc{V}_R/\R$ transversely in a finite set of points.  For each point
\[
(T_-,T_+,[\Sigma])\in\frak{s}^{-1}(0)\cap\mc{V}_R/\R,
\]
define a sign
\[
\epsilon_{\mc{M}}(T_-,T_+,[\Sigma])\in\{\pm1\}
\]
as follows: $\epsilon_{\mc{M}}(T_-,T_+,[\Sigma])\eqdef +1$ if and only
if near $(T_-,T_+,[\Sigma])$, the orientation on $\frak{s}^{-1}(0)/\R$
determined by the coherent orientations points in the increasing $R$
direction.  Also, define
\[
\epsilon_{\frak{s}}(T_-,T_+,[\Sigma])\in\{\pm1\}
\]
to be the sign of $(T_-,T_+,[\Sigma])$ as a zero of $\frak{s}$, see
\S\ref{sec:SDR}.

\begin{lemma}
\label{lem:count0}
If $r$ is chosen sufficiently large in the gluing construction, and if
$R$ is sufficiently large with respect to $r$, and generic so that
$\frak{s}^{-1}(0)$ intersects $\mc{V}_R$ transversely, then
\begin{equation}
\label{eqn:epsilonlemma}
\#G(U_+,U_-) = \sum_{(T_-,T_+,[\Sigma]) \in
\frak{s}^{-1}(0)\cap\mc{V}_R/\R} \epsilon_{\mc{M}}(T_-,T_+,[\Sigma]).
\end{equation}
\end{lemma}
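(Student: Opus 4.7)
The plan is to translate $\#G(U_+,U_-)$ through the gluing map and then read off signs. By Theorem~\ref{thm:GT}(b), if $r$ is chosen sufficiently large and $\delta$ sufficiently small, the gluing map restricts to an $\R$-equivariant homeomorphism $G:\frak{s}^{-1}(0)\cap\mc{U}_\delta\to\mc{G}_\delta(U_+,U_-)$, where $\R$ acts on the source by translating the $s$ coordinate on $\mc{M}$. Passing to quotients, $G$ identifies $\frak{s}^{-1}(0)\cap\mc{U}_\delta/\R$ with the open subset $\mc{G}_\delta(U_+,U_-)/\R$ of the smooth $1$-manifold $\mc{M}^J(\alpha_+,\alpha_-)/\R$; in particular, $\frak{s}^{-1}(0)/\R$ inherits a smooth structure near this set. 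The function
\[
R'(T_-,T_+,\Sigma) \eqdef T_+ + s_+ - s_- + T_-
\]
descends to the quotient $\times_2[5r,\infty)\times(\mc{M}/\R)$ since $s_+-s_-$ is translation-invariant, so $\mc{V}_R/\R$ is the level set $\{R'=R\}$.

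Next I would select a truncation $\mc{U}$ to which Definition~\ref{def:count} applies. Proposition~\ref{prop:deform} guarantees that every zero of $\frak{s}$ satisfies $T_\pm>\lambda R'/(3\Lambda)$, and combining this with Theorem~\ref{thm:GT}(a) implies: given any $\delta''>0$, there exists $R_*$ such that every point of $\frak{s}^{-1}(0)$ with $R'\ge R_*$ corresponds under $G$ to a curve in $\mc{G}_{\delta''}(U_+,U_-)$. Fix $R$ as in the lemma statement; then choosing $\delta'$ smaller than a threshold determined by $R$, the open truncation
\[
\mc{U} \eqdef G\bigl(\{R'>R\}\cap\frak{s}^{-1}(0)/\R\bigr) \subset \mc{M}^J(\alpha_+,\alpha_-)/\R
\]
satisfies $\mc{G}_{\delta'}/\R\subset\mc{U}\subset\mc{G}_\delta/\R$. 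Under the transverse intersection hypothesis, the boundary $\partial\overline{\mc{U}}$ is a finite set, bijective under $G$ with $\frak{s}^{-1}(0)\cap\mc{V}_R/\R$. Hence this $\mc{U}$ is admissible for computing $\#G(U_+,U_-)$.

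Finally, comparing orientations gives the sign match. At a boundary point $G(T_-,T_+,[\Sigma])$, the coherent orientation on $\mc{M}^J/\R$ pulls back through $G$ to the oriented tangent vector to $\frak{s}^{-1}(0)/\R$, whose sign relative to $\partial_{R'}$ is by definition $\epsilon_\mc{M}(T_-,T_+,[\Sigma])$. Because $\overline{\mc{U}}$ is the region $\{R'\ge R\}$, the outward normal at a boundary point is the direction of decreasing $R'$; consequently the boundary-orientation sign of $(T_-,T_+,[\Sigma])$ equals $-\epsilon_\mc{M}(T_-,T_+,[\Sigma])$. The definition $\#G(U_+,U_-)\eqdef-\sum(\text{boundary signs})$ then yields
\[
\#G(U_+,U_-) = -\sum\bigl(-\epsilon_\mc{M}(T_-,T_+,[\Sigma])\bigr) = \sum \epsilon_\mc{M}(T_-,T_+,[\Sigma]),
\]
which is precisely \eqref{eqn:epsilonlemma}.

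The only real subtlety here is keeping the direction conventions straight when transferring orientations through $G$ and identifying the outward normal with the direction of decreasing $R'$; no new analytic input is needed beyond Theorem~\ref{thm:GT} and Proposition~\ref{prop:deform}. The genuinely hard sign comparison, between $\epsilon_\mc{M}$ on the one hand and $\epsilon(U_+)\epsilon(U_-)\epsilon_\frak{s}$ on the other, will have to be addressed separately in the proof of Theorem~\ref{thm:count}.
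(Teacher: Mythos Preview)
Your proposal is correct and follows essentially the same approach as the paper: define $\mc{U}$ as the image under $G$ of the zeroes with $R'>R$, verify the sandwich $\mc{G}_{\delta'}/\R\subset\mc{U}\subset\mc{G}_\delta/\R$ using Theorem~\ref{thm:GT} and Proposition~\ref{prop:deform}, and read off the boundary count. The paper is slightly more explicit about one point you gloss over: the existence of a suitable $\delta'$ with $\mc{G}_{\delta'}/\R\subset\mc{U}$ is deduced from the compactness of $\bigcup_{R'\le R}\mc{V}_{R'}/\R$ together with the homeomorphism in Theorem~\ref{thm:GT}(b), rather than from the direction of the implication you state (large $R'$ $\Rightarrow$ small $\delta''$), which by itself gives only the upper inclusion $\mc{U}\subset\mc{G}_\delta/\R$.
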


\begin{proof}
Fix $r>r_0$ sufficiently large, and $\delta>0$ sufficiently small with
respect to $r$, as in Theorem~\ref{thm:GT}(b).  By
Theorem~\ref{thm:GT}(a) and Proposition~\ref{prop:deform}, if $R$ is
sufficiently large then the gluing map sends
$\frak{s}^{-1}(0)\cap\bigcup_{R'\ge R}\mc{V}_{R'}$ into
$\mc{G}_\delta(U_+,U_-)$.  Fix a generic such $R$ so that
$\frak{s}^{-1}(0)$ intersects $\mc{V}_R$ transversely.  Define
$\mc{U}\subset\mc{M}^J(\alpha_+,\alpha_-)/\R$ to be the image of the
gluing map on $\frak{s}^{-1}(0)\cap\bigcup_{R'>R}\mc{V}_{R'}$.  By
Theorem~\ref{thm:GT}(b) and the compactness of the set $\bigcup_{R'\le
R}\mc{V}_R/\R$, there exists $\delta'\in(0,\delta)$ such that $\mc{U}$
contains all of $\mc{G}_{\delta'}(U_+,U_-)/\R$.  As in
Definition~\ref{def:count}, if $\delta$ is sufficiently small (the
$\delta$ chosen above is already small enough), then
$\#G(U_+,U_-)=-\#\partial\overline{\mc{U}}$.  But
$-\#\partial\overline{\mc{U}}$ is clearly the same as the count on the
right hand side of \eqref{eqn:epsilonlemma}.
\end{proof}

As a consequence, to prove Theorem~\ref{thm:count} it sufficies to
prove the following:

\begin{lemma}
\label{lem:count1}
If $r$ is chosen sufficiently large in the gluing construction, then with
$R$ as in Lemma~\ref{lem:count0}, for each
$(T_-,T_+,[\Sigma])\in\frak{s}^{-1}(0)\cap\mc{V}_R/\R$, we have
\[
\epsilon_{\mc{M}}(T_-,T_+,[\Sigma]) =
\epsilon(U_-)\cdot\epsilon(U_+)\cdot\epsilon_{\frak{s}}(T_-,T_+,[\Sigma]).
\]
\end{lemma}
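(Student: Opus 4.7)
The plan is to reduce the sign comparison to the linear gluing machinery of \S\ref{sec:coherent}. Fix a zero $(T_-,T_+,\Sigma)\in\frak{s}^{-1}(0)\cap\mc{V}_R$, and let $C\eqdef u(T_-,T_+,\Sigma)$ denote the resulting index 2 $J$-holomorphic curve. By Lemma~\ref{lem:zss}, $\Sigma$ has only simple ramification points; by the genericity of $J$, both $U_\pm$ are unobstructed and immersed, and $D_{U_\pm}$ has one-dimensional kernel spanned by the $\R$-translation direction. I would first apply the triple linear gluing exact sequence \eqref{eqn:TLG} to the orientation triples $\widetilde{U}_-, \widetilde{\Sigma}, \widetilde{U}_+$ over the glued curve $C$. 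Since $D_{U_\pm}$ is already surjective, one may take $V_\pm=\{0\}$, and one may take $V_0\eqdef\Coker(D_\Sigma)$ so that $\Pi_{W_0}$ is the projection away from $\Coker(D_\Sigma)$. The exact sequence then collapses to
\[
0 \longrightarrow \Ker(D_C) \longrightarrow \Ker(D_{U_-})\oplus \Ker(\Pi_{W_0}D_\Sigma) \oplus \Ker(D_{U_+}) \stackrel{\Lambda}{\longrightarrow} \Coker(D_\Sigma) \longrightarrow 0.
\]

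Next I would identify the connecting map $\Lambda$ with the linearization of $\frak{s}$ modulo the $\R$-action. Tracing through the construction of the map $\widetilde{\Phi}$ in the proof of Lemma~\ref{lem:associative}, $\Lambda$ sends $(\phi_-,\phi_0,\phi_+)$ to $\Pi_{V_0}\Theta_0(\phi_-,\phi_0,\phi_+)$, where $\Theta_0$ is the linearized version of $\Theta_\Sigma$ from \eqref{eqn:ThetaSigma}. By the construction of $\frak{s}$ in \eqref{eqn:defs} together with \eqref{eqn:WOS}, the differential of $\frak{s}$ at our zero, restricted to the $\R$-invariant directions in the parameter space $\times_2[5r,\infty)\times\mc{M}$ along which the linear gluing data varies (i.e.\ to $T_\pm$-shifts and variations $\Ker(\Pi_{W_0}D_\Sigma)$ of $\Sigma$), agrees with $\Lambda$ up to perturbations controlled by Propositions~\ref{prop:CMT}(c) and \ref{prop:TRE}(b); a deformation argument analogous to \S\ref{sec:deform} shows these perturbations do not affect the sign. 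In particular this implies $d\frak{s}$ is surjective at the zero, so $\frak{s}^{-1}(0)$ is smooth, and the exactness of the above sequence gives a natural isomorphism
\[
\Ker(D_C) \simeq \Ker(d\frak{s}) \subset \Ker(D_{U_-})\oplus \Ker(\Pi_{W_0}D_\Sigma)\oplus\Ker(D_{U_+}).
\]

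With this identification in hand, I would translate the coherent orientation of $T_C\mc{M}^J(\alpha_+,\alpha_-)$ into data on the right side. By Lemma~\ref{lem:associative} together with axiom (OR2) applied twice (the ends being glued consist of all of $U_\pm$'s ends at covers of $\alpha$, together with all ends of $\Sigma$), the coherent orientation of $D_C$ is the triple-glued orientation $\frak{o}_{U_-}\#\frak{o}_\Sigma\#\frak{o}_{U_+}$; here $\frak{o}_{U_\pm}$ corresponds to $\epsilon(U_\pm)$ on the one-dimensional $\Ker(D_{U_\pm})$, and the orientation $\frak{o}_\Sigma\in\mc{O}(D_\Sigma)$ is the canonical one from Remark~\ref{rem:morgen} (the ends are covers of the elliptic orbit $\alpha$) -- which, because $\mc{M}$ has a complex structure and $\Coker(D_\Sigma)$ carries its canonical complex orientation, matches the orientations used to define $\epsilon_{\frak{s}}$ and the sign of $\mc{V}_R/\R$. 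The axiom (OR3) signs contribute nothing here since all ends are elliptic with equal $\varepsilon$'s on both sides of each gluing.

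The final step, which I expect to be the main obstacle, is the bookkeeping that converts the four-way identification into the equation in the lemma. One must track how: (a) quotienting $\mc{M}^J$ by $\R$ corresponds to dividing out by $\partial_s$ in $\Ker(D_{U_+})$ (say), with the ``$\R$-direction first'' convention introducing a sign; (b) the one-dimensional slice $\frak{s}^{-1}(0)/\R\cap\mc{V}_R/\R$ inherits its orientation from the fiberwise orientation of $\mc{O}$ via $d\frak{s}$, which under our identifications corresponds to the quotient of $\Ker(D_C)$ by a translation direction; and (c) the increasing-$R$ direction corresponds on the left side to a positively oriented transverse vector to $\mc{V}_R/\R$ in $\frak{s}^{-1}(0)/\R$. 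Carrying out this comparison -- essentially a determinant computation in the collapsed exact sequence -- yields $\epsilon_\mc{M} = \epsilon(U_-)\epsilon(U_+)\epsilon_{\frak{s}}$, completing the proof of Lemma~\ref{lem:count1} and hence of Theorem~\ref{thm:count}.
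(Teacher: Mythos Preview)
Your overall strategy---reduce to a triple linear gluing exact sequence and compare it to the derivative of $\frak{s}$---is exactly what the paper does, but your first step contains a genuine error that breaks the argument. With the middle operator taken to be $D_\Sigma$ and $V_0=\Coker(D_\Sigma)$, the space $\Ker(\Pi_{W_0}D_\Sigma)$ is \emph{zero}, not $T_\Sigma\mc{M}$: since $\Ker(D_\Sigma)=0$ and $\operatorname{Im}(D_\Sigma)\perp\Coker(D_\Sigma)$, the condition $D_\Sigma\psi\in V_0$ forces $D_\Sigma\psi=0$ and hence $\psi=0$. Your collapsed sequence then reads $0\to\Ker(D_C)\to\R\oplus 0\oplus\R\to\Coker(D_\Sigma)\to\Coker(D_C)\to 0$, which forces $\dim\Coker(D_C)=\dim\mc{M}$; but $C$ is unobstructed. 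The underlying reason the naive gluing fails is geometric: $D_\Sigma$ acts on the trivial $\C$-bundle over $\Sigma$, while $D_C$ acts on the normal bundle to $C$, and near each ramification point these are related by a bundle map that \emph{vanishes} (the map $\frak{n}$ in \S\ref{sec:tsnkm}). So gluing $D_-,D_\Sigma,D_+$ in the sense of \S\ref{sec:coherent} does not produce $D_C$.

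The paper's fix is the content of \S\ref{sec:tsnkm}--\S\ref{sec:HES}. One replaces $D_\Sigma$ by an operator $D_0$ on the normal bundle $E_0$ over $C_0\simeq\Sigma$, defined in \eqref{eqn:D0}, and takes $V_0=\frak{n}^{0,1}(\Coker(D_\Sigma))$. Lemma~\ref{lem:tsnkm} then shows $\Pi_{W_0}D_0$ is surjective and that a geometrically defined map $\rho$ gives $\Ker(\Pi_{W_0}D_0)\simeq T_\Sigma\mc{M}$---this is the substitute for your incorrect claim about $\Ker(\Pi_{W_0}D_\Sigma)$. The resulting exact sequence \eqref{eqn:TLG2} has the right shape, but now one must actually prove it is homotopic to the sequence \eqref{eqn:DSE} built from $\nabla\frak{s}$ and the inverse of the gluing map; this is Lemma~\ref{lem:last}, whose proof is the real work (a carefully constructed homotopy of exact sequences plus a comparison map $\widetilde{\rho}$). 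Your sentence ``a deformation argument analogous to \S\ref{sec:deform} shows these perturbations do not affect the sign'' is where this entire argument would need to go, and it is not analogous to \S\ref{sec:deform} at all.
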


\subsection{Eliminating the coherent orientations}
\label{sec:ECO}

We now reduce Lemma~\ref{lem:count1} to a more explicit statement
which does not refer to coherent orientations.

Recall from \S\ref{sec:OSGM} that the obstruction section $\frak{s}$
is defined on the set of triples $(T_-,T_+,\Sigma)$ with $T_+\ge 5r$
and $\Sigma\in\mc{M}$.  It proves convenient henceforth to replace the
coordinates $(T_-,T_+)$ by
\[
R_-\eqdef s_- - T_-, \quad\quad R_+ \eqdef s_+ + T_+.
\]
That is, $R_\pm$ is the amount by which the curve $u_\pm$ is
translated in the $\R$ direction in the pregluing.  In these
coordinates, $\frak{s}$ is defined on triples $(R_-,R_+,\Sigma)$ with
$R_+ \ge s_+ + 5r$ and $R_- \ge s_- - 5r$.  The set $\mc{V}_R$ in
\S\ref{sec:SDR} corresponds to triples $(R_-,R_+,\Sigma)$ as above
such that $R_+-R_-=R$.  Any element of $\mc{V}_R/\R$ has a
distinguished representative with $R_\pm=\pm R/2$, and this determines
the identification of $\mc{V}_R/\R$ with a subset of $\mc{M}$ from
\S\ref{sec:SDR}.

In these new coordinates, consider a point
$(R_-,R_+,\Sigma)\in\frak{s}^{-1}(0)\cap\mc{V}_R$, with $R$ large as
in Lemma~\ref{lem:count0}, and let $C$ denote the corresponding
$J$-holomorphic curve given by the gluing theorem.  The key to the
proof of Lemma~\ref{lem:count1} is to compare signs associated to (i)
the derivative of the gluing construction and (ii) linear gluing.

\paragraph{(i)}
Use the $L^2$ inner product to identify $\Coker(D_\Sigma)$ with its
dual, and thereby regard $\frak{s}$ as taking values in
$\Coker(D_\Sigma)$.  We then have a sequence of maps
\begin{equation}
\label{eqn:DSE}
0 \longrightarrow \Ker(D_C) \stackrel{\mc{I}}{\longrightarrow}
 \R^2\oplus T_\Sigma\mc{M}
\stackrel{\nabla\frak{s}}{\longrightarrow} \Coker(D_\Sigma)
\longrightarrow 0.
\end{equation}
Here $\nabla\frak{s}$ denotes the differential of $\frak{s}$ at
$(R_-,R_+,\Sigma)$.  Meanwhile,
\[
\mc{I}:\Ker(D_C)\stackrel{\simeq}{\longrightarrow}\Ker(\nabla\frak{s})
\]
is the inverse of the derivative of the gluing map.  (The discussion
in \S\ref{sec:sos} shows that the gluing map is smooth
here, and we will see in \S\ref{sec:HES} that its derivative
$\Ker(\nabla\frak{s})\to\Ker(D_C)$ is an isomorphism.).  Since $C$ is
unobstructed, dimension counting shows that the sequence
\eqref{eqn:DSE} is exact.  Since $T_\Sigma\mc{M}$ and
$\Coker(D_\Sigma)$ have canonical orientations, the exact sequence
\eqref{eqn:DSE} determines an isomorphism
\begin{equation}
\label{eqn:Phi1}
\Phi_1:\mc{O}(\Ker(D_C))\simeq \mc{O}(\R^2).
\end{equation}

\paragraph{(ii)}
The linear gluing construction from \S\ref{sec:coherent} defines an
isomorphism
\begin{equation}
\label{eqn:LGD0}
\mc{O}(D_C)\simeq \mc{O}(D_-)\tensor \mc{O}(D_0) \tensor \mc{O}(D_+),
\end{equation}
where $D_0$ is an appropriate index $0$ operator on $\Sigma$, compare
\S\ref{sec:tsnkm}.  By Remark~\ref{rem:morgen}, there is a canonical
orientation in $\mc{O}(D_0)$.  Thus, since $D_\pm$ and $D_C$ are
unobstructed, the isomorphism \eqref{eqn:LGD0} determines an
isomorphism
\begin{equation}
\label{eqn:LGD}
\mc{O}(\Ker(D_C)) \simeq \mc{O}(\Ker(D_-))\tensor\mc{O}(\Ker(D_+)).
\end{equation}
On the other hand, the $\R$ action on the moduli spaces of
$J$-holomorphic curves determines isomorphisms $\Ker(D_\pm)\simeq \R$.
Thus the isomorphism \eqref{eqn:LGD} determines an isomorphism
\begin{equation}
\label{eqn:Phi2}
\Phi_2:\mc{O}(\Ker(D_C))\simeq \mc{O}(\R^2).
\end{equation}

\begin{lemma}
\label{lem:count2}
If $r$ is sufficiently large in the gluing construction, and if $R$ is
sufficiently large as in Lemma~\ref{lem:count0}, then for each
$(R_-,R_+,\Sigma)\in\frak{s}^{-1}(0)\cap\mc{V}_R$, the isomorphisms
$\Phi_1$ and $\Phi_2$ defined in \eqref{eqn:Phi1} and \eqref{eqn:Phi2}
agree.
\end{lemma}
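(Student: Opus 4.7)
\textbf{Proposal for the proof of Lemma~\ref{lem:count2}.} The plan is to show that both $\Phi_1$ and $\Phi_2$ are induced by one and the same short exact sequence (up to a homotopy through such sequences), namely the triple linear gluing exact sequence from the generalization of Proposition~\ref{prop:LGES} (see Step~1 of the proof of Lemma~\ref{lem:associative}) applied to $\widetilde{C}_-\#_l\widetilde{\Sigma}\#_{l'}\widetilde{C}_+$. Since $D_\pm$ and $D_C$ are all surjective, we can choose the subspaces $V_-,V_+$ in that construction to be zero and $V_0\subset L^2(T^{0,1}\Sigma\tensor\pi^*N)$ to be a copy of $\Coker(D_\Sigma)$; the exact sequence \eqref{eqn:TLG} collapses to
\[
0 \longrightarrow \Ker(D_C) \longrightarrow \Ker(D_-)\oplus \Ker(D_\Sigma) \oplus \Ker(D_+) \stackrel{g}{\longrightarrow} \Coker(D_\Sigma)\longrightarrow 0.
\]
Using $\Ker(D_\pm)\simeq\R$ via the infinitesimal $\R$-translations $\nu_\pm$ of $u_\pm$, and using Lemma~{I.2.15} to identify $T_\Sigma\mc{M}=\Ker(D_\Sigma)$, this recovers the term-by-term structure of \eqref{eqn:DSE}. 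By construction, the orientation induced on $\Ker(D_C)$ by this exact sequence (using the canonical complex orientations of $T_\Sigma\mc{M}$ and $\Coker(D_\Sigma)$, and the chosen orientations on $\Ker(D_\pm)$) is exactly $\Phi_2$.

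It therefore suffices to connect $\nabla\frak{s}$ to $g$ by a homotopy of surjective maps $\R^2\oplus T_\Sigma\mc{M}\to\Coker(D_\Sigma)$, since such a homotopy yields a homotopy of short exact sequences, which by \S\ref{sec:AP} preserves the induced orientation on $\Ker(D_C)$. I would construct the homotopy by first extracting the leading-order piece of $\nabla\frak{s}$. From \eqref{eqn:WOS}, $\frak{s}=\Pi(\eta'+\mc{R}(\psi_\Sigma))$ where $\psi_\Sigma$ is the Proposition~\ref{prop:TRE} solution. Lemma~\ref{lem:NLE}, together with standard estimates on the derivatives of the contraction mapping \eqref{eqn:FCM} that produces $\psi_\Sigma$, gives a pointwise bound of the form $\|\nabla(\Pi\mc{R}(\psi_\Sigma))\|\le c\,e^{-\lambda r/2}$ on the compact parameter region $\mc{V}_R/\R\subset\mc{M}_{R/2-5r}$. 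Thus the linear interpolation $t\mapsto \nabla(\Pi\eta') + t\,\nabla(\Pi\mc{R}(\psi_\Sigma))$, $t\in[0,1]$, perturbs the leading term by something small for $r$ large. Similarly, since $\nabla(\Pi\eta')$ itself can be directly identified with $g$ (up to a small deformation), interpolating further yields the desired homotopy. Throughout the interpolation, surjectivity onto the finite-dimensional $\Coker(D_\Sigma)$ is preserved because the leading map has maximal rank.

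The main obstacle is verifying the identification $\nabla(\Pi\eta')=g$. The term $\eta'=\frac{1}{2}(\partial_s\beta_-\,\eta_{-T}+\partial_s\beta_+\,\eta_{+T})d\bar{z}$ is supported in the cutoff strips of the pregluing. Differentiating in $R_-$ translates $\eta_{-T}$; since $\eta_i(s,\tau)$ obeys the linearization of the $J$-holomorphic equation on the end, this derivative is, to leading order in $r$, an element of $\Ker(L_{a_i})$ up to exponentially small error, and its contribution to $\Pi(\eta')$ matches exactly what linear gluing produces when one sets $\phi_-=\nu_-\in\Ker(D_-)$: the $d\beta_-\cdot\phi_-$ term in $\Theta_\Sigma$ (cf.\ \eqref{eqn:ThetaSigma}). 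The symmetric identification holds for $R_+$. Finally, differentiating in $\delta\Sigma\in T_\Sigma\mc{M}=\Ker(D_\Sigma)$ shifts the domain of the pregluing; the resulting contribution to $\nabla(\Pi\eta')$ is seen to coincide with the linear gluing $g$-image of $(0,\delta\Sigma,0)$ after identifying the corresponding elements of $\Ker(D_\Sigma)$. Once this identification is verified and combined with the homotopy above, the orientations $\Phi_1$ and $\Phi_2$ must agree, completing the proof of Lemma~\ref{lem:count2} and hence Theorem~\ref{thm:count}.
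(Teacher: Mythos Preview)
Your proposal has a genuine gap at its very first step. You write the triple linear gluing sequence as
\[
0 \longrightarrow \Ker(D_C) \longrightarrow \Ker(D_-)\oplus \Ker(D_\Sigma) \oplus \Ker(D_+) \stackrel{g}{\longrightarrow} \Coker(D_\Sigma)\longrightarrow 0
\]
and then invoke ``Lemma~I.2.15 to identify $T_\Sigma\mc{M}=\Ker(D_\Sigma)$''. But Lemma~I.2.15(b) (see \S\ref{sec:BSS}) says exactly the opposite: $\Ker(D_\Sigma)=\{0\}$. The operator $D_\Sigma$ in \eqref{eqn:operatorD} is the \emph{normal} deformation operator of the branched cover; its index is $-\dim_{\R}\mc{M}$, not $0$. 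With $\Ker(D_\Sigma)=0$ your middle term has dimension $2$, while $\dim\Ker(D_C)=2$ and $\dim\Coker(D_\Sigma)=\dim_{\R}\mc{M}>0$, so the sequence you wrote cannot be exact except in the trivial case $N_++N_-=2$. Everything downstream---the interpretation of $\nabla(\Pi\eta')$ in the $T_\Sigma\mc{M}$-direction as ``$\delta\Sigma\in\Ker(D_\Sigma)$'', and the proposed homotopy to $g$---collapses with this.

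The paper repairs precisely this point in \S\ref{sec:tsnkm}: the middle operator in the triple gluing is not $D_\Sigma$ but a different index-$0$ operator $D_0$ on $C_0\simeq\Sigma$, defined in \eqref{eqn:D0} as $\beta_\Sigma D_C+(1-\beta_\Sigma)\imath^{0,1}D_\Sigma$, which incorporates the tangential directions via $D_C$. One then takes $V_0=\frak{n}^{0,1}(\Coker(D_\Sigma))$ and proves the nontrivial Lemma~\ref{lem:tsnkm}, which identifies $\Ker(\Pi_{W_0}D_0)\simeq T_\Sigma\mc{M}$ through a geometric map $\rho$ that reads off, at each ramification point, the projection of a section to $T(\R\times S^1)$. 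Only after this identification is in place does the comparison of \eqref{eqn:DSE} with the linear gluing sequence \eqref{eqn:TLG2} make sense; the paper carries it out in Lemma~\ref{lem:last} via an explicit three-part homotopy that interpolates between the derivative of the nonlinear gluing map and the linear $\Theta$'s, and then matches the $\tau=0$ sequence to \eqref{eqn:TLG2} through an auxiliary isomorphism $\widetilde{\rho}$. Your heuristic ``$\nabla(\Pi\eta')=g$ up to small error from Lemma~\ref{lem:NLE}'' is not the mechanism that does the work; the link between $\nabla\frak{s}$ and linear gluing passes through $\rho$ and $\frak{n}^{0,1}$, not through differentiating $\eta'$ alone.
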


This lemma will be proved in \S\ref{sec:HES}.  Granted this, we can
now give:

\begin{proof}[Proof of Lemma~\ref{lem:count1}.]
Assume that $r$ is sufficiently large as in Lemma~\ref{lem:count2} and
that $R$ is sufficiently large as in Lemma~\ref{lem:count0}, and let
$(R_-,R_+,\Sigma)$ be a transverse intersection of $\frak{s}^{-1}(0)$
with $\mc{V}_R$.  Transversality here means that the projection
\[
\Pi:
T_{(R_-,R_+,\Sigma)}\frak{s}^{-1}(0) \longrightarrow
T_{(R_-,R_+)}\R^2
\]
is an isomorphism.  Moreover,
\[
\epsilon_{\mc{M}}(R_-,R_+,\Sigma) = \text{sign}(\det(\Pi)),
\]
where $\text{sign}(\det(\Pi))$ is computed using the orientation of
$T_{(R_-,R_+,\Sigma)}\frak{s}^{-1}(0)$ determined by the coherent
orientations, together with the standard orientation of $\R^2$.  Using
these same orientations, we also see from the exact sequence
\eqref{eqn:DSE} that
\[
\text{sign}(\Phi_1) = \text{sign}(\det(\Pi))\cdot
\epsilon_{\frak{s}}(R_-,R_+,\Sigma).
\]
On the other hand, we have
\[
\text{sign}(\Phi_2) = \epsilon(U_-) \cdot \epsilon(U_+),
\]
because by definition the isomorphism $\mc{O}(D_\pm)\simeq
\R$ is orientation-preserving if and only if $\epsilon(U_\pm)=\pm1$.
Combining the above three equations with Lemma~\ref{lem:count2} proves
Lemma~\ref{lem:count1}.
\end{proof}

\subsection{Setting up the linear gluing exact sequence}
\label{sec:tsnkm}

Fix $(R_-,R_+,\Sigma)\in\frak{s}^{-1}(0)$, and let $C$ denote the
associated glued curve.  To prepare for the proof of
Lemma~\ref{lem:count2}, we now show that if $r,T_-,T_+$ are
sufficiently large, then a version of the linear gluing exact sequence
\eqref{eqn:TLG} is applicable, with $C_\pm = u_\pm$ and $C_0\approx\Sigma$.

Here is the precise setup.  Let $E_\pm$ denote the normal bundle to $u_\pm$,
and let $E_C$ denote the normal bundle to $C$.  Recall that we have
linear deformation operators $D_\pm:C^\infty(E_\pm)\to
C^\infty(T^{0,1}C_\pm\tensor E_\pm)$ and $D_C:C^\infty(E_C)\to
C^\infty(T^{0,1}C\tensor E_C)$.  Use the coordinates $z,w$ in a
neighborhood of the Reeb orbit $\alpha$ as usual to trivialize $E_-$
over the positive ends of $C_-$ and to trivialize $E_+$ over the
negative ends of $C_+$.

Let $C'$ denote the surface obtained from $C$ by removing the
$s<s_--T_-$ portion of the first $N_-$ negative ends and the
$s>s_++T_+$ portion of the first $N_+$ positive ends.  Let $C_0$
denote the surface obtained from $C'$ by attaching infinite
cylindrical ends to the boundary circles.  Note that $C_0$ is
naturally identified with $\Sigma$, because $\Sigma'$ parametrizes
$C'$ by a map sending $u\mapsto(z,w)=(\pi(u),\psi_\Sigma(u))$.  The
identification $\imath:C_0\to\Sigma$ is bi-Lipschitz, and off of
the ramification points it is smooth.

For future reference, here is a more explicit description of $C_0$
near a ramification point $p$.  Let $v$ be a holomorphic local
coordinate on $C_0$ that vanishes at $p$.  It follows from
\eqref{eqn:t10ry}, as in \S\ref{sec:zss}, that the holomorphic
coordinate $v$ can be rescaled so that near $v=0$,
\begin{equation}
\label{eqn:vwx}
\begin{split}
z & = z_0 + (1-|a_0|^2)^{-1}(v^2 + a_0\overline{v}^2) +
O(|v|^3),\\
w & = w_0 + c v + O(|v|^2),
\end{split}
\end{equation}
where $z_0, w_0, a_0$ denote the values of $z,w,a$ at $p$, and
$c$ is a nonzero constant.

Now define a bundle $E_0$ over $C_0$, and a differential operator
\[
D_0:C^\infty(E_0)\to C^\infty(T^{0,1}C_0\tensor E_0),
\]
as follows.  Over $C'$, define $E_0$ to be the pullback of the normal
bundle to $C$ in $\R\times Y$.  The coordinate $w$ trivializes this
bundle on the complement of the ramification points.  Use this
trivialization to extend $E_0$, with trivialization, over the ends of
$C_0$.

Next, define a continuous bundle map
\begin{equation}
\label{eqn:cbm}
\imath^{0,1}:T^{0,1}\Sigma \longrightarrow T^{0,1}C_0
\end{equation}
as follows.  On the complement of the ramification points,
$\imath^{0,1}$ is a smooth bundle map defined by pulling back from
$T^{0,1}\Sigma$ to $T_\C^*C_0$ via the map $\imath:C_0\to\Sigma$, and
then projecting along $T^{1,0}C_0$ to $T^{0,1}C_0$.  Here the complex
structure on $C_0$ is chosen to agree with that of $C$ over the
support of $\beta_\Sigma$, and to agree with the standard complex
structure on the cylinder over the ends of $C_0$.  The map
\eqref{eqn:cbm} extends continuously over the ramification points,
where it is zero.  We can choose the complex structure on $C_0$ so
that $\imath^{0,1}$ is an isomorphism on the complement of the
ramification points; this is because the $(0,1)$ part of the $1$-form
$d\overline{z}$ is
\begin{equation}
\label{eqn:dz01}
(d\overline{z})^{0,1} = (1-|a|^2)^{-1}(d\overline{z}
- \overline{a}dz).
\end{equation}
Finally, define
\begin{equation}
\label{eqn:D0}
D_0 \eqdef \beta_\Sigma D_C + (1-\beta_\Sigma)\imath^{0,1}D_\Sigma,
\end{equation}
where $\beta_\Sigma$ is the cutoff function defined in \S\ref{sec:defpre}.

In order to obtain a version of the linear gluing exact sequence, we
want to choose a finite dimensional subspace $V_0\subset
L^2(T^{0,1}C_0\tensor E_0)$ such that if $W_0$ denotes the orthogonal
complement of $V_0$ and if $\Pi_{W_0}$ denotes the orthogonal projection
onto $W_0$, then $\Pi_{W_0} D_0$ is surjective.  For this purpose,
define a continuous bundle map
\begin{equation}
\label{eqn:cbt}
\frak{n}: \Sigma\times\C \longrightarrow E_0,
\end{equation}
covering the Lipschitz map $\imath^{-1}:\Sigma\to C_0$, as follows.
Over $\Sigma\setminus \Sigma'$, the map $\frak{n}$ is just the
trivialization of $E_0$ over the ends of $C_0$.  Given $u\in \Sigma'$
and $w\in\C$, let $p$ denote the point in $C$ corresponding to $u$,
let $W\in T_p(\R\times Y)$ denote the tangent vector to the
$z=\text{constant}$ disc corresponding to $w$, and define
$\frak{n}(u,w)$ to be the projection of $W$ onto the normal bundle to
$C$ at $p$.  Note that $\frak{n}$ is zero at each ramification point,
and an isomorphism at every other point of $\Sigma$.  Tensoring the
bundle maps \eqref{eqn:cbm} and \eqref{eqn:cbt} defines a Lipschitz
bundle map
\[
\frak{n}^{0,1}\eqdef \imath^{0,1}\tensor\frak{n}: T^{0,1}\Sigma
\longrightarrow T^{0,1}C_0\tensor E_0
\]
which vanishes at the ramification points and is a smooth isomorphism
elsewhere.  In terms of this last bundle map, define
\[
V_0 \eqdef \frak{n}^{0,1} (\Coker(D_\Sigma)) \subset
L^2(T^{0,1}C_0\tensor E_0).
\]

We will see below that $\Pi_{W_0}D_0$ is surjective for this choice of
$V_0$.  We also want to understand the kernel of $\Pi_{W_0}D_0$.  For
this purpose define a linear map
\[
\rho : C^0(E_0) \longrightarrow T_\Sigma\mc{M}
\]
as follows.  Let $\mc{R}$ denote the set of ramification points of
$\Sigma$.  Due to our choice of almost complex structure $J$, the
ramification points are simple, so branched covers in $\mc{M}$ near
$\Sigma$ are determined by the projections of their ramification
points to $\R\times S^1$.  Consequently there is a natural identification
\[
T_\Sigma\mc{M} = \bigoplus_{u\in\mc{R}}T_{\pi(u)}(\R\times S^1).
\]
Under this identification, the map $\rho$ sends a continuous section
$\psi$ of $E_0$ to the collection of tangent vectors
$(z_u)_{u\in\mc{R}}$, where $z_u$ denotes the pushforward of $\psi(u)$ by
the projection $(z,w)\mapsto (z,0)$.

\begin{lemma}
\label{lem:tsnkm}
If $r$ is sufficiently large in the gluing construction, then:
\begin{description}
\item{(a)}
$\Pi_{W_0}D_0:L^2_1(E_0)\to W_0$ is surjective.
\item{(b)}
The map $\rho$ restricts to an isomorphism
\begin{equation}
\label{eqn:rhoker}
\rho:\Ker(\Pi_{W_0}D_0)\stackrel{\simeq}{\longrightarrow} T_\Sigma\mc{M}.
\end{equation}
\end{description}
\end{lemma}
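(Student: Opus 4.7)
The plan is to compute the Fredholm index of $\Pi_{W_0}D_0$, construct a candidate right inverse $\sigma:T_\Sigma\mc{M}\to\Ker(\Pi_{W_0}D_0)$ of $\rho$ by differentiating the correction $\psi_\Sigma$ of the gluing construction with respect to $\Sigma$, and then deduce both (a) and (b) from an injectivity argument for $\rho$ combined with the dimension count. The main obstacle will be the injectivity of $\rho$ on $\Ker(\Pi_{W_0}D_0)$, which requires a unique-continuation / asymptotic analysis across the ramification points where $\frak{n}$ and $\imath^{0,1}$ degenerate.

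For the index, additivity of the Fredholm index under gluing (as in the triple-gluing exact sequence~\eqref{eqn:TLG}, applied formally) gives $\op{ind}(D_C)=\op{ind}(D_-)+\op{ind}(D_0)+\op{ind}(D_+)$; since $C$ has index $2$ and each $u_\pm$ has index $1$, $\op{ind}(D_0)=0$. By Lemma~{I.2.15(b)} $\Ker(D_\Sigma)=0$, and by the branched-cover index formula of \S{I.1.2} together with $\kappa_\theta=1$ from \eqref{eqn:ii}, one finds $\dim\Coker(D_\Sigma)=\dim T_\Sigma\mc{M}$. Since $\frak{n}^{0,1}$ is injective on $\Coker(D_\Sigma)$ (being an isomorphism away from the finite ramification locus), $\dim V_0=\dim T_\Sigma\mc{M}$, whence $\op{ind}(\Pi_{W_0}D_0)=\dim T_\Sigma\mc{M}$.

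For the right inverse, given $\tau\in T_\Sigma\mc{M}$, choose a smooth path $\{\Sigma_\epsilon\}\subset\mc{M}$ with $\dot\Sigma_0=\tau$. Applying Proposition~\ref{prop:TRE} to the triple $(R_-,R_+,\Sigma_\epsilon)$ produces a smooth family of corrections $\psi_{\Sigma_\epsilon}$ satisfying \eqref{eqn:7SF1}, with $\frak{s}(R_-,R_+,\Sigma_\epsilon)=\Pi\mc{F}_{\Sigma_\epsilon}(\psi_{\Sigma_\epsilon})$. Differentiating at $\epsilon=0$ (where $\frak{s}$ vanishes) yields a section $\sigma(\tau)$ of $E_0$ over $C_0$ satisfying the linearized equation $D_0\sigma(\tau)=\frak{n}^{0,1}(\nabla_\tau\frak{s}(\Sigma))\in V_0$, hence $\sigma(\tau)\in\Ker(\Pi_{W_0}D_0)$. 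At each ramification point $u_j\in\mc{R}$, the local expansion~\eqref{eqn:vwx} shows that $T_{u_j}C$ lies in the $w$-direction and hence $E_0|_{u_j}$ in the $z$-direction; the $\epsilon$-derivative of the critical value of $\pi_{\Sigma_\epsilon}$ at $u_j$ is precisely the $j$-th component of $\tau$, so $\rho\circ\sigma=\op{id}$.

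For the injectivity of $\rho$, suppose $\psi\in\Ker(\Pi_{W_0}D_0)$ with $\rho(\psi)=0$, so that $\psi$ vanishes at every ramification point. On each end of $C_0$, $D_0$ is conjugate via $\imath^{0,1}$ and $\frak{n}^{-1}$ to $D_\Sigma$; the triviality of $\Ker(D_\Sigma)$ together with the asymptotic decay estimates of Section~\ref{sec:decay} forces $\psi$ to decay at infinity. On the central region, where $D_0=D_C$ is elliptic on the normal bundle of $C$, a standard $L^2$-energy estimate combined with the decay at infinity and the pointwise vanishing at the ramification points forces $\psi=0$; this step is the main technical difficulty, and the argument must carefully handle the Lipschitz behavior of $\frak{n}^{0,1}$ and the matching of the end-asymptotic analysis with the interior equation. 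Granted injectivity of $\rho$, the identity $\rho\circ\sigma=\op{id}$ forces $\dim\Ker(\Pi_{W_0}D_0)=\dim T_\Sigma\mc{M}$; combined with the index identity $\op{ind}(\Pi_{W_0}D_0)=\dim T_\Sigma\mc{M}$, we conclude that $\Pi_{W_0}D_0$ is surjective, proving (a), and that $\rho$ is an isomorphism, proving (b).
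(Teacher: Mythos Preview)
Your overall architecture is right: the index of $\Pi_{W_0}D_0$ equals $\dim T_\Sigma\mc{M}$, so injectivity of $\rho$ on $\Ker(\Pi_{W_0}D_0)$ yields both (a) and (b) by dimension counting. Note that your construction of the right inverse $\sigma$ is therefore unnecessary, and in any case it is problematic as written: $\psi_{\Sigma_\epsilon}$ is a function on the moving domain $\Sigma_\epsilon$, and turning its $\epsilon$-derivative into a section of $E_0$ over $C_0$ (rather than of the trivial bundle over $\Sigma$) is exactly the content of the ``normal derivative of the family of surfaces'' construction carried out later in \S\ref{sec:HES}; it is not simply the derivative of $\psi_\Sigma$.

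The real gap is your injectivity argument. The sentence ``a standard $L^2$-energy estimate combined with the decay at infinity and the pointwise vanishing at the ramification points forces $\psi=0$'' is not a proof. The operator $D_0$ is not $D_\Sigma$: they act on different bundles over domains related by the map $\frak{n}$, which \emph{degenerates} at the ramification points. There is no energy identity for $D_0$ that makes its kernel trivial, and vanishing at finitely many points is by itself far too weak a condition. You also have not yet used that $r$ is large, which is the hypothesis of the lemma.

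The paper's argument supplies the missing idea in two steps. First, one shows that $\rho(\psi)=0$ is exactly what is needed to make $\zeta:=\frak{n}^{-1}\psi$ extend to an element of $L^2_1(\Sigma,\C)$: in the local model \eqref{eqn:vwx} one has $\frak{n}=-v+O(|v|^2)$, while $\rho(\psi)=0$ gives $\psi=O(|v|)$ and $D_0\psi\in V_0$ gives $\partial_{\bar v}\psi=O(|v|)$, so $\partial_{\bar v}(\frak{n}^{-1}\psi)$ is bounded and elliptic regularity applies. Second, a direct computation (linearizing \eqref{eqn:dbarc}) shows
\[
D_0(\frak{n}\zeta)=\frak{n}^{0,1}\big(D_\Sigma\zeta+\beta_\Sigma\mc{R}(\zeta)\big),
\qquad |\mc{R}(\zeta)|\le c\,|w|\,(|\zeta|+|\nabla\zeta|).
\]
Since $\frak{n}^{0,1}$ is an isomorphism off the ramification set, the hypothesis $D_0\psi\in V_0=\frak{n}^{0,1}\Coker(D_\Sigma)$ becomes $D_\Sigma\zeta+\beta_\Sigma\mc{R}(\zeta)\in\Coker(D_\Sigma)$. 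Now the uniform lower bound $\|D_\Sigma\zeta\|_{L^2}\ge c\|\zeta\|_{L^2_1}$ from Lemma~\ref{lem:SB}, together with the fact that $|w|$ is uniformly small on the support of $\beta_\Sigma$ when $r$ is large, forces $\zeta=0$. This is precisely where the largeness of $r$ enters.
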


\begin{proof}
First note that if $\psi\in\Ker(\Pi_{W_0}D_0)$, then by definition
there exists $\eta\in\Coker(D_\Sigma)$ with $D_0\psi = \frak{n}^{0,1}\eta$.
Since $\eta$ is a smooth $(0,1)$-form on $\Sigma$, and since
$\frak{n}^{0,1}$ is Lipschitz, it follows that $D_0\psi$ has bounded first
derivatives near the ramification points, and so by elliptic
regularity $\psi$ is $C^1$.  In particular, $\rho$ is well defined on
$\Ker(\Pi_{W_0}D_0)$.

Now since the index of the operator $\Pi_{W_0}D_0$ equals the
dimension of $T_\Sigma\mc{M}$, to prove both (a) and (b) it suffices
to show that the map \eqref{eqn:rhoker} is injective.  Suppose $\psi\in
\Ker(\Pi_{W_0}D_0)$ satisfies $\rho(\psi)=0$; we will
show that $\psi=0$.

{\em Step 1.\/}
We first show that there exists $\zeta\in L^2_1(\Sigma,\C)$ with
$\frak{n}\zeta=\psi$.

The only issue is to check that $\frak{n}^{-1}\psi$ is $L^2_1$ in a
neighborhood of each ramification point $p$.  Near $p$, in terms of
the local description
\eqref{eqn:vwx}, the normal bundle to $C_0$ near $v=0$ is trivialized
by a $(1,0)$ form that annihilates $TC_0$ and has the form
\begin{equation}
\label{eqn:nhat}
\widehat{n}=\frac{c}{2}(dz - a_0 d \overline{z}) - v dw +
O(|v|)dz+O(|v|)d\overline{z} + O(|v|^2)dw+O(|v|^2)d\overline{w}.
\end{equation}
It follows that in this local trivialization, using the Lipschitz
identification $\imath:\Sigma\to C_0$ to regard $\frak{n}$ as a map
between bundles over $C_0$, we have
\begin{equation}
\label{eqn:ltt1}
\frak{n} = -v + O(|v|^2).
\end{equation}
Since $D_0\psi\in V_0$, we know that $\psi$ is $C^1$ and
$D_0\psi=O(|v|)$, and since $\rho(\psi)=0$ we know that $\psi=O(|v|)$.
It follows that in the local trivialization \eqref{eqn:nhat},
\begin{equation}
\label{eqn:ltt2}
\frac{\partial\psi}{\partial\overline{v}}=O(|v|).
\end{equation}
Now \eqref{eqn:ltt1} and \eqref{eqn:ltt2} imply that
$\partial(\frak{n}^{-1}\psi)/\partial\overline{v}$ is bounded, and so by
elliptic regularity again, $\frak{n}^{-1}\psi$ is $L^2_1$.

{\em Step 2.\/} We now show that if $\zeta\in L^2_1(\Sigma,\C)$
satisfies $\Pi_{W_0}D_0(\frak{n}\zeta)=0$ then $\zeta=0$.

We begin by deriving a useful formula for $D_0(\frak{n}\zeta)$.  First
restrict attention to $\Sigma'$.  Here, off of the ramification
points, regard $\psi_\Sigma$ locally as a function of $z\in\R\times
S^1$.  Recall from \S\ref{sec:G1} that the graph, $C$, of
$\psi_\Sigma$ is $J$-holomorphic if and only if $\dbar(C)=0$, where
$\dbar(C)$ denotes the $1$-form on $C$ with values in the normal bundle
$N_C$ that inputs a tangent vector $v$ and outputs the projection of
$Jv$ to $N_C$.  To describe $\dbar(C)$ more explicitly in the present
case, note that the projection from $T(\R\times Y)|_C$ to $N_C$ is
given by the composition of $\frak{n}$ with the $1$-form
$dw-d\psi_\Sigma$.  Hence $\frak{n}^{-1}\dbar(C)$ is the restriction
to $C$ of $-2i$ times the $(0,1)$ part of $dw-d\psi_\Sigma$.  By
\eqref{eqn:t10ry} and \eqref{eqn:dz01}, this gives
\begin{equation}
\label{eqn:dbarc}
\dbar(C)=
2i
\frak{n}^{0,1}\left(\left(\frac{\partial\psi_\Sigma}{\partial\overline{z}} 
+ a\frac{\partial\psi_\Sigma}{\partial z} +
  b\right)d\overline{z}\right).
\end{equation}
By definition,
\[
D_C(\frak{n}\zeta) = \frac{1}{2i}
\frac{d}{d\epsilon}\bigg|_{\epsilon=0}\dbar(C_\epsilon),
\]
where $C_\epsilon$ denotes the graph of $\psi_\Sigma+\epsilon\zeta$.
Therefore
\begin{equation}
\label{eqn:dcnpsi}
\begin{split}
D_C(\frak{n}\zeta) &= 
\frak{n}^{0,1}\left(\left(\frac{\partial\zeta}{\partial\overline{z}}
+ a\frac{\partial\zeta}{\partial z} +
  (\nabla_\zeta a)\frac{\partial\psi_\Sigma}{\partial z} +
  \nabla_\zeta b\right)d\overline{z}\right)\\
&= \frak{n}^{0,1}\left(D_\Sigma\zeta + 
\mc{R}(\zeta)
\right)
\end{split}
\end{equation}
where $\nabla_\zeta$ denotes the derivative along the
$z=\text{constant}$ disks in the direction determined by $\zeta$, and
\begin{equation}
\label{eqn:Rpsi}
\mc{R}(\zeta)\eqdef
\left(
a\frac{\partial\zeta}{\partial z} +
 (\nabla_\zeta a)\frac{\partial\psi_\Sigma}{\partial z} +
 \left(\nabla_\zeta b - \nabla_\zeta b|_{w=0}\right)\right)
d\overline{z}.
\end{equation}
It then follows from \eqref{eqn:D0} that on all of $\Sigma$,
\begin{equation}
\label{eqn:d0taupsi}
D_0(\frak{n}\zeta) = 
\frak{n}^{0,1}\left(D_\Sigma\zeta + 
\beta_\Sigma
\mc{R}(\zeta)
\right).
\end{equation}

To use this formula, note that our assumption that
$\Pi_{W_0}D_0(\frak{n}\zeta)=0$ means that
$D_0(\frak{n}\zeta)\in\frak{n}^{0,1}\Coker(D_\Sigma)$.  Since
$\frak{n}^{0,1}$ is an isomorphism except at the ramification points,
it follows from this and \eqref{eqn:d0taupsi} that
\begin{equation}
\label{eqn:ate1}
D_\Sigma\zeta + 
\beta_\Sigma
\mc{R}(\zeta)
\in\Coker(D_\Sigma).
\end{equation}
Now recall from the proof of Lemma~\ref{lem:SB} that there is a
constant $c>0$, not depending on $\Sigma,T_-,T_+$, such that
\begin{equation}
\label{eqn:ate2}
\|D_\Sigma\zeta\|_{L^2} \ge c\|\zeta\|_{L^2_1}.
\end{equation}
On the other hand, inspection of \eqref{eqn:Rpsi} shows that there is
a constant $c'$ with
\begin{equation}
\label{eqn:ate3}
\left|
\mc{R}(\zeta)
\right|
\le c' |w| \left(|\zeta| + |\nabla\zeta|\right).
\end{equation}
It follows from \eqref{eqn:ate1}, \eqref{eqn:ate2}, and
 \eqref{eqn:ate3} that if $r$ is sufficiently large, so that $|w|$ is
 always sufficiently small on the support of $\beta_\Sigma$, then
 $\zeta=0$.
\end{proof}

\subsection{A homotopy of exact sequences}
\label{sec:HES}

With all the setup in place, we come now to the heart of the proof of
Theorem~\ref{thm:count}.  If $r$ is sufficiently large, then since
$D_\pm$, $D_C$, and $\Pi_{W_0}D_0$ are surjective, as in
\eqref{eqn:TLG} we obtain an exact sequence
\begin{equation}
\label{eqn:TLG2}
0 \longrightarrow \Ker(D_C) \stackrel{f}{\longrightarrow} \Ker(D_-)\oplus
\Ker(\Pi_{W_0}D_0)\oplus\Ker(D_+) 
\stackrel{g}{\longrightarrow} V_0 \longrightarrow 0.
\end{equation}
Here we use the cutoff functions $\beta_-$,
$\beta_0\eqdef\beta_\Sigma$, and $\beta_+$ from \S\ref{sec:gluing};
these are slightly different from the cutoff functions in
\eqref{eqn:TLG}, but the resulting exact sequence will be the same up
to homotopy, as in \S\ref{sec:GO}.  Moreover, the isomorphisms
\begin{equation}
\label{eqn:identifications}
\Coker(D_\Sigma)\simeq V_0, \quad\quad \Ker(\Pi_{W_0}D_0)\simeq
T_\Sigma\mc{M},\quad\quad\Ker(D_\pm)\simeq\R,
\end{equation}
determined by $\frak{n}^{0,1}$, $\rho$, and the $\R$-action
respectively, identify the terms in the exact sequence
\eqref{eqn:TLG2} with those in the exact sequence \eqref{eqn:DSE},
although the maps may be different.  (In these identifications we are
commuting $\Ker(D_+)$ with $\Ker(\Pi_{W_0}D_0)$, which has no effect
on orientations since the latter is even dimensional.)  On the other
hand, the exact sequence \eqref{eqn:TLG2}, together with the exact
sequence
\begin{equation}
\label{eqn:CES}
0 \longrightarrow \Ker(D_0) \longrightarrow \Ker(\Pi_{W_0}D_0)
\stackrel{D_0}{\longrightarrow}
V_0 \longrightarrow
 \Coker(D_0) \longrightarrow 0,
\end{equation}
determines the isomorphism \eqref{eqn:LGD0} on orientations.  So to prove
Lemma~\ref{lem:count2}, and thus Theorem~\ref{thm:count}, it is enough
to prove the following:

\begin{lemma}
\label{lem:last}
Let $(R_-,R_+,\Sigma)\in\frak{s}^{-1}(0)$.  If $r$ is
sufficiently large in the gluing construction, then under the identifications
\eqref{eqn:identifications}:
\begin{description}
\item{(a)}
The isomorphism
\[
\mc{O}(T_\Sigma\mc{M})\tensor\mc{O}(\Coker(D_\Sigma))
\stackrel{\simeq}{\longrightarrow} \mc{O}(D_0)
\]
determined by \eqref{eqn:CES} sends the tensor product of the
canonical orientations of $T_\Sigma\mc{M}$ and $\Coker(D_\Sigma)$ to the
canonical orientation of $D_0$.
\item{(b)}
The exact sequences \eqref{eqn:DSE} and \eqref{eqn:TLG2} are homotopic
through exact sequences, and so induce the same isomorphism on
orientations.
\end{description}
\end{lemma}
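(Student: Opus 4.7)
The strategy is to reduce both parts to compatibility statements about complex-linear operators, exploiting the fact that all the objects involved carry natural complex orientations inherited from the elliptic assumption \eqref{eqn:i}. Both parts will be established by exhibiting homotopies through exact sequences that preserve the induced orientation isomorphism, and then comparing at a distinguished endpoint where everything becomes complex-linear (for part (a)) or where the two maps visibly agree (for part (b)).

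For part (a), since every end of $C_0$ is at a cover of the elliptic orbit $\alpha$, each asymptotic operator is elliptic, and by Remark~\ref{rem:morgen} the canonical orientation on $\mc{O}(D_0)$ is defined by deforming $D_0$ through $\mc{D}(\widetilde{C}_0)$ to a complex-linear operator $D_0^{\C}$ with the same asymptotic data. The plan is to choose a homotopy $\{D_0^t\}_{t\in[0,1]}$ of such operators together with a continuous family $\{V_0^t\}$ of finite dimensional subspaces, with $V_0^0 = V_0 = \frak{n}^{0,1}(\Coker(D_\Sigma))$ and $V_0^1$ a $\C$-subspace, such that $\Pi_{W_0^t} D_0^t$ is surjective throughout. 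The induced orientation isomorphism from \eqref{eqn:CES} is continuous in $t$, so it suffices to verify the claim at $t=1$. There, the key point is that $\rho$ is $\C$-linear (since it projects $\psi(u)$ to its $z$-component, and the complex structures on $E_0$ and $T(\R\times S^1)$ are both coming from $J$), and $\frak{n}^{0,1}$ is $\C$-linear off the ramification points (since $\frak{n}$ is a complex bundle map — the normal bundle to a $J$-holomorphic curve is $J$-invariant — and $\imath^{0,1}$ is $\C$-linear by construction). Hence at $t=1$ the exact sequence \eqref{eqn:CES} becomes an exact sequence of complex vector spaces with $\C$-linear maps, forcing the orientation isomorphism to send the product of complex orientations to the complex orientation, which is the content of part (a).

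For part (b), under the identifications in \eqref{eqn:identifications} both sequences have the form $0 \to \Ker(D_C) \to \R^2 \oplus T_\Sigma\mc{M} \to \Coker(D_\Sigma) \to 0$, so it suffices to exhibit an interpolating family of exact sequences. Given $(\dot R_-, \dot R_+, \dot\Sigma) \in \R^2 \oplus T_\Sigma\mc{M}$, let $\dot u_\pm = \dot R_\pm \cdot \nu_\pm$ and let $\dot\sigma \in \Ker(\Pi_{W_0}D_0)$ satisfy $\rho(\dot\sigma) = \dot\Sigma$; then the preglued infinitesimal deformation $\beta_- \dot u_- + \beta_\Sigma \dot\sigma + \beta_+ \dot u_+$ is exactly the image of the linear gluing assignment $f^{-1}$ (after subtracting the small correction in $\Ker(D_C)^\perp$ prescribed by the proof of Proposition~\ref{prop:LGES}), and the obstruction to its lying in $\Ker(D_C)$ is the map $g$. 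On the other hand, differentiating the nonlinear gluing map produces the same leading-order ansatz plus an additional correction from differentiating the implicit fixed-point sections $(\psi_-, \psi_\Sigma, \psi_+)$ provided by Propositions~\ref{prop:CMT} and \ref{prop:TRE}. Scaling this nonlinear correction by $t \in [0,1]$ interpolates between $f^{-1}$ at $t=0$ and $\mc{I}^{-1}$ at $t=1$; the corresponding obstruction map interpolates between $g$ and $\nabla\frak{s}$, since the definition \eqref{eqn:defs} of $\frak{s}$ evaluates precisely the residual part of $\mc{F}_\Sigma(\psi_\Sigma)$ that lies in $V_0$. Since the compositions are zero at both endpoints and vary continuously, and since the $t=1$ sequence is exact by construction of $\mc{I}$, exactness persists throughout — the estimates of \S\ref{sec:gluing} keep the $t$-dependent correction uniformly small, so the middle and right-hand maps retain constant rank.

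The main obstacle will lie in part (b): executing the interpolation rigorously requires identifying the nonlinear correction to $dG$ with precisely the $t=1$ contribution differentiating \eqref{eqn:7SF1} and \eqref{eqn:defs}, and verifying uniformly in $t$ that the family of obstruction maps remains surjective. This is essentially a parametrized version of the implicit function theorem already used in \S\ref{sec:gluing}, where the key input is that $\psi_\pm$ and $\psi_\Sigma$ and their derivatives with respect to $(R_-, R_+, \Sigma)$ obey the exponential decay estimates of Propositions~\ref{prop:CMT}(c)--(d) and \ref{prop:TRE}(b), so the scaled corrections are uniformly $O(e^{-\lambda R})$ and hence cannot destroy exactness when $R$ is large.
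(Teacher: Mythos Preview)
Your approach to part (a) --- deform to the complex-linear case --- is exactly what the paper does (in one line), and your additional detail about why $\rho$ and $\frak{n}^{0,1}$ are $\C$-linear is correct and helpful.

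For part (b), your overall strategy of interpolating between the linear-gluing expansion and the derivative of the nonlinear gluing map is the same as the paper's. However, there is a genuine gap at the $t=0$ endpoint. You assert that at $t=0$ the sequence coincides with \eqref{eqn:TLG2} under the naive identifications \eqref{eqn:identifications}, but this is not automatic. The issue is that in the linear gluing sequence the middle section is decomposed as $\psi_0 = \phi_0' + \zeta_0$ with $\zeta_0 \perp \Ker(\Pi_{W_0}D_0)$, whereas in your interpolating sequence (and in the paper's) it is decomposed as $\psi_0 = \phi_0 + \frak{n}\zeta_\Sigma$ with $\rho(\phi_0)=X$ and $\zeta_\Sigma \in L^2_1(\Sigma,\C)$. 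These are \emph{different} decompositions of the same $\psi_0$, so the map on the middle term that actually makes the diagram commute is $\widetilde{\rho}:(\phi_-,\phi_0',\phi_+)\mapsto(r_-,r_+,\rho(\psi_0))$, not the naive $(\phi_-,\phi_0',\phi_+)\mapsto(r_-,r_+,\rho(\phi_0'))$. The paper devotes its Parts~2 and~3 to this: defining $\widetilde{\rho}$, checking the commutative diagram, and then proving that $\widetilde{\rho}$ is homotopic through isomorphisms to the naive identification (via the linear interpolation $\widetilde{\rho}_\tau$ sending $\phi_0'$ to $\tau\rho(\psi_0)+(1-\tau)\rho(\phi_0')$, with injectivity established from the smallness estimates). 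Your sketch locates the main obstacle at $t=1$, but the $t=0$ identification requires this additional nontrivial step that your proposal omits.
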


\begin{proof}
Assertion (a) follows by deforming to the complex linear case.

The proof of assertion (b) has the following outline:

{\em Part 1.\/}
We will first construct a homotopy of exact sequences
\begin{equation}
\label{eqn:HES}
0 \longrightarrow \Ker(D_C) \stackrel{f_\tau}{\longrightarrow} \R^2
\oplus T_\Sigma\mc{M} \stackrel{g_\tau}{\longrightarrow} \Coker(D_\Sigma)
 \longrightarrow 0,
\end{equation}
parametrized by $\tau\in[0,1]$, such that when $\tau=1$, the exact
sequence \eqref{eqn:HES} agrees with \eqref{eqn:DSE}.

{\em Part 2.\/}
We will then relate the exact sequence \eqref{eqn:HES} for $\tau=0$ to
the exact sequence \eqref{eqn:TLG2} by defining a map
\begin{equation}
\label{eqn:rhotilde}
\widetilde{\rho}:\Ker(D_-) \oplus \Ker(\Pi_{W_0}D_0)\oplus\Ker(D_+)
\stackrel{\simeq}{\longrightarrow} \R^2\oplus T_\Sigma\mc{M}
\end{equation}
such that the following diagram commutes:
\begin{equation}
\label{eqn:rhotildediagram}
\begin{CD}
\Ker(D_C) @>{f}>> \Ker(D_-) \oplus \Ker(\Pi_{W_0}D_0)\oplus\Ker(D_+)
@>{g}>> V_0 \\
@| @VV{\widetilde{\rho}}V @AA{\frak{n}^{0,1}}A \\
\Ker(D_C) @>{f_0}>> \R^2\oplus T_\Sigma\mc{M} @>{g_0}>>
\Coker(D_\Sigma)
\end{CD}
\end{equation}

{\em Part 3.\/}
Lastly, we will show that $\widetilde{\rho}$ is an isomorphism which is
homotopic through isomorphisms to the isomorphism given by $\rho$ and
the identifications $\Ker(D_\pm)\simeq\R$.

The details follow.

\medskip

{\em Part 1.\/} Fix $(r_-,r_+,X)\in \R^2\oplus T_\Sigma\mc{M}$, and
fix $\tau\in[0,1]$.  We begin with a somewhat lengthy definition of
$g_\tau(r_-,r_+,X)$.

Let $\phi_\pm\in\Ker(D_\pm)$ correspond to $r_\pm\in\R$.  Let $\phi_0$
denote the unique element of $\Ker(\Pi_{W_0}D_0)$ for which
$\rho(\phi_0)=X$; this is provided by Lemma~\ref{lem:tsnkm}.  Given
$\zeta_\pm\in L^2_1(E_\pm)$ orthogonal to $\Ker(D_\pm)$ and given
$\zeta_\Sigma\in L^2_1(\Sigma,\C)$, consider
\begin{gather}
\label{eqn:nzs}
\psi_-\eqdef \phi_-+\zeta_-, \quad\quad
\psi_0\eqdef\phi_0+\frak{n}\zeta_\Sigma, \quad\quad
\psi_+\eqdef\phi_++\zeta_+,\\
\label{eqn:pnd}
\psi \eqdef \beta_-\psi_-+\beta_0\psi_0+\beta_+\psi_+ \in L^2_1(N_C).
\end{gather}
Recall that the construction of the linear gluing exact sequence
\eqref{eqn:TLG2}
writes
\[
D_C(\psi) = \beta_-\Theta_-(\psi_-,\psi_0) +
\beta_0\Theta_0(\psi_-,\psi_0,\psi_+) +
\beta_+\Theta_+(\psi_0,\psi_+),
\]
where $\Theta_\pm$ and $\Theta_0$ are defined by analogy with
\eqref{eqn:definethetas}.  Near the ramification points,
$\Theta_0=D_C\psi_0$, and so by \eqref{eqn:dcnpsi} and
\eqref{eqn:nzs}, the above equation can be rewritten in the form
\begin{equation}
\label{eqn:dcpsi0}
D_C(\psi) = \beta_-\Theta_-(\psi_-,\psi_0) +
\beta_0\frak{n}^{0,1}\Theta_\Sigma(\psi_-,\psi_0,\psi_+) +
\beta_+\Theta_+(\psi_0,\psi_+)
\end{equation}
where $\Theta_\Sigma\in L^2(T^{0,1}\Sigma,\C)$.

On the other hand, the derivative of the gluing construction in the
direction $(r_-,r_+,X)$ defines
an alternate expression
\begin{equation}
\label{eqn:dcpsi1}
D_C(\psi) = \beta_-\Theta_-'(\psi_-,\psi_0) +
\beta_0\frak{n}^{0,1}\Theta_\Sigma'(\psi_-,\psi_0,\psi_+) +
\beta_+\Theta_+'(\psi_0,\psi_+)
\end{equation}
as follows.

The $\Theta'$'s are first-order differential operators, so to define
the expression \eqref{eqn:dcpsi1} we can assume that $\psi_\pm$ and
$\psi_0$ are smooth.  Consider a smooth one-parameter family of
triples $(R_-(\epsilon),R_+(\epsilon),\Sigma(\epsilon))$, parametrized
by $\epsilon$ in a neighborhood of $0$ in $\R$, such that
$(R_-(0),R_+(0),\Sigma(0))$ agrees with our given element
$(R_-,R_+,\Sigma)\in\frak{s}^{-1}(0)$, while
$\frac{d}{d\epsilon}\big|_{\epsilon=0}R_\pm(\epsilon) = r_\pm$ and
$\frac{d}{d\epsilon}\big|_{\epsilon=0}\Sigma(\epsilon)=X$.  Let
$\widetilde{\zeta}_\pm(\epsilon)$ be a smooth one-parameter family of
sections of the normal bundle to $u_\pm$ such that
$\widetilde{\zeta}_\pm(0)$ is the section produced by the gluing
construction (in Propositions~\ref{prop:CMT} and \ref{prop:TRE} and
denoted there by $\psi_\pm$) applied to $(R_-,R_+,\Sigma)$, while
$\frac{d}{d\epsilon}\big|_{\epsilon=0}\widetilde{\zeta}_\pm(\epsilon)
= \zeta_\pm$.

Fix a small neighborhood $U$ of the ramification points in $\Sigma$.
Note that when $\epsilon$ is sufficiently small, there is a canonical
diffeomorphism of $\Sigma\setminus U$ with a subset of
$\Sigma(\epsilon)$, respecting the projections to $\R\times S^1$.  Let
$\widetilde{\zeta}_\Sigma(\epsilon)$ be a one-parameter family of
complex-valued functions on $\Sigma\setminus U$ such that
$\widetilde{\zeta}_{\Sigma}(0)$ is the restriction to $\Sigma\setminus
U$ of the function on $\Sigma$ produced by the gluing construction
(Proposition~\ref{prop:TRE}) applied to $(R_-,R_+,\Sigma)$, and
\begin{equation}
\label{eqn:confusing}
\frac{d}{d\epsilon}\bigg|_{\epsilon=0}
 \widetilde{\zeta}_\Sigma(\epsilon) = \frak{n}^{-1}\psi_0.
\end{equation}

To continue, let $C(\epsilon)$ denote the partially defined surface
obtained from the pregluing construction applied to
$(R_-(\epsilon),R_+(\epsilon),
\Sigma(\epsilon))$ using the sections
$(\widetilde{\zeta}_-(\epsilon),\widetilde{\zeta}_\Sigma(\epsilon),
\widetilde{\zeta}_+(\epsilon))$;  this is defined over the complement
of $U$.  Here equation \eqref{eqn:7} writes $\dbar(C(\epsilon))$ in
 the form
\begin{equation}
\label{eqn:dbartilde}
\begin{split}
2i \dbar(C(\epsilon)) =
& \beta_-(\epsilon)\widetilde{\Theta}_-(\widetilde{\zeta}_-(\epsilon),
\widetilde{\zeta}_{\Sigma(\epsilon)})
+
\beta_+(\epsilon)\widetilde{\Theta}_+(\widetilde{\zeta}_{\Sigma(\epsilon)},
\widetilde{\zeta}_+) \\
&+ 
\beta_0(\epsilon)\frak{n}^{0,1}\widetilde{\Theta}_{\Sigma(\epsilon)}
(\widetilde{\zeta}_-(\epsilon),
\widetilde{\zeta}_{\Sigma(\epsilon)},\widetilde{\zeta}_+(\epsilon))
\end{split}
\end{equation}
(where $\widetilde{\Theta}$ here corresponds to $\Theta$ in equation
\eqref{eqn:7}).  Note that on the part of $C$ corresponding to the
complement of $U$, the normal derivative of the family of surfaces
$\{C_\epsilon\}$ at $\epsilon=0$ is given by the section $\psi$
defined in \eqref{eqn:pnd}.  Thus differentiating equation
\eqref{eqn:dbartilde} at $\epsilon=0$, and using the fact that the
$\widetilde{\Theta}$'s vanish at $\epsilon=0$, gives
\[
\begin{split}
D_C(\psi) 
&= \beta_-\frac{\partial}{\partial\epsilon}\Big|_{\epsilon=0}
\widetilde{\Theta}_- + 
\beta_0\frak{n}^{0,1}
\frac{\partial}{\partial\epsilon}\Big|_{\epsilon=0}
\widetilde{\Theta}_{\Sigma(\epsilon)} +
\beta_+\frac{\partial}{\partial\epsilon}\Big|_{\epsilon=0}
\widetilde{\Theta}_+.
\end{split}
\]
The expansion \eqref{eqn:dcpsi1} is now defined over
the complement of $U$ by setting $\Theta_\pm'\eqdef
\frac{\partial}{\partial\epsilon}\big|_{\epsilon=0}\widetilde{\Theta}_\pm$
and
$\Theta_\Sigma'\eqdef\frac{\partial}{\partial\epsilon}\big|_{\epsilon=0}
\widetilde{\Theta}_{\Sigma(\epsilon)}$.  Near the ramification points, this
expansion agrees with \eqref{eqn:dcpsi0}, and as such has a canonical
extension over $U$.

With the preceding understood, for $\tau\in[0,1]$ define
\[
\Theta_-^\tau \eqdef (1-\tau)\Theta_- + \tau \Theta_-', \quad\;
\Theta_\Sigma^\tau \eqdef (1-\tau)\Theta_\Sigma + \tau \Theta_\Sigma',
\quad\;
\Theta_+^\tau \eqdef (1-\tau)\Theta_+ + \tau \Theta_+'.
\]
Keep in mind that the $\Theta^\tau$'s depend implicitly on the triple
$(r_-,r_+,X)\in \R^2\oplus T_\Sigma\mc{M}$.  
Note that one can write
\begin{gather*}
\Theta_-^\tau = D_-\zeta_- + \mc{R}_-^\tau(\zeta_-,\zeta_\Sigma),
\quad\quad \Theta_+^\tau = D_+\zeta_+ +
\mc{R}_+^\tau(\zeta_\Sigma,\zeta_+),\\
\Theta_\Sigma^\tau = D_\Sigma\zeta_\Sigma +
(\frak{n}^{0,1})^{-1}D_0\phi_0 +
\mc{R}_\Sigma^\tau(\zeta_-,\zeta_\Sigma,\zeta_+),
\end{gather*}
where each term in $\mc{R}^\tau_\pm$ or $\mc{R}_\Sigma^\tau$ that is
linear in $(\zeta_-,\zeta_\Sigma,\zeta_+)$ maps from $L^2_1$ to $L^2$
with small operator norm when $r$ is large.  It follows by the usual
arguments that if $r$ is sufficiently large, then there exist unique
$\zeta_\pm \in L^2_1(E_\pm)$ that are $L^2$-orthogonal to
$\Ker(D_\pm)$, and $\zeta_\Sigma\in L^2_1(\Sigma,\C)$, such that
\begin{equation}
\label{eqn:thetatau}
\Theta_-^\tau(\psi_-,\psi_0) = 0, \quad \quad
\Theta_\Sigma^\tau(\psi_-,\psi_0,\psi_+)\in \Coker(D_\Sigma),
\quad\quad \Theta_+^\tau(\psi_0,\psi_+) = 0.
\end{equation}
Moreover, $\zeta_\pm$ and $\zeta_\Sigma$ vary continuously with
$\tau$.  For this distinguished $\zeta_\pm$ and $\zeta_\Sigma$, we define
\[
g_\tau(r_-,r_+,X) \eqdef
\Theta_\Sigma^\tau(\psi_-,\psi_0,\psi_+).
\]

To complete the construction of the exact sequence \eqref{eqn:HES},
note that by \eqref{eqn:dcpsi0} and \eqref{eqn:dcpsi1}, we have a map
\begin{equation}
\label{eqn:obviousmap}
\begin{split}
\Ker(g_\tau) & \longrightarrow \Ker(D_C),\\
(r_-,r_+,X) & \longmapsto \beta_-\psi_- + \beta_0\psi_0 +
\beta_+\psi_+.
\end{split}
\end{equation}
A linear version of Lemmas~\ref{lem:IGM} and
\ref{lem:SGM} shows that if $r$ is sufficiently large, then the map
\eqref{eqn:obviousmap} is an isomorphism. We now define $f_\tau$ to be
the inverse of the map \eqref{eqn:obviousmap}.  Thus $f_\tau$ is
injective and $\op{Im}(f_\tau)=\Ker(g_\tau)$.  Since $C$ is
unobstructed, $\dim\Ker(D_C)=2$, and so by dimension counting,
the sequence \eqref{eqn:HES} is exact.

We now show that when $\tau=1$, the exact sequence \eqref{eqn:HES}
agrees with \eqref{eqn:DSE}.  We first show that
$g_1=\nabla\frak{s}$.  Let $(r_-,r_+,X)\in\R^2\oplus
T_\Sigma\mc{M}$ be given, and let $\phi_\pm$, $\phi_0$, and
$(R_-(\epsilon),R_+(\epsilon),\Sigma(\epsilon))$ be as before.  For
each $\epsilon$, the gluing construction finds a unique triple
$(\widetilde{\zeta}_-(\epsilon),
\widetilde{\zeta}_\Sigma(\epsilon),\widetilde{\zeta}_+(\epsilon))$,
where $\widetilde{\zeta}_\pm(\epsilon)$ is an $L^2_1$ section of
$u_\pm$ orthogonal to $\Ker(D_\pm)$, and
$\widetilde{\zeta}_\Sigma(\epsilon)\in L^2_1(\Sigma(\epsilon),\C)$,
such that $\widetilde{\Theta}_\pm=0$ and
$\widetilde{\Theta}_{\Sigma(\epsilon)}\in\Coker(D_{\Sigma(\epsilon)})$.
These depend smoothly on $\epsilon$ (see \S\ref{sec:sos}), and by
definition,
\begin{equation}
\label{eqn:defnablas}
\nabla\frak{s}(r_-,r_+,X) =
\frac{d}{d\epsilon}\Big|_{\epsilon=0}\widetilde{\Theta}_{\Sigma(\epsilon)}
\in\Coker(D_\Sigma).
\end{equation}
Now define $\zeta_\pm\eqdef
\frac{d}{d\epsilon}\big|_{\epsilon=0}\widetilde{\zeta}_\pm(\epsilon)\in
L^2_1(E_\pm)$ and $\psi_\pm\eqdef\phi_\pm+\zeta_\pm$.  Also, define
$\psi_0\in L^2_1(E_0)$ as follows.  Off of the ramification points,
$\psi_0$ is given by equation
\eqref{eqn:confusing}.  In a neighborhood of the ramification
points, $\psi_0$ is the normal derivative of the family of
surfaces $C(\epsilon)$.  Note that $\rho(\psi_0)=X$.  Moreover, near
the ramification points, $D_C\psi_0=\frak{n}^{0,1}\nabla\frak{s}$.  It
follows as in the proof of Lemma~\ref{lem:tsnkm} that
$\psi_0-\phi_0=\frak{n}\zeta_\Sigma$ for some $\zeta_\Sigma\in
L^2_1(\Sigma,\C)$.  The triple $(\zeta_-,\zeta_\Sigma,\zeta_+)$ is
then the unique solution to the equations \eqref{eqn:thetatau}, so by
the definition of $g_1$ and equation \eqref{eqn:defnablas} we
conclude that
$g_1(r_-,r_+,X)=\nabla\frak{s}(r_-,r_+,X)$.
Similarly, $f_1=\mc{I}$.

\medskip

{\em Part 2.\/} We now define the map $\widetilde{\rho}$ in
\eqref{eqn:rhotilde}.  Let $(\phi_-,\phi_0',\phi_+)$ be given, where
$\phi_\pm\in\Ker(D_\pm)$ and $\phi_0'\in\Ker(\Pi_{W_0}D_0)$.  As in
the definition of the map $g$ in
\eqref{eqn:TLG2}, there are unique $\zeta_\pm\in L^2_1(E_\pm)$ orthogonal to
$\Ker(D_\pm)$ and $\zeta_0\in L^2_1(E_0)$ orthogonal to
$\Ker(\Pi_{W_0}D_0)$, such that $\psi_\pm\eqdef\phi_\pm + \zeta_\pm$
and $\psi_0\eqdef \phi_0'+\zeta_0$ satisfy
\[
\Theta_-(\psi_-,\psi_0)=0, \quad\quad \Theta_+(\psi_0,\psi_+)=0,
\quad\quad \Theta_0(\psi_-,\psi_0,\psi_+)\in V_0.
\]
Near the ramification points, $\Theta_0=D_0\psi_0$, so as in the proof
of Lemma~\ref{lem:tsnkm}, $\rho(\psi_0)$ is defined.  Let $r_\pm$
correspond to $\phi_\pm$ under our usual identification
$\Ker(D_\pm)\simeq\R$, and define
\[
\widetilde{\rho}(\phi_-,\phi_0',\phi_+) \eqdef (r_-,r_+,\rho(\psi_0)).
\]

We now show that the diagram \eqref{eqn:rhotildediagram} commutes.  To
see that the right square commutes, continue with the notation from
the definition of $\widetilde{\rho}$, and let $\phi_0$ denote the
unique element of $\Ker(\Pi_{W_0}D_0)$ for which
$\rho(\phi_0)=\rho(\psi_0)$.  As in the proof of
Lemma~\ref{lem:tsnkm},
\begin{equation}
\label{eqn:zetaSigma}
\psi_0=\phi_0 + \frak{n}\zeta_\Sigma
\end{equation}
for some $\zeta_\Sigma\in L^2_1(\Sigma,\C)$.  Then
$(\psi_-,\psi_0,\psi_+)$ is the unique solution to the equations
\eqref{eqn:thetatau} for $\tau=0$ and
$(r_-,r_+,X)=\widetilde{\rho}(\phi_-,\phi_0',\phi_+)$.  So by
definition,
\[
\frak{n}^{0,1}g_0\widetilde{\rho}(\phi_-,\phi_0',\phi_+) =
\frak{n}^{0,1}\Theta_\Sigma(\psi_-,\psi_0,\psi_+) =
\Theta_0(\psi_-,\psi_0,\psi_+) = g(\phi_-,\phi_0',\phi_+).
\]
Similarly, the left square in \eqref{eqn:rhotildediagram} commutes.

\medskip

{\em Part 3.\/}
We now show that $\widetilde{\rho}$ is an isomorphism, which is
homotopic through isomorphisms to the map sending
$(\phi_-,\phi_0',\phi_+)\mapsto (r_-,r_+,\rho(\phi_0'))$.  To see this,
for $\tau\in[0,1]$ consider the linear interpolation
\[
\widetilde{\rho}_\tau(\phi_-,\phi_0',\phi_+) \eqdef
(r_-,r_+,\tau\rho(\psi_0)+(1-\tau)\rho(\phi_0')).
\]
It is enough to show that $\widetilde{\rho}_\tau$ is injective for
each $\tau$.  Suppose to the contrary that
$\widetilde{\rho}_\tau(\phi_-,\phi_0',\phi_+)=0$.  Then by
Lemma~\ref{lem:tsnkm},
\begin{align}
\label{eqn:phipm0}
\phi_\pm & =0,\\
\label{eqn:tphi0'}
\tau\phi_0 + (1-\tau)\phi_0' &= 0.
\end{align}
It follows from \eqref{eqn:phipm0}, as in the definition of $g$ in
Proposition~\ref{prop:LGES}, that for any $\varepsilon>0$, if
$r,T_\pm$ are large, then
\begin{align}
\label{eqn:psipmi}
\|\psi_\pm\|_{L^2_1} & \le
\varepsilon\|\phi_0'\|_{L^2_1},\\
\label{eqn:psi0i}
\|\psi_0-\phi_0'\|_{L^2_1} & \le
\varepsilon\|\phi_0'\|_{L^2_1}.
\end{align}
The inequality \eqref{eqn:psipmi}, together with the equation
$\Theta_0(\psi_-,\psi_0,\psi_+)\in V_0$, implies as in the proof of
Lemma~\ref{lem:tsnkm} that the function $\zeta_\Sigma$ defined in
\eqref{eqn:zetaSigma} satisfies
\begin{equation}
\label{eqn:zeta0'i}
\|\zeta_\Sigma\|_{L^2_1} \le \varepsilon
\|\phi_0'\|_{L^2_1}
\end{equation}
if $r$ is sufficiently large.  On the other hand, equations
\eqref{eqn:zetaSigma} and \eqref{eqn:tphi0'} imply that
\[
\phi_0' = \tau\left((\phi_0'-\psi_0) + \frak{n}\zeta_\Sigma\right).
\]
This contradicts \eqref{eqn:psi0i} and \eqref{eqn:zeta0'i} if
$\varepsilon$ is chosen sufficiently small.

This completes the proof of Lemma~\ref{lem:last}, and thus
Theorem~\ref{thm:count} is proved.
\end{proof}

\end{document}